\documentclass{article}
\usepackage[utf8]{inputenc}

\usepackage{import}
\usepackage{mathrsfs}
\usepackage{enumerate}
\usepackage{amsthm}
\usepackage{amsmath}
\usepackage{amsfonts}
\usepackage{amssymb}
\usepackage{dsfont}
\usepackage{stmaryrd}
\usepackage{esint}
\usepackage{mathtools}
\usepackage{hyperref}
\usepackage{cleveref}
\usepackage{float}
\usepackage{accents}
\usepackage{verbatim}
\usepackage[toc]{appendix}
\usepackage{parskip}
\usepackage{relsize}
\usepackage{xspace}
\usepackage{array}
\hypersetup{
    colorlinks,
    linkcolor={blue!80!black},
    citecolor={blue!80!black},
    urlcolor={blue!80!black}
}
\usepackage{tikz}
\usetikzlibrary{arrows.meta,positioning,matrix}
\usepackage{pgfplots}
\pgfplotsset{compat=1.18}
\usepackage{xcolor}
\usepackage{mleftright}
\newcommand{\bm}[1]{\mathbf{#1}}

\let\bm\bmdouble

\usepackage{titlesec}
\titleformat*{\section}{\LARGE\bfseries}

\usepackage[top=1.0in,bottom=1.3in,left=1.15in,right=1.15in,twoside]{geometry}

\usepackage[backend=biber,url=false,isbn=false,doi=false,eprint=false,maxbibnames=99]{biblatex}
\counterwithin{figure}{section}
\numberwithin{equation}{section}

\theoremstyle{plain}
\newtheorem{theorem}{Theorem}[section]
\newtheorem*{theorem*}{Theorem}
\newtheorem{lemma}[theorem]{Lemma}
\newtheorem{corollary}[theorem]{Corollary}
\newtheorem{proposition}[theorem]{Proposition}
\newtheorem{remark}[theorem]{Remark}

\theoremstyle{definition}
\newtheorem{definition}[theorem]{Definition}
\newtheorem{example}[theorem]{Example}

\newcommand{\ZBC}{\ensuremath{\mathrm{\textsf{ZBC}}}\xspace}
\newcommand{\NZBC}{\ensuremath{\mathrm{\textsf{NZBC}}}\xspace}

\newcommand{\KdV}{\ensuremath{\mathrm{\textsf{KdV}}}\xspace}
\newcommand{\mKdV}{\ensuremath{\mathrm{\textsf{mKdV}}}\xspace}

\newcommand{\Eqn}{\ensuremath{\mathrm{\dagger}\xspace}}
\newcommand{\Eqns}{\ensuremath{\mathrm{\textsf{Eqns}}}\xspace}
\newcommand{\AKNS}{\ensuremath{\mathrm{\textsf{AKNS}}}\xspace}
\newcommand{\GP}{\ensuremath{\mathrm{\textsf{GP}}}\xspace}
\newcommand{\tGP}{\ensuremath{\mathrm{\ti{\textsf{GP}}}}\xspace}
\newcommand{\NLS}{\ensuremath{\mathrm{\textsf{NLS}}}\xspace}
\newcommand{\dNLS}{\ensuremath{\mathrm{\textsf{dNLS}}}\xspace}
\newcommand{\hGP}{\ensuremath{\mathrm{\textsf{hGP}}}\xspace}
\newcommand{\htGP}{\ensuremath{\mathrm{\textsf{h}\ti{\textsf{GP}}}}\xspace}
\newcommand{\rGP}{\ensuremath{\mathrm{\epsilon\textsf{GP}}}\xspace}
\newcommand{\rtGP}{\ensuremath{\mathrm{\epsilon\ti{\textsf{GP}}}}\xspace}

\DeclareMathOperator{\R}{\mathbb{R}}

\DeclareMathOperator{\C}{\mathbb{C}}

\DeclareMathOperator{\SSS}{\mathbb{S}}
\DeclareMathOperator{\N}{\mathbb{N}}
\DeclareMathOperator{\Z}{\mathbb{Z}}

\DeclareMathOperator{\K}{\mathbb{K}}

\DeclareMathOperator{\Real}{Re}
\DeclareMathOperator{\Imag}{Im}
\DeclareMathOperator{\Even}{Ev}
\DeclareMathOperator{\Odd}{Od}
\DeclareMathOperator{\sign}{sign}

\DeclareMathOperator{\loc}{loc}

\DeclareMathOperator{\sech}{sech}
\DeclareMathOperator{\diag}{diag}

\DeclareMathOperator{\tr}{tr}

\DeclareMathOperator{\HS}{HS}

\NewDocumentCommand{\opnorm}{sO{}m}{%
  \IfBooleanTF{#1}{
    \left|\opnormkern\left|\opnormkern\left|
    #3
    \right|\opnormkern\right|\opnormkern\right|
  }{
    \mathopen{#2|\opnormkern #2|\opnormkern #2|}
    #3
    \mathclose{#2|\opnormkern #2|\opnormkern #2|}
  }%
}
\newcommand{\opnormkern}{\mkern-1.5mu\relax}

\DeclarePairedDelimiter{\ceil}{\lceil}{\rceil}
\DeclarePairedDelimiter{\floor}{\lfloor}{\rfloor}
\makeatletter
  \let\oldceil\ceil
  \def\ceil{\@ifstar{\oldceil}{\oldceil*}}
  \let\oldfloor\floor
  \def\floor{\@ifstar{\oldfloor}{\oldfloor*}}
\makeatother
\newcommand*\dd{\mathop{}\!\mathrm{d}}

\DeclareFontFamily{U}{mathx}{\hyphenchar\font45}
\DeclareFontShape{U}{mathx}{m}{n}{
      <5> <6> <7> <8> <9> <10>
      <10.95> <12> <14.4> <17.28> <20.74> <24.88>
      mathx10
      }{}
\DeclareSymbolFont{mathx}{U}{mathx}{m}{n}
\DeclareFontSubstitution{U}{mathx}{m}{n}
\DeclareMathAccent{\widecheck}{0}{mathx}{"71}

\newcommand{\conj}[1]{\overline{#1}}
\newcommand{\ti}[1]{\widetilde{#1}}
\newcommand{\ha}[1]{\widehat{#1}}

\usetikzlibrary{tikzmark,arrows.meta,bending,calc}
\newcounter{coverarcarrow}

\let\phi=\varphi

\let\epsilon=\varepsilon

\let\del=\partial
\let\mc=\mathcal

\title{Long-wave KdV Hierarchy Approximation of the NLS Hierarchy with Nonzero Boundary Conditions}

\author{Robert Wegner}
\date{}

\begin{document}

\maketitle

\begin{abstract}
  We study the approximation of certain renormalized conserved quantities for the \NLS hierarchy with nonzero boundary conditions, in the long-wave regime, by the energies of the \KdV hierarchy.
  We extend this to all $n \in \N$ by proving an approximation result for the transmission coefficient of the Lax operator of the \NLS hierarchy, which is a Dirac operator in the nonrelativistic regime, by the transmission coefficient of a Schrödinger operator, which is the Lax operator of the \KdV hierarchy. 
  This yields a formal approximation result between the hierarchies, which we quantify using energy methods and previously established well-posedness results.
\end{abstract}
\noindent{\sl Keywords:} NLS hierarchy, KdV hierarchy, Gross-Pitaevskii equation, Korteweg-de Vries equation, transmission coefficient, Lax operator, Dirac operator, long-wave approximation
\\ \noindent{\sl AMS Subject Classification (2020):} 35Q51, 35Q53, 35Q55, 37K10
 
\tableofcontents

\mleftright

\section{Introduction}
\label{section:introduction}

We fix $N \in \N$ and consider the ``$N$-truncated'' defocusing \NLS hierarchy
\footnote{Some authors define the \NLS hierarchy to consist of only the even flows and refer to the odd flows as the \mKdV hierarchy.}
\begin{align*} \tag{$\NLS_n$} \label{eqn:NLS-n}
    i \del_{t_n} q &= \frac{\delta \mc{H}^\NLS_n(q)}{\delta \conj{q}} \qquad \forall\, n \in \{0, \dots, N\}
\end{align*}
for a function $q = q(\bm{t}, x) = q(t_0, \dots, t_N, x): \R^{N+1} \times \R \longrightarrow \C$.
Here $\mc{H}^\NLS_n$ are the Hamiltonians of the NLS hierarchy, the first of which are
\begin{align}
    \tag{Mass} \mc{H}^\NLS_0(q) &= \int_{\R} q \conj{q} \dd x
    \\ \tag{Momentum} \mc{H}^\NLS_1(q) &= - i \int_{\R} q \conj{q}_x \dd x
    \\ \tag{Energy} \mc{H}^\NLS_2(q) &= \int_{\R} q_x \conj{q}_x + q^2 \conj{q}^2 \dd x
    \\ \nonumber \mc{H}^\NLS_3(q) &= i \int_{\R} q \conj{q}_{xxx} - 4 q^2 \conj{q} \conj{q}_x - q_x q \conj{q}^2 \dd x
    \\ \nonumber \mc{H}^\NLS_4(q) &= \int_{\R} q_{xx} \conj{q}_{xx} - 6 q^2 \conj{q} \conj{q}_{xx} - 5 q^2 \conj{q}_x^2 
    - 6 q q_x \conj{q} \conj{q}_x - q q_{xx} \conj{q}^2 + 2 q^3 \conj{q}^3 \dd x \,.
\end{align}
We are interested in long-wave solutions $q$ of the form
\begin{align} \label{eqn:ansatz}
    q(\bm{t}, x) &= \left( 1 + \frac{\epsilon^2}{2} (W_+ - W_-)(\bm{T}, X) \right) \exp\left( i \frac{\epsilon}{\sqrt{2}} \int_0^X (W_+ + W_-)(\bm{T}, Y) \dd Y \right) e^{i \phi(\bm{t})}
\end{align}
which satisfy the nonzero boundary conditions
\begin{align*} \tag{\NZBC} \label{eqn:NZBC}
    \lim_{x \rightarrow \pm \infty} q(\bm{t}, x) e^{- i \psi(\bm{t})} &= q_\pm \text{ where } |q_\pm| = 1 \,.
\end{align*}
We use the rescaled space and time variables
\begin{align*}
    X &= \sqrt{2} \epsilon x & \bm{T} = (T_n)_{0 \leq n \leq N} = \left(\frac{\dd T_n}{\dd t_n} t_n\right)_{0 \leq n \leq N}
\end{align*}
with
\begin{align} \label{eqn:times}
    \frac{\dd T_{2n}}{\dd t_{2n}} &= 2 (\sqrt{2} \epsilon)^{2n-1}
    & \frac{\dd T_{2n-1}}{\dd t_{2n-1}} &= (\sqrt{2} \epsilon)^{2n-1} \,,
\end{align}
where $\epsilon > 0$ is a small parameter.
The functions $\phi, \psi: \R^{N+1} \rightarrow \R$ are assumed to depend only on $\bm{t}$, representing uniform-in-$x$ phase rotations.
We assume $W_+, W_-: \R^{N+1} \times \R \longrightarrow \R$ and use the variable $W = (W_+, W_-)$ to denote the pair.
In order to match \NZBC for $q$, we choose for $W$ the zero boundary condition 
\begin{align*}
    \lim_{|X| \rightarrow \infty} W(\bm{T}, X) &= 0 \,.
\end{align*}
We are interested only in the dynamics of $W$.
Define $M = \floor{\frac{N-1}{2}}$.
Our main result is Theorem \ref{thm:1} below, which states that the behavior of the $N$-truncated \NLS hierarchy with \NZBC in the long-wave regime can be described by the $M$-truncated \KdV hierarchy
\begin{align} \label{eqn:KdV-n} \tag{$\KdV_n$}
    \del_{T_n} U &= \frac12 \del_X \frac{\delta \mc{E}^\KdV_n(U)}{\delta U} \qquad \forall\, n \in \{0, \dots, M\}
\end{align}
for a function $U = U(\bm{T}, X) = U(T_0, \dots, T_M, X): \R^{M+1} \times \R \rightarrow \R$.
Here $\mc{E}^\KdV_n$ are the Hamiltonians of the \KdV hierarchy, the first of which are
\begin{align}
    \tag{Mass} \mc{E}^\KdV_{-1}(U) &= \int_{\R} U \dd X
    \\ \tag{Momentum} \mc{E}^\KdV_0(U) &= \int_{\R} U^2 \dd X
    \\ \tag{Energy} \mc{E}^\KdV_1(U) &= \int_{\R} U_X^2 + 2 U^3 \dd X
    \\ \nonumber \mc{E}^\KdV_2(U) &= \int_{\R} U_{X\!X}^2 + 10 U_X^2 U + 5 U^4 \dd X
    \\ \nonumber \mc{E}^\KdV_3(U) &= \int_{\R} U_{X\!X\!X}^2 + 14 U_{X\!X}^2 U + 70 U_X^2 U^2 + 14 U^5 \dd X \,.
\end{align}
Theorem \ref{thm:1} is stated below after further preliminary discussion.
In Examples \ref{ex:0}--\ref{ex:7} at the end of the introduction several explicit demonstrations of our main result may be found.

\subsection{\texorpdfstring{From \NLS to \GP and \tGP}{From NLS to GP and G̃P̃}}
Let us begin by recalling the defocusing nonlinear Schrödinger equation
\begin{align} \label{eqn:NLS} \tag{\NLS}
    i q_t + q_{xx} &= 2 q |q|^2
\end{align}
for a wavefunction $q = q(t, x): \R \times \R \longrightarrow \C$ that satisfies the zero boundary condition
\begin{align} \label{eqn:ZBC} \tag{\ZBC}
    \lim_{|x| \rightarrow \infty} q(t, x) &= 0 \,.
\end{align}
It is closely related to the defocusing Gross-Pitaevskii equation
\begin{align} \label{eqn:GP} \tag{\GP}
    i q_t + q_{xx} &= 2 q (|q|^2 - 1) \,,
\end{align}
which we associate with the nonzero boundary conditions
\begin{align*} \tag{\NZBC}
    \lim_{x \rightarrow \pm \infty} q(t, x) &= q_\pm \text{ where } |q_\pm| = 1 \,.
\end{align*}
Both equations are formally equivalent, as the transformation $q \mapsto e^{2 i t} q$ maps solutions of \NLS to solutions of \GP.
It is preferable to work with the system \GP--\NZBC instead of \NLS--\NZBC because solutions to the former system are stationary at infinity, which is not the case for the latter system.
When writing \NLS and \GP, we generally mean the systems \NLS--\ZBC and \GP--\NZBC. 

The Hamiltonians $\mc{H}^\NLS_n$ are formally conserved quantities for both \NLS and \GP, but ill-defined in the setting of \NZBC.
We use instead an alternative sequence of Hamiltonians $(\mc{H}^\GP_n)_{n \geq -1}$ which is compatible with \NZBC. 
The initial ones are
\begin{align*}
    \tag{Mass} \mc{H}^\GP_0(q) &= \int_{\R} q \conj{q} - 1 \dd x
    \\ \tag{Momentum} \mc{H}^\GP_1(q) &= - i \int_{\R} q \conj{q}_x \dd x
    \\ \tag{Energy} \mc{H}^\GP_2(q) &= \int_{\R} q_x \conj{q}_x + (q \conj{q} - 1)^2 \dd x
    \\ \nonumber \mc{H}^\GP_3(q) &= i \int_{\R} q \conj{q}_{xxx} - 4 q^2 \conj{q} \conj{q}_x - q_x q \conj{q}^2 + 4 q \conj{q}_x \dd x
    \\ \nonumber \mc{H}^\GP_4(q) &= \int_{\R} q_{xx} \conj{q}_{xx} - 6 q^2 \conj{q} \conj{q}_{xx} - 5 q^2 \conj{q}_x^2 - 6 q q_x \conj{q} \conj{q}_x
    - q q_{xx} \conj{q}^2 
    \\ \nonumber &\qquad - 6 q_x \conj{q}_x + 2 q^3 \conj{q}^3 - 6 q^2 \conj{q}^2 + 6 q \conj{q} - 2 \dd x \,.
\end{align*}
In this setting there exists an additional non-trivial conserved quantity: the asymptotic phase change
\begin{align*}
    \mc{H}^\GP_{-1}(q) &= - i \log_{\text{pr}}\left(\frac{q_-}{q_+}\right) \,.
\end{align*}
Here $\log_{\text{pr}}$ denotes the principal logarithm.

We consider also another sequence of Hamiltonians $(\mc{H}^\tGP_n)_{n \geq -1}$, which is defined by $\mc{H}^\tGP_{2n} = \mc{H}^\GP_{2n}$ and the equivalent relations
\begin{align} \label{eqn:GP-tGP}
    \mc{H}^\tGP_{2n-1} &= \sum_{m=0}^n \binom{-\frac12}{n-m} 4^{n-m} 2^{-\mathds{1}_{\{m=0\}}} \mc{H}^\GP_{2m-1}
    \;\text{ and }\; \mc{H}^\GP_{2n-1} = 2^{\mathds{1}_{\{n=0\}}} \sum_{m=0}^n \binom{\frac12}{n-m} 4^{n-m} \mc{H}^\tGP_{2m-1} \,.
\end{align}
Here the initial odd Hamiltonians are
\begin{align*}
    \mc{H}^\tGP_{-1}(q) &= \frac12 \mc{H}^\GP_{-1}(q)
    \\  \mc{H}^\tGP_1(q) &= - i \int_{\R} q \conj{q}_x \dd x - \mc{H}^\GP_{-1}(q)
    \\ \nonumber \mc{H}^\tGP_3(q) &= i \int_{\R} q \conj{q}_{xxx} - 4 q^2 \conj{q} \conj{q}_x - q_x q \conj{q}^2 + 6 q \conj{q}_x \dd x + 3 \mc{H}^\GP_{-1}(q) \,.
\end{align*}
These Hamiltonians play a crucial role in the preceding work \cite{LiaoWegner2025}, which concerns the well-posedness of the \NLS hierarchy with \NZBC.
In principle, the Hamiltonians $\mc{H}^\tGP_0, \dots, \mc{H}^\tGP_8$ have previously been used\footnote{
    They use the variables
    $\del_{\mathrm{x}} \Theta_\epsilon = 6 (W_+ + W_-)$ and
    $N_\epsilon = - 6 (W_+ - W_- + \frac{\epsilon^2}{4} (W_+ - W_-)^2)$,
    and their choice of energies and momenta corresponds to ours with
    $\mc{E}_n(N_\epsilon, \Theta_\epsilon) = \frac{18}{(\sqrt{2} \epsilon)^{2n+1}} \mc{H}^\rtGP_{2n}(W)$ and 
    $\mc{P}_n(N_\epsilon, \Theta_\epsilon) = \sqrt{2} \frac{18}{(\sqrt{2} \epsilon)^{2n+1}} \mc{H}^\rtGP_{2n-1}(W)$. 
    We have checked this only for $n \in \{1, 2, 3\}$ due to computational restrictions.
    Note that (101) in the reference contains a term twice, which we understand to be unintentional.
} in \cite{BethuelGravejatSautSmets2010,BethuelGravejatSautSmets2009}, but to the best of our knowledge\footnote{We base this on \cite[Remark 4.7]{BethuelGravejatSautSmets2009}.} the authors were not aware of the ``correct'' definition for all $n \in \N$.
We elaborate on this in Section \ref{section:approx-rGP-KdV}. 
They have also appeared in \cite[End of §1.3.4]{KochLiao2022}.

Using the invertible coefficient matrices $\bm{c}_{\GP \mapsto \tGP}, \bm{c}_{\tGP \mapsto \GP} \in \R^{(N+1) \times (N+1)}$ given by
\begin{align*}
    \bm{c}_{\tGP \mapsto \GP}^{2n,2m} &= \mathds{1}_{\{n=m\}}
    & \bm{c}_{\tGP \mapsto \GP}^{2n,2m-1} &= 0
    \\ \bm{c}_{\tGP \mapsto \GP}^{2n-1,2m} &= 0
    & \bm{c}_{\tGP \mapsto \GP}^{2n-1,2m-1} &= 2^{\mathds{1}_{\{n=0\}}} \binom{\frac12}{n-m} 4^{n-m}
\end{align*}
and $\bm{c}_{\GP \mapsto \tGP} = (\bm{c}_{\tGP \mapsto \GP})^{-1}$, we can write
\begin{align*}
    \left(\frac{\delta \mc{H}^\tGP_n}{\delta \conj{q}}\right)_{0 \leq n \leq N} &= \bm{c}_{\GP \mapsto \tGP} \left(\frac{\delta \mc{H}^\GP_n}{\delta \conj{q}}\right)_{0 \leq n \leq N}
    & \left(\frac{\delta \mc{H}^\GP_n}{\delta \conj{q}}\right)_{0 \leq n \leq N} &= \bm{c}_{\tGP \mapsto \GP} \left(\frac{\delta \mc{H}^\tGP_n}{\delta \conj{q}}\right)_{0 \leq n \leq N} \,.
\end{align*}
Let us also state at this point a relation between $\mc{H}^\NLS_n$ and $\mc{H}^\GP_n$, which is a consequence of \eqref{eqn:NLS-GP-1}--\eqref{eqn:NLS-GP-2} below (see also \cite[Chapter 1,(10.25)]{FaddeevTakhtajan}):
\begin{align}
    \label{eqn:NLS-GP-3} \frac{\delta \mathcal{H}^\NLS_{2n}(q)}{\delta \conj{q}} &= \sum_{m=0}^n \binom{n-\frac12}{n-m} 4^{n-m} \frac{\delta \mathcal{H}^\GP_{2m}(q)}{\delta \conj{q}} \\
    \label{eqn:NLS-GP-4} \frac{\delta \mathcal{H}^\NLS_{2n-1}(q)}{\delta \conj{q}} &= \sum_{m=0}^n \binom{n-1}{n-m} 4^{n-m} \frac{\delta \mathcal{H}^\GP_{2m-1}(q)}{\delta \conj{q}} \,.
\end{align}
Similarly, we have
\begin{align}
    \label{eqn:NLS-tGP-3} \frac{\delta \mathcal{H}^\NLS_{2n}(q)}{\delta \conj{q}} &= \sum_{m=0}^n \binom{n-\frac12}{n-m} 4^{n-m} \frac{\delta \mathcal{H}^\tGP_{2m}(q)}{\delta \conj{q}} \\
    \label{eqn:NLS-tGP-4} \frac{\delta \mathcal{H}^\NLS_{2n-1}(q)}{\delta \conj{q}} &= \sum_{m=0}^n \binom{n-\frac12}{n-m} 4^{n-m} \frac{\delta \mathcal{H}^\tGP_{2m-1}(q)}{\delta \conj{q}} \,.
\end{align}
We do not state these relations for the Hamiltonians themselves because $\mc{H}^\NLS_n(q)$ can only be well-defined if $q$ satisfies \ZBC, while $\mc{H}^\GP_n(q)$ and $\mc{H}^\tGP_n(q)$ require \NZBC.
We define again invertible coefficient matrices $\bm{c}_{\NLS \mapsto \GP}, \bm{c}_{\GP \mapsto \NLS}, \bm{c}_{\NLS \mapsto \tGP}, \bm{c}_{\tGP \mapsto \NLS} \in \R^{(N+1) \times (N+1)}$ by
\begin{align*}
    \bm{c}_{\NLS \mapsto \GP}^{2n,2m} &= \binom{n-\frac12}{n-m} (-4)^{n-m} & \bm{c}_{\NLS \mapsto \GP}^{2n,2m-1} &= 0
    \\ \bm{c}_{\NLS \mapsto \GP}^{2n-1,2m} &= 0 & \bm{c}_{\NLS \mapsto \GP}^{2n-1,2m-1} &= \binom{n-1}{n-m} (-4)^{n-m}
    \\ \bm{c}_{\NLS \mapsto \tGP}^{2n,2m} &= \binom{n-\frac12}{n-m} (-4)^{n-m} & \bm{c}_{\NLS \mapsto \tGP}^{2n,2m-1} &= 0
    \\ \bm{c}_{\NLS \mapsto \tGP}^{2n-1,2m} &= 0 & \bm{c}_{\NLS \mapsto \tGP}^{2n-1,2m-1} &= \binom{n-\frac12}{n-m} (-4)^{n-m}
\end{align*}
and $\bm{c}_{\GP \mapsto \NLS} = (\bm{c}_{\NLS \mapsto \GP})^{-1}$, $\bm{c}_{\tGP \mapsto \NLS} = (\bm{c}_{\NLS \mapsto \tGP})^{-1}$.

In analogy to the definition of the \NLS hierarchy, we consider the $N$-truncated \GP and \tGP hierarchies, which consist, respectively, of the Hamiltonian flows
\begin{align*} \tag{$\GP_n$} \label{eqn:GP-n}
    i \del_{t_n} q &= \frac{\delta \mc{H}^\GP_n(q)}{\delta \conj{q}}
    \\ \tag{$\tGP_n$} \label{eqn:tGP-n}
    i \del_{t_n} q &= \frac{\delta \mc{H}^\tGP_n(q)}{\delta \conj{q}}
\end{align*}
for $n \in \{0, \dots, N\}$.
While the \GP hierarchy\footnote{Not to be confused with the ``Gross--Pitaevskii hierarchy'' from kinetic theory (see e.~g. \cite{ErdosSchleinYau}).} 
is a more immediate renormalization of the \NLS hierarchy for working with \NZBC, the \tGP hierarchy is the ``better'' hierarchy to work with in both \cite{LiaoWegner2025} and this work.
Note, however, that by Proposition \ref{prop:2} below the theories of the \NLS, \GP, and \tGP hierarchies are equivalent, up to the application of explicit transformations.

Let it be said that $\frac{\delta \mc{H}^\GP_{-1}}{\delta \conj{q}} = 0$ and hence the Hamiltonian flow generated by this conserved quantity is trivial, meaning the asymptotic phase change $\mc{H}^\GP_{-1}$ is not of particular importance in this work.
Nevertheless, Theorem \ref{thm:approx-2} below \textit{does} require the involvement of $\mc{H}^\tGP_{-1}$, and its proof motivates our choice of normalization for this conserved quantity, which is different from $\mc{H}^\GP_{-1}$ that appears in \cite{KochLiao2022}.

\subsection{\texorpdfstring{From \GP to the hydrodynamic formulation \hGP and the long-wave rescaling \rGP}{From GP to the hydrodynamic formulation hGP and the long-wave rescaling εGP}}
The solutions we are interested in have no vacuum, meaning that there exists a constant $\delta > 0$ for which
\begin{align*}
    \inf_{x \in \R} |q(\bm{t}, x)| > \delta \,.
\end{align*}
As a result, it is possible to apply the so-called Madelung transform, which maps $q$ to the pair $(a, v)$ with
\begin{align*}
    a &= |q|
    & v &= \Imag\left( \frac{q_x}{q} \right)
    & q &= a \exp\left( i \int_0^x v(y) \dd y \right) e^{i \phi} \,.
\end{align*}
Here $\phi$ is a uniform-in-$x$ choice of phase, which is information that has to be provided in order to invert the Madelung transform.
It is well-known that if $q$ solves \GP, then $(a, v)$ solve a hydrodynamic system, which we call the hydrodynamic Gross-Pitaevskii equations:
\begin{align*}
    \del_t a + a \del_x v + 2 v \del_x a &= 0 
    \\
    \del_t v + \del_x(v^2) + 4 a \del_x a &= \del_x\left( \frac{\del_{xx} a}{a} \right) \,.
\end{align*}
We prefer to work with the slightly different pair of variables $w = (w_+, w_-)$ where
\begin{align*}
    w_\pm &= \frac{v}{2} \pm (a - 1) \,.
\end{align*}
We still refer to the system of PDEs that $w$ solves as the hydrodynamic Gross-Pitaevskii equations:
\begin{align}
    \label{eqn:hGP-1} \tag{\hGP-1}
    \del_t w_- &= - (3 w_- + w_+ - 2) \del_x w_- + \frac12 \del_x \left( \frac{\del_{xx} (w_+ - w_-)}{2 a} \right) 
    \\\label{eqn:hGP-2} \tag{\hGP-2}
    \del_t w_+ &= - (w_- + 3 w_+ + 2) \del_x w_+ + \frac12 \del_x \left( \frac{\del_{xx} (w_+ - w_-)}{2 a} \right) \,.
\end{align}
Of course, this system has an infinite family of conserved energies given by
\begin{align*}
    \mc{H}^\hGP_n(w) &= \mc{H}^\GP_n(q)
    & \mc{H}^\htGP_n(w) &= \mc{H}^\tGP_n(q)
\end{align*}
for $n \geq - 1$.
We define the $N$-truncated \hGP and \htGP hierarchies as the hierarchies of Hamiltonian flows
\begin{align*} \tag{$\hGP_n$} \label{eqn:hGP-n}
    \del_{t_n} w &= \frac{1}{4} \begin{pmatrix}
        - \del_x a^{-1} - a^{-1} \del_x
        & \del_x a^{-1} - a^{-1} \del_x
        \\ - \del_x a^{-1} + a^{-1} \del_x
        & \del_x a^{-1} + a^{-1} \del_x
    \end{pmatrix} \begin{pmatrix} \frac{\delta}{\delta w_+} \mc{H}^\hGP_n(w) \\ \frac{\delta}{\delta w_-} \mc{H}^\hGP_n(w) \end{pmatrix}
    \\ \tag{$\htGP_n$} \label{eqn:htGP-n}
    \del_{t_n} w &= \frac{1}{4} \begin{pmatrix}
        - \del_x a^{-1} - a^{-1} \del_x
        & \del_x a^{-1} - a^{-1} \del_x
        \\ - \del_x a^{-1} + a^{-1} \del_x
        & \del_x a^{-1} + a^{-1} \del_x
    \end{pmatrix} \begin{pmatrix} \frac{\delta}{\delta w_+} \mc{H}^\htGP_n(w) \\ \frac{\delta}{\delta w_-} \mc{H}^\htGP_n(w) \end{pmatrix}
\end{align*}
for $n \in \{0, \dots, N\}$. 
Note that here $a = 1 + \frac12 (w_+ - w_-)$, $|a| > \delta$, and $a^{-1}$ should be understood as the operator of multiplication by $a^{-1}$.
In Appendix \ref{appendix:hGP} we detail the complete Hamiltonian structure of \eqref{eqn:hGP-n}, which is chosen so that $q \leftrightarrow w$ is a symplecto-/Poisson morphism.
It is indeed the case that \hyperref[eqn:hGP-n]{$(\hGP_1)$}, \hyperref[eqn:htGP-n]{$(\htGP_1)$}, and \eqref{eqn:hGP-1}--\eqref{eqn:hGP-2} are all identical.

For a small $\epsilon > 0$ we now set
\begin{align*}
    X &= \sqrt{2} \epsilon x & T_2 &= 2 \sqrt{2} \epsilon t
\end{align*}
and consider the rescaled variables
\begin{align*}
    W_\pm(T_2, X) &= \epsilon^{-2} w_\pm(t, x)
    \\ A(T_2, X) &= 1 + \frac{\epsilon^2}{2} (W_+(T_2, X) - W_-(T_2, X)) = a(t, x) \,.
\end{align*}
Transforming back to $q$ yields the ansatz \eqref{eqn:ansatz} above, up to the use of $T_2$ instead of the time vector $\bm{T}$, and the necessary reconstruction of the uniform-in-$x$ phase function $\phi(\bm{t})$.
We refer to the system that $W = (W_+, W_-)$ solves as the $\epsilon$-rescaled hydrodynamic Gross-Pitaevskii equations:
\begin{align}
    \label{eqn:rGP-1} \tag{\rGP-1}
    \del_{T_2} W_+ &= - \del_X W_+ + \frac{\epsilon^2}{4} \left( - \left( 2 W_- + 6 W_+ \right) \del_X W_+ + 2 \del_X \left( \frac{\del_{X\!X} (W_+ - W_-)}{2 A} \right) \right)
    \\ \label{eqn:rGP-2} \tag{\rGP-2} \del_{T_2} W_- &= \del_X W_- + \frac{\epsilon^2}{4} \left( - \left( 6 W_- + 2 W_+ \right) \del_X W_- + 2 \del_X \left( \frac{\del_{X\!X} (W_+ - W_-)}{2 A} \right) \right) \,.
\end{align}
They have the infinite family of conserved energies
\begin{align*}
    \mc{H}^\rGP_n(W) &= \mc{H}^\hGP_n(w)
    & \mc{H}^\rtGP_n(W) &= \mc{H}^\htGP_n(w)
\end{align*}
for $n \geq - 1$, the first of which are
\begin{align*}
    \tag{Phase Change} \mc{H}^\rGP_{-1}(W) &= - \frac{\epsilon}{\sqrt{2}} \int_{\R} W_+ + W_- \dd X
    \\ \tag{Mass} \mc{H}^\rGP_0(W) &= \frac{1}{\sqrt{2} \epsilon} \int_{\R} \left( 1 + \epsilon^2 \frac{W_+ - W_-}{2} \right)^2 - 1 \dd X
    \\ \tag{Momentum} \mc{H}^\rGP_1(W) &= - \frac{\epsilon}{\sqrt{2}} \int_{\R} \left(1 + \frac{\epsilon^2}{2} (W_+ - W_-)\right)^2 (W_+ + W_-) \dd X
    \\ \tag{Energy} \mc{H}^\rGP_2(W) &= \frac{1}{\sqrt{2} \epsilon} \int \frac{\epsilon^6}{2} (\del_X (W_+ - W_-))^2 
        + \epsilon^4 \left( 1 + \epsilon^2 \frac{W_+ - W_-}{2} \right)^2 (W_+ + W_-)^2 
        \\ &\qquad \qquad + \left( \left( 1 + \epsilon^2 \frac{W_+ - W_-}{2} \right)^2 - 1 \right)^2 \dd X \,.
\end{align*}
We define the $N$-truncated \rGP and \rtGP hierarchies as the hierarchies of Hamiltonian flows
\begin{align*} \tag{$\rGP_n$} \label{eqn:rGP-n}
    \del_{T_n} W &= \frac{\dd t_n}{\dd T_n} \frac{1}{2 \epsilon^2} \begin{pmatrix}
        - \del_X A^{-1} - A^{-1} \del_X
        & \del_X A^{-1} - A^{-1} \del_X
        \\ - \del_X A^{-1} + A^{-1} \del_X
        & \del_X A^{-1} + A^{-1} \del_X
    \end{pmatrix} \begin{pmatrix} \frac{\delta}{\delta W_+} \mc{H}^\rGP_n(W) \\ \frac{\delta}{\delta W_-} \mc{H}^\rGP_n(W) \end{pmatrix}
    \\ \tag{$\rtGP_n$} \label{eqn:rtGP-n}
    \del_{T_n} W &= \frac{\dd t_n}{\dd T_n} \frac{1}{2 \epsilon^2} \begin{pmatrix}
        - \del_X A^{-1} - A^{-1} \del_X
        & \del_X A^{-1} - A^{-1} \del_X
        \\ - \del_X A^{-1} + A^{-1} \del_X
        & \del_X A^{-1} + A^{-1} \del_X
    \end{pmatrix} \begin{pmatrix} \frac{\delta}{\delta W_+} \mc{H}^\rtGP_n(W) \\ \frac{\delta}{\delta W_-} \mc{H}^\rtGP_n(W) \end{pmatrix}
\end{align*}
for $n \in \{0, \dots, N\}$. Here
\begin{align*}
    A(\bm{T}, X) &= 1 + \frac{\epsilon^2}{2} (W_+ - W_-)(\bm{T}, X)
\end{align*}
is the rescaled amplitude.
In Appendix \ref{appendix:rGP} we detail the complete Hamiltonian structure of \eqref{eqn:rGP-n}, which is chosen so that $w \leftrightarrow W$ is a symplecto-/Poisson morphism.
Recall that the \hGP, \htGP, \rGP, and \rtGP hierarchies are equipped with the zero boundary conditions
\begin{align*}
    \lim_{|x| \rightarrow \infty} w(\bm{t}, x) &= 0 & \lim_{|X| \rightarrow \infty} W(\bm{T}, X) &= 0 \,.
\end{align*}

\subsection{\texorpdfstring{Approximation of \rGP by transport and \KdV equations from \cite{BethuelGravejatSautSmets2010,BethuelGravejatSautSmets2009}}{Approximation of ɛGP by transport and KdV equations from Bethuel–Gravejat–Saut–Smets (2009, 2010)}} \label{section:approx-rGP-KdV}

Observe that the leading order behavior of \eqref{eqn:rGP-1}--\eqref{eqn:rGP-2} is given by two transport equations with constant velocities $\pm 1$.
As such, we can think of $W_+$ and $W_-$ as two largely decoupled waves moving in opposite directions. This was observed and rigorously studied in \cite{BethuelDanchinSmets2010}.
The next-to-leading order of the system is reminiscent of the \KdV equation
\begin{align*} \label{eqn:KdV} \tag{$\KdV$}
    \del_T U &= 6 U U_X - U_{X\!X\!X} \,,
\end{align*}
although non-trivial interaction terms such as $W_+ \del_X W_-$ formally persist in the limit $\epsilon \rightarrow 0$.
In \cite{BethuelGravejatSautSmets2010,BethuelGravejatSautSmets2009} F.~Béthuel, P.~Gravejat, J.-C.~Saut, and D.~Smets studied \hGP in the long-wave regime described above 
and proved that the system \eqref{eqn:rGP-1}--\eqref{eqn:rGP-2} indeed behaves at leading order like two transport equations, and at next-to-leading order like two \KdV equations.
Specifically, they proved \cite[Theorem 1]{BethuelGravejatSautSmets2010}, which states that for any solution $q$ of \GP in a Zhidkov space of sufficient regularity, for which $W(0)$ satisfies additional estimates,
there exist two solutions $\pm U_\pm$ of $\KdV$ so that
\begin{align} \label{eqn:BGSS-approx}
    \left\|6 W_\pm\left(\frac{8}{\epsilon^2} \tau, X \mp \frac{8}{\epsilon^2} \tau\right) - U_\pm(\tau, \pm X)\right\|_{H^s} &\leq \epsilon^2 C(W(0)) e^{C(W(0)) |\tau|}
\end{align}
for all $\tau \in \R$ and $s \in \N$. Both the transport equation \hyperref[eqn:KdV-n]{$(\KdV_0)$} and of course \KdV = \hyperref[eqn:KdV-n]{$(\KdV_1)$} are part of the \KdV hierarchy.
Note that after switching to a co-moving frame to remove the leading-order transport behavior, we need to accelerate the timescale by a factor of order $\epsilon^2$ in order to see the \KdV approximation.

It turns out to be the case that for the higher equations in the \NLS, \GP and \tGP hierarchies, or more precisely the \rGP and \rtGP hierarchies, the higher equations in the \KdV hierarchy show up at leading order as approximating equations.
We can demonstrate this by formally expanding
\begin{align*}
    A^{-1} &= \sum_{k=0}^\infty \left( - \frac{\epsilon^2}{2} (W_+ - W_-) \right)^k
\end{align*}
and computing the initial orders in powers of $\epsilon$ of the right-hand sides of \eqref{eqn:rGP-n} and \eqref{eqn:rtGP-n}.
For the flows with even $n$ there is no difference between the \rGP and \rtGP hierarchies, and we obtain
\begin{align}
    \label{eqn:rtGP-expand-0} \tag{$\rtGP_0$} \del_{T_0} W &= \begin{pmatrix} 0 \\ 0 \end{pmatrix}
    \\ \label{eqn:rtGP-expand-2} \tag{$\rtGP_2$} \del_{T_2} W &= \begin{pmatrix} - (W_+)_X \\ (W_-)_X \end{pmatrix}
    + \frac{\epsilon^2}{4} \begin{pmatrix}
            (W_+)_{X\!X\!X} - (W_-)_{X\!X\!X} - 6 (W_+)_{X} W_{+} - 2 (W_+)_{X} W_{-} 
            \\ (W_+)_{X\!X\!X} - (W_-)_{X\!X\!X} - 2 (W_-)_{X} W_{+} - 6 (W_-)_{X} W_{-} 
        \end{pmatrix} 
        + O(\epsilon^4)
    \\ \label{eqn:rtGP-expand-4} \tag{$\rtGP_4$} \del_{T_4} W &= \begin{pmatrix}
        (W_+)_{X\!X\!X} - 6 (W_+)_{X} W_{+}
        \\ - (W_-)_{X\!X\!X} - 6 (W_-)_{X} W_{-}
    \end{pmatrix}
    \\\nonumber &+ \frac{\epsilon^2}{4} \begin{pmatrix} 
        - (W_+)_{X\!X\!X\!X\!X} + (W_-)_{X\!X\!X\!X\!X} + 13 (W_+)_{X\!X\!X} W_{+} + 3 (W_+)_{X\!X\!X} W_{-} + 29 (W_+)_{X\!X} (W_+)_{X} 
        \\ - (W_+)_{X\!X\!X\!X\!X} + (W_-)_{X\!X\!X\!X\!X} + 13 (W_-)_{X\!X\!X} W_{-} + 3 (W_-)_{X\!X\!X} W_{+} + 29 (W_-)_{X\!X} (W_-)_{X} 
    \end{pmatrix} 
    \\\nonumber &+ \frac{\epsilon^2}{4} \begin{pmatrix} 
        9 (W_-)_{X\!X} (W_-)_{X} + 3 (W_+)_{X\!X} (W_-)_{X} - (W_+)_{X} (W_-)_{X\!X} + 3 (W_-)_{X\!X\!X} W_{-} - 3 (W_-)_{X\!X\!X} W_{+}  
        \\ 9 (W_+)_{X\!X} (W_+)_{X} + 3 (W_+)_{X} (W_-)_{X\!X} - (W_+)_{X\!X} (W_-)_{X} + 3 (W_+)_{X\!X\!X} W_{+} - 3 (W_+)_{X\!X\!X} W_{-}  
    \end{pmatrix}
    \\\nonumber &+ \frac{\epsilon^2}{4} \begin{pmatrix} 
        - 45 (W_+)_{X} W_{+}^{2} - 6 (W_+)_{X} W_{+} W_{-} + 3 (W_+)_{X} W_{-}^{2} 
        \\ 45 (W_-)_{X} W_{-}^{2} + 6 (W_-)_{X} W_{+} W_{-} - 3 (W_-)_{X} W_{+}^{2} 
    \end{pmatrix} + O(\epsilon^4) \,.
\end{align}
Observe that the leading orders correspond precisely to the initial equations \eqref{eqn:KdV-n} in the \KdV hierarchy, and this happens on a purely algebraic level.
For the flows with odd $n$ we first look at the \rGP hierarchy:
\begin{align}
    \label{eqn:rGP-expand-1} \tag{$\rGP_1$} \del_{T_1} W &= \begin{pmatrix}
        (W_+)_{X}
        \\ (W_-)_{X}
    \end{pmatrix}
    \\ \label{eqn:rGP-expand-3} \tag{$\rGP_3$} \del_{T_3} W &= \frac{1}{\epsilon^2} \begin{pmatrix}
        (W_+)_{X}
        \\ (W_-)_{X}
    \end{pmatrix}
    + \begin{pmatrix} 
        - (W_+)_{X\!X\!X} + 6 (W_+)_{X} W_{+}
        \\ - (W_-)_{X\!X\!X} - 6 (W_-)_{X} W_{-}
    \end{pmatrix} + O(\epsilon^2)
    \\ \label{eqn:rGP-expand-5} \tag{$\rGP_5$} \del_{T_5} W &= \frac{1}{2 \epsilon^4} \begin{pmatrix}
        - (W_+)_{X}
        \\ - (W_-)_{X}
    \end{pmatrix}
    + \frac{1}{\epsilon^2} \begin{pmatrix} 
        - (W_+)_{X\!X\!X} + 6 (W_+)_{X} W_{+}
        \\ - (W_-)_{X\!X\!X} - 6 (W_-)_{X} W_{-}
    \end{pmatrix} 
    \\\nonumber &+ \begin{pmatrix}
        (W_+)_{X\!X\!X\!X\!X} \!-\! 10 (W_+)_{X\!X\!X} W_{+} \!-\! 20 (W_+)_{X\!X} (W_+)_{X} \!+\! 30 (W_+)_{X} W_{+}^{2}
        \\ (W_-)_{X\!X\!X\!X\!X} \!+\! 10 (W_-)_{X\!X\!X} W_{-} \!+\! 20 (W_-)_{X\!X} (W_-)_{X} \!+\! 30 (W_-)_{X} W_{-}^{2}
    \end{pmatrix}
    + O(\epsilon^2) \,.
\end{align}
Here the dynamics at leading order are mixed up. 
The \rtGP hierarchy is a renormalization which separates these dynamics:
\begin{align}
    \label{eqn:rtGP-expand-1} \tag{$\rtGP_1$} \del_{T_1} W &= \begin{pmatrix}
        (W_+)_{X}
        \\ (W_-)_{X}
    \end{pmatrix}
    \\ \label{eqn:rtGP-expand-3} \tag{$\rtGP_3$} \del_{T_3} W &= \begin{pmatrix} 
        - (W_+)_{X\!X\!X} + 6 (W_+)_{X} W_{+}
        \\ - (W_-)_{X\!X\!X} - 6 (W_-)_{X} W_{-}
    \end{pmatrix} 
    + \frac{\epsilon^2}{4} \begin{pmatrix}
        - 3 (W_+)_{X\!X\!X} W_{+} - 3 (W_+)_{X\!X\!X} W_{-} 
        \\ 3 (W_-)_{X\!X\!X} W_{+} + 3 (W_-)_{X\!X\!X} W_{-}
    \end{pmatrix} 
    \\\nonumber &+ \frac{\epsilon^2}{4} \begin{pmatrix}
        - 9 (W_+)_{X\!X} (W_+)_{X} - 3 (W_+)_{X\!X} (W_-)_{X} + 3 (W_-)_{X\!X\!X} W_{-} + 3 (W_-)_{X\!X\!X} W_{+} + 3 (W_+)_{X} W_{-}^{2}
        \\ 9 (W_-)_{X\!X} (W_-)_{X} + 3 (W_+)_{X} (W_-)_{X\!X} - 3 (W_+)_{X\!X\!X} W_{+} - 3 (W_+)_{X\!X\!X} W_{-} + 3 (W_-)_{X} W_{+}^{2}
    \end{pmatrix}
    \\\nonumber &+ \frac{\epsilon^2}{4} \begin{pmatrix}
        3 (W_+)_{X} (W_-)_{X\!X} + 9 (W_-)_{X\!X} (W_-)_{X} + 15 (W_+)_{X} W_{+}^{2} + 6 (W_+)_{X} W_{+} W_{-} 
        \\ - 3 (W_+)_{X\!X} (W_-)_{X} - 9 (W_+)_{X\!X} (W_+)_{X} + 15 (W_-)_{X} W_{-}^{2} + 6 (W_-)_{X} W_{+} W_{-}
    \end{pmatrix}
    + O(\epsilon^4)
    \\ \label{eqn:rtGP-expand-5} \tag{$\rtGP_5$} \del_{T_5} W &= \begin{pmatrix}
        (W_+)_{X\!X\!X\!X\!X} - 10 (W_+)_{X\!X\!X} W_{+} - 20 (W_+)_{X\!X} (W_+)_{X} + 30 (W_+)_{X} W_{+}^{2}
        \\ (W_-)_{X\!X\!X\!X\!X} + 10 (W_-)_{X\!X\!X} W_{-} + 20 (W_-)_{X\!X} (W_-)_{X} + 30 (W_-)_{X} W_{-}^{2}
    \end{pmatrix}
    + O(\epsilon^2) \,.
\end{align}

The correspondence between the leading order of the equations in the \rtGP hierarchy and the equations in the \KdV hierarchy, for all $n \in \N$, is the content of Theorems \ref{thm:approx-2} and \ref{thm:2} below, and also our main result Theorem \ref{thm:1}.
Let us state clearly at this point that for the case of \GP = \hyperref[eqn:GP-n]{$(\GP_2)$} these theorems \textit{do not} generalize the next-to-leading order \KdV approximation result, i.~e. \eqref{eqn:BGSS-approx}. 
They only generalize the leading order transport approximation result. Note also that the leading order contains no interaction terms, which simplifies the analysis.

This work was inspired by \cite[Remarks 1.4, 1.5, 4.7]{BethuelGravejatSautSmets2009}, where the authors comment on their discovery of 
\textit{``[...] a strong and somewhat striking relationship between the (GP) invariants and the (KdV) invariants''}.
Specifically, they noticed that certain linear combinations of the conserved quantities $(\mc{H}^\rtGP_k(W))_{-1 \leq k \leq 2n}$ approximate the energy $\mc{E}^\KdV_{n-1}(\pm W_\pm)$ up to an error of size $O(\epsilon^2)$ for $0 \leq n \leq 4$.
They subsequently remark that \textit{``It would be of interest to investigate further the relationships between (GP) and
(KdV), in particular at the level of the spectral problems associated to the corresponding inverse
scattering methods.''}, and that \textit{``[the relationship mentioned above] might be extended to higher order (GP) and
(KdV) invariants, provided one was first able to compute some expressions for them.''}.

We carry this out, extending the relationship between $(\mc{H}^\rtGP_k(W))_{-1 \leq k \leq {2n}}$ and $\mc{E}^\KdV_{n-1}(\pm W_\pm)$ to all $n \in \N$.

\subsection{Approximation of Hamiltonians and transmission coefficients}
\begin{theorem} \label{thm:approx-2}
    All identities below hold up to formal integration by parts.
    For all $n \in \N$ there exist polynomials $\bm{P}_n$, depending on $\epsilon$, $W_+$, $W_-$, and up to $\floor{\frac{n}{2}}$ derivatives thereof, such that
    \begin{align}
        \label{eqn:approx-both} \mc{H}^\rtGP_{2n}(W) \mp 2 \mc{H}^\rtGP_{2n-1}(W) &= (\sqrt{2} \epsilon)^{2n+1} \left( \mc{E}^\KdV_{n-1}(\pm W_\pm) + \frac{\epsilon^2}{2} \int_{\R} \bm{P}_{2n} \mp \bm{P}_{2n-1} \dd X \right)
    \end{align}
    In particular
    \begin{align}
        \label{eqn:approx-even} \mc{H}^\rtGP_{2n}(W) &= \frac{(\sqrt{2} \epsilon)^{2n+1}}{2} \left( \mc{E}^\KdV_{n-1}(- W_-) + \mc{E}^\KdV_{n-1}(W_+) + \epsilon^2 \int_{\R} \bm{P}_{2n} \dd X \right)
        \\ \label{eqn:approx-todd} \mc{H}^{\rtGP}_{2n-1}(W) &= \frac{(\sqrt{2} \epsilon)^{2n+1}}{4} \left( \mc{E}^\KdV_{n-1}(- W_-) - \mc{E}^\KdV_{n-1}(W_+) + \epsilon^2 \int_{\R} \bm{P}_{2n-1} \dd X \right) \,.
    \end{align}
    Furthermore, there are no constant or linear monomials in $\bm{P}_n$.
    Lastly, there exist polynomials $\bm{Q}_{2n}$, depending on $\epsilon$, $W_+$, $W_-$, and up to $n - 1$ derivatives thereof, such that
    \begin{align} \label{eqn:extra}
        \int_{\R} \bm{P}_{2n} \dd X &= \frac14 \int_{\R} (\del_X^n (W_+ - W_-))^2 \dd X + \int_{\R} \bm{Q}_{2n} \dd X \,.
    \end{align}
\end{theorem}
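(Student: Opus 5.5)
The plan is to go through the spectral side, as the abstract suggests, rather than through explicit formulas for the Hamiltonians.

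\textbf{Step 1: generating functions on both sides.}
The conserved quantities $\mc{H}^\GP_n$ arise as coefficients in the large-$z$ asymptotic expansion of $\log$ of the transmission coefficient of the Zakharov--Shabat (Dirac) Lax operator with \NZBC. The variable is the uniformizing spectral parameter: $k$ with $\lambda^2 = k^2+1$, or equivalently $z = k+\lambda$. The renormalization \eqref{eqn:GP-tGP} with binomial coefficients $\binom{\pm 1/2}{n-m}4^{n-m}$ is exactly a re-expansion between these two parameters. Concretely,
\begin{align*}
    \log T(k) \sim \sum_n c_n \mc{H}^\tGP_n(q)\, k^{-n-1},
\end{align*}
where $\mc{H}^\tGP_{-1}$ enters as the constant term coming from the phase change.

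On the \KdV side, $\log$ of the transmission coefficient of $-\del_X^2 + U$ at spectral parameter $\kappa$ has the classical expansion
\begin{align*}
    \log t(\kappa) \sim \sum_n d_n\, \mc{E}^\KdV_{n-1}(U)\, \kappa^{-2n-1}.
\end{align*}

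\textbf{Step 2: long-wave reduction of the eigenvalue problem.}
Insert the ansatz \eqref{eqn:ansatz} into the Dirac operator and rescale $x = X/(\sqrt2\epsilon)$. Near the band edge, put $k = \sqrt2\epsilon\kappa$, which is the nonrelativistic regime. Conjugate by the constant eigenvector matrix of the background operator to diagonalize the leading part. Then eliminate the fast (``small'') component, via a Foldy--Wouthuysen / Schur complement reduction. To leading order in $\epsilon$ this yields a scalar Schrödinger operator $-\del_X^2 \pm W_\pm + O(\epsilon^2)$.

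The sign $\pm$ is tied to the two asymptotic branches. The combination $\mc{H}^\rtGP_{2n} \mp 2\mc{H}^\rtGP_{2n-1}$ corresponds to evaluating at $\pm k$, or equivalently to selecting the even or odd parts of the expansion. The cross terms involving the other Riemann invariant enter only at relative order $\epsilon^2$.

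From this we aim to derive
\begin{align*}
    \log T(\sqrt2\epsilon\kappa) = \log t_{\pm W_\pm}(\kappa) + \epsilon^2 R(\kappa),
\end{align*}
where $R$ has an asymptotic expansion whose coefficients are integrals of polynomials in $W_\pm$ and their derivatives.

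\textbf{Step 3: matching coefficients.}
Compare the coefficients of $\kappa^{-2n-1}$ on both sides. The powers $(\sqrt2\epsilon)^{2n+1}$ in \eqref{eqn:approx-both} come from $k^{-2n-1} = (\sqrt2\epsilon)^{-2n-1}\kappa^{-2n-1}$. This gives \eqref{eqn:approx-both}, with $\bm P$ defined by the coefficients of $R$. Adding and subtracting the two sign choices gives \eqref{eqn:approx-even} and \eqref{eqn:approx-todd}.

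\textbf{Step 4: structural properties of $\bm P$.}
\begin{itemize}
    \item \emph{Polynomiality and derivative count.} These follow from the recursive (Riccati-type) computation of the expansion coefficients, after integrating by parts to balance derivatives. This is what gives at most $\floor{n/2}$ derivatives in $\bm P_n$.
    \item \emph{No constant or linear monomials.} Constants vanish because both sides vanish at $W = 0$. Linear terms vanish because the linear part of each Hamiltonian is a total derivative or is exactly matched: the linear term of $\mc{H}^\rtGP_{2n-1}$ is fixed by the normalization of $\mc{H}^\tGP_{-1}$, which is precisely why that normalization is chosen.
    \item \emph{Formula \eqref{eqn:extra}.} Compute the quadratic part exactly. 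The quadratic part of $\mc{H}^\GP_{2n}$ is the linearized Hamiltonian, with symbol
    \begin{align*}
        |\xi|^{2n}\text{-type terms of the form } \xi^2(\xi^2+4)^{n-1}\ \text{in } \ha{w},
    \end{align*}
    which can be read off from the linearized dispersion relation.
    \item After rescaling, the top-order quadratic term produces $\frac14\int(\del_X^n(W_+-W_-))^2$ at order $\epsilon^2$ relative to $\mc{E}^\KdV_{n-1}$, whose top term is $(\del_X^{n-1}U)^2$. All remaining terms involve at most $n-1$ derivatives.
\end{itemize}

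\textbf{Main obstacle.}
The hard step is Step 2. It requires making the reduction from the Dirac to the Schrödinger transmission coefficient rigorous at the level of asymptotic expansions, uniformly for all $n$. This is only needed formally, coefficient by coefficient. The first approach to try is to work directly with the Riccati equations for the logarithmic derivatives of the Jost solutions on both sides. One then shows that the Dirac Riccati equation, after substituting $k = \sqrt2\epsilon\kappa$, coincides with the Schrödinger Riccati equation modulo $\epsilon^2$ times polynomial terms. The required statement follows by induction on the recursion.
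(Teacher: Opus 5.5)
There is a genuine gap in Step 2, and the fallback in your ``main obstacle'' paragraph fails for the same reason.

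\textbf{Why Step 2 and the fallback fail.} The identity you aim for, $\log T(\sqrt2\epsilon\kappa)=\log t_{\pm W_\pm}(\kappa)+\epsilon^2R(\kappa)$ with $R$ having coefficients polynomial in $\epsilon$, is false in the regime that defines the Hamiltonians, namely $\kappa\to\infty$ at fixed $\epsilon$.
\begin{itemize}
\item On either sheet, $\log a^\rGP$ has even powers of $(2ik)^{-1}$ with coefficients $\int\sigma^\rGP_{2n}\,\dd X\propto\epsilon^{-2n}\mc{H}^\rGP_{2n-1}$. Their linear part $\frac{c_{2n-1}}{2}\int(W_++W_-)\,\dd X$ has size $\epsilon^{1-2n}$. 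By contrast, $\log a^\KdV$ has only odd powers (Lemma~\ref{lem:vanish}).
\item For the same reason the raw Riccati equation \eqref{eqn:rGP-sig} does not agree with \eqref{eqn:KdV-sig} modulo $\epsilon^2$. Its potential is $\frac12(W_+-W_-)-\frac{\lambda}{2}(W_++W_-)$, where $\lambda=\sqrt2\epsilon k+\sum_{n\geq1}c_n(2ik)^{-n}$ with $c_n\sim\epsilon^{-n}$. Its coupling term $\frac{i\epsilon}{\sqrt2}(W_+)_Xr^\rGP$ involves $r^\rGP_1=-\frac{i\epsilon}{\sqrt2}(W_+-W_-)-\frac{i\sqrt2}{\epsilon}$. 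So the coefficientwise induction you sketch cannot start.
\item A Foldy--Wouthuysen or Schur reduction near the band edge lives in the opposite regime, $\epsilon\kappa\ll1$ with $\lambda\approx\pm1$. Lemma~\ref{lem:approx-3}, for instance, needs $\kappa<1/(\sqrt2\epsilon)$. It says nothing about the large-$\kappa$ coefficients.
\item Your Step~1 formula is also wrong. The $(2z)^{-1}$-coefficients of $\log a^\GP$ are by definition the $\mc{H}^\GP_n$, and $\mc{H}^\tGP_1=\mc{H}^\GP_1-\mc{H}^\GP_{-1}$ is not a multiple of $\mc{H}^\GP_1$. The \tGP renormalization is not a re-expansion of $\log a^\GP$ in another spectral variable.
\end{itemize}

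\textbf{The missing idea.} The sign $\pm$ does not pick a sheet, which would give $\Even_\lambda\pm\Odd_\lambda$. The relevant object is $\Even_\lambda[\log a^\rGP]\mp\lambda^{-1}\Odd_\lambda[\log a^\rGP]$. Expanding $\lambda^{-1}=(1+2\epsilon^2k^2)^{-1/2}$ produces exactly the binomials $\binom{-1/2}{n-m}4^{n-m}$ of \eqref{eqn:GP-tGP}. So this combination has, up to scaling, the coefficients $\mc{H}^\rtGP_{2n}\mp2\mc{H}^\rtGP_{2n-1}$.

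At the level of densities, take $k\in i\R$ so that $\frac{1}{i\lambda}$ is real. The paper sets $\mu,\nu=\Real\sigma^\rGP\pm\frac{1}{i\lambda}\Imag\sigma^\rGP$, and defines $\alpha,\beta$ analogously for $\frac{i\epsilon}{\sqrt2}r^\rGP$. Two facts make this work.
\begin{itemize}
\item The potential becomes exactly $W_+$ for $\mu$ and $-W_-$ for $\nu$.
\item The quadratic term transforms as $\Real(\sigma^2)\pm\frac{1}{i\lambda}\Imag(\sigma^2)=\mu^2$ (resp.\ $\nu^2$) $+(\lambda^2-1)(\frac{\mu-\nu}{2})^2$, where $\lambda^2-1=-\frac{\epsilon^2}{2}(2ik)^2$.
\end{itemize}
The resulting system for $(\mu,\nu,\alpha,\beta)$ involves $\lambda$ only through $\lambda^2$. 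An induction then gives $\alpha=O(1)$ and $\beta=O(\epsilon^2)$, so $\nu$ solves the \KdV Riccati equation with $U=-W_-$ modulo $\epsilon^2$.

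For $\mu$, the coupling $(W_+)_X\alpha$ does \emph{not} drop out. After the reflection $X\mapsto-X$ it matches the $U_X\ti{r}^\KdV$ term in the equation for $\ti{\sigma}^\KdV$ with $U=W_+^\sharp$. So your claim that the other component enters only at relative order $\epsilon^2$ hides an order-one coupling in the $W_+$ case. Using $\int\sigma=\int\ti{\sigma}$ and the reflection invariance of the integrated \KdV densities then gives \eqref{eqn:approx-both}.

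Your instinct to use Riccati equations plus induction is right, but without this change of unknowns the argument does not go through.

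\textbf{Step 4.} The vanishing of the linear part of $\mc{H}^\rtGP_{2n-1}$ for $n\geq1$ is a binomial identity between the coefficients $c_{2m-1}$ of $\lambda$ and those of $\lambda^{-1}$, essentially $\lambda^{-1}\lambda=1$. It does not follow from the normalization of $\mc{H}^\tGP_{-1}$, which only matters for $n=0$.
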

\begin{remark}
    The initial values of the error polynomials $\bm{P}_n$ are as follows:
    \begin{align*}
        \bm{P}_{-1} &= 0 \qquad \qquad \bm{P}_0 = \frac14 (W_+ - W_-)^2
        \qquad \qquad \bm{P}_1 = \frac14 (- W_+^3 + W_+^2 W_- + W_+ W_-^2 - W_-^3)
        \\ \bm{P}_2 &= \frac14 (W_+ - W_-)_X^2 + \frac14 (3 W_+^3 - W_+^2 W_- + W_+ W_-^2 - 3 W_-^3)
        \\ &+ \frac{\epsilon^2}{32} (5 W_+^4 - 4 W_+^3 W_- - 2 W_+^2 W_-^2 - 4 W_+ W_-^3 + 5 W_-^4) 
        \\ \bm{P}_3 &= \frac14 (- (W_+)_X^2 (5 W_+ + W_-) + 6 (W_+)_X (W_-)_X (W_+ + W_-) - (W_-)_X^2 (5 W_- + W_+))
        \\ &+ \frac14(- 5 W_+^4 + 2 W_+^3 W_- - 2 W_+ W_-^3 + 5 W_-^4)
        \\ &+ \frac{\epsilon^2}{32} (- 7 W_+^5 + 5 W_+^4 W_- + 2 W_+^3 W_-^2 + 2 W_+^2 W_-^3 + 5 W_+ W_-^4 - 7 W_-^5) \,.
    \end{align*}
\end{remark}

In order to explain the origin of this result, we need to consider the Lax operators
\begin{align*}
    L^\rGP(W) &= \begin{pmatrix}
        - \epsilon^2 W_- + 1 & i \sqrt{2} \epsilon \del_X \\ i \sqrt{2} \epsilon \del_X & - \epsilon^2 W_+ - 1
    \end{pmatrix}
    & L^\KdV(\pm W_\pm) &= - \del_X^2 \pm W_\pm
    \\ L^\rGP_0 &= \begin{pmatrix}
        1 & i \sqrt{2} \epsilon \del_X \\ i \sqrt{2} \epsilon \del_X & - 1
    \end{pmatrix}
    & L^\KdV_0 &= - \del_X^2 \,.
\end{align*}
Their relevance is explained in Section \ref{section:lax-pairs}, where we give a brief overview of the Lax pair formalisms for the equations under consideration.
In the following, we denote by $\mc{Q}_j$, $j \in \{1, 2, 3, 4\}$ the four open quadrants of the complex plane.
We take two spectral parameters $\lambda \in \conj{\mc{Q}_1} \cup \conj{\mc{Q}_3}$ and $k \in \conj{\mc{Q}_1}$ such that $\lambda = \pm \sqrt{1 + 2 \epsilon^2 k^2}$, and consider the scattering problems associated to $L^\rGP - \lambda$ and $L^\KdV - k^2$. 
This is discussed in detail in Section \ref{section:transmissioncoefficient}, where we define in \eqref{eqn:trans-3} the corresponding transmission coefficients $a^\rGP(\lambda, k; W)$ and $a^\KdV(k; \pm W_\pm)$.
They are conserved quantities for every flow in their respective hierarchies and admit for $\lambda \in \conj{\mc{Q}_1}$ the following asymptotic expansions in powers of $2 k$ at infinity:
\begin{align} \label{eqn:a-expansion}
    \log a^\rGP(\lambda, k; W) &\sim i \sum_{n=-1}^\infty \frac{\mc{H}^\rGP_n(W)}{(2 k)^{n+1}}
    & \log a^\KdV(k; \pm W_\pm) &\sim i \sum_{n=-1}^\infty \frac{\mc{E}^\KdV_n(\pm W_\pm)}{(2 k)^{2n+3}} \,.
\end{align}
The precise sense of expansion is given in Definition \ref{def:expansion} below.
The following corollary presents a different perspective on Theorem \ref{thm:approx-2}.
Here we define for functions $f = f(\lambda): \C \rightarrow \C$ the even and odd parts
\begin{align} \label{eqn:def-even-odd}
    \Even_\lambda f(\lambda) &= \frac{f(\lambda) + f(- \lambda)}{2}
    & \Odd_\lambda f(\lambda) &= \frac{f(\lambda) - f(- \lambda)}{2} \,.
\end{align}
\begin{proposition} \label{prop:approx-4}
    Let $W = (W_+, W_-) \in \mc{S}(\R; \R^2)$, $\epsilon \in (0, 1)$, and $\lambda, k \in \conj{\mc{Q}_1}$ with $|k| > c(W)$ so that Lemma \ref{lem:jost} below is applicable and hence the asymptotic expansions \eqref{eqn:a-expansion} exist. 
    Then
    \begin{align}
        \label{eqn:ev-od-approx} \Even_\lambda\left[\log a^\rGP(\lambda, k; W)\right] \mp \frac{1}{\lambda} \Odd_\lambda\left[\log a^\rGP(\lambda, k; W)\right] 
        &\sim \log a^\KdV(k; (\pm W_\pm)) + \sum_{n=-1}^\infty \frac{O(\epsilon^2)}{(2 k)^{n+1}}
    \end{align}
    in the sense of asymptotic expansions as $2 i k \rightarrow \infty$. Here $O(\epsilon^2)$ must be understood as in Theorem \ref{thm:approx-2}, i.~e. in terms of polynomials in $\epsilon$, $W_+$, $W_-$, and derivatives thereof.
    
    If instead $\lambda \in (0, 1)$, $k \in i \R_+$, and $\kappa = - i k < \frac{1}{\sqrt{2} \epsilon}$ satisfies
    \begin{align*}
        \max\left\{\frac{1}{\kappa},\frac{1}{\kappa^3}\right\} \|W\|_{L^2}^2 &\leq \frac{1}{30} \,,
    \end{align*}
    then    
    \begin{align} 
        \label{eqn:a-ev-od-approx} \left| \Even_\lambda\left[\log a^\rGP(\lambda, k; W)\right] \mp \frac{1}{\lambda} \Odd_\lambda\left[\log a^\rGP(\lambda, k; W)\right] 
        - \log a^\KdV(k; \pm W_\pm) \right| &\leq 8 \epsilon^2 \kappa^{-1} \|W\|_{L^2}^2 \,.
    \end{align}
\end{proposition}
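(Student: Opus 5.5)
The plan has two independent parts: the formal expansion \eqref{eqn:ev-od-approx}, which I would reduce to Theorem \ref{thm:approx-2} by bookkeeping of coefficients, and the quantitative bound \eqref{eqn:a-ev-od-approx}, which I would prove directly from a Born-type series for the transmission coefficients.

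\emph{Formal part.} Write $\lambda = \sqrt{1+2\epsilon^2 k^2}$ for $\lambda \in \conj{\mc{Q}_1}$. Replacing $\lambda$ by $-\lambda$, i.e.\ passing to $\conj{\mc{Q}_3}$, amounts to interchanging the roles of the two components of the Dirac scattering problem; on the level of \eqref{eqn:a-expansion} this should flip the sign of exactly the odd-indexed Hamiltonians. I would first justify this from the Jost solutions of Lemma \ref{lem:jost} and the symmetry of $L^\rGP$. Granting it, the two parts are
\begin{align*}
\Even_\lambda \log a^\rGP &\sim i\sum_{n\ge0} \mc{H}^\rGP_{2n}(2k)^{-2n-1}, &
\Odd_\lambda \log a^\rGP &\sim i\sum_{n\ge0}\mc{H}^\rGP_{2n-1}(2k)^{-2n}.
\end{align*}
Next I would expand
\begin{align*}
\frac1\lambda = \frac{1}{\sqrt2\,\epsilon k}\Bigl(1+\frac{1}{2\epsilon^2k^2}\Bigr)^{-1/2}
\end{align*}
binomially in powers of $(2k)^{-2}$. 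Multiplying this series by the odd part produces coefficients $\binom{-1/2}{n-m}4^{n-m}$ acting on the $\mc{H}^\rGP_{2m-1}$, which is exactly the change of basis \eqref{eqn:GP-tGP}. Hence $\frac1\lambda\Odd_\lambda\log a^\rGP$ is, up to the explicit factor $(\sqrt2\epsilon)^{-1}$ and the normalisation $2^{-\mathds 1_{\{m=0\}}}$ (which must be checked to absorb the $n=0$ phase-change term), the series $i\sum_n \mc{H}^\rtGP_{2n-1}(2k)^{-2n-1}$.

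Combining the even part with this, the coefficient of $(2k)^{-2n-1}$ on the left of \eqref{eqn:ev-od-approx} is a fixed multiple of $\mc{H}^\rtGP_{2n}\mp2\mc{H}^\rtGP_{2n-1}$. By \eqref{eqn:approx-both} this equals $(\sqrt2\epsilon)^{2n+1}\bigl(\mc{E}^\KdV_{n-1}(\pm W_\pm)+O(\epsilon^2)\bigr)$. The factor $(\sqrt2\epsilon)^{2n+1}$ is absorbed by the rescaling between the spectral parameters of the two problems. Comparing with the \KdV expansion in \eqref{eqn:a-expansion} then gives \eqref{eqn:ev-od-approx}. The only real work here is matching the constants, including the index shift $n\mapsto n-1$ and the $n=0$ term.

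\emph{Quantitative part.} For $k=i\kappa$ and $\lambda\in(0,1)$ I would write both $\log a^\rGP(\pm\lambda)$ and $\log a^\KdV$ as convergent series of multilinear integrals in $W$, obtained from the Volterra equations for the Jost solutions:
\begin{align*}
\log a = \sum_{j\ge1} \ell_j(W).
\end{align*}
The steps are as follows.
\begin{enumerate}
\item Linear terms. For \KdV, $\ell_1 = \mp\frac{1}{2\kappa}\int W_\pm\,\dd X$. For the Dirac problem, the even/odd combination of the linear terms should reproduce this exactly, consistent with the absence of linear monomials in $\bm{P}_n$. I would verify this directly.
\item Quadratic terms. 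Both are Fourier-multiplier quadratic forms with symbols of the form $(\xi^2+4\kappa^2)^{-1}$-type. The Dirac symbol differs from the Schrödinger one by a relative factor $1+O(\epsilon^2\kappa^2)$ and $O(\epsilon^2\xi^2)$-type corrections. Using $\kappa<1/(\sqrt2\epsilon)$, the difference of the quadratic terms is bounded by $C\epsilon^2\kappa^{-1}\|W\|_{L^2}^2$.
\item Cubic and higher terms. I would bound these by a geometric series in $\max\{\kappa^{-1},\kappa^{-3}\}\|W\|_{L^2}^2\le\frac1{30}$, with an extra factor $\epsilon^2$ extracted from the kernel difference. This extra factor is where the constant $8$ arises.
\end{enumerate}

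The main obstacle is step 3: obtaining resolvent kernel bounds for the free Dirac operator $L^\rGP_0-\lambda$ that are uniform in $\epsilon$ for $\kappa<1/(\sqrt2\epsilon)$, and showing that its projected kernel equals the Schrödinger kernel $e^{-\kappa|X-Y|}/(2\kappa)$ plus an $O(\epsilon^2)$ error with the same decay. I would attempt this first by explicitly diagonalising the constant-coefficient Dirac system, with eigenvalues $\pm\sqrt2\epsilon\kappa\,\lambda^{-1}$-type decay rates, and comparing entrywise.
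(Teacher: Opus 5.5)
Your treatment of \eqref{eqn:ev-od-approx} is essentially the paper's argument. The $\lambda\mapsto-\lambda$ symmetry of Lemma \ref{lem:a-ev-od-expansion} separates the even- and odd-indexed Hamiltonians, the binomial expansion of $1/\lambda$ reproduces \eqref{eqn:GP-tGP}, and \eqref{eqn:approx-both} finishes. The phase-change normalisation you flag is indeed the only delicate bookkeeping.

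The quantitative part \eqref{eqn:a-ev-od-approx}, however, has a genuine gap. What you call the main obstacle is not one. No diagonalisation is needed: $R^\rGP_0 = R^\KdV_0\,\mc{U}$ exactly, with $\mc{U} = \frac{\lambda}{2} + \frac{\sigma_3}{2} + \frac{i\epsilon}{\sqrt{2}}\sigma_1\del_X$. So the free kernel is $\frac{1}{2\kappa}e^{-\kappa|X-Y|}$ times an explicit matrix, and the decay rate is exactly $\kappa$. The same formula shows that the mechanism you rely on is false. The diagonal entries $\frac{\lambda\pm1}{2}$ differ from $(1,0)$ by $\frac{1-\lambda}{2}\approx\frac{\epsilon^2\kappa^2}{2}$, and the off-diagonal entries have size $\epsilon\kappa$. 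Both become of order one as $\kappa\to\frac{1}{\sqrt{2}\epsilon}$, which the hypotheses allow, so there is no $O(\epsilon^2)$ kernel error. Worse, suppose you compare $\log a^\rGP(\lambda)$ with $\log a^\KdV(-W_-)$ and $\log a^\rGP(-\lambda)$ with $\log a^\KdV(W_+)$, and then combine via
\[
    \Even_\lambda f \mp \frac{1}{\lambda}\Odd_\lambda f = \frac{(\lambda\mp1) f(\lambda) + (\lambda\pm1) f(-\lambda)}{2\lambda} .
\]
Every error is then divided by $\lambda$, and an extra term $\frac{1-\lambda}{2\lambda}\big(\log a^\KdV(\pm W_\pm) - \log a^\KdV(\mp W_\mp)\big)$ appears. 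Already at linear order in $W$ these pieces are of size $\frac{1-\lambda}{\lambda\kappa}\big|\int (W_+ + W_-)\big|$ and cancel only exactly. Estimating them in absolute value therefore cannot give a bound that is uniform as $\lambda\to0^+$.

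The missing idea is to carry out the even/odd combination exactly inside each term $\tr[(R^\rGP_0\mc{W})^l]$ before estimating anything.
\begin{itemize}
\item Since $\mc{U}$ is affine in $\lambda$ and $R^\KdV_0$ depends only on $\kappa$, each trace is a polynomial in $\lambda$. Hence $\frac{1}{\lambda}\Odd_\lambda$ of it is again a polynomial: every $1/\lambda$ is matched by a factor $\lambda$ from a slot where the $\frac{\lambda}{2}$ part of $\mc{U}$ was selected.
\item Expanding over the three parts of $\mc{U}$, the slots containing $\sigma_1\del_X$ must occur an even number of times, because off-diagonal matrices are traceless. This is what produces the factor $\epsilon^2$, controlled via \eqref{eqn:HS-1.5}.
\item In the purely diagonal terms, the combination gives $\tr[(R^\KdV_0(\pm W_\pm))^l]$ plus corrections that are polynomials in $\lambda^2-1=-2\epsilon^2\kappa^2$ with binomially bounded coefficients. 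For instance, with $p=\frac{1+\lambda}{2}$ and $q=\frac{1-\lambda}{2}$, the coefficient of $\tr[(R^\KdV_0 W_+)^l]$ in the upper-sign combination is $\frac{p^{l+1}-q^{l+1}}{p-q}=\sum_{i=0}^l p^i q^{l-i}$. This is a polynomial in $pq=\frac{\epsilon^2\kappa^2}{2}$ that equals $1$ at $pq=0$.
\item One factor $\epsilon^2\kappa^2$ combines with two factors $\kappa^{-1/2}\|W\|_{H^{-1}_\kappa}\le\frac12\kappa^{-3/2}\|W\|_{L^2}$ into $\epsilon^2\kappa^{-1}\|W\|_{L^2}^2$. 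Further powers are bounded using $\sqrt{2}\epsilon\kappa<1$, and the remaining factors sum geometrically under $\kappa^{-3}\|W\|_{L^2}^2\le\frac1{30}$.
\end{itemize}
Note also that a series for $\log a$ built from Volterra iteration naturally carries $L^1$ bounds on $W$. To obtain a statement in terms of $\|W\|_{L^2}$ you need the trace series with the Hilbert--Schmidt bounds of Lemma \ref{lem:HS-explicit}. You also need the identity $\log a^\rGP=\alpha^\rGP-\tr[R^\rGP_0\mc{W}]$ from Lemma \ref{lem:a-alpha-correspondence}.
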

\begin{proof}
    The asymptotic expansion relation \eqref{eqn:ev-od-approx} is a reformulation of Lemma \ref{lem:approx-1}, or alternatively Theorem \ref{thm:approx-2}, involving the identities \eqref{eqn:GP-tGP}, \eqref{eqn:def-hamil}, \eqref{eqn:def-energy}, and \eqref{eqn:a-ev-od-expansion}.
    The second approximation result \eqref{eqn:a-ev-od-approx} is taken from Lemma \ref{lem:approx-3}.
\end{proof}
\begin{remark}[Splitting of the Dirac operator in the nonrelativistic limit] \label{rem:splitting}
    Setting $c = \frac{1}{\epsilon^2}$, we have
    \begin{align*}
        \frac{L^\rGP}{\epsilon^2} &= c \begin{pmatrix}
            0 & \sqrt{2} i \del_X \\ \sqrt{2} i \del_X & 0
        \end{pmatrix}
        + c^2 \begin{pmatrix}
            1 & 0 \\ 0 & - 1
        \end{pmatrix}
        + \begin{pmatrix}
        - W_- & 0 \\ 0 & - W_+
        \end{pmatrix} \,,
    \end{align*}
    which is a one-dimensional Dirac operator with speed of light $c$ and mass $m = 1$. 
    On the other hand, $L^\KdV(\pm W_\pm)$ is the Schrödinger operator with potential $\pm W_\pm$.
    The limit $\epsilon \rightarrow 0$ that we are interested in corresponds to the nonrelativistic limit $c \rightarrow \infty$,
    where one expects to recover from the Dirac operator a combination of two Schrödinger operators.
    The problem of taking this limit is well-studied in mathematical physics.
    We refer to \cite[Chapter 6]{Thaller1992} for an introduction in a general setting.
    As an example of such a result, in \cite[Corollary 4.1]{GesztesyGrosseThaller1984} (see also \cite[Theorem 4.1]{White1990}) it is proven that
    \begin{align*}
        \left(\frac{L^\rGP \mp 1 - \lambda}{\epsilon^2}\right)^{-1} &\xrightarrow{\epsilon \rightarrow 0} \mp (L^\KdV(\pm W_\pm) + \kappa^2)^{-1} \begin{pmatrix}
            \frac{1 \pm 1}{2} & 0
            \\ 0 & \frac{1 \mp 1}{2}
        \end{pmatrix}
    \end{align*}
    uniformly strongly for any $\lambda \in \C \setminus \R$. 
    More generally, one may study the asymptotic behavior of the Dirac operator in the nonrelativistic regime by considering various quantities as holomorphic functions of $\frac{1}{c^2}$ around $c = \infty$ and expanding them in a power series.
    For example, in \cite[Theorem 2.1]{GesztesyGrosseThaller1984} such a holomorphic representation of the Dirac operator is used to prove the aforementioned result.
    Most relevant for us is \cite[Theorem 4.2]{BullaGesztesyUnterkofler1992}, where such a holomorphic representation of the scattering matrix associated to $\frac{L^\rGP \mp 1}{\epsilon^2}$ is given, and the initial expansion coefficients are explicitly calculated.
    In particular, the scattering matrix associated to $L^\KdV(\pm W_\pm)$ is recovered in the limit. 
    Since the scattering matrices contain the transmission coefficients $a^\rGP(\lambda; W)$ and $a^\KdV(\kappa, \pm W_\pm)$, this result is in correspondence with \eqref{eqn:ev-od-approx}, although it is not identical.
    Admitting that we cannot speak with confidence about this broad field of mathematical physics, we hope that Proposition \ref{prop:approx-4} may present a novel perspective on the nonrelativistic limit of the one-dimensional Dirac operator.
\end{remark}

A consequence of Proposition \ref{prop:approx-4} is that the energies $\mc{E}^\KdV_n$ on the right-hand side of \eqref{eqn:approx-both} can be used to control the Sobolev norms of $\pm W_\pm$ \textit{individually}, up to an error of size $\epsilon^2$. 
This is the content of the following corollary, which is proven in Section \ref{section:proof-of-thm-approx-2} by combining Theorem \ref{thm:approx-2} with \cite[Theorem 9.1]{KochTataru2018}.
Here we use the functionals
\begin{align*}
    \mc{E}^{\rtGP,\pm}_n(W) &= \sum_{l=0}^n \binom{n}{l} (\sqrt{2} \epsilon)^{-2l-3} \big(\mc{H}^\rtGP_{2l+2}(W) \mp 2 \mc{H}^\rtGP_{2l+1}(W)\big) \,,
\end{align*}
which are of course conserved under every flow in the \rGP and \rtGP hierarchies.
\begin{corollary} \label{cor:approx-global}
    Let $n \in \N$ with $n \geq 1$ and $\epsilon \in (0, 1)$. 
    There exists some $\delta = \delta(n) \in (0, 1)$ and $C = C(n) > 0$ such that for all $W \in H^{n+1}$ with $\|W\|_{H^{-1}} < \delta$ we have $\mc{E}^{\rtGP,\pm}_n(W) < \infty$ and 
    \begin{align*}
        \left| \mc{E}^{\rtGP,\pm}_n(W) - \|W_\pm\|_{H^n}^2 - \frac{\epsilon^2}{8} \|\del_X (W_+ - W_-)\|_{H^n}^2 \right| 
        &\leq C \|W_\pm\|_{H^n}^2 \|W_\pm\|_{H^{-1}} 
        \\ &+ \epsilon^2 C \|W\|_{H^n}^2 (1 + \|W\|_{H^n})^{n+1} \,.
    \end{align*}
\end{corollary}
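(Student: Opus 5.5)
We plan to combine the identity \eqref{eqn:approx-both} of Theorem \ref{thm:approx-2} with the known control of Sobolev norms by the renormalized \KdV energies from \cite[Theorem 9.1]{KochTataru2018}. Inserting \eqref{eqn:approx-both} with $l+1$ in place of $n$ into the definition of $\mc{E}^{\rtGP,\pm}_n$, the powers $(\sqrt{2}\epsilon)^{2l+3}$ cancel exactly and we obtain
\begin{align*}
    \mc{E}^{\rtGP,\pm}_n(W) &= \sum_{l=0}^n \binom{n}{l} \mc{E}^\KdV_{l}(\pm W_\pm) + \frac{\epsilon^2}{2} \sum_{l=0}^n \binom{n}{l} \int_{\R} \bm{P}_{2l+2} \mp \bm{P}_{2l+1} \dd X \,.
\end{align*}
The first sum is the KdV energy adapted to the $H^n$ norm. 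Its leading quadratic part is $\sum_l \binom{n}{l}\|\del_X^l W_\pm\|_{L^2}^2$, which is equivalent to (and with the right convention for the norm equals) $\|W_\pm\|_{H^n}^2$. By \cite[Theorem 9.1]{KochTataru2018}, for $\|W_\pm\|_{H^{-1}}<\delta(n)$ it satisfies
\begin{align*}
    \Big|\sum_{l}\binom{n}{l}\mc{E}^\KdV_l(\pm W_\pm) - \|W_\pm\|_{H^n}^2\Big| \leq C\|W_\pm\|_{H^n}^2\|W_\pm\|_{H^{-1}} \,.
\end{align*}
This gives the first error term. One point to check here is that the polynomial energies $\mc{E}^\KdV_l$ agree with Koch--Tataru's energies up to normalization. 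Another is that their estimate applies to the cubic and higher remainder of the polynomial energy. If it is only stated for their renormalized energies, I would instead bound the cubic-and-higher terms of $\mc{E}^\KdV_l$ directly via Gagliardo--Nirenberg interpolation between $H^{-1}$ and $H^n$.

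Next we treat the $\epsilon^2$ part. The odd polynomials $\bm{P}_{2l+1}$ have no constant or linear monomials. I would first argue that they also have no quadratic monomials in the top-order derivatives. This should follow from the structure of the quadratic part of $\mc{H}^\rtGP_{2l+1}$, which is exactly computable from the linearization; the flat Lax operator $L^\rGP_0$ gives explicit quadratic Hamiltonians. Then we use \eqref{eqn:extra} for $\bm{P}_{2l+2}$ with $l+1$ derivatives, which extracts $\frac14\|\del_X^{l+1}(W_+-W_-)\|_{L^2}^2$. The factor $\frac{\epsilon^2}{2}\cdot\frac14=\frac{\epsilon^2}{8}$ summed against $\binom{n}{l}$ produces $\frac{\epsilon^2}{8}\|\del_X(W_+-W_-)\|_{H^n}^2$.

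The remaining terms are the $\bm{Q}_{2l+2}$, which involve at most $l\le n$ derivatives, and the $\bm{P}_{2l+1}$, which involve at most $l$ derivatives. They are polynomials without constant or linear monomials, and their coefficients are polynomial in $\epsilon\in(0,1)$. Each such monomial is a product of $W$'s with total derivative count at most $2n$ and individual derivative count at most $n$. By Hölder, the Sobolev embedding $H^1\subset L^\infty$, and integration by parts to balance derivatives, each integral is bounded by $C\|W\|_{H^n}^2(1+\|W\|_{H^n})^{d-2}$, where $d$ is the degree. The degree should be at most $n+3$, by weight counting, since derivatives carry weight one and $W$ carries weight two in the KdV scaling of $\mc{E}^\KdV_n$. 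Summing gives the stated $\epsilon^2 C\|W\|_{H^n}^2(1+\|W\|_{H^n})^{n+1}$. Finiteness of $\mc{E}^{\rtGP,\pm}_n(W)$ for $W\in H^{n+1}$ follows from the same estimates, because the highest derivative term requires $\del_X^{n+1}W\in L^2$.

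The main obstacle is the bookkeeping of the quadratic terms in the error. In particular, we must show that the only quadratic top-order contribution is the one from \eqref{eqn:extra}, and that quadratic terms of lower order in $\bm{P}$ are either absent or absorbed. A quadratic term like $\epsilon^2\|\del_X^j W\|_{L^2}^2$ is harmless, since it is bounded by $\epsilon^2\|W\|_{H^n}^2$. A more serious risk is a quadratic term with $n+1$ derivatives not of the form in \eqref{eqn:extra}, for instance one coming from $\bm{P}_{2l+1}$. I would first try to exclude this by computing the quadratic part of the generating function $\log a^\rGP$ exactly via the Born approximation, where it is an explicit Fourier multiplier, and comparing it with the quadratic part of $\log a^\KdV$.
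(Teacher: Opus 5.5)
Your route is the paper's own. You substitute \eqref{eqn:approx-both} into $\mc{E}^{\rtGP,\pm}_n$ and apply \cite[Theorem 9.1]{KochTataru2018} to $\sum_{l}\binom{n}{l}\mc{E}^\KdV_l(\pm W_\pm)$. You then split off $\frac{\epsilon^2}{8}\|\del_X(W_+-W_-)\|_{H^n}^2$ via \eqref{eqn:extra} and bound $\int_{\R}\sum_{l}\binom{n}{l}(\bm{Q}_{2l+2}\mp\bm{P}_{2l+1})\dd X$ with Cauchy--Schwarz and $H^1\hookrightarrow L^\infty$.

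The obstacle you single out at the end does not arise. Theorem~\ref{thm:approx-2} already gives at most $l\le n$ derivatives on any factor of $\bm{Q}_{2l+2}$ and of $\bm{P}_{2l+1}$. So every leftover quadratic monomial is bounded by $\|W\|_{H^n}^2$ and absorbed into the $\epsilon^2$ error. Neither a Born-approximation computation nor an exclusion of quadratic terms from $\bm{P}_{2l+1}$ is needed.

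One step does not hold as you justify it: the degree bound $n+3$. The paper's proof asserts the same bound.
\begin{itemize}
\item Weight counting must give $\epsilon$ weight $-1$. Indeed, the rescaling $\ti{W}(Y)=\mu^2W(\mu Y)$, $\ti{\epsilon}=\epsilon/\mu$ leaves $\epsilon^2W$ and $\epsilon\del_X$, hence $L^\rGP$, unchanged.
\item Consequently it bounds the degree of the $\epsilon$-free monomials of $\bm{Q}_{2l+2}$ and $\bm{P}_{2l+1}$ by $l+3$, but a monomial carrying $\epsilon^{2j}$ can have degree $l+3+j$.
\item This already happens for $n=1$: the term $2(|q|^2-1)^3$ of $\mc{H}^\GP_4$ alone contributes $\frac{\epsilon^4}{128}(W_+-W_-)^6$ to $\bm{Q}_4$.
\end{itemize}
Since $C$ must not depend on $\epsilon\in(0,1)$, you have two options.
\begin{itemize}
\item Use the hypothesis $\|W\|_{H^{-1}}<\delta$ in this step as well. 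For example, for a scalar $f$ with $\|f\|_{H^{-1}}\le\delta$, the bounds $\|f\|_{L^\infty}^2\le\|f\|_{L^2}\|\del_Xf\|_{L^2}$ and $\|f\|_{L^2}^2\le\|f\|_{H^{-1}}\|f\|_{H^1}$ give $\int_{\R}f^6\dd X\le\|f\|_{L^2}^4\|\del_Xf\|_{L^2}^2\le\delta^2\|f\|_{H^1}^4$.
\item Accept a larger power of $1+\|W\|_{H^n}$. This is harmless for the later use in Proposition~\ref{prop:2}, where $W$ is small.
\end{itemize}
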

\begin{remark}
    We use Corollary \ref{cor:approx-global} to prove the crucial estimates \eqref{eqn:approx-global-explicit} and \eqref{eqn:approx-global-general} below, which control the size of $W$ globally in time.
    One may also attempt such estimates from \eqref{eqn:global-GP}, or more specifically the conserved energies $\sum_{l=0}^{h-1} \tau^{2(h-1-l)} \binom{h-1}{l} \mc{H}^\GP_{2l+2}$ constructed in \cite{KochLiao,KochLiao2022} for  $\tau \geq 2$.
    They control the size of the solution as measured by the semiclassical Sobolev norm
    \begin{align*}
        \|(|q|^2 - 1, \del_x q)\|_{H^h_\tau} &= \left( \int_{\R} (\tau^2 + \xi^2)^h \big(\big|\ha{|q|^2 - 1}\big|^2 + \big|\ha{\del_x q}\big|^2\big) \dd \xi \right)^{\frac12} \,.
    \end{align*}
    After obtaining an estimate, it is necessary to pass it through a rescaling and then the Madelung transform.
    The first step turns out to be problematic due to the inhomogeneity causing negative powers of $\epsilon$ to appear.
    This could be resolved by lifting the restriction $\tau \geq 2$.
\end{remark}

\subsection{Formal approximation of hierarchies}
In the previous section we have established an approximative relationship between the Hamiltonians of the \tGP and \KdV hierarchies.
This suggests a method of proof for the pattern we have observed in Section \ref{section:approx-rGP-KdV} to hold for the leading orders of the Hamiltonian PDEs in the \rtGP hierarchy.
Such an argument requires the symplectic structures of \GP and \KdV to have a similar approximative relationship, which turns out to be the case.
In Appendix \ref{appendix:poissonstructures} the reader may find a detailed description of the symplectic/Poisson structures of the hierarchies in question.
Let us write \eqref{eqn:tGP-n} in the Hamiltonian formulation
\begin{align*}
    i \del_{t_n} \begin{pmatrix} q \\ \conj{q} \end{pmatrix} &= \begin{pmatrix} 0 & 1 \\ - 1 & 0 \end{pmatrix} 
    \begin{pmatrix} \frac{\delta}{\delta \conj{q}} \mc{H}^\tGP_n(q) \\ \frac{\delta}{\delta q} \mc{H}^\tGP_n(q) \end{pmatrix} \,.
\end{align*}
Conjugation of the Hamiltonian operator with the map $(q, \conj{q}) \mapsto (w_+, w_-)$ and a careful change of variables for the functional derivative shows that the Hamiltonian formulation of \eqref{eqn:htGP-n} is 
\begin{align*}
    \del_{t_n} \begin{pmatrix} w_+ \\ w_- \end{pmatrix} &= \frac{1}{4} \begin{pmatrix}
        - \del_x a^{-1} - a^{-1} \del_x
        & \del_x a^{-1} - a^{-1} \del_x
        \\ - \del_x a^{-1} + a^{-1} \del_x
        & \del_x a^{-1} + a^{-1} \del_x
    \end{pmatrix} \begin{pmatrix} \frac{\delta}{\delta w_+} \mc{H}^\htGP_n(w) \\ \frac{\delta}{\delta w_-} \mc{H}^\htGP_n(w) \end{pmatrix} \,.
\end{align*}
Note that here $\del_x a^{-1}$ and $a^{-1} \del_x$ are compositions of operators.
Then the Hamiltonian formulation of \eqref{eqn:rtGP-n} is
\begin{align*}
    \del_{T_n} \begin{pmatrix} W_+ \\ W_- \end{pmatrix}
    &= \frac{\dd t_n}{\dd T_n} \frac{1}{2 \epsilon^2} \begin{pmatrix}
        - \del_X A^{-1} - A^{-1} \del_X
        & \del_X A^{-1} - A^{-1} \del_X
        \\ - \del_X A^{-1} + A^{-1} \del_X
        & \del_X A^{-1} + A^{-1} \del_X
    \end{pmatrix} \begin{pmatrix} \frac{\delta}{\delta W_+} \mc{H}^\rtGP_n(W) \\ \frac{\delta}{\delta W_-} \mc{H}^\rtGP_n(W) \end{pmatrix}
    \\ &= \frac{\dd t_n}{\dd T_n} \frac{2}{\epsilon^2} \left( \begin{pmatrix} - \frac{1}{2} \del_X & 0 \\ 0 & \frac{1}{2} \del_X \end{pmatrix} 
    + \epsilon^2 \bm{E}_n \right)
    \begin{pmatrix} \frac{\delta}{\delta W_+} \mc{H}^\rtGP_n(W) \\ \frac{\delta}{\delta W_-} \mc{H}^\rtGP_n(W) \end{pmatrix} \,,
\end{align*}
where the scaling factor $\frac{\dd t_n}{\dd T_n}$ was defined in \eqref{eqn:times} and
\begin{align*}
    \bm{E}_n &= \frac{A^{-1} - 1}{2 \epsilon^2} \begin{pmatrix} - \del_X & 0 \\ 0 & \del_X \end{pmatrix} 
        + \frac{A_X A^{-2}}{4 \epsilon^2} \begin{pmatrix} 1 & -1 \\ 1 & -1 \end{pmatrix}
        \\ &= - \frac14 \frac{W_+ - W_-}{1 + \frac{\epsilon^2}{2} (W_+ - W_-)} \begin{pmatrix} - \del_X & 0 \\ 0 & \del_X \end{pmatrix} 
        + \frac18 \frac{(W_+ - W_-)_X}{\left(1 + \frac{\epsilon^2}{2} (W_+ - W_-)\right)^2} \begin{pmatrix} 1 & -1 \\ 1 & -1 \end{pmatrix} \,.
\end{align*}
Applying Theorem \ref{thm:approx-2} yields
\begin{align*}
    \del_{T_{2n}} \begin{pmatrix} W_+ \\ W_- \end{pmatrix} &= \frac{\dd t_{2n}}{\dd T_{2n}} 
     \frac{2}{\epsilon^2} \left( \begin{pmatrix} - \frac{1}{2} \del_X & 0 \\ 0 & \frac{1}{2} \del_X \end{pmatrix} 
    + \epsilon^2 \bm{E}_{2n} \right) 
    \frac{(\sqrt{2} \epsilon)^{2n+1}}{2} 
    \\ &\qquad \qquad \times \left( \begin{pmatrix}
        \frac{\delta}{\delta W_+} \mc{E}^\KdV_{n-1}(W_+)
        \\ \frac{\delta}{\delta W_-} \mc{E}^\KdV_{n-1}(- W_-)
    \end{pmatrix} + \epsilon^2 \begin{pmatrix}
        \frac{\delta}{\delta W_+} \int_{\R} \bm{P}_{2n} \dd X
        \\ \frac{\delta}{\delta W_-} \int_{\R} \bm{P}_{2n} \dd X
    \end{pmatrix} \right)
    \\ &= \begin{pmatrix} - \frac{1}{2} \del_X & 0 \\ 0 & \frac{1}{2} \del_X \end{pmatrix} 
    \begin{pmatrix}
        \frac{\delta}{\delta W_+} \mc{E}^\KdV_{n-1}(W_+)
        \\ \frac{\delta}{\delta W_-} \mc{E}^\KdV_{n-1}(- W_-)
    \end{pmatrix}
    + \epsilon^2 \bm{R}_{2n}
    \\ \del_{T_{2n-1}} \begin{pmatrix} W_+ \\ W_- \end{pmatrix} &= \frac{\dd t_{2n-1}}{\dd T_{2n-1}} 
    \frac{2}{\epsilon^2} \left( \begin{pmatrix} - \frac{1}{2} \del_X & 0 \\ 0 & \frac{1}{2} \del_X \end{pmatrix} 
    + \epsilon^2 \bm{E}_{2n-1} \right) 
    \frac{(\sqrt{2} \epsilon)^{2n+1}}{4} 
    \\ &\qquad \qquad \times \left( \begin{pmatrix}
        - \frac{\delta}{\delta W_+} \mc{E}^\KdV_{n-1}(W_+)
        \\ \frac{\delta}{\delta W_-} \mc{E}^\KdV_{n-1}(- W_-)
    \end{pmatrix} + \epsilon^2 \begin{pmatrix}
        \frac{\delta}{\delta W_+} \int_{\R} \bm{P}_{2n-1} \dd X
        \\ \frac{\delta}{\delta W_-} \int_{\R} \bm{P}_{2n-1} \dd X
    \end{pmatrix} \right)
    \\ &= \begin{pmatrix} - \frac{1}{2} \del_X & 0 \\ 0 & \frac{1}{2} \del_X \end{pmatrix} 
    \begin{pmatrix}
        - \frac{\delta}{\delta W_+} \mc{E}^\KdV_{n-1}(W_+)
        \\ \frac{\delta}{\delta W_-} \mc{E}^\KdV_{n-1}(- W_-)
    \end{pmatrix}
    + \epsilon^2 \bm{R}_{2n-1}
\end{align*}
with residual terms
\begin{align*}
    \bm{R}_{2n} &= \bm{E}_{2n} \begin{pmatrix}
        \frac{\delta}{\delta W_+} \mc{E}^\KdV_{n-1}(W_+)
        \\ \frac{\delta}{\delta W_-} \mc{E}^\KdV_{n-1}(- W_-)
    \end{pmatrix}
    + \left( \begin{pmatrix} - \frac{1}{2} \del_X & 0 \\ 0 & \frac{1}{2} \del_X \end{pmatrix} 
    + \epsilon^2 \bm{E}_{2n} \right) \begin{pmatrix}
        \frac{\delta}{\delta W_+} \int_{\R} \bm{P}_{2n} \dd X
        \\ \frac{\delta}{\delta W_-} \int_{\R} \bm{P}_{2n} \dd X
    \end{pmatrix}
    \\ \bm{R}_{2n-1} &= \bm{E}_{2n-1} \begin{pmatrix}
        - \frac{\delta}{\delta W_+} \mc{E}^\KdV_{n-1}(W_+)
        \\ \frac{\delta}{\delta W_-} \mc{E}^\KdV_{n-1}(- W_-)
    \end{pmatrix}
    + \left( \begin{pmatrix} - \frac{1}{2} \del_X & 0 \\ 0 & \frac{1}{2} \del_X \end{pmatrix} 
    + \epsilon^2 \bm{E}_{2n-1} \right) \begin{pmatrix}
        \frac{\delta}{\delta W_+} \int_{\R} \bm{P}_{2n-1} \dd X
        \\ \frac{\delta}{\delta W_-} \int_{\R} \bm{P}_{2n-1} \dd X
    \end{pmatrix} \,.
\end{align*}
\begin{remark}
    The initial values of the residual terms $\bm{R}_n$ are as follows:
    \begin{align*}
        \bm{R}_0 &= \bm{R}_1 = 0
        \\ \bm{R}_2 &= \frac14 \begin{pmatrix}
            (W_+)_{X\!X\!X} - 6 (W_+)_X W_+ - (W_-)_{X\!X\!X} - 2 (W_+)_X W_-
            \\ - (W_-)_{X\!X\!X} - 6 (W_-)_X W_- + (W_+)_{X\!X\!X} - 2 (W_-)_X W_+
        \end{pmatrix} + O(\epsilon^2)
        \\ \bm{R}_3 &= \frac14 \begin{pmatrix}
            - 3 (W_+)_{X\!X\!X} W_+ - 3 (W_+)_{X\!X\!X} W_- - 9 (W_+)_{X\!X} (W_+)_X - 3 (W_+)_{X\!X} (W_-)_X
            \\ - 3 (W_+)_{X\!X\!X} W_+ - 3 (W_+)_{X\!X\!X} W_- - 9 (W_+)_{X\!X} (W_+)_X - 3 (W_+)_{X\!X} (W_-)_X
        \end{pmatrix}
        \\ &\quad + \frac14 \begin{pmatrix}
            3 (W_+)_X (W_-)_{X\!X} + 15 (W_+)_X W_+^2 + 6 (W_+)_X W_+ W_- + 3 (W_+)_X W_-^2
            \\ 3 (W_+)_X (W_-)_{X\!X} + 15 (W_-)_X W_-^2 + 6 (W_-)_X W_+ W_- + 3 (W_-)_X W_+^2
        \end{pmatrix}
        \\ &\quad + \frac14 \begin{pmatrix}
            3 (W_-)_{X\!X\!X} W_+ + 3 (W_-)_{X\!X\!X} W_- + 9 (W_-)_{X\!X} (W_-)_X
            \\ 3 (W_-)_{X\!X\!X} W_+ + 3 (W_-)_{X\!X\!X} W_- + 9 (W_-)_{X\!X} (W_-)_X  
        \end{pmatrix}
        + O(\epsilon^2) 
        \\ \bm{R}_4 &= \frac14 \begin{pmatrix}
            - (W_+)_{X\!X\!X\!X\!X} + 13 (W_+)_{X\!X\!X} W_+ + 3 (W_+)_{X\!X\!X} W_- + 29 (W_+)_{X\!X} (W_+)_X + 3 (W_+)_{X\!X} (W_-)_X
            \\ - (W_+)_{X\!X\!X\!X\!X} + 13 (W_-)_{X\!X\!X} W_- + 3 (W_+)_{X\!X\!X} W_+ + 29 (W_-)_{X\!X} (W_-)_X + 3 (W_+)_X (W_-)_{X\!X}
        \end{pmatrix}
        \\ &\quad + \frac14 \begin{pmatrix}
            - (W_+)_X (W_-)_{X\!X} - 45 (W_+)_X W_+^2 - 6 (W_+)_X W_+ W_- + 3 (W_+)_X W_-^2 + (W_-)_{X\!X\!X\!X\!X}
            \\ - (W_+)_{X\!X} (W_-)_X + 45 (W_-)_X W_-^2 + 6 (W_-)_X W_+ W_- - 3 (W_-)_X W_+^2 + (W_-)_{X\!X\!X\!X\!X}
        \end{pmatrix}
        \\ &\quad + \frac14 \begin{pmatrix}
            - 3 (W_-)_{X\!X\!X} W_+ + 3 (W_-)_{X\!X\!X} W_- + 9 (W_-)_{X\!X} (W_-)_X
            \\ - 3 (W_+)_{X\!X\!X} W_- + 3 (W_-)_{X\!X\!X} W_+ + 9 (W_+)_{X\!X} (W_+)_X
        \end{pmatrix}
        + O(\epsilon^2) \,.
    \end{align*}
\end{remark}
It is the \KdV-like structure of $\bm{R}_2$ that is proven in \cite{BethuelGravejatSautSmets2010,BethuelGravejatSautSmets2009} to yield a \KdV approximation at next-to-leading order.

We define invertible coefficient matrices $\bm{c}_{\NLS \mapsto \rtGP}, \bm{c}_{\rtGP \mapsto \NLS} \in \R^{(N+1) \times (N+1)}$ by
\begin{align*}
    \bm{c}_{\NLS \mapsto \rtGP}^{2n,2m} &= \binom{n-\frac12}{n-m} \left(- \frac{2}{\epsilon^2}\right)^{n-m}
    & \bm{c}_{\NLS \mapsto \rtGP}^{2n,2m-1} &= 0
    \\ \bm{c}_{\NLS \mapsto \rtGP}^{2n-1,2m} &= 0
    & \bm{c}_{\NLS \mapsto \rtGP}^{2n-1,2m-1} &= \binom{n-\frac12}{n-m} \left(- \frac{2}{\epsilon^2}\right)^{n-m}
\end{align*}
and $\bm{c}_{\rtGP \mapsto \NLS} = (\bm{c}_{\NLS \mapsto \rtGP})^{-1}$.
For any of the coefficient matrices $\bm{c}_{\cdots \mapsto \cdots}$ that we use, we define for their transposes the notation
\begin{align*}
    \bm{c}^{\cdots \mapsto \cdots} &= (\bm{c}_{\cdots \mapsto \cdots})^T 
    & \bm{c}^{\cdots \mapsto \cdots}_{m,n} &= \bm{c}_{\cdots \mapsto \cdots}^{n,m} \,.
\end{align*}
Then with
\begin{align*}
    (\nabla_{\bm{t}} \bm{T}) &= \diag\left(\left(\frac{\dd T_n}{\dd t_n}\right)_{0 \leq n \leq N}\right) 
    & (\nabla_{\bm{T}} \bm{t}) &= \diag\left(\left(\frac{\dd t_n}{\dd T_n}\right)_{0 \leq n \leq N}\right)
\end{align*}
we can write
\begin{align*}
    \bm{c}^{\rtGP \mapsto \NLS} &= (\nabla_{\bm{t}} \bm{T}) \bm{c}^{\tGP \mapsto \NLS} (\nabla_{\bm{T}} \bm{t})
    & \bm{c}^{\NLS \mapsto \rtGP} &= (\nabla_{\bm{t}} \bm{T}) \bm{c}^{\NLS \mapsto \tGP} (\nabla_{\bm{T}} \bm{t})
    \\ \bm{c}_{\rtGP \mapsto \NLS} &= (\nabla_{\bm{t}} \bm{T}) \bm{c}_{\tGP \mapsto \NLS} (\nabla_{\bm{T}} \bm{t})
    & \bm{c}_{\NLS \mapsto \rtGP} &= (\nabla_{\bm{T}} \bm{t}) \bm{c}_{\NLS \mapsto \tGP} (\nabla_{\bm{t}} \bm{T}) \,.
\end{align*}
For later, we define also
\begin{align*}
    \bm{c}^{\NLS \mapsto \rGP} &= (\nabla_{\bm{t}} \bm{T}) \bm{c}^{\NLS \mapsto \GP} (\nabla_{\bm{T}} \bm{t})
    & \bm{c}^{\rtGP \mapsto \rGP} &= \bm{c}^{\rtGP \mapsto \NLS} \bm{c}^{\NLS \mapsto \rGP} \,.
\end{align*}
For the convenience of the reader, we note the following facts: $\bm{c}_{\cdots \mapsto \cdots}$ is always lower triangular and $\bm{c}^{\cdots \mapsto \cdots}$ always upper triangular.
Furthermore, the diagonal values are all $1$, these matrices are always invertible, and the row and column indices of any nonzero entry have matching parity.

\begin{theorem}[Formal approximation of the \rtGP hierarchy by the \KdV hierarchy] \label{thm:2}
    Suppose that $q$ is formally a solution of the $N$-truncated \NLS hierarchy.
    If $W(\bm{T})$ is defined by \eqref{eqn:ansatz}, then $W(\bm{c}^{\NLS \mapsto \rtGP} \bm{T})$ solves the $N$-truncated \rtGP hierarchy and
    \begin{align*}
        \frac{\dd}{\dd T_n} W(\bm{c}^{\NLS \mapsto \rtGP} \bm{T})
        &= \begin{pmatrix}
            - (-1)^n \frac12 \del_X \frac{\delta}{\delta W_+} \mc{E}^\KdV_{\floor{\frac{n-1}{2}}}(W_+(\bm{c}^{\NLS \mapsto \rtGP} \bm{T}))
            \\ - \frac12 \del_X \frac{\delta}{\delta (- W_-)} \mc{E}^\KdV_{\floor{\frac{n-1}{2}}}(- W_-(\bm{c}^{\NLS \mapsto \rtGP} \bm{T}))
        \end{pmatrix} + \epsilon^2 \bm{R}_n(\bm{T}) \,.
    \end{align*}
    Correspondingly, $W(\bm{T})$ solves
    \begin{align*}
        \del_{T_{2n}} W(\bm{T})
        &= \sum_{m=0}^n \binom{n-\frac12}{n-m} \left(\frac{2}{\epsilon^2}\right)^{n-m} 
        \left( \begin{pmatrix}
            - \frac12 \del_X \frac{\delta}{\delta W_+} \mc{E}^\KdV_{m-1}(W_+(\bm{T}))
            \\ - \frac12 \del_X \frac{\delta}{\delta (- W_-)} \mc{E}^\KdV_{m-1}(- W_-(\bm{T}))
        \end{pmatrix} + \epsilon^2 \bm{R}_{2m}(\bm{c}^{\rtGP \mapsto \NLS} \bm{T}) \right)
        \\ \del_{T_{2n-1}} W(\bm{T})
        &= \sum_{m=0}^n \binom{n-\frac12}{n-m} \left(\frac{2}{\epsilon^2}\right)^{n-m} 
        \left( \begin{pmatrix}
            \frac12 \del_X \frac{\delta}{\delta W_+} \mc{E}^\KdV_{m-1}(W_+(\bm{T}))
            \\ - \frac12 \del_X \frac{\delta}{\delta (- W_-)} \mc{E}^\KdV_{m-1}(- W_-(\bm{T}))
        \end{pmatrix} + \epsilon^2 \bm{R}_{2m-1}(\bm{c}^{\rtGP \mapsto \NLS} \bm{T}) \right) \,.
    \end{align*}
\end{theorem}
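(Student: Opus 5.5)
The plan is to reduce everything to the computation displayed just before the statement, which expands the $\rtGP$ flows via Theorem \ref{thm:approx-2}. The proof has three parts: transfer the $\NLS$ hierarchy to the $\rtGP$ hierarchy by a linear change of times, identify the leading order of each $\rtGP_n$ flow with a $\KdV$ flow, and then undo the time change.

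\emph{Step 1 (change of times).} Suppose $q$ solves the $N$-truncated $\NLS$ hierarchy. Each right-hand side $\frac{\delta \mc{H}^\NLS_n}{\delta \conj{q}}$ is, by \eqref{eqn:NLS-tGP-3}--\eqref{eqn:NLS-tGP-4}, a fixed linear combination of the $\frac{\delta \mc{H}^\tGP_m}{\delta \conj{q}}$, with coefficients given by the matrix $\bm{c}_{\NLS \mapsto \tGP}$ or its inverse (sign conventions to be checked).

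Since the flows commute, the chain rule for the reparametrization $\bm{t} \mapsto \bm{c}\,\bm{t}$ with the transposed matrix shows that $q(\bm{c}^{\NLS \mapsto \tGP}\bm{t})$ solves the $\tGP$ hierarchy. Concretely, $\partial_{t_m}$ of the composite is $\sum_n \bm{c}^{n,m}\partial_{t_n} q$, which picks out exactly one Hamiltonian $\mc{H}^\tGP_m$.

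The phase factor $e^{i\phi(\bm{t})}$ plays no role, because it is uniform in $x$ and only adds multiples of $q$. Such multiples are generated by the mass flow, and the Madelung transform discards them. The Madelung transform and the rescaling are Poisson morphisms (Appendices \ref{appendix:hGP}, \ref{appendix:rGP}), so the $\tGP$ flows pass to the $\htGP$ flows and then to the $\rtGP_n$ flows. The rescaling $\bm{t}\leftrightarrow\bm{T}$ conjugates the coefficient matrix by $\nabla_{\bm{t}}\bm{T}$ and $\nabla_{\bm{T}}\bm{t}$. This is exactly the identity $\bm{c}^{\NLS \mapsto \rtGP} = (\nabla_{\bm{t}} \bm{T}) \bm{c}^{\NLS \mapsto \tGP} (\nabla_{\bm{T}} \bm{t})$ recorded before the theorem. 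Hence $W(\bm{c}^{\NLS \mapsto \rtGP}\bm{T})$ solves the $\rtGP$ hierarchy.

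\emph{Step 2 (leading order).} For each $n$, insert the Hamiltonian form of \eqref{eqn:rtGP-n}, split the Poisson operator as $\frac{2}{\epsilon^2}(\diag(-\frac12\del_X,\frac12\del_X)+\epsilon^2\bm{E}_n)$, and substitute the variational derivatives of \eqref{eqn:approx-even}--\eqref{eqn:approx-todd}. This is precisely the calculation displayed before the theorem, and it produces the stated leading term plus $\epsilon^2\bm{R}_n$.

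What remains at this step is powers-of-$\epsilon$ bookkeeping. The prefactor is
\[
\frac{\dd t_n}{\dd T_n}\cdot\frac{2}{\epsilon^2}\cdot\frac{(\sqrt2\epsilon)^{2n'+1}}{2\text{ or }4},
\]
with $n' = n/2$ or $(n+1)/2$ respectively, and it equals $1$ by \eqref{eqn:times}. One must also rewrite $\frac{\delta}{\delta W_-}\mc{E}^\KdV(-W_-)$ as $-\frac{\delta}{\delta(-W_-)}\mc{E}^\KdV(-W_-)$, which accounts for the sign pattern $(-1)^n$ and the index $\floor{\frac{n-1}{2}}$.

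\emph{Step 3 (undoing the time change).} Set $\widetilde W(\bm{T}) = W(\bm{c}^{\NLS\mapsto\rtGP}\bm{T})$, so that $W(\bm{T}) = \widetilde W(\bm{c}^{\rtGP\mapsto\NLS}\bm{T})$, using that the transposed matrices are mutually inverse. The chain rule gives
\[
\del_{T_k}W(\bm{T}) = \sum_m \bm{c}^{\rtGP\mapsto\NLS}_{m,k}\,(\del_{T_m}\widetilde W)(\bm{c}^{\rtGP\mapsto\NLS}\bm{T}).
\]
By Step 2 this is the leading term plus $\epsilon^2\bm{R}_m$, with the residuals evaluated at $\bm{c}^{\rtGP\mapsto\NLS}\bm{T}$ as in the statement.

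It remains to compute the entries of $\bm{c}_{\rtGP\mapsto\NLS}$. Both coefficient matrices are built from the binomial generating function $(1+x)^{n-\frac12}$ in the variable $-\frac{2}{\epsilon^2}$, so the inverse should be given by the same binomials with the sign flipped, namely $\binom{n-\frac12}{n-m}(\frac{2}{\epsilon^2})^{n-m}$. This fact is then checked via the convolution identity $\sum_{j}\binom{n-\frac12}{n-j}\binom{j-\frac12}{j-m}(-1)^{j-m}=\delta_{nm}$; it may instead be the ratio of matrices of type \eqref{eqn:NLS-GP-3}, and I expect this inversion to be the main obstacle.

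If the naive guess fails, I would first derive the inverse directly from \eqref{eqn:NLS-tGP-3}, which already expresses $\NLS$ in terms of $\tGP$ with coefficients $\binom{n-\frac12}{n-m}4^{n-m}$. These coefficients become $(\frac{2}{\epsilon^2})^{n-m}$ after conjugation by the scalings \eqref{eqn:times}, since $4\cdot(\sqrt2\epsilon)^{-2}=\frac{2}{\epsilon^2}$. This second route in fact avoids computing any inverse at all, and I would rely on it.
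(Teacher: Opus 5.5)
Your proposal is correct and follows the paper's proof. Both use the chain rule for the linear time change $\mathbf{c}^{\NLS\mapsto\rtGP}$, the relations \eqref{eqn:NLS-tGP-3}--\eqref{eqn:NLS-tGP-4} carried through the Madelung transform and rescaling, and the computation displayed before the theorem; your Step 3 spells out what the paper leaves implicit, and your $\epsilon$-prefactor and sign bookkeeping check out. The inversion you flag as the main obstacle is immediate from $\binom{n-\frac12}{n-j}\binom{j-\frac12}{j-m}=\binom{n-\frac12}{n-m}\binom{n-m}{j-m}$, which collapses the alternating sum to $\binom{n-\frac12}{n-m}(1-1)^{n-m}=\delta_{nm}$. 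Note that this identity is already needed in Step 1, to know that the explicit matrix $\mathbf{c}_{\NLS\mapsto\tGP}$ inverts the coefficients in \eqref{eqn:NLS-tGP-3}--\eqref{eqn:NLS-tGP-4}, so your second route avoids it only for the final display.
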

\begin{proof}
    By the chain rule we have
    \begin{align*}
        \nabla_{\bm{T}} (W(\bm{c}^{\NLS \mapsto \rtGP} \bm{T})) = \bm{c}_{\NLS \mapsto \rtGP} (\nabla_{\bm{T}} W)(\bm{c}^{\NLS \mapsto \rtGP} \bm{T}) \,.
    \end{align*}
    Together with \eqref{eqn:NLS-tGP-3}--\eqref{eqn:NLS-tGP-4} and the definition of $\bm{c}_{\NLS \mapsto \rtGP}$, this implies that $W(\bm{c}^{\NLS \mapsto \rtGP} \bm{T})$ solves the $N$-truncated \rtGP hierarchy.
    Then the claimed formulas follow from the elaborations above the theorem.
\end{proof}

\begin{remark}[Conjecture on the next-to-leading order of the \rtGP hierarchy] \label{rem:conjecture}
    We have verified the following formula to hold for $n \in \{1, 2, 3, 4\}$:
    \begin{align*}
        4 (\bm{R}_{2n} + \bm{R}_{2n-1}) &= \begin{pmatrix}
            - \frac12 \del_X \frac{\delta}{\delta W_+} \mc{E}^\KdV_n(W_+) + \del_{X\!X} \left( \frac12 \del_X \frac{\delta}{\delta (- W_-)} \mc{E}^\KdV_{n-1}(- W_-) \right) + \text{interaction terms} + O(\epsilon^2)
            \\ O(\epsilon^0)
        \end{pmatrix}
        \\  4 (\bm{R}_{2n} - \bm{R}_{2n-1}) &= \begin{pmatrix}
            O(\epsilon^0)
            \\ - \frac12 \del_X \frac{\delta}{\delta W_-} \mc{E}^\KdV_n(W_-) + \del_{X\!X} \left( \frac12 \del_X \frac{\delta}{\delta W_+} \mc{E}^\KdV_{n-1}(W_+) \right) + \text{interaction terms} + O(\epsilon^2)
        \end{pmatrix} \,.
    \end{align*}
    Here ``interaction terms'' are those which contain as factors both $W_+$ and $W_-$, or, respectively, some derivatives thereof.
    The significance of the above formulas is the following: suppose $W(\bm{T})$ is a solution to the \rtGP hierarchy. 
    Then Theorem \ref{thm:2} implies
    \begin{align*}
        & \frac12 \left( \frac{\dd}{\dd T_{2n}} + \frac{\dd}{\dd T_{2n-1}} \right) W(\bm{T}) 
        = \begin{pmatrix}
            0
            \\ - \frac12 \del_X \frac{\delta}{\delta (- W_-)} \mc{E}^\KdV_{n-1}(- W_-(\bm{T}))
        \end{pmatrix} 
        \\ &+ \frac{\epsilon^2}{8} \begin{pmatrix}
            - \frac12 \del_X \frac{\delta}{\delta W_+} \mc{E}^\KdV_n(W_+(\bm{T})) + \del_{X\!X} \left( \frac12 \del_X \frac{\delta}{\delta (- W_-)} \mc{E}^\KdV_{n-1}(- W_-(\bm{T})) \right) + \text{interaction terms} + O(\epsilon^2)
            \\ O(\epsilon^0)
        \end{pmatrix}
        \\ & \frac12 \left( \frac{\dd}{\dd T_{2n}} - \frac{\dd}{\dd T_{2n-1}} \right) W(\bm{T}) 
        = \begin{pmatrix}
            - \frac12 \del_X \frac{\delta}{\delta W_+} \mc{E}^\KdV_{n-1}(W_+(\bm{T}))
            \\ 0
        \end{pmatrix} 
        \\ &+ \frac{\epsilon^2}{8} \begin{pmatrix}
            O(\epsilon^0)
            \\ - \frac12 \del_X \frac{\delta}{\delta W_-} \mc{E}^\KdV_n(W_-(\bm{T})) + \del_{X\!X} \left( \frac12 \del_X \frac{\delta}{\delta W_+} \mc{E}^\KdV_{n-1}(W_+(\bm{T})) \right) + \text{interaction terms} + O(\epsilon^2)
        \end{pmatrix} \,.
    \end{align*}
    This suggests that an extension of Theorem \ref{thm:2} and Theorem \ref{thm:1} below to the next-to-leading order is plausible: while \hyperref[eqn:rtGP-n]{$(\rtGP_{2n})$} has its leading order behavior given by \hyperref[eqn:KdV-n]{$(\KdV_{n-1})$}, switching to a ``co-\hyperref[eqn:rtGP-n]{$(\rtGP_{2n-1})$}-moving'' frame allows us to see, on a longer/slower timescale, the dynamics of \hyperref[eqn:KdV-n]{$(\KdV_n)$} in one component of our solution $W$.
    In the case $n = 1$, i.~e. that of \hyperref[eqn:rtGP-n]{$(\rGP_2)$}, this corresponds to the utilization of right- or left-moving frames in order to see the dynamics of \hyperref[eqn:KdV-n]{$(\KdV_1)$} in one component of the solution.
    This is the content of \cite{BethuelGravejatSautSmets2010}. Interestingly, this suggests the co-moving frame used in this result should be thought of as a renormalization using \hyperref[eqn:rtGP-n]{$(\rtGP_1)$}, instead of \hyperref[eqn:KdV-n]{$(\KdV_0)$}, although they are both transport equations.
    
    In the general case, supposing the above formulas can be proven to hold, the term wrapped in two derivatives and the interaction terms, which are a priori not negligible, may still turn out to be of order $O(\epsilon^2)$ in the energy estimates by the same method that is used in \cite{BethuelGravejatSautSmets2010}: 
    one component of the solution travels rapidly to infinity due to the dynamics of \hyperref[eqn:KdV-n]{$(\KdV_{n-1})$} happening on a faster timescale.
\end{remark}

\subsection{Review of well-posedness results for the hierarchies}
The introduction of \cite{LiaoWegner2025} contains an overview of well-posedness results for \NLS, \GP, \hGP, \KdV, and the associated hierarchies.
Since this work concerns the hierarchies, we focus our review of the literature on well-posedness results specifically for hierarchies.
We start with the \KdV hierarchy, where initial existence and uniqueness results were given by J.-C.~Saut in \cite{Saut1979} and M.~Schwarz in \cite{Schwarz1984}.
Later, C.~E.~Kenig, G.~Ponce and L.~Vega produced a number of works \cite{KenigPonceVega1994-1,KenigPonceVega-BenjaminOno,KenigPonceVega1994-2,KenigPonceVega1991-1,KenigPonceVega-SmallSolutions,KenigPonceVega1993,KenigPonceVega1991-2}, developing dispersive estimates and using them to prove the local well-posedness of various generalized \KdV-type hierarchies in weighted Sobolev spaces.
Another local well-posedness result in weighted Sobolev spaces for small initial data is given in \cite{Pilod}. 
This work also contains an ill-posedness result for all of the higher equations in the \KdV hierarchy, in the sense of failure of the flow map to be $C^2$ in the Sobolev spaces $H^s(\R)$.
Nevertheless, by employing subtle energy estimates and a parabolic regularization argument, C.~E.~Kenig and D.~Pilod \cite{KenigPilod} proved local well-posedness in $H^s(\R)$ at high regularity.
We combine this with the conserved quantities that H.~Koch and D.~Tataru constructed in \cite{KochTataru2018}, which prevent the Sobolev norm of a local solution from blowing up in finite time, to formulate the following global well-posedness result.
Here and thereafter we use the notation $\bm{0} = (0, \dots, 0) \in \R^{N+1}$.
\begin{proposition}[{\cite[Theorem 1.1]{KenigPilod}} and {\cite[Theorem 9.1]{KochTataru2018}}] \label{prop:1}
    Let $N \in \N$, $s \geq 4N-\frac92$.
    For every $U_0 \in H^s$ there exists a unique $U \in C(\R^{N+1}; H^s)$ with initial data $U(\bm{0}) = U_0$ that satisfies \eqref{eqn:KdV-n} in the sense of distributions for all $n \in \{0, \dots, N\}$.
    For every $s' \in [0, s]$ we have
    \begin{align} \label{eqn:KdV-bound}
        \|U\|_{L^\infty_{\bm{T}} H^{s'}_X} &\leq C(s') (1 + \|U_0\|_{H^{-1}}^{6 s'}) \|U_0\|_{H^{s'}} \,.
    \end{align}
\end{proposition}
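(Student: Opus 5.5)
The statement to prove is Proposition \ref{prop:1}, which combines a local well-posedness result with conservation laws. The plan is to construct the multi-time solution flow by flow and then propagate the $H^{s'}$ bounds using the conserved energies.

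\textbf{Step 1: local well-posedness of each flow.} For each single flow \hyperref[eqn:KdV-n]{$(\KdV_n)$}, $n \le N$, I would invoke \cite[Theorem 1.1]{KenigPilod}. It gives local well-posedness in $H^s$ once $s$ exceeds a threshold linear in the order of the equation. Flow $n$ has order $2n+1$, so the worst case is $n = N$. I would check that the hypothesis $s \ge 4N - \frac92$ covers the Kenig--Pilod threshold for every $n \le N$, using the fact that the threshold is increasing in $n$. The output is an existence time depending on $\|U_0\|_{H^s}$, together with continuous dependence on the data.

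\textbf{Step 2: globalization of each flow.} I would use \cite[Theorem 9.1]{KochTataru2018}. It provides conserved energies $E_{s'}$ for the whole KdV hierarchy which satisfy
\begin{align*}
    E_{s'}(U) \approx \|U\|_{H^{s'}}^2
\end{align*}
in the small-$H^{-1}$ regime. The general $H^{-1}$ case is reduced to this by scaling. Keeping track of the scaling should produce the explicit factor $(1 + \|U_0\|_{H^{-1}}^{6s'})$. Since these energies are conserved by every flow of the hierarchy, the $H^{s}$ norm stays bounded on any time interval. Iterating the local result then yields a global solution of each individual flow.

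\textbf{Step 3: the multi-time solution.} I would set
\begin{align*}
    U(\bm{T}) = \Phi_0^{T_0} \circ \cdots \circ \Phi_N^{T_N} U_0 ,
\end{align*}
where $\Phi_n^{T}$ denotes the flow map of \hyperref[eqn:KdV-n]{$(\KdV_n)$}. For $U$ to satisfy every equation of the hierarchy, the flows must commute. For Schwartz data this follows from the Poisson-commutativity of the KdV Hamiltonians, the standard Lax-pair fact. I would pass to $H^s$ data by density, using the continuous dependence from Step 1 together with the uniform bounds from Step 2.

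\textbf{Step 4: uniqueness, continuity, and the obstacle.} Uniqueness in $C(\R^{N+1}; H^s)$ would follow by restricting to one-parameter lines and applying uniqueness for each single flow. Continuity in $\bm{T}$ would come from the continuity of each flow map. I expect the main obstacle to be Step 3: showing that the flows commute in $H^s$ and that the composed map is jointly continuous in $\bm{T}$. This requires continuous dependence on the data, which in the quasilinear Kenig--Pilod setting holds only in the sense of continuity, not Lipschitz continuity. I would handle it with a Bona--Smith-type approximation argument, or by appealing directly to the continuity statement in \cite{KenigPilod}.
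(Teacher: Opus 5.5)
Your overall architecture matches the paper's: Kenig--Pilod local theory, globalization of each flow via conserved energies, composition of commuting flows with a density argument. The gap is in Step 2, which is exactly where the bound \eqref{eqn:KdV-bound} comes from.

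\textbf{The gap.} The Koch--Tataru comparison $|\tilde E^{s'}(U)-\|U\|_{H^{s'}}^2|\le c(s')\|U\|_{H^{-1}}\|U\|_{H^{s'}}^2$ is only available at those times where $\|U(\mathbf{T})\|_{H^{-1}}\le\delta$. Rescaling the \emph{initial datum} makes $\|u(\mathbf{0})\|_{H^{-1}}$ small. However, nothing in your argument keeps $\|u(\mathbf{T})\|_{H^{-1}}$ below $\delta$ at later times. So the chain $\|u(\mathbf{T})\|_{H^{s'}}^2\lesssim\tilde E^{s'}(u(\mathbf{T}))=\tilde E^{s'}(u(\mathbf{0}))$ cannot be invoked. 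This affects both claims in Step 2: the globalization statement that ``the $H^s$ norm stays bounded on any time interval'', and the time-uniform estimate the proposition asserts.

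\textbf{How the paper closes it.} The missing ingredient is an a priori, time-uniform $H^{-1}$ bound. The paper first uses the low-regularity conserved quantities of Killip--Vi\c{s}an--Zhang, which are conserved by every flow of the hierarchy, to get
$\sup_{\mathbf{T}}\|U(\mathbf{T})\|_{H^{-1}}\le C(1+\|U_0\|_{H^{-1}}^2)\|U_0\|_{H^{-1}}$.
Only then does it pick the scaling parameter from the whole trajectory, not from the datum: $\omega=\delta_0^2/\sup_{|\mathbf{T}|\le T}\|U(\mathbf{T})\|_{H^{-1}}^2$, with $u(\mathbf{T},X)=\omega^2U(\Omega\mathbf{T},\omega X)$ and $\Omega_{n,n}=\omega^{2n+1}$. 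This gives $\|u(\mathbf{T})\|_{H^{-1}}^2\le\omega\sup\|U\|_{H^{-1}}^2\le\delta_0^2$ for all relevant times. Undoing the scaling then yields $\|U(\mathbf{T})\|_{H^{s'}}^2\le 3\omega^{-2s'}\|U_0\|_{H^{s'}}^2$.

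The exponent $6s'$ is a fingerprint of this step: $\omega^{-s'}\lesssim\|U_0\|_{H^{-1}}^{6s'}$ for large data because the KVZ bound is cubic. It therefore cannot be produced by ``keeping track of the scaling'' of the datum alone. An $H^{-1}$-level conserved energy combined with a continuity argument would be an acceptable substitute, but you must supply some such mechanism.

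\textbf{A minor point on Step 3.} The paper proves commutativity of the flows in weighted spaces $H^s\cap H^{s',1}$, following \cite[Section 4]{LiaoWegner2025}, and then passes to $H^s$ by density. Your Schwartz-class route additionally needs persistence of Schwartz decay under the higher flows. It also needs a rigorous passage from Poisson commutativity to commuting flows, not just the formal Lax-pair identity.
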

Since we shall work with high regularity this formulation is all that we need, but it is far from the state of the art in the literature.
In the low regularity regime A.~Rybkin \cite{Rybkin2010} and also R.~Killip, M.~Vi\c san, and X.~Zhang \cite{KillipVisan,KillipVisanZhang2018} achieved a breakthrough by using a perturbation determinant approach to construct conserved quantities that control the $H^{-1}$-norm of the solution.
This allowed them to prove the global well-posedness of \KdV in $H^{-1}$ on the line and the torus, and their ``method of commuting flows'' has since been applied to several other integrable PDEs (see e.~g. \cite{BringmannKillipVisan2021,KillipVisanHarropGriffiths2024,KillipLaurensVisan2024}).
We use the uniform estimates they give for solutions to the \KdV hierarchy in the proof of \eqref{eqn:KdV-bound}.
Recently, the method of commuting flows has been used to prove the global well-posedness of the \KdV hierarchy in $H^{-1}(\R)$ by F.~Klaus, H.~Koch, and B.~Liu \cite{KlausKochLiu2023}.

Moving on to the \NLS hierarchy, we mention the work \cite{Grünrock2010} by A.~Grünrock where the Fourier restriction norm method is used to prove global well-posedness and ill-posedness results for the flows in the \NLS hierarchy, and also the \KdV hierarchy, in Fourier-Lebesgue spaces at low regularity.
This was extended by J.~Adams in \cite{Adams2024} to the even flows in the \NLS hierarchy, and recently in \cite{Adams2025} to the \dNLS hierarchy.
Note also \cite{OzawaTianWu2026}, where orbital stability of the dark soliton solutions to the the odd flows \hyperref[eqn:NLS-n]{$(\NLS_{2n+1})$} of the \NLS hierarchy, with real-valued $q$, is proven.

Of direct relevance is the preceding work \cite{LiaoWegner2025}, where the global well-posedness of the \NLS hierarchy with \NZBC is proven, i.~e. precisely the hierarchy we study here.
While the local well-posedness is based on the aforementioned methods from C.~E.~Kenig, G.~Ponce, and L.~Vega, the globalization argument uses certain conserved energies constructed in \cite{KochLiao,KochLiao2022} by H.~Koch and X.~Liao.
This can be understood as the analogue of \cite{KochTataru2018} in the setting of \NZBC.
Since this is essential for Proposition \ref{prop:2} below, we recall now some of their definitions.

We define for $s, s' \in \N$ and $\tau > 0$ the (semi-classical/weighted) Sobolev spaces
\begin{align}
    \big( H^s, \|\cdot\|_{H^s} \big) &= \big( \big\{ u \in L^2: (1 + |\xi|^2)^{\frac{s}{2}} \ha{u} \in L^2 \big\}, \|(1 + |\xi|^2)^{\frac{s}{2}} \, \ha{\cdot} \,\|_{L^2} \big)
    \\ \label{eqn:Hs-tau}\big( H^s_\tau, \|\cdot\|_{H^s_\tau} \big) &= \big( \big\{ u \in L^2: (\tau^2 + |\xi|^2)^{\frac{s}{2}} \ha{u} \in L^2 \big\}, \|(\tau^2 + |\xi|^2)^{\frac{s}{2}} \, \ha{\cdot} \,\|_{L^2} \big)
    \\ \big( H^{s',1}, \|\cdot\|_{H^{s',1}} \big) &= \big( \big\{ u \in H^{s'}: x u \in H^{s'} \big\}, \|x \cdot\|_{H^{s'}} + \|\cdot\|_{H^{s'}} \big)
\end{align}
and the energy functionals
\begin{align*}
    E^s(q) &= \||q|^2 - 1\|_{H^{s-1}} + \|q_x\|_{H^{s-1}}
    & E^{s',1}(q) &= \||q|^2 - 1\|_{H^{s'-1,1}} + \|q_x\|_{H^{s'-1,1}} \,.
\end{align*}
We recall the complete metric space $(X^s, d^s)$ whose definition is
\begin{align} \label{eqn:def-Xs}
    X^s &= \left\{ q \in H^s_{loc}: E^{s-1}(q) < \infty \right\} \big/ \SSS^1 
    & d^s(q, p) &= \left( \int_{\R} \inf_{\lambda \in \SSS^1} \|\sech(\cdot - y) (\lambda p - q)\|_{H^s}^2 \dd y \right)^{\frac12} \,.
\end{align}
This space was used by H.~Koch and X.~Liao in \cite{KochLiao,KochLiao2022} to prove the global well-posedness of \GP for $s > 0$.
Their globalization argument relies on the aforementioned conserved energies, as they can be used to control the energy functionals $E^s(q)$.

We state now the well-posedness result for the \NLS, \GP, \tGP and \rtGP hierarchies that we use. 
It is a consequence of \cite[Theorem 1.2]{LiaoWegner2025} and the theory used in its proof.
\begin{proposition}[Well-posedness of the \NLS, \GP, and \tGP hierarchies] \label{prop:2}
    Let $N \geq 2$ and define $\omega = \frac{N-1}{2}$. Let $s, s' \in \N$ such that $s \geq 3 N - 1 + \omega$ and $\frac{s+3\omega}{2} \leq s' \leq s - N$.
    For every $q_0: \R \rightarrow \C$ of the form $q_0 = q_\ast + p_0$, where $q_\ast \in H^{s+1-\ceil{\omega}+N}_{\loc}$ with $E^{s+1-\ceil{\omega}+N,1}(q_\ast) < \infty$ and $p_0 \in H^s \cap H^{s',1}$,
    there exists a unique function $q: \R^{N+1} \times \R \rightarrow \C$ with $q(\bm{0}) = q_0$ satisfying the following:
    \begin{enumerate}[(i)]
        \item For every $n \in \{0, \dots, N\}$ the functions $q(\bm{t}, x)$, $q(\bm{c}^{\NLS \mapsto \GP} \bm{t}, x)$, and $q(\bm{c}^{\NLS \mapsto \tGP} \bm{t}, x)$ solve respectively \eqref{eqn:NLS-n}, \eqref{eqn:GP-n}, and \eqref{eqn:tGP-n} in the sense of distributions.
        \item We have $q \in C_b(\R^{N+1}; X^s)$ and
        \begin{align} \label{eqn:global-GP}
            \sup_{\bm{t} \in \R^{N+1}} E^s(q(\bm{t})) &\leq C(N, s, E^s(q_0)) \,.
        \end{align}
        \item The function $p = q - e^{- i (\bm{c}^{\GP \mapsto \NLS} \bm{t})_0} q_\ast$ satisfies $p \in C(\R^{N+1}; H^s \cap H^{s',1})$.
        Furthermore, $p(\bm{t})$, $p(\bm{c}^{\NLS \mapsto \GP} \bm{t})$, and $p(\bm{c}^{\NLS \mapsto \tGP} \bm{t})$ solve 
        the corresponding perturbative formulations of respectively \eqref{eqn:NLS-n}, \eqref{eqn:GP-n}, and \eqref{eqn:tGP-n}, for the respective fronts $e^{- i (\bm{c}^{\GP \mapsto \NLS} \bm{t})_0} q_\ast$, $e^{- i t_0} q_\ast$, and $e^{- i t_0} q_\ast$, strongly in $H^{s-n} \cap H^{s'-n,1}$.
        \item If $\mc{H}^\GP_2(q_0) < b < \frac83$ for some $b > 0$, then there exists $\delta = \delta(b) > 0$ such that
        \begin{align*}
            \inf_{\bm{t} \in \R^{N+1}, x \in \R} |q(\bm{t}, x)| > \delta \,.
        \end{align*}
        In this case we can define for some $\epsilon > 0$ the function $W$ via the ansatz \eqref{eqn:ansatz}, and find that $W(\bm{c}^{\NLS \mapsto \rtGP} \bm{T}) \in C(\R^{N+1}; H^{s-1})$ solves \eqref{eqn:rtGP-n} strongly in $H^{s-2\floor{\frac{n}{2}}-2}$.
        Furthermore, for all $h \in \{1, \dots, s - 2\}$ there exists some $\epsilon_0 = \epsilon_0(h, \|W(\bm{0})\|_{H^{h+1}}) > 0$, $c = c(h) > 0$, and $C = C(h) > 0$ such that the following holds.
        Define \begin{align*}
            \tau = \max\{1, c \|W(\bm{0})\|_{H^{h+1}}^{\frac23}\} \,.
        \end{align*}
        Then $\epsilon \in (0, \epsilon_0)$ implies
        \begin{align} \label{eqn:approx-global-explicit}
            \|W_\pm(\bm{T})\|_{H^h_\tau}^2 \!+\! \frac{\epsilon^2}{8} \|\del_X (W_+ - W_-)(\bm{T})\|_{H^h_\tau}^2
            &\!\leq\! C \left( \|W_\pm(\bm{0})\|_{H^h_\tau}^2 \!+\! \frac{\epsilon^2}{8} \|\del_X (W_+ - W_-)(\bm{0})\|_{H^h_\tau}^2 \right)
            \\\nonumber &+ \epsilon^2 \tau^2 C \left( \|W(\bm{0})\|_{H^h_\tau}^2 \!+\! \frac{\epsilon^2}{4} \|\del_X (W_+ - W_-)(\bm{0})\|_{H^h_\tau}^2 \right)
        \end{align}
        for all $\bm{T} \in \R^{N+1}$.
    \end{enumerate}
\end{proposition}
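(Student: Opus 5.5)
Parts (i)--(iii) I would take directly from \cite[Theorem 1.2]{LiaoWegner2025}. The statement there concerns the \NLS hierarchy with \NZBC in the space $X^s$, with fronts $q_\ast$ and perturbations $p$ in $H^s \cap H^{s',1}$. To pass to the \GP and \tGP hierarchies I would use the chain rule: for $\ti{q}(\bm{t}) = q(\bm{c}^{\NLS \mapsto \GP} \bm{t})$ we have $\nabla_{\bm{t}} \ti{q} = \bm{c}_{\NLS \mapsto \GP} (\nabla_{\bm{t}} q)(\bm{c}^{\NLS\mapsto\GP}\bm{t})$. Combining this with \eqref{eqn:NLS-GP-3}--\eqref{eqn:NLS-GP-4}, respectively \eqref{eqn:NLS-tGP-3}--\eqref{eqn:NLS-tGP-4}, converts each \eqref{eqn:NLS-n} into \eqref{eqn:GP-n} or \eqref{eqn:tGP-n}. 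This is the same computation as in the proof of Theorem \ref{thm:2}. The phase factor $e^{-i(\bm{c}^{\GP\mapsto\NLS}\bm{t})_0}$ in front of $q_\ast$ comes from the fact that the \GP renormalization differs from \NLS by the mass flow, which acts as a uniform phase rotation. The bound \eqref{eqn:global-GP} is the Koch--Liao energy control used in the globalization step of \cite{LiaoWegner2025}.

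For (iv), I would first get the no-vacuum bound from conservation of $\mc{H}^\GP_2$ under every flow. The elementary inequality $\sup_x (1-|q|^2)^2 \lesssim \mc{H}^\GP_2(q)$ (via $\frac{d}{dx}$ of a primitive of $(1-|q|^2)$ times $|q_x|$, Modica--Mortola type) gives $1 - |q|^2 \le \sqrt{2\mc{H}^\GP_2}/\dots$. The threshold $\frac83$ is exactly where this argument keeps $|q|$ bounded away from $0$, as in Béthuel--Gravejat--Saut--Smets. Once $\inf|q| > \delta$, the Madelung transform and the rescaling are smooth maps. This gives $W \in C(\R^{N+1}; H^{s-1})$; one derivative is lost because $v = \Imag(q_x/q)$. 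Applying the chain-rule argument with $\bm{c}^{\NLS\mapsto\rtGP}$ and the Poisson morphism of Appendix \ref{appendix:rGP} shows that \eqref{eqn:rtGP-n} holds in $H^{s-2\floor{n/2}-2}$.

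The main step is \eqref{eqn:approx-global-explicit}. I would use the conserved functionals $\mc{E}^{\rtGP,\pm}_h$, but with the binomial weights replaced by powers of $\tau^2$, namely $\sum_l \binom{h}{l}\tau^{2(h-l)}(\cdots)$, so that they match $\|\cdot\|_{H^h_\tau}$. I would then invoke a $\tau$-version of Corollary \ref{cor:approx-global}. That version comes from Theorem \ref{thm:approx-2} and the Koch--Tataru estimate, whose $H^{-1}$-smallness hypothesis is replaced by a large $\tau$: with $\tau \gtrsim \|W\|^{2/3}$, the cubic errors $C\|W_\pm\|_{H^h_\tau}^2\|W_\pm\|_{H^{-1}_\tau}$ are absorbable.

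With this, the functional equals $\|W_\pm\|_{H^h_\tau}^2 + \frac{\epsilon^2}{8}\|\del_X(W_+-W_-)\|_{H^h_\tau}^2$ up to two errors: an absorbable cubic term, and $\epsilon^2 C\tau^2\|W\|^2(1+\|W\|)^{h+1}$. I would use a continuity/bootstrap argument in $\bm{T}$ to keep $\|W(\bm{T})\|_{H^h_\tau}$ at most twice its initial size. Choosing $\epsilon_0$ small, depending on $\|W(\bm{0})\|_{H^{h+1}}$, lets the polynomial factor be absorbed into $C$. Comparing the functional at $\bm{T}$ and at $\bm{0}$ then yields the estimate.

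The main obstacle is this $\tau$-uniform version of Corollary \ref{cor:approx-global}. One has to check that the $\bm{P}_{2n}$ error terms, estimated via \eqref{eqn:extra}, cost only a factor $\tau^2$ and not worse powers.
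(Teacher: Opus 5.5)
Parts (i)--(iii) and the first half of (iv) match the paper; the gap is in \eqref{eqn:approx-global-explicit}. You flag the $\tau$-uniform version of Corollary \ref{cor:approx-global} as the main obstacle and leave it open. That is exactly where the paper's key idea enters, and it needs no new estimate: the paper rescales. Put $\ti{\epsilon} = \epsilon \tau$ and $\ti{W}(\ti{\bm{T}}, \ti{X}) = \tau^{-2} W(\bm{T}, \ti{X}/\tau)$. This is the same $w$ written at scale $\ti{\epsilon}$, so each $\mc{H}^\rtGP_n$ takes the same value at $(\ti{\epsilon}, \ti{W})$ as at $(\epsilon, W)$, namely $\mc{H}^\htGP_n(w)$. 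Consequently $\mc{E}^{\rtGP,\pm}_h$ evaluated at $(\ti{\epsilon}, \ti{W})$ is $\tau^{-2h-3}$ times your weighted functional $\sum_l \binom{h}{l} \tau^{2(h-l)} (\sqrt{2}\epsilon)^{-2l-3} (\mc{H}^\rtGP_{2l+2} \mp 2 \mc{H}^\rtGP_{2l+1})$. Likewise $\|\ti{W}_\pm\|_{H^h}^2 = \tau^{-2h-3} \|W_\pm\|_{H^h_\tau}^2$ and $\ti{\epsilon}^2 \|\del_{\ti{X}} (\ti{W}_+ - \ti{W}_-)\|_{H^h}^2 = \tau^{-2h-3} \epsilon^2 \|\del_X (W_+ - W_-)\|_{H^h_\tau}^2$. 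With $\tau = \max\{1, c \|W(\bm{0})\|_{H^{h+1}}^{2/3}\}$ the rescaled datum is small in $H^{h+1}$. So Corollary \ref{cor:approx-global} as stated applies to $\ti{W}$ for as long as a bootstrap keeps it small, with $\ti{\epsilon} < 1$ ensured by the choice of $\epsilon_0$. Lemma \ref{lem:cont} closes that bootstrap at the small scale, and scaling back produces the $H^h_\tau$ norms and the factor $\ti{\epsilon}^2 = \epsilon^2 \tau^2$ automatically. There is nothing to re-check about $\bm{P}_{2n}$.

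Without this observation your error bookkeeping is off in a way that stops the argument from closing. Undoing the rescaling, the cubic error is $C \|W_\pm\|_{H^h_\tau}^2 \, \tau^{-1/2} \|W_\pm\|_{H^{-1}_\tau}$, not $C \|W_\pm\|_{H^h_\tau}^2 \|W_\pm\|_{H^{-1}_\tau}$. Only the former is small when $\tau \gtrsim \|W\|^{2/3}$, because $\tau^{-1/2} \|W\|_{H^{-1}_\tau} \leq \tau^{-3/2} \|W\|_{L^2}$; the latter in general needs $\tau \gtrsim \|W\|_{L^2}$. Similarly, the $\epsilon^2$-error is $\epsilon^2 \tau^2 C \|W\|_{H^h_\tau}^2 (1 + \tau^{-h-3/2} \|W\|_{H^h_\tau})^{h+1}$, and it is the factor $\tau^{-h-3/2}$ that keeps the polynomial factor bounded by a constant depending only on $h$ throughout the bootstrap. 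With your factor $(1 + \|W\|)^{h+1}$, shrinking $\epsilon_0$ does not let you absorb it into $C = C(h)$. It multiplies $\epsilon^2 \tau^2 \|W(\bm{0})\|^2$, which involves both components and so cannot be moved into the $W_\pm$-only first term. Trading it against $\epsilon$ instead costs part of the factor $\epsilon^2$. Once the rescaling is in place, your bootstrap-and-compare step is precisely the paper's; Lemma \ref{lem:cont} treats the two components separately, including the degenerate case where one of them vanishes initially. A minor point on the vacuum bound: the sharp threshold $\frac83$ comes from the integrated Modica--Mortola inequality $\mc{H}^\GP_2(q) \geq 4 \int_{\rho_0}^1 (1 - \rho^2) \dd \rho$ with $\rho_0 = \inf_x |q|$, not from a bound on $\sup_x (1 - |q|^2)^2$.
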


\subsection{Quantitative approximation of hierarchies}

Let us give some justification for why the study of integrable hierarchies is interesting.
Due to Noether's theorem, the higher flows in an integrable hierarchy are symmetries of each equation in the hierarchy.
Any statement about the $\NLS/\KdV/\GP/\dots$ hierarchy is therefore also a statement about the $\NLS/\KdV/\GP/\dots$ equation.
These equations are all central to the study of nonlinear PDEs and ubiquitous in the physical modelling of nonlinear waves (see e.~g. \cite{Bao,Berge,Hasegawa} for applications of \NLS and \cite{Feynman,Dalfovo1998,Gardner,Grant1973,Landau,Loffredo1993} for applications of \GP).
Besides, integrable hierarchies directly appear in mathematical physics in connection with algebraic geometry \cite{AratynSorin2001,London-1,London-2}.
For the reader interested in more algebraic approaches to these integrable hierarchies, we refer to \cite{Dickey1991,GesztesyHolden2003,GesztesyHoldenMichorTeschl2008}.
We are particularly interested in the relevance of these hierarchies in the theory of amplitude and modulation equations.

The transport equation \hyperref[eqn:KdV-n]{$(\KdV_0)$} and the Korteweg-de Vries equation \hyperref[eqn:KdV-n]{$(\KdV_1)$} play an important role as long-wave amplitude equations in the study of one-dimensional nonlinear waves.
For example, they have been used in the description of long-wave solutions to the water wave problem \cite{Craig1985,CraigGroves1994,SchneiderWayne2000}, a Boussinesq equation \cite{Schneider1998-1}, 
a capillary-gravity wave system \cite{SchneiderWayne2002}, the \NLS equation \cite{ChironRousset2010,Schneider1998-2,Chirilus-BrucknerDüllSchneider2014}, the \GP equation \cite{BethuelGravejatSautSmets2010,BethuelGravejatSautSmets2009}, and the Euler-Poisson system \cite{LiuYang2020}.
For particular classes of dispersive nonlinear equations, \KdV acts as a \textit{universal} long-wave amplitude equation \cite{Bridges2013} (see also \cite[§12.2]{SchneiderUecker2017}).
Other examples of long-wave amplitude and modulation equations are \NLS, the Ginzburg-Landau equation, and the Whitham modulation equations.
For a broad perspective on the theory of such amplitude and modulation equations, we refer to the books \cite{SchneiderUecker2017,Whitham1999,Whitham1965} and the survey \cite{ElHoefer2016}.
Many of these amplitude equations are completely integrable, an aspect which has been studied in \cite{Kalyakin1989} by L.~A.~Kalyakin.
Another perspective on this phenomenon was given in \cite{Calogero1991} by F. Calogero, who argued that for certain large classes of PDEs the process of passing to an effective limit equation preserves integrability, and hence the resulting limit equations ought to be integrable if the original class contained any integrable equations.
This idea was further developed toward the classification of integrable equations.

Returning specifically to \hyperref[eqn:KdV-n]{$(\KdV_1)$} as a long-wave amplitude equation, we note that it is common for the (trivial transport equation) \hyperref[eqn:KdV-n]{$(\KdV_0)$} to predict the leading order behavior, and \hyperref[eqn:KdV-n]{$(\KdV_1)$} the next-to-leading order behavior.
One is therefore tempted to conjecture that the higher equations \hyperref[eqn:KdV-n]{$(\KdV_n)$} in the \KdV hierarchy may be used in the description of the higher order asymptotic behavior of long-wave solutions to dispersive nonlinear equations.
Yet, such results, and in general direct physical applications of the \KdV hierarchy in the modelling of nonlinear waves, are few and far between.
In this direction, there exist results by Y.~Kodama, T.~Taniuti, and Y.~Hiraoka \cite{HiraokaKodama2009,KodamaTaniuti1979}. 
They were used by D.~Bambusi \cite{Bambusi} to show that, indeed, \hyperref[eqn:KdV-n]{$(\KdV_2)$} shows up after \hyperref[eqn:KdV-n]{$(\KdV_0)$} and \hyperref[eqn:KdV-n]{$(\KdV_1)$} in the aforementioned long-wave approximation of water waves.
Furthermore, using the reductive perturbation method of T.~Taniuti \cite{Taniuti1974}, R.~A.~Kraenkel, M.~A.~Manna, and J.~G.~Pereira \cite{KraenkelMannaPereira1995} discovered that the full \KdV hierarchy appears in the study of the asymptotic behavior of long, shallow water waves.
Other results in a similar direction are \cite{Demiray2008,Ivanov2024}. 
In \cite[Remark 4.6.10]{Sarah2024} S.~Hofbauer noted that a result of her dissertation, using the inverse scattering transform method to prove an approximation result for \KdV by a linear Schrödinger equation, can be extended to the higher equations in the \KdV hierarchy.

We state now our main result, which shows rigorously that the equations in the \KdV hierarchy act as long-wave amplitude equations for the \tGP hierarchy.
Recall our notations $\bm{0} = (0, \dots, 0) \in \R^{N+1}$ and $M = \floor{\frac{N - 1}{2}}$.
Define matrices $\bm{b}^+, \bm{b}^- \in \R^{(M+1) \times (N+1)}$ by
\begin{align*}
    \bm{b}^+_{m,n} &= - (-1)^n \mathds{1}_{\{m=\floor{\frac{n-1}{2}}\}}
    & \bm{b}^-_{m,n} &= \mathds{1}_{\{m=\floor{\frac{n-1}{2}}\}} \,.
\end{align*}
The purpose of this matrix is to match the correct flows in the \NLS hierarchy with the corresponding flows in the \KdV hierarchy, as described in Theorem \ref{thm:2}.
\begin{theorem}[Quantitative approximation of the \rtGP hierarchy by the \KdV hierarchy] \label{thm:1}
    Let $N \geq 2$, $s \in \N$, and $q_0: \R \rightarrow \C$ satisfy the assumptions of Proposition \ref{prop:2} for some $s' \in \N$ and $q_\ast, p_0: \R \rightarrow \C$.
    Assume in addition, as in (iv) of Proposition \ref{prop:2}, that $\mc{H}^\GP_2(q_0) < \frac83$ so that $|q(\bm{t}, x)| > \delta > 0$ and hence we can define for some $\epsilon > 0$ the function $W$ according to the ansatz \eqref{eqn:ansatz}.
    We have $W \in C(\R^{N+1}; H^{s-1})$, and if $\epsilon < \epsilon_0(s, \|W(\bm{0})\|_{H^{s-1}})$ then
    \begin{align} \label{eqn:approx-global-general}
        \|W\|_{L^\infty_{\bm{T}} H^{h-1}_X} &\leq C(h, \|W(\bm{0})\|_{H^h}) \|W(\bm{0})\|_{H^h} \qquad \qquad \forall\, h \in \{2, \dots, s - 1\} \,.
    \end{align}
    Furthermore, $W(\bm{c}^{\NLS \mapsto \rtGP} \bm{T}) \in C(\R^{N+1}; H^{s-1})$ solves \eqref{eqn:rtGP-n} in the sense of distributions.

    Let $\pm U_\pm \in C(\R^{M+1}; H^{s-1})$ be the solutions to the $M$-truncated \KdV hierarchy, given by Proposition \ref{prop:1}, with initial data $U_\pm(\bm{0}) = W_\pm(\bm{0})$.
    They satisfy
    \begin{align}
        \|U_\pm\|_{L^\infty_{\bm{T}} H^h_X} &\leq C(h, \|U_\pm(\bm{0})\|_{H^{-1}}) \|U_\pm(\bm{0})\|_{H^h} \qquad \qquad \forall\, h \in \{0, \dots, s - 1\} \,.
    \end{align}
    
    Define $[\bm{T}] = |T_2| + \dots + |T_N|$.
    For all $h \in \{0, \dots, \floor{\frac{s-N}{2}}\}$ there exists $C = C(N, h, \delta) > 0$ such that
    \begin{align} \label{eqn:approx-main}
        \left\|\begin{pmatrix}
            U_+(\bm{b}^+ \bm{T})
            \\ U_-(\bm{b}^- \bm{T})
        \end{pmatrix}
        - W(\bm{c}^{\NLS \mapsto \rtGP} \bm{T})\right\|_{H^h} 
        &\leq \epsilon^2 [\bm{T}]^{\frac12} \exp\left( [\bm{T}] C \big(1 + \|(W, U)\|_{L^\infty_{\bm{T}} H^{2M}_X}^M\big) \right) 
        \\ \nonumber &\times C \big(1 + \|W\|_{L^\infty_{\bm{T}} H^{N+1+h}_X}^{N+4}\big) 
        \big(1 + [\bm{T}] \|(W, U)\|_{L^\infty_{\bm{T}} H^{2(M+h)}_X}^{N-1}\big)^{\frac{h}{2}} \,.
    \end{align}
\end{theorem}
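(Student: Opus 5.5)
The plan is to combine the well-posedness and uniform bounds already available (Propositions \ref{prop:1}, \ref{prop:2}) with the formal expansion of Theorem \ref{thm:2}, and then run an energy estimate for the difference along a straight path in multi-time.

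\emph{Step 1: bounds on $W$ and $U_\pm$.} The regularity $W \in C(\R^{N+1}; H^{s-1})$ and the fact that $W(\bm{c}^{\NLS \mapsto \rtGP}\bm{T})$ solves \eqref{eqn:rtGP-n} come directly from Proposition \ref{prop:2}(iv). For \eqref{eqn:approx-global-general}, apply \eqref{eqn:approx-global-explicit} with $h-1$ in place of $h$. Since $\tau$ is bounded in terms of $\|W(\bm{0})\|_{H^h}$, the $H^{h-1}_\tau$ and $H^{h-1}$ norms are comparable with constants depending on that quantity. The $\epsilon^2\tau^2$ term is bounded by the same data once $\epsilon<\epsilon_0$. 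The bound on $U_\pm$ is exactly \eqref{eqn:KdV-bound} from Proposition \ref{prop:1}, applied to the $M$-truncated hierarchy (note $s-1 \ge 4M-\frac92$ under the hypotheses on $s$).

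\emph{Step 2: the difference equation.} Write $\widetilde W(\bm{T}) = W(\bm{c}^{\NLS \mapsto \rtGP}\bm{T})$ and $V(\bm{T}) = (U_+(\bm{b}^+\bm{T}), U_-(\bm{b}^-\bm{T}))$. By the definition of $\bm{b}^\pm$, $\partial_{T_n}V$ equals the leading-order KdV vector field appearing in Theorem \ref{thm:2}, with $\mc{E}^\KdV_{\floor{\frac{n-1}{2}}}$ evaluated at $V$. Theorem \ref{thm:2} gives the same vector field at $\widetilde W$, plus $\epsilon^2\bm{R}_n$. Hence $D = V - \widetilde W$ satisfies
\[
\partial_{T_n} D = \mathcal{K}_n(V) - \mathcal{K}_n(\widetilde W) - \epsilon^2 \bm{R}_n, \qquad D(\bm{0}) = 0,
\]
where $\mathcal{K}_n$ is the decoupled KdV flow of order $2\floor{\frac{n-1}{2}}+1$. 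To reach a general $\bm{T}$, integrate along the segment $\sigma\mapsto\sigma\bm{T}$, $\sigma\in[0,1]$. The directional derivative is then $\sum_n T_n\,\partial_{T_n}D$, which explains the factor $[\bm{T}]$. The flows $n=0,1$ are trivial or pure transport and contribute nothing to the energy, since $\bm{R}_0=\bm{R}_1=0$.

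\emph{Step 3: energy estimate, the main obstacle.} The residual $\bm{R}_n$ is a polynomial in $W$ and at most about $N+1+h$ derivatives of it, so $\|\bm{R}_n\|_{H^h}\lesssim 1+\|W\|^{N+4}_{H^{N+1+h}}$. This is the source of that factor in \eqref{eqn:approx-main}.

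The hard part is the difference $\mathcal{K}_n(V)-\mathcal{K}_n(\widetilde W)$. It contains terms of order up to $2M+1$ in $D$, and the dispersive part loses derivatives. Naive $H^h$ estimates do not close, because the quasilinear terms (coefficients times $\partial^{2m+1}D$) are only skew up to lower order. My plan:
\begin{itemize}
\item For $h=0$, use the skew-adjointness of the linear part and integrate by parts in the quasilinear terms. This gives
\[
\frac{d}{d\sigma}\|D\|_{L^2}^2 \lesssim [\bm{T}]\bigl(1+\|(W,U)\|^M_{H^{2M}}\bigr)\|D\|_{L^2}^2 + \epsilon^2[\bm{T}]\|\bm{R}\|_{L^2}\|D\|_{L^2}.
\]
\item Apply Gronwall. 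This produces the exponential factor and $\epsilon^2[\bm{T}]^{1/2}$; the square root comes from how the forcing enters the quadratic energy.
\item For $h>0$, interpolate between the $L^2$ bound on $D$ and the a priori $H^{2(M+h)}$ bounds on $W$ and $U$ from Step 1.
\end{itemize}
The interpolation is what I expect to yield the factor $\bigl(1+[\bm{T}]\|(W,U)\|^{N-1}_{H^{2(M+h)}}\bigr)^{h/2}$, and it accounts for the restriction $h\le\floor{\frac{s-N}{2}}$.

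If a direct $H^h$ estimate is needed instead, I would use modified energies: add lower-order correction terms to $\|\partial^h D\|^2$ to cancel the non-skew top-order commutators, as in Kenig–Pilod's energy method. This is the step I expect to require the most care.
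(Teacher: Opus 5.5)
\textbf{Main gap: interpolation for $h>0$.} Your Steps 1 and 2 agree with the paper in substance. The paper moves along $T_0,\dots,T_N$ one coordinate at a time rather than along $\sigma\bm{T}$, so it needs only partial derivatives and each segment carries a single flow, but this difference is minor. The genuine gap is your treatment of $h>0$ in Step 3. Interpolating $\|D\|_{H^h}\le\|D\|_{L^2}^{1-h/K}\|D\|_{H^K}^{h/K}$ between an $O(\epsilon^2)$ bound in $L^2$ and the $O(1)$ a priori bounds on $W$ and $U_\pm$ in $H^K$ only gives $\|D\|_{H^h}=O(\epsilon^{2(1-h/K)})$. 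So it cannot produce the prefactor $\epsilon^2$ in \eqref{eqn:approx-main} for any $h\ge1$.

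You have to estimate $\|\partial_X^lD\|_{L^2}^2$ directly for every $0\le l\le h$, and this is what the paper does:
\begin{itemize}
\item at level $l$ the term $(-1)^m\partial_X^{2m+1}$ drops out by skewness;
\item the residual is split off by Young's inequality into $a_l\sim[\bm{T}]\epsilon^4(1+\|W\|_{H^{N+1+l}}^{2N+8})$;
\item the nonlinear difference terms, written with coefficients depending only on $W$ and $U_\pm$, are bounded by $\sum_{r\le l}H_{l,r}\|\partial_X^rD\|_{L^2}^2$.
\end{itemize}
The resulting lower-triangular system of integral inequalities is closed by Grönwall's lemma for linear systems. The nilpotent off-diagonal part of $H$ produces $(1+[\bm{T}]g_h)^h$, and its square root is the factor $(\cdots)^{h/2}$ in \eqref{eqn:approx-main}. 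That factor does not come from interpolation.

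\textbf{The obstruction does not depend on the level.} Your own diagnosis contradicts your $h=0$ step. Integrating by parts in $\int\partial_X^lD\,\partial_X^l(a\,\partial_X^{2m-1}D+b\,\partial_X^{2m-2}D+\dots)\,dX$ leaves terms $\int\partial_Xa\,(\partial_X^{l+m-1}D)^2\,dX$ at $l=0$ exactly as at $l>0$. So there are two cases. Either the reduction to $\sum_{r\le l}\|\partial_X^rD\|_{L^2}^2$ works at every level; this is the case $m\le1$, i.e.\ $N\le4$, and then interpolation is superfluous. Or it already fails in $L^2$.

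For $m\ge2$ it does fail. Set $\mathcal{K}_2(U):=\frac12\partial_X\frac{\delta}{\delta U}\mathcal{E}^{\KdV}_2(U)=\partial_X^5U-10U\partial_X^3U-20\partial_XU\,\partial_X^2U+30U^2\partial_XU$. With $D=U-W$ one finds $\int D\,(\mathcal{K}_2(U)-\mathcal{K}_2(W))\,dX=5\int\partial_XU\,(\partial_XD)^2\,dX$ plus terms bounded by $C(W,U)\|D\|_{L^2}^2$. Hence for $N\ge5$ a Kenig--Pilod-type modified energy is needed on the segments carrying flows with $m\ge2$. This is the central difficulty, not an optional refinement for $h>0$.

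The paper's ``repeated integration by parts'' estimate faces the same objection at this point. Also, once energy corrections depend on the flow, the coordinate-wise path is more convenient than your straight one.
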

This result represents a collection of approximations, and the timescales involved have several edge cases.
Note, for example, that $\bm{c}^{\NLS \mapsto \rtGP}$ contains negative powers of $\epsilon$.
We shall therefore take some time now to explain this result.
It is convenient to introduce for a \textit{time-independent} function $q: \R \rightarrow \C$ with $|q| > \delta > 0$ the notation
\begin{align*}
    [q \mapsto W](q)(X) &= \frac{1}{\epsilon^2} \left(\frac12\Imag\left[\frac{q_x}{q}\right] + (|q| - 1), \frac12\Imag\left[\frac{q_x}{q}\right] - (|q| - 1)\right)(x) \,.
\end{align*}
This means that if $q: \R^{N+1} \times \R \rightarrow \C$ is a solution to the $N$-truncated \NLS hierarchy, and we define $W: \R^{N+1} \times \R \rightarrow \R^2$ according to \eqref{eqn:ansatz}, then the following hold:
\begin{align*}
    q(\bm{t}) &\text{ solves the $N$-truncated \NLS hierarchy} & &\text{ and } & [q \mapsto W](q(\bm{t})) &= W(\bm{T})
    \\ q(\bm{c}^{\NLS \mapsto \GP} \bm{t}) &\text{ solves the $N$-truncated \GP hierarchy} & &\text{ and } & [q \mapsto W](q(\bm{c}^{\NLS \mapsto \GP} \bm{t})) &= W(\bm{c}^{\NLS \mapsto \rGP} \bm{T}) 
    \\ q(\bm{c}^{\NLS \mapsto \tGP} \bm{t}) &\text{ solves the $N$-truncated \tGP hierarchy} & &\text{ and } & [q \mapsto W](q(\bm{c}^{\NLS \mapsto \tGP} \bm{t})) &= W(\bm{c}^{\NLS \mapsto \rtGP} \bm{T}) \,.
\end{align*}
We see now that Theorem \ref{thm:1} is an approximation result written in terms of the \tGP/\rtGP hierarchy.
We want to sketch the consequences of Theorem \ref{thm:1} for the equations \hyperref[eqn:NLS-n]{$(\NLS_n)$}, \hyperref[eqn:GP-n]{$(\GP_n)$}, and \hyperref[eqn:tGP-n]{$(\tGP_n)$} for $n \in \{0, \dots, 7\}$.
To keep the notation simple, we shall use the formal statement
\begin{align*}
    W(\bm{c}^{\NLS \mapsto \rtGP} \bm{T}) &= 
    \begin{pmatrix}
        U_+(\bm{b}^+ \bm{T})
        \\ U_-(\bm{b}^- \bm{T})
    \end{pmatrix}
    + \begin{cases} 
        O(\epsilon^2) \quad \text{ as long as } [\bm{T}] \lesssim 1 &, [\bm{T}] > 0
        \\ 0 \quad \text{ for all } \bm{T} \in \R^{N+1} &, [\bm{T}] = 0
    \end{cases}
\end{align*}
to represent the precise statement \eqref{eqn:approx-main}, which as explained above is an approximation result for the \tGP hierarchy.
A corresponding approximation result for the \NLS hierarchy reads
\begin{align*}
    W(\bm{T}) &=
    \begin{pmatrix}
        U_+(\bm{b}^+ \bm{c}^{\rtGP \mapsto \NLS} \bm{T})
        \\ U_-(\bm{b}^- \bm{c}^{\rtGP \mapsto \NLS} \bm{T})
    \end{pmatrix}
    + \begin{cases} 
        O(\epsilon^2) \quad \text{ as long as } [\bm{c}^{\rtGP \mapsto \NLS} \bm{T}] \lesssim 1 &, [\bm{c}^{\rtGP \mapsto \NLS} \bm{T}] > 0
        \\ 0 \quad \text{ for all } \bm{T} \in \R^{N+1} &, [\bm{c}^{\rtGP \mapsto \NLS} \bm{T}] = 0
    \end{cases} \,,
\end{align*}
and for the \GP hierarchy it is
\begin{align*}
    W(\bm{c}^{\NLS \mapsto \rGP} \bm{T}) &= 
    \begin{pmatrix}
        U_+(\bm{b}^+ \bm{c}^{\rtGP \mapsto \rGP} \bm{T})
        \\ U_-(\bm{b}^- \bm{c}^{\rtGP \mapsto \rGP} \bm{T})
    \end{pmatrix}
    + \begin{cases} 
        O(\epsilon^2) \quad \text{ as long as } [\bm{c}^{\rtGP \mapsto \rGP} \bm{T}] \lesssim 1 &, [\bm{c}^{\rtGP \mapsto \rGP} \bm{T}] > 0
        \\ 0 \quad \text{ for all } \bm{T} \in \R^{N+1} &, [\bm{c}^{\rtGP \mapsto \rGP} \bm{T}] = 0
    \end{cases} \,.
\end{align*}
Here the dependence of various time-related quantities on $\epsilon$ is not clearly visible, so let us write it in a more explicit form.
We assume that $N$ is even. The approximation results can be expressed as
\begin{align*}
    [q \mapsto W](q(\bm{t})) &= \begin{pmatrix}
        U_+\left( \left((\sqrt{2} \epsilon)^{2m+1} \big( (\bm{c}^{\tGP \mapsto \NLS} \bm{t})_{2m+1} - 2 (\bm{c}^{\tGP \mapsto \NLS} \bm{t})_{2m+2} \big) \right)_{0 \leq m \leq M} \right)
        \\ U_-\left( \left((\sqrt{2} \epsilon)^{2m+1} \big( (\bm{c}^{\tGP \mapsto \NLS} \bm{t})_{2m+1} + 2 (\bm{c}^{\tGP \mapsto \NLS} \bm{t})_{2m+2} \big) \right)_{0 \leq m \leq M} \right)
    \end{pmatrix}
    \\ &+ \begin{cases} 
        O(\epsilon^2) \quad \text{ as long as } [(\nabla_{\bm{t}} \bm{T}) \bm{c}^{\tGP \mapsto \NLS} \bm{t}] \lesssim 1 &, [(\nabla_{\bm{t}} \bm{T}) \bm{c}^{\tGP \mapsto \NLS} \bm{t}] > 0
        \\ 0 \quad \text{ for all } \bm{T} \in \R^{N+1} &, [(\nabla_{\bm{t}} \bm{T}) \bm{c}^{\tGP \mapsto \NLS} \bm{t}] = 0
    \end{cases}
\end{align*}
for the \NLS hierarchy,
\begin{align*}
    [q \mapsto W](q(\bm{c}^{\NLS \mapsto \GP} \bm{t})) &= \begin{pmatrix}
        U_+\left( \left((\sqrt{2} \epsilon)^{2m+1} \big( (\bm{c}^{\tGP \mapsto \GP} \bm{t})_{2m+1} - 2 (\bm{c}^{\tGP \mapsto \GP} \bm{t})_{2m+2} \big) \right)_{0 \leq m \leq M} \right)
        \\ U_-\left( \left((\sqrt{2} \epsilon)^{2m+1} \big( (\bm{c}^{\tGP \mapsto \GP} \bm{t})_{2m+1} + 2 (\bm{c}^{\tGP \mapsto \GP} \bm{t})_{2m+2} \big) \right)_{0 \leq m \leq M} \right)
    \end{pmatrix}
    \\ &+ \begin{cases} 
        O(\epsilon^2) \quad \text{ as long as } [(\nabla_{\bm{t}} \bm{T}) \bm{c}^{\tGP \mapsto \GP} \bm{t}] \lesssim 1 &, [(\nabla_{\bm{t}} \bm{T}) \bm{c}^{\tGP \mapsto \GP} \bm{t}] > 0
        \\ 0 \quad \text{ for all } \bm{T} \in \R^{N+1} &, [(\nabla_{\bm{t}} \bm{T}) \bm{c}^{\tGP \mapsto \GP} \bm{t}] = 0
    \end{cases}
\end{align*}
for the \GP hierarchy, and
\begin{align*}
    [q \mapsto W](q(\bm{c}^{\NLS \mapsto \tGP} \bm{t})) &= \begin{pmatrix}
        U_+\left( \left((\sqrt{2} \epsilon)^{2m+1} \big( t_{2m+1} - 2 t_{2m+2} \big) \right)_{0 \leq m \leq M} \right)
        \\ U_-\left( \left((\sqrt{2} \epsilon)^{2m+1} \big( t_{2m+1} + 2 t_{2m+2} \big) \right)_{0 \leq m \leq M} \right)
    \end{pmatrix}
    \\ &+ \begin{cases} 
        O(\epsilon^2) \quad \text{ as long as } [(\nabla_{\bm{t}} \bm{T}) \bm{t}] \lesssim 1 &, [(\nabla_{\bm{t}} \bm{T}) \bm{t}] > 0
        \\ 0 \quad \text{ for all } \bm{T} \in \R^{N+1} &, [(\nabla_{\bm{t}} \bm{T}) \bm{t}] = 0
    \end{cases}
\end{align*}
for the \tGP hierarchy.

\subsection{Examples}
We use the above formulations to give explicit approximation results along one-dimensional time-rays in $\R^{N+1}$ for the cases $N = 0, \dots, 7$.
\begin{example}[$(\NLS_0) = (\GP_0) = (\tGP_0) =$ ``phase rotation''] \label{ex:0}
    Take
    \begin{align*}
        N &= 0 & M &= - 1 & \bm{T} &= (2 (\sqrt{2} \epsilon)^{-1} t_0) \,.
    \end{align*}
    Then
    \begin{align*}
        [q \mapsto W](q(\bm{t})) = [q \mapsto W](q(\bm{c}^{\NLS \mapsto \GP} \bm{t})) = [q \mapsto W](q(\bm{c}^{\NLS \mapsto \tGP} \bm{t})) 
        &= \begin{pmatrix}
            U_+(0)
            \\ U_-(0)
        \end{pmatrix}
        \quad \text{ for all } t_0 \in \R \,.
    \end{align*}
    Here we use the convention $\R^{M+1} = \R^0 = \{0\}$.
\end{example}
\begin{example}[$(\NLS_1) = (\GP_1) = (\tGP_1) =$ ``transport''] \label{ex:1}
    Take
    \begin{align*}
        N &= 1 & M &= 0 & \bm{T} &= (0, \sqrt{2} \epsilon t_1) \,.
    \end{align*}
    Then
    \begin{align*}
        [q \mapsto W](q(\bm{t})) = [q \mapsto W](q(\bm{c}^{\NLS \mapsto \GP} \bm{t})) = [q \mapsto W](q(\bm{c}^{\NLS \mapsto \tGP} \bm{t})) 
        &= \begin{pmatrix}
            U_+(\sqrt{2} \epsilon t_1)
            \\ U_-(\sqrt{2} \epsilon t_1)
        \end{pmatrix}
        \quad \text{ for all } t_1 \in \R \,.
    \end{align*}
\end{example}
\begin{example}[$(\NLS_2) = \NLS$ and $(\GP_2) = (\tGP_2) = \GP$] \label{ex:2}
    Take
    \begin{align*}
        N &= 2 & M &= 0 & \bm{T} &= (0, 0, 2 \sqrt{2} \epsilon t_2) \,.
    \end{align*}
    Then
    \begin{align*}
        [q \mapsto W](q(\bm{t})) &= \begin{pmatrix}
            U_+(- 2 \sqrt{2} \epsilon t_2)
            \\ U_-(2 \sqrt{2} \epsilon t_2)
        \end{pmatrix} + O(\epsilon^2)
        \quad \text{ as long as } |t_2| \lesssim \epsilon^{-1} 
        \\ [q \mapsto W](q(\bm{c}^{\NLS \mapsto \GP} \bm{t})) &= \begin{pmatrix}
            U_+(- 2 \sqrt{2} \epsilon t_2)
            \\ U_-(2 \sqrt{2} \epsilon t_2)
        \end{pmatrix} + O(\epsilon^2)
        \quad \text{ as long as } |t_2| \lesssim \epsilon^{-1}
        \\ [q \mapsto W](q(\bm{c}^{\NLS \mapsto \tGP} \bm{t})) &= \begin{pmatrix}
            U_+(- 2 \sqrt{2} \epsilon t_2)
            \\ U_-(2 \sqrt{2} \epsilon t_2)
        \end{pmatrix} + O(\epsilon^2)
        \quad \text{ as long as } |t_2| \lesssim \epsilon^{-1} \,.
    \end{align*}
    In fact, we have $[q \mapsto W](q(\bm{t})) = [q \mapsto W](q(\bm{c}^{\NLS \mapsto \GP} \bm{t})) = [q \mapsto W](q(\bm{c}^{\NLS \mapsto \tGP} \bm{t}))$.
    This is a consequence of the fact that the difference between these flows for $q$ is one of phase rotation, which does not show up in the variable $W$.

    This result corresponds to \cite[Theorem 2]{BethuelDanchinSmets2010}, i.~e. it is an approximation of \GP by two transport equations.
    As discussed above, it is known from \cite{BethuelGravejatSautSmets2009,BethuelGravejatSautSmets2010} that in the subsequent order \KdV = \hyperref[eqn:KdV-n]{$(\KdV_1)$} shows up as the approximating equation.
    It is natural to conjecture that this may be generalized for the cases $N \geq 3$, i.~e. to characterize the effect of the terms $O(\epsilon^2)$ up to $O(\epsilon^4)$.
    Remark \ref{rem:conjecture} above provides some evidence towards this.
\end{example}
\begin{example}[$(\NLS_3) = \mKdV$, $(\GP_3)$, and $(\tGP_3)$] \label{ex:3}
    Take
    \begin{align*}
        N &= 3 & M &= 1 & \bm{T} &= (0, 0, 0, (\sqrt{2} \epsilon)^3 t_3) \,.
    \end{align*}
    Then
    \begin{align*}
        [q \mapsto W](q(\bm{t})) &= \begin{pmatrix}
            U_+(\sqrt{2} \epsilon \bm{c}^{\tGP \mapsto \NLS}_{1,3} t_3, (\sqrt{2} \epsilon)^3 t_3)
            \\ U_-(\sqrt{2} \epsilon \bm{c}^{\tGP \mapsto \NLS}_{1,3} t_3, (\sqrt{2} \epsilon)^3 t_3)
        \end{pmatrix} + O(\epsilon^2)
        \quad \text{ as long as } |t_3| \lesssim \epsilon^{-3}
        \\ [q \mapsto W](q(\bm{c}^{\NLS \mapsto \GP} \bm{t})) &= \begin{pmatrix}
            U_+(\sqrt{2} \epsilon \bm{c}^{\tGP \mapsto \GP}_{1,3} t_3, (\sqrt{2} \epsilon)^3 t_3)
            \\ U_-(\sqrt{2} \epsilon \bm{c}^{\tGP \mapsto \GP}_{1,3} t_3, (\sqrt{2} \epsilon)^3 t_3)
        \end{pmatrix} + O(\epsilon^2)
        \quad \text{ as long as } |t_3| \lesssim \epsilon^{-3}
        \\ [q \mapsto W](q(\bm{c}^{\NLS \mapsto \tGP} \bm{t})) &= \begin{pmatrix}
            U_+(0, (\sqrt{2} \epsilon)^3 t_3)
            \\ U_-(0, (\sqrt{2} \epsilon)^3 t_3)
        \end{pmatrix} + O(\epsilon^2)
        \quad \text{ as long as } |t_3| \lesssim \epsilon^{-3} \,.
    \end{align*}
    This is the first new result that Theorem \ref{thm:1} yields. 
    Here \mKdV and renormalized versions of \mKdV are approximated by two transport equations in leading order, and then by two \KdV equations at next-to-leading order.
    Observe that for the \NLS and \GP hierarchies the timescale is $\epsilon^{-3}$, which is better than the expected $\epsilon^{-1}$. 
    This is because the approximation of transport equations by transport equations has zero error.
\end{example}
\begin{example}[$(\NLS_4) =$ ``$4$-th order \NLS'', $(\GP_4) = (\tGP_4) =$ ``$4$-th order \GP''] \label{ex:4}
    Take
    \begin{align*}
        N &= 4 & M &= 1 & \bm{T} &= (0, 0, 0, 0, 2 (\sqrt{2} \epsilon)^3 t_4) \,.
    \end{align*}
    Then
    \begin{align*}
        [q \mapsto W](q(\bm{t})) &= \begin{pmatrix}
            U_+(- 2 \sqrt{2} \epsilon \bm{c}^{\tGP \mapsto \NLS}_{2,4} t_4, - 2 (\sqrt{2} \epsilon)^3 t_4)
            \\ U_-(2 \sqrt{2} \epsilon \bm{c}^{\tGP \mapsto \NLS}_{2,4} t_4, 2 (\sqrt{2} \epsilon)^3 t_4)
        \end{pmatrix} + O(\epsilon^2)
        \quad \text{ as long as } |t_4| \lesssim \epsilon^{-1}
        \\ [q \mapsto W](q(\bm{c}^{\NLS \mapsto \GP} \bm{t})) &= \begin{pmatrix}
            U_+(0, - 2 (\sqrt{2} \epsilon)^3 t_4)
            \\ U_-(0, 2 (\sqrt{2} \epsilon)^3 t_4)
        \end{pmatrix} + O(\epsilon^2)
        \quad \text{ as long as } |t_4| \lesssim \epsilon^{-3}
        \\ [q \mapsto W](q(\bm{c}^{\NLS \mapsto \tGP} \bm{t})) &= \begin{pmatrix}
            U_+(0, - 2 (\sqrt{2} \epsilon)^3 t_4)
            \\ U_-(0, 2 (\sqrt{2} \epsilon)^3 t_4)
        \end{pmatrix} + O(\epsilon^2)
        \quad \text{ as long as } |t_4| \lesssim \epsilon^{-3} \,.
    \end{align*}
    For the \NLS hierarchy the case $n = 2$ ``dominates'', leading to the shorter timescale $\epsilon^{-1}$. 
    For the \tGP hierarchy the dynamics are cleanly separated, leading to a better timescale.
    Remark \ref{rem:conjecture} suggests that studying \hyperref[eqn:rtGP-n]{$(\rtGP_4)$} in a ``co-\hyperref[eqn:rtGP-n]{$(\rtGP_3)$}-moving'' frame allows us to see, on a longer timescale, the dynamics of \hyperref[eqn:KdV-n]{$(\KdV_2)$} in one component of our solution $W$.
\end{example}
\begin{example}[$(\NLS_5) =$ ``$5$-th order \mKdV'', $(\GP_5)$, and $(\tGP_5)$] \label{ex:5}
    Take
    \begin{align*}
        N &= 5 & M &= 2 & \bm{T} &= (0, 0, 0, 0, 0, (\sqrt{2} \epsilon)^5 t_5) \,.
    \end{align*}
    Then
    \begin{align*}
        [q \mapsto W](q(\bm{t})) &= \begin{pmatrix}
            U_+(\sqrt{2} \epsilon \bm{c}^{\tGP \mapsto \NLS}_{1,5} t_5, (\sqrt{2} \epsilon)^3 \bm{c}^{\tGP \mapsto \NLS}_{3,5} t_5, (\sqrt{2} \epsilon)^5 t_5)
            \\ U_-(\sqrt{2} \epsilon \bm{c}^{\tGP \mapsto \NLS}_{1,5} t_5, (\sqrt{2} \epsilon)^3 \bm{c}^{\tGP \mapsto \NLS}_{3,5} t_5, (\sqrt{2} \epsilon)^5 t_5)
        \end{pmatrix} 
        \\ &+ O(\epsilon^2) \quad \text{ as long as } |t_5| \lesssim \epsilon^{-3}
        \\ [q \mapsto W](q(\bm{c}^{\NLS \mapsto \GP} \bm{t})) &= \begin{pmatrix}
            U_+(\sqrt{2} \epsilon \bm{c}^{\tGP \mapsto \GP}_{1,5} t_5, (\sqrt{2} \epsilon)^3 \bm{c}^{\tGP \mapsto \GP}_{3,5} t_5, (\sqrt{2} \epsilon)^5 t_5)
            \\ U_-(\sqrt{2} \epsilon \bm{c}^{\tGP \mapsto \GP}_{1,5} t_5, (\sqrt{2} \epsilon)^3 \bm{c}^{\tGP \mapsto \GP}_{3,5} t_5, (\sqrt{2} \epsilon)^5 t_5)
        \end{pmatrix} 
        \\ &+ O(\epsilon^2) \quad \text{ as long as } |t_5| \lesssim \epsilon^{-3}
        \\ [q \mapsto W](q(\bm{c}^{\NLS \mapsto \tGP} \bm{t})) &= \begin{pmatrix}
            U_+(0, 0, (\sqrt{2} \epsilon)^5 t_5)
            \\ U_-(0, 0, (\sqrt{2} \epsilon)^5 t_5)
        \end{pmatrix} 
        \\ &+ O(\epsilon^2) \quad \text{ as long as } |t_5| \lesssim \epsilon^{-5} \,.
    \end{align*}
    Here \hyperref[eqn:KdV-n]{$(\KdV_2)$} shows up for the first time. 
    Note that once again the approximation of transport equations by transport equations has no error, yielding the better-than-expected timescale $\epsilon^{-3}$ for the \NLS and \GP hierarchies.
\end{example}
\begin{example}[$(\NLS_6) =$ ``$6$-th order \NLS'', $(\GP_6) = (\tGP_6) =$ ``$6$-th order \GP''] \label{ex:6}
    Take
    \begin{align*}
        N &= 6 & M &= 2 & \bm{T} &= (0, 0, 0, 0, 0, 0, 2 (\sqrt{2} \epsilon)^5 t_6) \,.
    \end{align*}
    Then
    \begin{align*}
        [q \mapsto W](q(\bm{t})) &= \begin{pmatrix}
            U_+(- 2 \sqrt{2} \epsilon \bm{c}^{\tGP \mapsto \NLS}_{2,6} t_6, - 2 (\sqrt{2} \epsilon)^3 \bm{c}^{\tGP \mapsto \NLS}_{4,6} t_6, - 2 (\sqrt{2} \epsilon)^5 t_6)
            \\ U_-(2 \sqrt{2} \epsilon \bm{c}^{\tGP \mapsto \NLS}_{2,6} t_6, 2 (\sqrt{2} \epsilon)^3 \bm{c}^{\tGP \mapsto \NLS}_{4,6} t_6, 2 (\sqrt{2} \epsilon)^5 t_6)
        \end{pmatrix} 
        \\ &+ O(\epsilon^2) \quad \text{ as long as } |t_6| \lesssim \epsilon^{-1}
        \\ [q \mapsto W](q(\bm{c}^{\NLS \mapsto \GP} \bm{t})) &= \begin{pmatrix}
            U_+(0, 0, - 2 (\sqrt{2} \epsilon)^5 t_6)
            \\ U_-(0, 0, 2 (\sqrt{2} \epsilon)^5 t_6)
        \end{pmatrix} 
        \\ &+ O(\epsilon^2) \quad \text{ as long as } |t_6| \lesssim \epsilon^{-5}
        \\ [q \mapsto W](q(\bm{c}^{\NLS \mapsto \tGP} \bm{t})) &= \begin{pmatrix}
            U_+(0, 0, - 2 (\sqrt{2} \epsilon)^5 t_6)
            \\ U_-(0, 0, 2 (\sqrt{2} \epsilon)^5 t_6)
        \end{pmatrix} 
        \\ &+ O(\epsilon^2) \quad \text{ as long as } |t_6| \lesssim \epsilon^{-5} \,.
    \end{align*}
\end{example}
\begin{example}[$(\NLS_7) =$ ``$7$-th order \mKdV'', $(\GP_7)$, and $(\tGP_7)$] \label{ex:7}
    Take
    \begin{align*}
        N &= 7 & M &= 3 & \bm{T} &= (0, 0, 0, 0, 0, 0, 0, (\sqrt{2} \epsilon)^7 t_7) \,.
    \end{align*}
    Then
    \begin{align*}
        [q \mapsto W](q(\bm{t})) &= \begin{pmatrix}
            U_+(\sqrt{2} \epsilon \bm{c}^{\tGP \mapsto \NLS}_{1,7} t_7, (\sqrt{2} \epsilon)^3 \bm{c}^{\tGP \mapsto \NLS}_{3,7} t_7, (\sqrt{2} \epsilon)^5 \bm{c}^{\tGP \mapsto \NLS}_{5,7} t_7, (\sqrt{2} \epsilon)^7 t_7)
            \\ U_-(\sqrt{2} \epsilon \bm{c}^{\tGP \mapsto \NLS}_{1,7} t_7, (\sqrt{2} \epsilon)^3 \bm{c}^{\tGP \mapsto \NLS}_{3,7} t_7, (\sqrt{2} \epsilon)^5 \bm{c}^{\tGP \mapsto \NLS}_{5,7} t_7, (\sqrt{2} \epsilon)^7 t_7)
        \end{pmatrix} 
        \\ &+ O(\epsilon^2) \quad \text{ as long as } |t_7| \lesssim \epsilon^{-3}
        \\ [q \mapsto W](q(\bm{c}^{\NLS \mapsto \GP} \bm{t})) &= \begin{pmatrix}
            U_+(\sqrt{2} \epsilon \bm{c}^{\tGP \mapsto \GP}_{1,7} t_7, (\sqrt{2} \epsilon)^3 \bm{c}^{\tGP \mapsto \GP}_{3,7} t_7, (\sqrt{2} \epsilon)^5 \bm{c}^{\tGP \mapsto \GP}_{5,7} t_7, (\sqrt{2} \epsilon)^7 t_7)
            \\ U_-(\sqrt{2} \epsilon \bm{c}^{\tGP \mapsto \GP}_{1,7} t_7, (\sqrt{2} \epsilon)^3 \bm{c}^{\tGP \mapsto \GP}_{3,7} t_7, (\sqrt{2} \epsilon)^5 \bm{c}^{\tGP \mapsto \GP}_{5,7} t_7, (\sqrt{2} \epsilon)^7 t_7)
        \end{pmatrix} 
        \\ &+ O(\epsilon^2) \quad \text{ as long as } |t_7| \lesssim \epsilon^{-3}
        \\ [q \mapsto W](q(\bm{c}^{\NLS \mapsto \tGP} \bm{t})) &= \begin{pmatrix}
            U_+(0, 0, 0, (\sqrt{2} \epsilon)^7 t_7)
            \\ U_-(0, 0, 0, (\sqrt{2} \epsilon)^7 t_7)
        \end{pmatrix} 
        \\ &+ O(\epsilon^2) \quad \text{ as long as } |t_7| \lesssim \epsilon^{-7} \,.
    \end{align*}
\end{example}
At this point we stop because all the edge cases have been exhibited and the pattern continues as expected.

\subsection{Notations and definitions}
We use $\N = \{n \in \Z: n \geq 0\}$ but note that in some cases where we write $n \in \N$, and when the intended interpretation is clear from the context, $n = 0$ will be an inadmissible value.

We often write $A \leq C(\text{things}) B$ to indicate that there exists a constant, depending only on the given parameters, for which the estimate holds.
Such constants may change from one line to the next without being given a new name.

Whenever we consider a function space such as $H^s$ without explicit domain, the domain is implicitly assumed to be $\R$. 
The target space may be $\R$, $\C$, or corresponding spaces of vectors or matrices, and must be determined from context. 

We denote by $\bm{t} = (t_0, \dots, t_N), \bm{T} = (T_0, \dots, T_N), \bm{0} = (0, \dots, 0) \in \R^{N+1}$ certain vectors of times.
For matrices $\bm{c} \in \C^{(N+1) \times (N+1)}$ we denote the entry in the $n$-th row and $m$-th column by $\bm{c}_{n,m}$.
The only exception is the matrix of Jost solutions $\Phi^{\Eqn,\pm} = (\Phi^{\Eqn,\pm}_1 |\Phi^{\Eqn,\pm}_2)$.

We define the Pauli matrices
\begin{align*}
    \sigma_3 &= \begin{pmatrix} 1 & 0 \\ 0 & - 1 \end{pmatrix}
    & \sigma_1 &= \begin{pmatrix} 0 & 1 \\ 1 & 0 \end{pmatrix} \,.
\end{align*}

For real numbers $x, y \in \R$, we define $x \land y = \min\{x, y\}$ and $x \lor y = \max\{x, y\}$, as well as the Japanese bracket $\langle x \rangle = \sqrt{x^2 + 1}$ and the floor and ceiling functions $\floor{x} = \max\{n \in \Z: n \leq x\}$ and $\ceil{x} = \min\{n \in \Z: n \geq x\}$.

For functions $f: \R \rightarrow \C^n$ we use the following convention for the Fourier transform:
\begin{align*}
    \ha{f}(\xi) &= \frac{1}{\sqrt{2 \pi}} \int_{\R} e^{- i x \xi} f(x) \dd x \,.
\end{align*}
We extend the Fourier transform to the tempered distributions $\mc{S}'(\R; \C^n)$ as usual.

We denote by $\mc{Q}_j = i^{j-1} \{z \in \C: \Re z > 0, \Imag z > 0\}$, $j \in \{1, 2, 3, 4\}$ the four open quadrants of the complex plane.

\textbf{Acknowledgements.}
I thank my supervisor Xian Liao for her support and guidance over the past few years, and for many enjoyable and insightful discussions.

\section{\texorpdfstring{Definition and approximation of the Hamiltonians $\mc{H}^\rGP_n$ by $\mc{E}^\KdV_n$}
{Definition and approximation of the Hamiltonians 𝓗ᴳᴾₙ by 𝓔ᴷᵈⱽₙ}}

\label{section:transmissioncoefficient}
\subsection{Lax pairs}
\label{section:lax-pairs}
Formally, a function $q: \R^2 \rightarrow \C$ is a solution to \GP if and only if the time-dependent operators
\begin{align*}
    L^\GP &= \begin{pmatrix} i \del_x & - i q \\ i \conj{q} & - i \del_x \end{pmatrix}
\end{align*}
and
\begin{align*}
    P^\GP &=  i \begin{pmatrix}
        2 \del_x^2 - q \conj{q} + 1 & - 2 q \del_x - q_x \\ 2 \conj{q} \del_x + \conj{q}_x & - 2 \del_x^2 + q \conj{q} - 1
    \end{pmatrix}
\end{align*}
solve the so-called Lax equation
\begin{align*}
    \del_t L^\GP &= [P^\GP, L^\GP] \,.
\end{align*}
Here $L^\GP$ is called the Lax operator of \GP and $(L^\GP, P^\GP)$ the Lax pair.
Setting $L^\NLS = L^\GP$ and
\begin{align*}
    P^\NLS &= i \begin{pmatrix}
        2 \del_x^2 - q \conj{q} & - 2 q \del_x - q_x \\ 2 \conj{q} \del_x + \conj{q}_x & - 2 \del_x^2 + q \conj{q}
    \end{pmatrix}
\end{align*}
yields a Lax pair for \NLS.
A direct consequence of the Lax pair formalism is that eigenvalues of the Lax operator are conserved quantities. 
A deeper consequence is that the potential $q$ can be reconstructed from the scattering data associated to the Lax operator $L^\GP$,
and this scattering data undergoes a linear evolution in time. 
As such, one can formally linearize \NLS or \GP by conjugating the flow with the direct and inverse scattering transforms.
This is called the inverse scattering transform (IST) method, and it is the reason why \NLS and \GP are considered to be completely integrable PDEs.
Note that understanding the mapping properties of the IST on standard function spaces is non-trivial, especially for nonzero boundary conditions such as in the case of \GP.
For \NLS and \GP the scattering data is obtained from the Zakharov-Shabat scattering problem, which is given as the cases $\Eqn \in \{\NLS, \GP\}$ in Definition \ref{def:jost} below.
Since the IST method is not the focus of this work, we shall merely highlight here the seminal papers \cite{AblowitzKaupNewellSegur1974,ZakharovShabat} and refer to the recent review \cite{Prinari} regarding the development of the IST method, with a focus on \GP.
We emphasize the connection to \cite[Section 2]{LiaoWegner2025} in a preceding work, where the theory that we use here is set up, with matching notation, and more background on complete integrability and the IST method is given.

We shall now give Lax pairs for \hGP, \rGP and \KdV.
Set $q = a e^{i \phi}$ and $\theta = \frac{i}{2} (\phi - \frac{\pi}{2})$.
In \cite{LiaoPlum2023} X.~Liao and M.~Plum note that the explicit unitary matrix
\begin{align*}
    M &= \frac{1}{\sqrt{2}} \begin{pmatrix}
        e^{- \theta} & e^\theta
        \\ e^{- \theta} &  - e^\theta
    \end{pmatrix}
    & M^{-1} &= \frac{1}{\sqrt{2}} \begin{pmatrix}
        e^\theta & e^{- \theta}
        \\ e^\theta & - e^{- \theta}
    \end{pmatrix}
\end{align*}
yields the unitary equivalence
\begin{align*}
    L^\GP &= M^{-1} L^\hGP M \,,
\end{align*}
where
\begin{align*}
    L^\hGP &= \begin{pmatrix}
        - w_- + 1 & i \del_x \\ i \del_x & - w_+ - 1
    \end{pmatrix} \,.
\end{align*}
Together with
\begin{align*}
    P^\hGP &= \begin{pmatrix}
        - \frac12 (3 w_- + w_+)_x - (3 w_- + w_+ - 2) \del_x
        & 2 i \del_x^2 - \frac{i}{2} \frac{(w_+ - w_-)_{xx}}{w_+ - w_- + 2}
        \\ 2 i \del_x^2 - \frac{i}{2} \frac{(w_+ - w_-)_{xx}}{w_+ - w_- + 2}
        & - \frac12 (w_- + 3 w_+)_x - (w_- + 3 w_+ + 2) \del_x
    \end{pmatrix}
\end{align*}
this forms a Lax pair for the hydrodynamic formulation \hGP of \GP.

By rescaling, we can construct from $(L^\hGP, P^\hGP)$ the Lax pair
\begin{align*}
    L^\rGP &= \begin{pmatrix}
        - \epsilon^2 W_- + 1 & i \sqrt{2} \epsilon \del_X \\ i \sqrt{2} \epsilon \del_X & - \epsilon^2 W_+ - 1
    \end{pmatrix}
\end{align*}
and
\begin{align*}
    P^\rGP &= \begin{pmatrix} P^\rGP_{1,1} & P^\rGP_{1,2} \\ P^\rGP_{2,1} & P^\rGP_{2,2}
    \end{pmatrix}
    \text{ where } \begin{pmatrix} P^\rGP_{1,1} \\ P^\rGP_{1,2} \\ P^\rGP_{2,1} \\ P^\rGP_{2,2}
    \end{pmatrix}
    = \begin{pmatrix}
        - \frac{\epsilon}{\sqrt{2}} (3 \epsilon^2 W_- + \epsilon^2 W_+)_X - (3 \epsilon^2 W_- + \epsilon^2 W_+ - 2) \sqrt{2} \epsilon \del_X
        \\ 4 i \epsilon^2 \del_X^2 - i \epsilon^2 \frac{(\epsilon^2 W_+ - \epsilon^2 W_-)_{X\!X}}{\epsilon^2 W_+ - \epsilon^2 W_- + 2}
        \\ 4 i \epsilon^2 \del_X^2 - i \epsilon^2 \frac{(\epsilon^2 W_+ - \epsilon^2 W_-)_{X\!X}}{\epsilon^2 W_+ - \epsilon^2 W_- + 2}
        \\ - \frac{\epsilon}{\sqrt{2}} (\epsilon^2 W_- + 3 \epsilon^2 W_+)_X - (\epsilon^2 W_- + 3 \epsilon^2 W_+ + 2) \sqrt{2} \epsilon \del_X
    \end{pmatrix}
\end{align*}
for \rGP.
Lastly, we recall the well-known Lax pair
\begin{align*}
    \ti{L}^\KdV &= - \del_X^2 + U
    & P^\KdV &= - 4 \del_X^3 + 3 U_X + 6 U \del_X
\end{align*}
for \KdV.
For the sake of uniformity we treat the scattering problem for $\KdV$ as a first-order system, i.~e. we use instead of $\ti{L}^\KdV$ the Lax operator
\begin{align*}
    L^\KdV &= \begin{pmatrix}
        - \del_X^2 + U & 0
        \\ U_X & - \del_X^2 + U
    \end{pmatrix} \,.
\end{align*}

We are interested in these Lax pairs because the conservation of the spectral data of the Lax operator implies the existence of infinitely many conserved quantities. 
More specifically, we shall construct in the subsequent section the Hamiltonians $\mc{H}^\NLS_n, \mc{H}^\GP_n, \mc{H}^\hGP_n, \mc{H}^\rGP_n$, and $\mc{E}^\KdV_n$ from the transmission coefficients associated to each scattering problem.

\subsection{Jost solutions and their asymptotic expansion}
The Jost solutions for each scattering problem depend on a complex parameter $\lambda \in \C$.
We prefer to use for \GP and \hGP the alternative variable $z = \sqrt{\lambda^2 - 1}$, and for \rGP and \KdV the variable $k = \frac{z}{\sqrt{2} \epsilon}$. 
Due to the multivaluedness of the relation $\lambda \leftrightarrow z$, it is common to consider pairs $(\lambda, z) \in \C^2$ as lying on a Riemann surface.
Since we also consider the additional parameter $k$, we define
\begin{align} \label{eqn:surface}
    \K = \{(\lambda, z, k) \in \C^3: \lambda^2 - z^2 = 1 \text{ and } z =  \sqrt{2} \epsilon k\} \,.
\end{align}
We decompose $\K$ into two sheets $\K = \conj{\K_+} \cup \conj{\K_-}$ where
\begin{align*}
    \K_\pm &= \{(\lambda, z, k) \in \K: \lambda \in \C \setminus ((-\infty, -1] \cup [1, \infty)), \pm \Imag z > 0\} \,.
\end{align*}
By an abuse of notation, we may write $\lambda \in \conj{\K_\pm}$, $z \in \conj{\K_\pm}$, or $k \in \conj{\K_\pm}$ to mean $(\lambda, z, k) \in \conj{\K_\pm}$.
Similarly, for functions $f$ with domain $\conj{\K_\pm}$ we write $f(\lambda) = f(z) = f(k) = f(\lambda, z, k)$.

In order to give a uniform treatment of all the scattering problems under consideration, we define
\begin{align*}
    \Eqns &= \{\NLS, \GP, \hGP, \rGP, \KdV\}
\end{align*}
and write $\Eqn \in \Eqns$ when we wish for $\Eqn$ to refer to any of the equations under consideration.
Using this notation, we shall characterize all five scattering problems under consideration using a single set of variables:
\begin{align*}
    (q^\Eqn, q^\Eqn_\pm, x^\Eqn, \lambda^\Eqn, z^\Eqn) &= \begin{cases}
        (q, 0, x, \lambda, \lambda) &, \Eqn = \NLS
        \\ (q, q_\pm, x, \lambda, z) &, \Eqn = \GP
        \\ (w, 0, x, \lambda, z) &, \Eqn = \hGP
        \\ (W, 0, X, \lambda, k) &, \Eqn = \rGP
        \\ (U, 0, X, k^2, k) &, \Eqn = \KdV
    \end{cases} \,.
\end{align*}
In the following we always take $j \in \{1, 2\}$ and set $\gamma_1 = 1$ and $\gamma_2 = - 1$.

\begin{definition}[Jost solutions of \Eqn] \label{def:jost}
    Given a potential $q^\Eqn$ with $\lim_{x \rightarrow \pm \infty} q^\Eqn = q^\Eqn_\pm$, we define the Jost solution $\Phi^{\Eqn,\pm}_j$ as the solution to the scattering problem
    \begin{align}
        \nonumber \Phi^{\Eqn,\pm}_j: \R \times \conj{\K_{\mp \gamma_j}} &\longrightarrow \C^2
        \\ \label{eqn:jost-eigenvalue} L^\Eqn \;\Phi^{\Eqn,\pm}_j(x^\Eqn, z^\Eqn; q^\Eqn) &= \lambda^\Eqn \; \Phi^{\Eqn,\pm}_j(x^\Eqn, z^\Eqn; q^\Eqn) 
        \\ \label{eqn:jost-bdry}\lim_{x \rightarrow \pm \infty} \Phi^{\Eqn,\pm}_j(x^\Eqn, z^\Eqn; q^\Eqn) e^{x^\Eqn i z^\Eqn \gamma_j} &= E^{\Eqn,\pm}_j(z^\Eqn) \,.
    \end{align}
    We define in addition the modified Jost solutions as $\Psi^{\Eqn,\pm}_j = \Phi_j^{\Eqn,\pm} e^{x^\Eqn i z^\Eqn \gamma_j}$.
    The eigenvalue problem \eqref{eqn:jost-eigenvalue} is equivalent to
    \begin{align}
        \label{eqn:jost-ode} \del_{x^\Eqn} \Phi^{\Eqn,\pm}_j(x^\Eqn, z^\Eqn; q^\Eqn) &= A^\Eqn(x^\Eqn, z^\Eqn; q^\Eqn) \; \Phi^{\Eqn,\pm}_j(x^\Eqn, z^\Eqn; q^\Eqn) 
        \\ \label{eqn:mod-jost-ode} \del_{x^\Eqn} \Psi^{\Eqn,\pm}_j(x^\Eqn, z^\Eqn; q^\Eqn) &= (A^\Eqn(x^\Eqn, z^\Eqn; q^\Eqn) + i z^\Eqn \gamma_j) \; \Psi^{\Eqn,\pm}_j(x^\Eqn, z^\Eqn; q^\Eqn) 
    \end{align}
    The matrices $A^\Eqn(x^\Eqn, z^\Eqn; q^\Eqn), E^{\Eqn,\pm}(z^\Eqn) \in \C^{2 \times 2}$ are defined in the following table:
    {\setlength{\arraycolsep}{2pt}
    \setlength{\tabcolsep}{2.3pt}
    \begin{equation*}
        \begin{array}{|l|c|c|}
            \hline
            \Eqn & A^{\Eqn} & E^{\Eqn,\pm}
            \\ \hline
            \NLS 
            & \begin{pmatrix}
                - i \lambda & q
                \\ \conj{q} & i \lambda
            \end{pmatrix}
            & \begin{pmatrix}
                1 & 0
                \\ 0 & 1
            \end{pmatrix}
            \\ \hline
            \GP
            & \begin{pmatrix}
                - i \lambda & q
                \\ \conj{q} & i \lambda
            \end{pmatrix}
            & \begin{pmatrix}
                q_\pm & i (z - \lambda)
                \\ i (\lambda - z) & \conj{q}_\pm
            \end{pmatrix}
            \\ \hline
            \hGP
            & \begin{pmatrix}
                    0 & - i (\lambda + 1 + w_+)
                    \\ - i (\lambda - 1 + w_-) & 0
                \end{pmatrix}
            & \frac{i}{\sqrt{2}} \begin{pmatrix}
                \lambda - z + 1 & - \lambda + z - 1
                \\ - \lambda + z + 1 & - \lambda + z + 1
            \end{pmatrix} \begin{pmatrix}
                e^{\theta_\pm} & 0 
                \\ 0 & e^{- \theta_\pm}
            \end{pmatrix}
            \\ \hline
            \rGP 
            & \begin{pmatrix}
                0 & - \frac{i \epsilon}{\sqrt{2}} \big(\frac{\lambda + 1}{\epsilon^2} + W_+\big)
                \\ - \frac{i \epsilon}{\sqrt{2}} \big(\frac{\lambda - 1}{\epsilon^2} + W_-\big) & 0
            \end{pmatrix}
            & \frac{i}{\sqrt{2}} \begin{pmatrix}
                \lambda - \sqrt{2} \epsilon k + 1 & - \lambda + \sqrt{2} \epsilon k - 1
                \\ - \lambda + \sqrt{2} \epsilon k + 1 & - \lambda + \sqrt{2} \epsilon k + 1
            \end{pmatrix} \begin{pmatrix}
                e^{\theta_\pm} & 0 
                \\ 0 & e^{- \theta_\pm}
            \end{pmatrix}
            \\ \hline
            \KdV
            & \begin{pmatrix}
                0 & 1
                \\ U - k^2 & 0
            \end{pmatrix}
            & \begin{pmatrix}
                1 & 1
                \\ - i k & i k
            \end{pmatrix}
            \\ \hline
        \end{array}
    \end{equation*}}
    Here $\theta_\pm = \frac{i}{2} (\phi_\pm - \frac{\pi}{2})$ where $e^{i \phi_\pm} = q_\pm$.
    We also define $A^{\Eqn,\pm}(z^\Eqn; q^\Eqn) = \lim_{x^\Eqn \rightarrow \pm \infty} A^{\Eqn}(x^\Eqn, z^\Eqn; q^\Eqn)$.
    Observe that $E^{\Eqn,\pm}$ is indeed a solution to \eqref{eqn:mod-jost-ode} at $x^\Eqn = \pm \infty$.
\end{definition}
Our choice of normalization for these eigenvalue problems yields the identity
\begin{align} \label{eqn:jost-identity}
    \Phi^{\GP,\pm}_j(x, z) &= M(x) \Phi^{\hGP,\pm}_j(x, z) = M\left(\frac{X}{\sqrt{2} \epsilon}\right) \Phi^{\rGP,\pm}_j(X, k) \,.
\end{align}

\subsection{Transmission coefficients and their asymptotic expansion}
We recall \cite[Definition 2.4]{LiaoWegner2025}.
\begin{definition} \label{def:expansion}
    Let $D \subset \C$, $d \in \N$ and $1 \leq p \leq \infty$.
    \begin{enumerate}[(i)]
        \item We say that a function $f = f(z) \in C^\infty(D; \C^d)$ has an 
        \textbf{asymptotic expansion in powers of $2 i z$ at infinity on $D$} if there exist $(f_n)_{n \in \N} \subset \C^d$ such that
        \begin{align*}
            \forall\, N \in \N \quad
            \lim_{|z| \rightarrow \infty} (2 i z)^N \left( f(z) - \sum_{n=0}^N \frac{f_n}{(2 i z)^n} \right) = 0 \,.
        \end{align*}
        We call $f_n$ the \textbf{expansion coefficients} of $f$ and note that they are unique.
        If $f$ and $g$ have such an expansion then $2 i z f$ and $f g$ do as well. 
        If $g(z) \neq 0$ for $|z|$ sufficiently large, then $f g^{-1}$ also has such an expansion.
        
        \item We say that a function $f = f(x, z) \in C^\infty(\R \times D; \C^d)$ has an 
        \textbf{$L^p$-smooth asymptotic expansion in powers of $2 i z$ at infinity on $D$} if there exist $(f_n)_{n \in \N} \subset (C^\infty \cap L^p)(\R; \C^d)$ such that
        \begin{align*}
            \forall\, k, N \in \N \quad
            \lim_{|z| \rightarrow \infty} \left\| (2 i z)^N \left( \del_x^k f(x, z) - \sum_{n=0}^N \frac{\del_x^k f_n(x)}{(2 i z)^n} \right) \right\|_{L^p_x(\R)} = 0 \,.
        \end{align*}
        In this case $\del_x f$ has such an expansion as well. 
        If $g$ has such an $L^\infty$-smooth expansion and $\inf_{x \in \R} |g(x, z)| > 0$ for $|z|$ sufficiently large, then $f g^{-1}$ also has an $L^p$-smooth asymptotic expansion.
        If $p = 1$ then $\int_{-\infty}^\infty f(x, z) \dd x$ has an asymptotic expansion in powers of $2 i z$ at infinity on $D$ with expansion coefficients $\int_{-\infty}^\infty f_n(x) \dd x$.
    \end{enumerate}
    When $f$ has an asymptotic expansion of some kind with expansion coefficients $(f_n)_{n \in \N}$, we write
    \begin{align*}
        f &\sim \sum_{n=0}^\infty \frac{f_n}{(2 i z)^n} \,.
    \end{align*}
\end{definition}
We collect now a number of results on the Jost solutions which are known in the literature, although rarely arranged in this form.
For the rest of the section we focus only on $\conj{\K_+}$.
\begin{lemma} \label{lem:jost}
    Let $q^\Eqn \in C^\infty(\R)$ with $q^\Eqn - q^\Eqn_\pm \in \mc{S}(\R_\pm)$.
    \begin{enumerate}[(i)]
        \item The Jost solutions $\Phi^{\Eqn,\pm}_j$ exist and are smooth in $x^\Eqn$ and analytic in $z^\Eqn$ up to a finite set of choices for $z^\Eqn$.
        \item There exists a constant $c(q^\Eqn) > 0$ such that $\Imag z^\Eqn > c(q^\Eqn)$ implies
        \begin{align}
            \label{eqn:ass-limit} &\text{the limits } \lim_{x^\Eqn \rightarrow \infty} \Psi^{\Eqn,-}_1(x^\Eqn, z^\Eqn), \lim_{x^\Eqn \rightarrow - \infty} \Psi^{\Eqn,+}_2(x^\Eqn, z^\Eqn) 
            \text{ exist}
            \\ &\lim_{x^\Eqn \rightarrow \infty} \del_{x^\Eqn} \Psi^{\Eqn,-}_1(x^\Eqn, z^\Eqn) = \lim_{x^\Eqn \rightarrow - \infty} \del_{x^\Eqn} \Psi^{\Eqn,+}_2(x^\Eqn, z^\Eqn) = 0 
            \\ \label{eqn:sup-jost-1} &\sup_{x^\Eqn \in \R} \left| \Psi^{\Eqn,-}_1(x^\Eqn, z^\Eqn) - E^{\Eqn,-}_1(z^\Eqn) \right| < \frac{|E^{\Eqn,-}_1(z^\Eqn)|}{c(q^\Eqn) |z^\Eqn|}
            \\ \label{eqn:sup-jost-2} &\sup_{x^\Eqn \in \R} \left| \Psi^{\Eqn,+}_2(x^\Eqn, z^\Eqn) - E^{\Eqn,+}_2(z^\Eqn) \right| < \frac{|E^{\Eqn,+}_2(z^\Eqn)|}{c(q^\Eqn) |z^\Eqn|} \,.
        \end{align}
        \item Define on $\conj{\K_+} \cap \{\Imag z^\Eqn > c, \pm \Imag \lambda > 0\}$ the quantities
        \begin{align}
            \label{eqn:sig-0} \sigma^\Eqn &= \frac{\del_{x^\Eqn} \Psi^{\Eqn,-}_{1,1}}{\Psi^{\Eqn,-}_{1,1}} 
            & \ti{\sigma}^\Eqn &= - \frac{\del_{x^\Eqn} \Psi^{\Eqn,+}_{2,2}}{\Psi^{\Eqn,+}_{2,2}} 
            \\ \label{eqn:r-0} r^\Eqn &= \frac{\Psi^{\Eqn,-}_{1,2}}{\Psi^{\Eqn,-}_{1,1}} 
            & \ti{r}^\Eqn &= \frac{\Psi^{\Eqn,+}_{2,1}}{\Psi^{\Eqn,+}_{2,2}} \,.
        \end{align}
        They are bounded and have $L^\infty$-smooth asymptotic expansions in powers of $2 i z^\Eqn$ at infinity.
        The densities $\sigma^\Eqn$ and $\ti{\sigma}^\Eqn$ are in addition integrable, and their asymptotic expansions are also $L^1$-smooth.
    \end{enumerate}
\end{lemma}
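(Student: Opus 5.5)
Since these facts are classical, the plan is to reduce all five problems to one template and then argue once. First, I would use the explicit gauge relations to move between the cases. Identity \eqref{eqn:jost-identity} gives $\Phi^{\GP}=M\Phi^{\hGP}=M\Phi^{\rGP}$, where $M$ is smooth and bounded with bounded inverse; this holds because $|q|>\delta$ and $M\to$ const at $\pm\infty$. The \NLS case is the \GP case with $q_\pm=0$. For \KdV, $A^\KdV$ is the companion matrix of $-\del_X^2+U-k^2$. In each case I would conjugate by the constant matrix $E^{\Eqn,\pm}$ that diagonalizes $A^{\Eqn,\pm}$. The modified Jost function $\Psi_j$ then solves
\[
\del_x \Psi_j=(A^{\Eqn,\pm}+i z\gamma_j)\Psi_j+(A^\Eqn-A^{\Eqn,\pm})\Psi_j ,
\]
where the perturbation $A^\Eqn-A^{\Eqn,\pm}$ is Schwartz on $\R_\pm$. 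After conjugation, the free part is $\diag(0,2iz\gamma_j)$ or $\diag(-2iz\gamma_j,0)$.

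For (i), I would write the Volterra integral equation
\[
\Psi^{\pm}_j(x)=E^\pm_j+\int_{\pm\infty}^x e^{(x-y)(\text{free})}(A-A^{\pm})(y)\Psi^\pm_j(y)\dd y .
\]
On the half-line where the potential decays, the exponential kernel is bounded exactly when $z\in\conj{\K_{\mp\gamma_j}}$. The Neumann series converges there with the bound $\exp(\int|A-A^\pm|)$, which gives existence, smoothness in $x$, and analyticity in the interior. On the other half-line, where $q^\Eqn$ has a different limit, I would extend by solving the linear ODE. The exceptional finite set consists of the branch points $z=0$ (that is, $\lambda=\pm1$), where $E^{\Eqn,\pm}$ degenerates. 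There one either excludes the point or normalizes by $\det E$.

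For (ii), the key step is to make the perturbation small relative to $|z|$. In the \GP and \NLS variables, $A-A^\pm$ is off-diagonal and of size $O(1)$ independent of $z$. I would integrate the off-diagonal component once by parts against the oscillating factor $e^{2iz(x-y)}$, whose modulus is bounded by $e^{-2\Imag z\,|x-y|}$. This gains a factor $(\Imag z)^{-1}$, so the Neumann series yields $\sup_x|\Psi-E|\lesssim |E|\,\|q-q_\pm\|_{W^{1,1}}/|z|$ once $\Imag z>c$. Since the same exponential decay holds on all of $\R$, the limit of $\Psi^{-}_1$ at $+\infty$ exists, and $\del_x\Psi\to0$ follows from the ODE.

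For (iii), I would use Riccati equations. Dividing the ODE by $\Psi_{1,1}$, which is nonzero by (ii) for $\Imag z>c$, shows that $r$ satisfies a Riccati equation of the form
\[
r_x=2iz\,r+\beta-\alpha r^2 ,
\]
and that $\sigma=\alpha r$ or a similar expression; analogous formulas hold for $\ti r,\ti\sigma$. I would then insert the formal series $r\sim\sum r_n(2iz)^{-n}$, which gives the recursion $r_{n+1}=\del_x r_n+\sum\alpha r_mr_{n-m}-\beta\delta_{n0}$. Every coefficient is a differential polynomial in $q-q_\pm$ and is therefore Schwartz, which gives the $L^1$ and $L^\infty$ integrability. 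To justify the expansion, I would show that the remainder $R_N=(2iz)^N(r-\sum_{n\le N})$ solves a linear equation $\del_xR_N=2izR_N+O(|z|^{-1})$ with zero data at $-\infty$. The resolvent $(\del_x-2iz)^{-1}$ has norm $\lesssim|\Imag z|^{-1}$ on $L^1$ and on $L^\infty$, so $R_N\to0$; derivatives are handled by differentiating the equation. I expect the main obstacle to be this uniform remainder control. It requires a bootstrap that uses the a priori bound $|r|\lesssim|z|^{-1}$ from (ii) to absorb the quadratic term $\alpha r^2$. For $\rGP$ there is an extra difficulty: $\lambda\sim\sqrt2\epsilon k$ grows with $k$, so the off-diagonal entries are unbalanced. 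I would handle this by first passing to the \GP frame through $M$ and then transferring the result back.
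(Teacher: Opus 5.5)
Your overall architecture matches the paper's. Its proof is by reference: Volterra equations and Neumann series for \GP, the gauge identity \eqref{eqn:jost-identity} to pass to \hGP and \rGP, the zero-boundary cases as easier variants, and (iii) deferred to the preceding work. The gap is that you reduce everything to \GP with $|q_\pm|=1$ and, generically, $q_+\neq q_-$ (for \hGP and \rGP one has $\phi_+-\phi_-=\int v\,\dd x$), but then treat that case as a zero-boundary problem. The step carrying (iii) fails there.

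Your claim that every expansion coefficient is a differential polynomial in $q-q_\pm$, hence Schwartz, is false. From $r_x=2i\lambda r+\conj{q}-qr^2$ one gets $r^{\GP}_1=-\conj{q}$, and the paper's own recursion has $r^{\rGP}_0=1$. The $r_n$ are only bounded, in general with different limits at $\pm\infty$. Consequences:
\begin{enumerate}[(a)]
  \item The remainder equation for $r$ has neither vanishing data at $-\infty$ nor integrable forcing, so the $L^1$ resolvent bound cannot be applied to it.
  \item Since $\sigma^{\GP}=i(z-\lambda)+qr$ with $q\notin L^1$, $L^\infty$-control of $r-\sum_{n\le N}r_n(2iz)^{-n}$ does not give $L^1$-smoothness of the $\sigma$-expansion. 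That property is exactly what defines the Hamiltonians.
  \item Integrability of the individual coefficients is not automatic: $\sigma^{\GP}_1=1-|q|^2$ is integrable only because $|q_\pm|=1$.
\end{enumerate}

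The missing step is to treat $\sigma$ through its own equation. Since $(\sigma+i(\lambda-z))/q=r$, \eqref{eqn:GP-sig} reads $\sigma_x=2iz\sigma-\sigma^2+(|q|^2-1)+q_x r$, whose inhomogeneity is integrable. This is the structure of the paper's coupled systems such as \eqref{eqn:rGP-sig}--\eqref{eqn:rGP-r}. The argument then runs as follows:
\begin{enumerate}[(1)]
  \item Run your remainder argument only in $L^\infty$ for $r$. The data at $-\infty$ are then $O(|z|^{-1})$ after multiplying by $(2iz)^N$, not zero.
  \item Run it in $L^1$ for $\sigma-\sum_{n\le N}\sigma_n(2iz)^{-n}$. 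This remainder does vanish at $-\infty$, and its forcing contains $q_x\bigl(r-\sum_n r_n(2iz)^{-n}\bigr)$, which lies in $L^1$ with the right order.
  \item Integrability of each $\sigma_n$ follows because $\sigma\equiv0$ for the constant potential $q\equiv q_\pm$. Hence the local polynomial $\sigma_n$ vanishes on that background.
\end{enumerate}

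A similar point affects (ii). If $q_+\neq q_-$, then $A-A^-$ is not integrable on $\R_+$. Neither a single Volterra equation normalized at $-\infty$ nor extending by the ODE gives the supremum over all $x\in\R$. You must restart at $x=0$ with reference matrix $A^+$ and frame $E^+$, and use that $E^-_1$ lies within $O(|E^-_1|/|z|)$ of $\C E^+_1$. This keeps $\Psi^{-}_1$, and its limit $aE^+_1$, within $O(|E^-_1|/|z|)$ of $E^-_1$ on $\R_+$.
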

\begin{proof}
    Let us first discuss the case of \GP. 
    Here (i) and (ii) can be proven by the well-known method of writing the boundary value problem as a Volterra integral equation and using a Neumann series ansatz. 
    For a demonstration we refer to \cite[Proposition 3]{DemontisPrinariMeeVitale}, and also the theory in \cite[Chapter 1]{FaddeevTakhtajan}. 
    We refer also to \cite[Section 2]{LiaoWegner2025} for a treatment of these matters with a different approach, and specifically \cite[Lemma 2.5]{LiaoWegner2025} as a reference for (iii).
    These same methods are strictly easier to apply for \NLS and \KdV because the potential under consideration has zero boundary data and the structure of the scattering problem is similar.
    For more information on the direct scattering theory of \KdV we refer to \cite{AblowitzClarkson1991}.
    Using \eqref{eqn:jost-identity}, the cases \hGP and \rGP can be obtained directly from \GP.
\end{proof}
If $\Imag z^\Eqn = 0$ then $\Phi^{\Eqn,-} = (\Phi^{\Eqn,-}_1, \Phi^{\Eqn,-}_2)$ and $\Phi^{\Eqn,+} = (\Phi^{\Eqn,+}_1, \Phi^{\Eqn,+}_2)$ are two fundamental solution matrices to \eqref{eqn:jost-ode}, 
hence there exist $a^\Eqn(z^\Eqn; q^\Eqn), b^\Eqn(z^\Eqn; q^\Eqn) \in \C$ such that
\begin{align}
    \label{eqn:trans-1} \Phi^{\Eqn,-}_1(x^\Eqn, z^\Eqn) &= a^\Eqn(z^\Eqn; q^\Eqn) \Phi^{\Eqn,+}_1(x^\Eqn, z^\Eqn) + b^\Eqn(z^\Eqn; q^\Eqn) \Phi^{\Eqn,+}_2(x^\Eqn, z^\Eqn) \,.
\end{align}
We call $a^\Eqn(z^\Eqn; q^\Eqn)$ the transmission coefficient of $\Eqn$ with potential $q^\Eqn$. It satisfies
\begin{align} \label{eqn:trans-3}
    a^\Eqn &= \frac{\det(\Phi^{\Eqn,-}_1|\Phi^{\Eqn,+}_2)}{\det E^{\Eqn,+}}
    = \frac{\Phi^{\Eqn,-}_{1,1} \Phi^{\Eqn,+}_{2,2} - \Phi^{\Eqn,-}_{1,2} \Phi^{\Eqn,+}_{2,1}}{\Phi^{\Eqn,+}_{1,1} \Phi^{\Eqn,+}_{2,2} - \Phi^{\Eqn,+}_{1,2} \Phi^{\Eqn,+}_{2,1}} 
    = \frac{\Psi^{\Eqn,-}_{1,1} \Psi^{\Eqn,+}_{2,2} - \Psi^{\Eqn,-}_{1,2} \Psi^{\Eqn,+}_{2,1}}{\Psi^{\Eqn,+}_{1,1} \Psi^{\Eqn,+}_{2,2} - \Psi^{\Eqn,+}_{1,2} \Psi^{\Eqn,+}_{2,1}} \,.
\end{align}
Using this formula, we extend $a^\Eqn(z^\Eqn)$ analytically to $z^\Eqn \in \conj{\K_+}$ up to a finite set of points.
Since the determinant is multiplicative, and due to \eqref{eqn:jost-identity}, we have
\begin{align*}
    a^\GP(z; q) &= a^\hGP(z; w) = a^\rGP(k; W) \,.
\end{align*}
Note that
\begin{align*}
    \det E^{\Eqn, \pm} &= \begin{cases}
        1 &, \Eqn = \NLS
        \\ 2 z (\lambda - z) &, \Eqn = \GP
        \\ - 2 z (\lambda - z) &, \Eqn = \hGP
        \\ - 2 \sqrt{2} \epsilon k (\lambda - \sqrt{2} \epsilon k) &, \Eqn = \rGP
        \\ 2 i k &, \Eqn = \KdV
    \end{cases} \,.
\end{align*}
Using \eqref{eqn:mod-jost-ode} to substitute $\Phi^{\Eqn,-}_{1,2}$ in \eqref{eqn:trans-3} and taking the limit $x \rightarrow \infty$, or, respectively, by substituting $\Phi^+_{2,1}$ and taking the limit $x \rightarrow - \infty$, we find that
\begin{align*}
    \lim_{x^\Eqn \rightarrow \infty} \Psi^{\Eqn,-}_{1,1} &= a^\Eqn \beta^\Eqn_1
    & \lim_{x^\Eqn \rightarrow - \infty} \Psi^{\Eqn,+}_{2,2} &= a^\Eqn \beta^\Eqn_2
\end{align*}
where
\begin{align*}
    \beta^\Eqn_1 &= \frac{\det E^{\Eqn,+}}{E^{\Eqn,+}_{2,2} + (A^{\Eqn,+}_{2,2} + i z^\Eqn)^{-1} A^{\Eqn,+}_{1,2} E^{\Eqn,+}_{2,1}}
    = \begin{cases}
        1 &, \Eqn = \NLS
        \\ q_+ &, \Eqn = \GP
        \\ e^{\theta_+} \frac{i}{\sqrt{2}} (1 + \lambda - z) &, \Eqn = \hGP
        \\ e^{\theta_+} \frac{i}{\sqrt{2}} (1 + \lambda - \sqrt{2} \epsilon k) &, \Eqn = \rGP
        \\ 1 &, \Eqn = \KdV
    \end{cases}
    \\ \beta^\Eqn_2 &= \frac{\det E^{\Eqn,-}}{E^{\Eqn,-}_{1,1} + (A^{\Eqn,-}_{1,1} - i z^\Eqn)^{-1} A^{\Eqn,-}_{2,1} E^{\Eqn,-}_{1,2}}
    = \begin{cases}
        1 &, \Eqn = \NLS
        \\ \conj{q}_- &, \Eqn = \GP
        \\ e^{- \theta_-} \frac{i}{\sqrt{2}} (1 + z - \lambda) &, \Eqn = \hGP
        \\ e^{- \theta_-} \frac{i}{\sqrt{2}} (1 + \sqrt{2} \epsilon k - \lambda) &, \Eqn = \rGP
        \\ i k &, \Eqn = \KdV
    \end{cases} \,.
\end{align*}
On the other hand, \eqref{eqn:jost-bdry} implies
\begin{align*}
    \lim_{x^\Eqn \rightarrow - \infty} \Psi^{\Eqn,-}_{1,1} &= E^{\Eqn,-}_{1,1}
    = \begin{cases}
        1 &, \Eqn = \NLS
        \\ q_- &, \Eqn = \GP
        \\ e^{\theta_-} \frac{i}{\sqrt{2}} (1 + \lambda - z) &, \Eqn = \hGP
        \\ e^{\theta_-} \frac{i}{\sqrt{2}} (1 + \lambda - \sqrt{2} \epsilon k) &, \Eqn = \rGP
        \\ 1 &, \Eqn = \KdV \,.
    \end{cases}
    \\ \lim_{x^\Eqn \rightarrow \infty} \Psi^{\Eqn,+}_{2,2} &= E^{\Eqn,+}_{2,2}
    = \begin{cases}
        1 &, \Eqn = \NLS
        \\ \conj{q}_+ &, \Eqn = \GP
        \\ e^{- \theta_+} \frac{i}{\sqrt{2}} (1 + z - \lambda) &, \Eqn = \hGP
        \\ e^{- \theta_+} \frac{i}{\sqrt{2}} (1 + \sqrt{2} \epsilon k - \lambda) &, \Eqn = \rGP
        \\ i k &, \Eqn = \KdV \,.
    \end{cases}
\end{align*}
Due to \eqref{eqn:sup-jost-1}--\eqref{eqn:sup-jost-2}, there exists for sufficiently large $\Imag z > c = c(q)$ a branch $\log^\Eqn$ of the logarithm, only depending on $q^\Eqn_\pm$, 
for which $\log(\Psi^{\Eqn,-}_{1,1} (\beta^\Eqn_1)^{-1})$ and $\log(\Psi^{\Eqn,+}_{2,2} (\beta^\Eqn_2)^{-1})$ are continuously differentiable in $x^\Eqn$ on all of $\R$.
Specifically, writing $\log_{\text{pr}}$ for the principal logarithm, we choose
\begin{align*}
    \log^\Eqn(\omega) &= \begin{cases}
        \log_{\text{pr}}(\omega) &, \Eqn \in \{\NLS, \KdV\}
        \\ \log_{\text{pr}}(\omega e^{i (\phi_+ - \phi_-)}) - \log_{\text{pr}}(e^{i (\phi_+ - \phi_-)}) &, \Eqn = \GP
        \\ \log_{\text{pr}}(\omega e^{\frac{i}{2} (\phi_+ - \phi_-)}) - \log_{\text{pr}}(e^{\frac{i}{2} (\phi_+ - \phi_-)}) &, \Eqn \in \{\hGP, \rGP\}
    \end{cases} \qquad \qquad \forall\, \omega \in \C \,.
\end{align*}
Then 
\begin{align}
    \log^\Eqn a^\Eqn - \begin{cases}
        0 &, \Eqn = \NLS
        \\ i (\phi_- - \phi_+) &, \Eqn = \GP
        \\ \frac{i}{2} (\phi_- - \phi_+) &, \Eqn \in \{\hGP, \rGP\}
        \\ 0 &, \Eqn = \KdV
    \end{cases} &= \int_{\R} \sigma^\Eqn \dd x^\Eqn = \int_{\R} \ti{\sigma}^\Eqn \dd x^\Eqn \,.
\end{align}
We are interested in the asymptotic expansions
\begin{align}
    \label{eqn:a-sig-r-expand} \sigma^\Eqn(x^\Eqn, z^\Eqn; q^\Eqn) &\sim \sum_{n=0}^\infty \frac{\sigma^\Eqn_n(x^\Eqn; q^\Eqn)}{(2 i z^\Eqn)^n}
    & r^\Eqn(x^\Eqn, z^\Eqn; q^\Eqn) &\sim \sum_{n=0}^\infty \frac{r^\Eqn_n(x^\Eqn; q^\Eqn)}{(2 i z^\Eqn)^n} \,.
\end{align}
By (iii) from Lemma \ref{lem:jost} these expansions indeed exist, and the expansion coefficients are bounded and integrable in $x^\Eqn$.
Furthermore, the asymptotic expansion of $\sigma^\Eqn$ is $L^1$-smooth, meaning that we can take the integral and thereby obtain an asymptotic expansion in powers of $2 i z^\Eqn$ for the transmission coefficient of the form
\begin{align} \label{eqn:a-sig-sigt-expand}
    \log^\Eqn a^\Eqn - \begin{cases}
        0 &, \Eqn = \NLS
        \\ i (\phi_- - \phi_+) &, \Eqn = \GP
        \\ \frac{i}{2} (\phi_- - \phi_+) &, \Eqn \in \{\hGP, \rGP\}
        \\ 0 &, \Eqn = \KdV
    \end{cases} 
    &\sim \sum_{n=0}^\infty \frac{\int_{\R} \sigma^\Eqn_n \dd x^\Eqn}{(2 i z^\Eqn)^n} 
    \sim \sum_{n=0}^\infty \frac{\int_{\R} \ti{\sigma}^\Eqn_n \dd x^\Eqn}{(2 i z^\Eqn)^n} \,.
\end{align}
For the cases $\Eqn \in \{\GP, \hGP, \rGP\}$ clarification is necessary, as the expansion coefficients both in \eqref{eqn:a-sig-r-expand} and \eqref{eqn:a-sig-sigt-expand} are not the same on all of $\conj{\K_+}$.
Instead, they depend on the sign of $\Imag \lambda$, or alternatively a choice of bijection $\lambda \leftrightarrow (z, k)$.
We shall choose the expansion coefficients so that \eqref{eqn:a-sig-r-expand}--\eqref{eqn:a-sig-sigt-expand} hold true for $\Imag \lambda > 0$.

Using these expansion coefficients, we define for $\Eqn \neq \KdV$ the Hamiltonians
\begin{align} \label{eqn:def-hamil}
    \mc{H}^\Eqn_n(q^\Eqn) &= - (- i)^n \big((\sqrt{2} \epsilon)^{(n+1)}\big)^{\mathds{1}_{\{\Eqn = \rGP\}}} \int_{\R} \sigma^\Eqn_{n+1}(x^\Eqn; q^\Eqn) \dd x^\Eqn + \mathds{1}_{\{n=-1\}} \begin{cases}
        0 &, \Eqn = \NLS
        \\ \phi_- - \phi_+ &, \Eqn = \GP
        \\ \frac{\phi_- - \phi_+}{2} &, \Eqn \in \{\hGP, \rGP\}
    \end{cases} \,,
\end{align}
and for $\Eqn = \KdV$ the energies
\begin{align} \label{eqn:def-energy}
    \mc{E}^\KdV_n(U) &= (-1)^n \int_{\R} \sigma^\KdV_{2n+3}(X; U) \dd X \,.
\end{align}
We do not define conserved quantities corresponding to the even orders because they vanish (see Lemma \ref{lem:vanish} below).
These Hamiltonians act as expansion coefficients for the logarithm of the transmission coefficient. 
Specifically, for $(\lambda, z, k) \in \K_+ \cap \conj{\mc{Q}_1}$ we have as $\Imag z \rightarrow \infty$ the asymptotic expansions
\begin{align*}
    \log a^\Eqn(z^\Eqn; q^\Eqn) &\sim i \sum_{n=-1}^\infty \frac{\mc{H}^\Eqn_n(q^\Eqn)}{(2 z)^{n+1}} \qquad \qquad \forall\, \Eqn \in \{\NLS, \GP, \hGP, \rGP\}
    \\ \log a^\KdV(k; U) &\sim i \sum_{n=-1}^\infty \frac{\mc{E}^\KdV_n(U)}{(2 k)^{2n+3}} \,.
\end{align*}
In particular
\begin{align} \label{eqn:hamil-equal}
    \mc{H}^\GP_n(q) &= \mc{H}^\hGP_n(w) = \mc{H}^\rGP_n(W) \,.
\end{align}
Note that we can reformulate \eqref{eqn:GP-tGP} as
\begin{align*}
    \log a^\GP(z; q) &\sim i \sum_{n=0}^\infty \frac{\mc{H}^\tGP_{2n}(q)}{(2 z)^{2n+1}} + i \frac{\lambda}{z} \sum_{n=0}^\infty \frac{2^{\mathds{1}_{\{n=0\}}} \mc{H}^\tGP_{2n-1}(q)}{(2 z)^{2n}} \,,
\end{align*} 
which gives an alternative view on the definition of the \tGP hierarchy and its relation to the \GP hierarchy.

As mentioned above, for the cases $\Eqn \in \{\GP, \hGP, \rGP\}$ these expansion coefficients are specific to our choice $\Imag \lambda > 0$.
A crucial property of the Hamiltonians $\mc{H}^\Eqn_n$ is that they are pairwise Poisson commuting.
This is a well-known result and it implies that each Hamiltonian is a conserved quantity for every Hamiltonian flow.
\begin{lemma}
    Let $\Eqn \in \Eqns$ and $q^\Eqn \in C^\infty(\R)$ with $q^\Eqn - q^\Eqn_\pm \in \mc{S}(\R_\pm)$. Below, we write $\mc{H}^\KdV_n = \mc{E}^\KdV_n$.
    For all $n, m \geq 0$ and $z^\Eqn_1, z^\Eqn_2 \in \conj{\K_+}$ with $\Imag z^\Eqn_1, \Imag z^\Eqn_2 > c(q^\Eqn)$ we have
    \begin{align*}
        \{\mc{H}^\Eqn_n, \mc{H}^\Eqn_m\}^\Eqn(q^\Eqn) &= 0 
        & \{\log a^\Eqn(z^\Eqn_1), \log a^\Eqn(z^\Eqn_2)\}(q^\Eqn) &= 0 \,.
    \end{align*}
    The Poisson brackets $\{\dots, \dots\}^\Eqn$ are given in Appendix \ref{appendix:poissonstructures}.
\end{lemma}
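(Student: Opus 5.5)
The plan is to derive Poisson commutativity of the transmission coefficients at two spectral parameters first, and then read off commutativity of the Hamiltonians as a statement about expansion coefficients. The key input is an explicit formula for the variational derivative of $\log a^\Eqn(z^\Eqn)$ in terms of the Jost solutions. By the standard variation-of-constants argument for the ODE \eqref{eqn:jost-ode}, perturbing $q^\Eqn \mapsto q^\Eqn + \delta q$ changes $\det(\Phi^{\Eqn,-}_1|\Phi^{\Eqn,+}_2)$ by an integral of $\delta A^\Eqn$ paired against the product of $\Phi^{\Eqn,-}_1$ and $\Phi^{\Eqn,+}_2$. This yields
\begin{align*}
    \frac{\delta \log a^\Eqn(z^\Eqn)}{\delta q^\Eqn(x)} &= \frac{1}{a^\Eqn(z^\Eqn)\det E^{\Eqn,+}} \, \mc{B}^\Eqn\big(\Phi^{\Eqn,-}_1, \Phi^{\Eqn,+}_2\big)(x) \,,
\end{align*}
where $\mc{B}^\Eqn$ is an explicit bilinear expression determined by $\del A^\Eqn/\del q^\Eqn$. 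For $\Eqn = \GP$ it consists of the products $\Phi^{-}_{1,1}\Phi^{+}_{2,1}$ and $\Phi^{-}_{1,2}\Phi^{+}_{2,2}$, as in \cite[Chapter 1]{FaddeevTakhtajan}. Writing these products as $\Psi$'s times $e^{ix(z_1\gamma_1 \dots)}$ shows that they are bounded and decay, because $\delta q$ is Schwartz.

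Next I insert two such gradients, at $z^\Eqn_1$ and $z^\Eqn_2$, into the Poisson bracket $\{\cdot,\cdot\}^\Eqn$ from Appendix \ref{appendix:poissonstructures}. The integrand should turn out to be an exact $x$-derivative, via the classical Wronskian-type identity that comes from \eqref{eqn:jost-ode}:
\begin{align*}
    (\lambda_1 - \lambda_2)\,\big(\text{bracket density}\big) &= \del_x \big(\text{quadratic form in } \Phi(z_1), \Phi(z_2)\big) \,.
\end{align*}
The bracket therefore reduces to boundary terms at $x = \pm\infty$. Using \eqref{eqn:jost-bdry}, these boundary terms contain oscillating factors $e^{\pm i x (z_1 - z_2)}$ or factors $e^{\pm i x(z_1+z_2)}$. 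The exponentially decaying ones vanish because $\Imag z_j > c$. The oscillating ones cancel between the two ends after dividing by $a(z_1)a(z_2)$. This yields $\{\log a(z_1), \log a(z_2)\} = 0$ for $z_1 \neq z_2$, and the case $z_1 = z_2$ follows by continuity.

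For the Hamiltonians I use the expansion \eqref{eqn:a-sig-sigt-expand}, which identifies $\mc{H}^\Eqn_n$ (respectively $\mc{E}^\KdV_n$) as coefficients of $\log a$ in powers of $2iz$. The gradients of $\log a$ also admit $L^1$-smooth expansions, obtained by expanding the $\Psi$'s through Lemma \ref{lem:jost}(iii). Taking the double expansion in $z_1$ and $z_2$ of $\{\log a(z_1),\log a(z_2)\} = 0$ and matching coefficients then gives $\{\mc{H}_n,\mc{H}_m\}=0$. The constant phase term in \eqref{eqn:def-hamil} has zero gradient and does not affect this. The step that needs care is justifying the interchange of the asymptotic expansion with the bracket, which requires uniform-in-$x$ control of the remainders; Lemma \ref{lem:jost}(iii) provides exactly this.

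For the remaining equations I plan to avoid repeating the computation. The cases $\hGP$ and $\rGP$ follow from $\GP$: by \eqref{eqn:hamil-equal} the Hamiltonians coincide, and the maps $q \leftrightarrow w \leftrightarrow W$ are Poisson morphisms, as stated in the appendices. $\NLS$ is the same computation with $q_\pm = 0$. For $\KdV$ I use the classical Gardner bracket with the scalar Schrödinger Jost functions; there the gradient is $\delta \log a/\delta U = \frac{1}{2ik\,a}\,\phi^-\phi^+$ and the same Wronskian identity applies. I expect the main obstacle to be the $\NZBC$ case. There the boundary terms at $\pm\infty$ involve the nontrivial matrices $E^{\GP,\pm}$ and the phases $q_\pm$, so their cancellation has to be verified explicitly. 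The principal-branch choice of $\log^\GP$ also has to be checked for consistency with this.
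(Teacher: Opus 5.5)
\textbf{Comparison with the paper.} Your route differs from the paper's. The paper does no computation: it cites Klaus--Koch--Liu (Theorem B.7) for $\NLS$, $\GP$, $\KdV$ as AKNS-type problems, mentions Faddeev--Takhtajan's $r$-matrix argument as an alternative for $\GP$, and transfers to $\hGP$ and $\rGP$ via \eqref{eqn:hamil-equal}, exactly as you do. Your direct squared-eigenfunction/Wronskian computation is a sound, self-contained replacement. The citation buys brevity; your argument makes explicit where $|q_\pm|=1$ enters.

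\textbf{The $\NZBC$ boundary step is easy.} The step you expect to be the main obstacle is immediate once the right primitive is written down. Write $\Phi^-_1=\Phi^{\GP,-}_1$ and $\Phi^+_2=\Phi^{\GP,+}_2$. Then
\[
\frac{\delta \log a}{\delta q}=\frac{\Phi^-_{1,2}\Phi^+_{2,2}}{a\det E^{\GP,+}}\,,\qquad \frac{\delta \log a}{\delta \conj{q}}=-\frac{\Phi^-_{1,1}\Phi^+_{2,1}}{a\det E^{\GP,+}}\,.
\]
For solutions $\phi,\psi$ at $z_1,z_2$ one has $\del_x\det(\phi|\psi)=-i(\lambda_1-\lambda_2)(\phi_1\psi_2+\phi_2\psi_1)$. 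Applying this to $\det(\Phi^-_1(z_1)|\Phi^+_2(z_2))\det(\Phi^+_2(z_1)|\Phi^-_1(z_2))$, in which all exponentials cancel, the bracket density becomes
\[
\frac{i}{2(\lambda_1-\lambda_2)}\,\frac{\del_x (D_1 D_2)}{a(z_1)\,a(z_2)\det E^{\GP,+}(z_1)\det E^{\GP,+}(z_2)}\,,\qquad D_1=\det(\Psi^-_1(z_1)|\Psi^+_2(z_2))\,,\quad D_2=\det(\Psi^+_2(z_1)|\Psi^-_1(z_2))\,.
\]
So no factors $e^{\pm ix(z_1\mp z_2)}$ survive, and nothing needs to cancel ``between the two ends'' in the way you describe. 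Since $\Psi^-_1\to aE^{\GP,+}_1$ at $+\infty$ and $\Psi^+_2\to aE^{\GP,-}_2$ at $-\infty$, both boundary values of $D_1D_2$ equal $-a(z_1)a(z_2)(1-\mu_1\mu_2)^2$ with $\mu_j=\lambda_j-z_j$. This uses only $|q_\pm|=1$.

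\textbf{The expansion step is wrong as stated.} The gradients of $\log a$ do not have $L^1$-smooth expansions for $\GP$ or $\KdV$. From the same limits, $\frac{\delta\log a}{\delta q}\to\frac{i\conj{q}_\pm}{2z}$ and $\frac{\delta\log a}{\delta\conj{q}}\to\frac{iq_\pm}{2z}$ as $x\to\pm\infty$, consistent with $\frac{\delta\mc{H}^\GP_0}{\delta\conj{q}}=q$. Likewise $\frac{\delta\log a^\KdV}{\delta U}\to-\frac{1}{2ik}$. The products you call bounded and decaying are bounded but do not decay.

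Lemma \ref{lem:jost}(iii) gives only $L^\infty$-smooth expansions; for $\GP$ the gradients are exactly $\frac{r}{1-r\ti{r}}$ and $-\frac{\ti{r}}{1-r\ti{r}}$. That is not enough to pass the double expansion in $z_1,z_2$ through the bracket integral. To match coefficients you need three things:
\begin{enumerate}
\item Subtract the explicit limiting constants. Their mutual products cancel pointwise in the $\GP$ density, and in the Gardner bracket they produce only vanishing boundary terms.
\item Prove that the remainder has an $L^1$-smooth expansion, e.g.\ from the Riccati equation for $r$ linearised about its limit.
\item Make the expansions locally uniform in $q$, so that their coefficients are indeed $\nabla\mc{H}_n$.
\end{enumerate}
This is repairable, but it is not supplied by the lemma you invoke.
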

\begin{proof}
    For \NLS, \GP and \KdV, which are all part of the \AKNS hierarchy, we refer to \cite[Theorem B.7]{KlausKochLiu2023}. 
    Specifically for \GP we note also a different proof given in \cite[III.§2]{FaddeevTakhtajan}.
    With \eqref{eqn:hamil-equal}, we can derive from the case \GP the cases \hGP and \rGP.
\end{proof}

\subsection{Recurrence relations for the densities of the Hamiltonians}

From \eqref{eqn:mod-jost-ode} we derive Riccati equations for $\sigma^\Eqn$ and $\ti{\sigma}^\Eqn$ with $\Eqn \in \{\NLS, \GP\}$:
\begin{align}
    \label{eqn:NLS-sig} \del_x \sigma^\NLS &= (2 i \lambda) \sigma^\NLS + \frac{q_x}{q} \sigma^\NLS - (\sigma^\NLS)^2 + q \conj{q}
    \\ \label{eqn:NLS-sigt} - \del_x \ti{\sigma}^\NLS &= (2 i \lambda) \ti{\sigma}^\NLS - \frac{\conj{q}_x}{\conj{q}} \ti{\sigma}^\NLS - (\ti{\sigma}^\NLS)^2 + q \conj{q}
    \\ \label{eqn:GP-sig} \del_x \sigma^\GP &= (2 i z) \sigma^\GP - (\sigma^\GP)^2 + q \conj{q} - 1 + q_x \frac{\sigma^\GP + i (\lambda - z)}{q}
    \\ \label{eqn:GP-sigt} - \del_x \ti{\sigma}^\GP &= (2 i z) \ti{\sigma}^\GP - (\ti{\sigma}^\GP)^2 + q \conj{q} - 1 - \conj{q}_x \frac{\ti{\sigma}^\GP + i (\lambda - z)}{\conj{q}}
\end{align}
They were first derived in \cite{ZakharovShabat}.
For $\Eqn \in \{\hGP, \rGP\}$ we prefer to work with the systems for the pairs of variables $(\sigma^\Eqn, r^\Eqn)$ and $(\ti{\sigma}^\Eqn, \ti{r}^\Eqn)$, which are specifically
\begin{align}
    \\ \label{eqn:hGP-sig} \del_x \sigma^\hGP &= (2 i z) \sigma^\hGP - (\sigma^\hGP)^2 - w_- w_+ - w_-(1 + \lambda) + w_+ (1 - \lambda) - i (w_+)_x r^\hGP
    \\ \label{eqn:hGP-sigt} - \del_x \ti{\sigma}^\hGP &= (2 i z) \ti{\sigma}^\hGP - (\ti{\sigma}^\hGP)^2 - w_- w_+ - w_-(1 + \lambda) + w_+ (1 - \lambda) - i (w_-)_x \ti{r}^\hGP
    \\ \label{eqn:hGP-r} \del_x r^\hGP &= i (1 + \lambda + w_+) (r^\hGP)^2 + i (1 - \lambda - w_-)
    \\ \label{eqn:hGP-rt} - \del_x \ti{r}^\hGP &= i (1 - \lambda - w_-) (\ti{r}^\hGP)^2 + i (1 + \lambda + w_+)
    \\ \label{eqn:rGP-sig} \del_X \sigma^\rGP\! &= (2 i k) \sigma^\rGP\! - (\sigma^\rGP)^2 + \frac12\left( - \epsilon^2 W_- W_+ - W_-(1 \!+\! \lambda) + W_+ (1 \!-\! \lambda) - i \sqrt{2} \epsilon (W_+)_X r^\rGP\! \right)
    \\ \label{eqn:rGP-sigt} - \del_X \ti{\sigma}^\rGP\! &= (2 i k) \ti{\sigma}^\rGP\! - (\ti{\sigma}^\rGP)^2 + \frac12\left( - \epsilon^2 W_- W_+ - W_-(1 \!+\! \lambda) + W_+ (1 \!-\! \lambda) - i \sqrt{2} \epsilon (W_-)_X \ti{r}^\rGP\! \right)
    \\ \label{eqn:rGP-r} \del_X r^\rGP &= \frac{i \epsilon}{\sqrt{2}} (\epsilon^{-2}(1 + \lambda) + W_+) (r^\rGP)^2 + \frac{i \epsilon}{\sqrt{2}} (\epsilon^{-2}(1 - \lambda) - W_-)
    \\ \label{eqn:rGP-rt} - \del_X \ti{r}^\rGP &= \frac{i \epsilon}{\sqrt{2}} (\epsilon^{-2}(1 - \lambda) - W_-) (\ti{r}^\rGP)^2 + \frac{i \epsilon}{\sqrt{2}} (\epsilon^{-2}(1 + \lambda) + W_+) \,.
\end{align}
The case \KdV is distinct in that we consider a single equation for $\sigma^\KdV$, but also the system $(\ti{\sigma}^\KdV, \ti{r}^\KdV)$:
\begin{align}
    \label{eqn:KdV-sig} \del_X \sigma^\KdV &= (2 i k) \sigma^\KdV - (\sigma^\KdV)^2 + U
    \\ \label{eqn:KdV-sigt} - \del_X \ti{\sigma}^\KdV &= (2 i k) \ti{\sigma}^\KdV - (\ti{\sigma}^\KdV)^2 + U + U_X \ti{r}^\KdV
    \\ \label{eqn:KdV-rt} - \del_X \ti{r}^\KdV &= \left( U + \frac14 (2 i k)^2 \right) (\ti{r}^\KdV)^2 - 1 \,.
\end{align}
These equations prescribe recurrence relations for the expansion coefficients $\sigma^\Eqn_n$.
In order to avoid an abuse of notation, we shall use bold variables $\bm{\sigma}^\Eqn$, $\bm{\ti{\sigma}}^\Eqn$, $\bm{r}^\Eqn$, $\bm{\ti{r}}^\Eqn$, and so on to denote the formal power series corresponding to each function.
By substituting the formal power series $\bm{\sigma}^\NLS, \ti{\bm{\sigma}}^\NLS, \bm{\sigma}^\GP$ and $\ti{\bm{\sigma}}^\GP$ into the Riccati equations \eqref{eqn:NLS-sig}--\eqref{eqn:GP-sigt}, we can derive recurrence relations for the expansion coefficients $\sigma^\NLS_n$, $\ti{\sigma}^\NLS_n$, $\sigma^\GP_n$ and $\ti{\sigma}^\GP_n$.
For the case $\NLS$ this yields
\begin{align*}
    \sigma^\NLS_0 &= 0
    & \sigma^\NLS_1 &= - q \conj{q}
    & \sigma^\NLS_{n+1} &= \del_x \sigma^\NLS_n - \frac{q_x}{q} \sigma^\NLS_n + \sum_{k=0}^n \sigma^\NLS_k \sigma^\NLS_{n-k} \,.
\end{align*}
Due to the presence of $\lambda$ on the right-hand side of \eqref{eqn:GP-sig}, the recurrence relation for $\sigma^\GP_n$ is awkward, so we use instead the relation
\begin{align*}
    \bm{\sigma}^\NLS(x, \lambda) &= \bm{\sigma}^\GP(x, z) + i (\lambda - z) \,,
\end{align*}
which is proven by using it to substitute $\bm{\sigma}^\NLS$ in \eqref{eqn:NLS-sig} and observing that this yields \eqref{eqn:GP-sig}.
We can then derive the expansion coefficients $\sigma^\GP_n$ from $\sigma^\NLS_n$ and vice versa. 
We assume now that $\lambda, z \in \conj{\mc{Q}_1}$.
Then we can write $z = \sqrt{\lambda^2 - 1}$ and $\lambda = \sqrt{z^2 + 1}$ using the principal square root, and obtain the relations
\begin{align}
    \label{eqn:NLS-GP-1} \sigma^\NLS_{2m} &= \sum_{k=0}^m \binom{m-1}{m-k} (-4)^{m-k} \sigma^\GP_{2k}
    & \sigma^\NLS_{2m+1} &= \sum_{k=0}^m \binom{m-\frac12}{m-k} (-4)^{m-k} \sigma^\GP_{2k+1} + (-1)^{m+1} C_m
    \\ \label{eqn:NLS-GP-2} \sigma^\GP_{2m} &= \sum_{k=0}^m \binom{m-1}{m-k} 4^{m-k} \sigma^\NLS_{2k}
    & \sigma^\GP_{2m+1} &= \sum_{k=0}^m \binom{m-\frac12}{m-k} 4^{m-k} \sigma^\NLS_{2k+1} + C_m \,.
\end{align}
Here $C_n$ are the Catalan numbers. They can be defined either by the recurrence relation they solve, or by their generating function:
\begin{align}
    \label{eqn:catalan} C_0 &= 1 & C_{n+1} &= \sum_{k=0}^n C_k C_{n-k} 
    & \sum_{n=0}^\infty X^n C_n &= \frac{1 - \sqrt{1 - 4 X}}{2 X} \,.
\end{align}
If $\lambda \not\in \conj{\mc{Q}_1}$ or $z \not\in \conj{\mc{Q}_1}$ then the relations \eqref{eqn:NLS-GP-1}--\eqref{eqn:NLS-GP-2} may need to be corrected by sign changes.

Defining $q^\sharp(x) = q(- x)$, we can observe that $\bm{\sigma}^\NLS(x, \lambda; q)$ and $\bm{\ti{\sigma}}^\NLS(- x, \lambda; \conj{q}^\sharp)$ both solve \eqref{eqn:NLS-sig}, and $\bm{\sigma}^\GP(x, z; q)$ and $\bm{\ti{\sigma}}^\GP(- x, z; \conj{q}^\sharp)$ both solve \eqref{eqn:GP-sig}. 
A consequence of the recurrence relations discussed above is that $\sigma^\NLS_n$ and $\sigma^\GP_n$ consist of monomials whose total number of derivatives has the opposite parity of $n$. 
Then $(\del_x^k q^\sharp)(x) = (-1)^k (\del_x^k q)(- x)$ together with \eqref{eqn:a-sig-sigt-expand} yields
\begin{align} \label{eqn:sig-sigt-relation}
    \int \sigma^\Eqn_n(x, z; q) \dd x &= (-1)^{n+1} \int \conj{\sigma^\Eqn_n(x, z; q)} \dd x
\end{align}
for $\Eqn \in \{\NLS, \GP\}$, implying that the functionals $\mc{H}^\NLS_n, \mc{H}^\GP_n, \mc{H}^\hGP_n$ and $\mc{H}^\rGP_n$ are real-valued.
We assume again that $\lambda, k \in \conj{\mc{Q}_1}$ are in the first quadrant of the complex plane in order to have an explicit expansion
\begin{align} \label{eqn:lam-expand}
    \lambda = \sqrt{1 + 2 \epsilon^2 k^2} &= \sum_{n=-1}^\infty \frac{c_n}{(2 i k)^n}
    \quad \text{ where } \quad
    c_n = \begin{cases}
        - \frac{i \epsilon}{\sqrt{2}} &, n = -1 \\
        \frac{i}{(\sqrt{2} \epsilon)^{n}} \frac{1}{n} \binom{n}{\frac{n-1}{2}} &, n \geq 0 \text{ odd} \\
        0 &, \text{else}
    \end{cases} \,.
\end{align}
Then we can derive from \eqref{eqn:rGP-sig}--\eqref{eqn:rGP-rt} the recurrence relations
\begin{align}
    \label{eqn:rGP-sig-rec-0} \sigma^\rGP_0 &= \frac{c_{-1}}{2} (W_+ + W_-)
    \\ \label{eqn:rGP-sig-rec-1} \sigma^\rGP_1 &= \frac{c_{-1}}{2} \del_X (W_+ + W_-) + \frac{c_{-1}^2}{4} (W_+ + W_-)^2 + \frac{\epsilon^2}{2} W_- W_+ + \frac12 (W_- - W_+) + \frac{i \epsilon}{\sqrt{2}} \del_X W_+ r^\rGP_0
    \\ \label{eqn:rGP-sig-rec-n+1} \sigma^\rGP_{n+1} &= \del_X \sigma^\rGP_n + \sum_{k=0}^n \sigma^\rGP_{n-k} \sigma^\rGP_k + \frac12 c_n (W_- + W_+) + \frac{i \epsilon}{\sqrt{2}} \del_X W_+ r^\rGP_n \,.
\end{align}
and
\begin{align}
    \label{eqn:rGP-r-rec-0} r^\rGP_0 &= 1
    \\ \label{eqn:rGP-r-rec-1} r^\rGP_1 &= - \frac{i \epsilon}{\sqrt{2}} \left( W_+ - W_- + \frac{2}{\epsilon^2} \right)
    \\ \label{eqn:rGP-r-rec-n+1} r^\rGP_{n+1} &= \del_X r^\rGP_n + \frac{i c_n}{\sqrt{2} \epsilon} + \frac{i \epsilon}{\sqrt{2}} \left( - W_+ - \frac{1}{\epsilon^2} \right) \sum_{k=0}^n r^\rGP_{n-k} r^\rGP_k 
    \\\nonumber &- \frac{i}{\sqrt{2} \epsilon} \sum_{m=0}^n \sum_{k=0}^m c_{n-m} r^\rGP_{m-k} r^\rGP_k - \frac{i c_{-1}}{\sqrt{2} \epsilon} \sum_{k=1}^n r^\rGP_{n+1-k} r^\rGP_k \,.
\end{align}
In the same way recurrence relations can be derived for $\ti{\sigma}^\rGP$, $\ti{r}^\rGP$, $\sigma^\hGP$, $\ti{\sigma}^\hGP$, $r^\hGP$, and $\ti{r}^\hGP$, 
which we omit here for brevity.

Lastly, we derive from \eqref{eqn:KdV-sig}--\eqref{eqn:KdV-rt} the recurrence relations
\begin{align} \label{eqn:KdV-sig-rec} 
    \sigma^\KdV_0 &= 0
    & \sigma^\KdV_1 &= - U
    & \sigma^\KdV_{n+1} &= \del_X \sigma^\KdV_n + \sum_{k=0}^n \sigma^\KdV_k \sigma^\KdV_{n-k} \,.
\end{align}
and
\begin{align*} 
    \ti{\sigma}^\KdV_0 &= 0
    & \ti{\sigma}^\KdV_1 &= - U
    & \ti{\sigma}^\KdV_{n+1} &= - \del_X \ti{\sigma}^\KdV_n + \sum_{k=0}^n \ti{\sigma}^\KdV_k \ti{\sigma}^\KdV_{n-k} - U_X \ti{r}^\KdV_n
\end{align*}
as well as
\begin{align*}
    \ti{r}^\KdV_0 &= 0
    & \ti{r}^\KdV_1 &= 2
    & \ti{r}^\KdV_{n+1} &= - \del_X \ti{r}^\KdV_n - U \sum_{k=0}^n \ti{r}^\KdV_k \ti{r}^\KdV_{n-k} - \frac14 \sum_{k=2}^n \ti{r}^\KdV_k \ti{r}^\KdV_{n+2-k} \,. 
\end{align*}

Note that something subtle has happened in \eqref{eqn:rGP-r-rec-0}: the equation \eqref{eqn:rGP-sig} only implies $(r^\rGP_0)^2 = 1$, so we need to use the additional information 
\begin{align*}
    r^\rGP_0 &= \lim_{k \rightarrow i \infty} \frac{E^{\rGP,-}_{1,2}(k)}{E^{\rGP,-}_{1,1}(k)} 
    = \lim_{k, \lambda \rightarrow i \infty} \frac{1 + \sqrt{2} \epsilon k - \lambda}{1 - \sqrt{2} \epsilon k + \lambda} 
    = 1 \,.
\end{align*}
On the other hand, we have $\ti{r}^\rGP_0 = - 1$. 
These statements are true only in the case $\Imag \lambda > 0$, which we have assumed by considering the asymptotic expansion for $\lambda, k \in \conj{\mc{Q}_1}$.
On other regions of $\conj{\K_+}$, where instead $\Imag \lambda < 0$ while of course $\Imag k > 0$, we have $r^\rGP_0 = - 1$, $\ti{r}^\rGP_0 = 1$.
In addition we must also replace $c_n$ by $- c_n$, hence the expansion coefficients change significantly.
This is relevant for the following lemma, which uses the operators $\Even_\lambda$ and $\Odd_\lambda$ defined in \eqref{eqn:def-even-odd}.
\begin{lemma} \label{lem:a-ev-od-expansion}
    For $\lambda, k \in \conj{\mc{Q}_1}$ we have the following asymptotic expansions in powers of $2 i k$ at infinity:
    \begin{align} \label{eqn:a-ev-od-expansion}
        \Even_\lambda[\log a^\rGP(k)] &\sim \sum_{n=0}^\infty \frac{\int_{\R} \Real \sigma^\rGP_{2n+1} \dd X}{(2 i k)^{2n+1}}
        & \Odd_\lambda[\log a^\rGP(k)] &\sim \sum_{n=0}^\infty \frac{\int_{\R} i \Imag \sigma^\rGP_{2n} \dd X}{(2 i k)^{2n}} \,.
    \end{align}
\end{lemma}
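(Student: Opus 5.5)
The plan is to compare the two asymptotic expansions of $\log a^\rGP$ on the two points of $\conj{\K_+}$ lying over the same $k$: the point $(\lambda, k)$ with $\lambda\in\conj{\mc{Q}_1}$ and the point $(-\lambda, k)$, where $\Imag(-\lambda)<0$. By the discussion preceding the lemma, the expansion at $-\lambda$ is obtained from the recurrences \eqref{eqn:rGP-sig-rec-0}--\eqref{eqn:rGP-r-rec-n+1} after two substitutions: $c_n\mapsto -c_n$ and $r^\rGP_0=1\mapsto -1$. This gives the expansion of $\log a^\rGP(-\lambda,k)$ with coefficients $\int\sigma^{\rGP,\prime}_n\dd X$, which we then compare to $\int\sigma^\rGP_n\dd X$.

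The key structural step is a symmetry claim, proved by induction on $n$ using the recurrences: the primed coefficients satisfy $\sigma^{\rGP,\prime}_n = \conj{\sigma^\rGP_n}$ for real $W$ and real $\epsilon$, at least after integration by parts.

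The reason is that every $c_n$ is purely imaginary by \eqref{eqn:lam-expand}, so $c_n\mapsto -c_n$ is complex conjugation on these constants. Inspecting \eqref{eqn:rGP-r-rec-1}--\eqref{eqn:rGP-r-rec-n+1}, every explicit $i$ appears in combinations like $i\epsilon/\sqrt2$ or $ic_n$. From this I would show inductively that $r^\rGP_n$ has a fixed parity structure: $r^\rGP_n$ is real for even $n$ and imaginary for odd $n$, up to the global sign coming from $r_0$. Under the substitution, this yields $r^{\rGP,\prime}_n = -\conj{r^\rGP_n}$. Inserting this into \eqref{eqn:rGP-sig-rec-n+1} gives $\sigma'_n=\conj{\sigma_n}$. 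The terms that need checking are:
\begin{itemize}
\item the term $\tfrac{i\epsilon}{\sqrt2}(W_+)_X r_n$, which conjugates correctly because of the extra sign on $r'$;
\item the term $c_n(W_++W_-)$, which conjugates correctly because $c_n$ is imaginary.
\end{itemize}
If an exact pointwise identity fails, I would instead fall back on the reflection argument behind \eqref{eqn:sig-sigt-relation}, using the $\ti\sigma$ recurrences, to get the identity at the level of integrals. The analogue of \eqref{eqn:sig-sigt-relation} for \rGP, namely $\int\sigma_n=(-1)^{n+1}\int\conj{\sigma_n}$, is inherited from \GP via \eqref{eqn:hamil-equal}. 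This tells us $\int\sigma_n$ is real for odd $n$ and imaginary for even $n$.

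Granting the symmetry, the expansions of $\log a^\rGP(\lambda)$ and $\log a^\rGP(-\lambda)$ are $\sum \int\sigma_n/(2ik)^n$ and $\sum\int\conj{\sigma_n}/(2ik)^n$. Averaging and taking half-differences gives
\[
\Even_\lambda \sim \sum_n \frac{\int\Real\sigma_n}{(2ik)^n}, \qquad \Odd_\lambda\sim\sum_n\frac{\int i\Imag\sigma_n}{(2ik)^n}.
\]
The parity statement then removes the even-index terms from $\Even$ and the odd-index terms from $\Odd$, which yields \eqref{eqn:a-ev-od-expansion}. One must also track the additive constant $\tfrac{i}{2}(\phi_--\phi_+)$ from \eqref{eqn:a-sig-sigt-expand}. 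Since the lemma is stated for decaying $W$, that constant should be accounted for in $\sigma_0$, which is purely imaginary, and so it goes into the odd part.

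I expect the main obstacle to be the inductive conjugation claim for $r^\rGP_n$. The recurrence \eqref{eqn:rGP-r-rec-n+1} mixes $c_{-1}$ with the sums, and the initial normalisation $r_0=\pm1$ enters quadratically in some terms and linearly in others. It therefore needs to be verified carefully that the sign flip propagates consistently, rather than cancelling in the quadratic terms. I would first test the claim on $r_1$ and $r_2$ explicitly before running the induction.
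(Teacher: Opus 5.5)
The step that fails is the parity claim you use to get $r'_n=-\conj{r_n}$. (I drop the superscript \rGP; primes denote coefficients at $(-\lambda,k)$.) It is not true that $r_n$ is real for even $n$ and imaginary for odd $n$. Already for $n=2$, the recurrence \eqref{eqn:rGP-r-rec-n+1} with $r_0=1$ and $c_0=0$ gives
\[
r_2=\partial_X r_1-(1+\epsilon^2W_+)D+\tfrac{\epsilon^2}{4}D^2,\qquad r_1=-\tfrac{i\epsilon}{\sqrt2}D,\qquad D=W_+-W_-+\tfrac{2}{\epsilon^2}.
\]
So $\Imag r_2=-\frac{\epsilon}{\sqrt2}\partial_X(W_+-W_-)$ and $\Real r_2=-(1+\epsilon^2W_+)D+\frac{\epsilon^2}{4}D^2$ are both nonzero. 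More generally, the paper's formula $\pi_{n-1}\Imag r_n=-\frac{\epsilon}{\sqrt2}\partial_X^{n-1}(W_+-W_-+\frac{2}{\epsilon^2})$ shows $\Imag r_n\neq 0$ for every $n\ge 1$. Whether a monomial is real or imaginary depends on $n$ together with its number of derivatives. One also checks directly that $r'_2=-\conj{r_2}\neq\pm r_2$. Hence the induction you describe, which tracks a fixed parity through the recurrence, cannot work, and the main line of your proposal does not go through as written. Your fallback is essentially the paper's proof.

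The identity you want is nevertheless true, and it needs no parity information at all.

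\emph{Repair.} For $k\in i\R_+$ the variable $K=2ik$ is real, so you may conjugate formal series in $K$ coefficientwise. By \eqref{eqn:lam-expand} every $c_n$ is purely imaginary, hence $\conj{\lambda}=-\lambda$ as formal series.
\begin{enumerate}
\item Conjugating \eqref{eqn:rGP-r} shows that $-\conj{r}$ solves \eqref{eqn:rGP-r} with $\lambda$ replaced by $-\lambda$, and its leading coefficient is $-1$. In the order-$K^{-n}$ equation, $r_{n+1}$ enters linearly with a nonzero coefficient, so the formal solution is fixed by its leading coefficient. Hence $r'=-\conj{r}$.
\item Conjugating \eqref{eqn:rGP-sig}, the term $-i\sqrt2\epsilon(W_+)_X r$ becomes $+i\sqrt2\epsilon(W_+)_X\conj{r}=-i\sqrt2\epsilon(W_+)_X r'$. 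So $\conj{\sigma}$ solves the $(-\lambda)$-equation, and uniqueness of $\sigma_0,\sigma_1,\dots$ gives $\sigma'_n=\conj{\sigma_n}$. This holds exactly and pointwise; no integration by parts is needed.
\end{enumerate}

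\emph{Comparison with the paper.} With this repair your argument is correct and genuinely different from the paper's. The paper uses the reflection $X\mapsto -X$, $(W_+,W_-)\mapsto(-W_-^\sharp,-W_+^\sharp)$. This identifies $(\sigma,r)$ at $-\lambda$ with $(\ti{\sigma},\ti{r})$ at $\lambda$ for the reflected potential. It then reruns the argument behind \eqref{eqn:sig-sigt-relation} to get $\int\sigma'_n\,dX=(-1)^{n+1}\int\sigma_n\,dX$, and uses \eqref{eqn:sig-sigt-relation} only to rewrite the surviving integrals as real or imaginary parts. Your route replaces the reflection step by the pointwise conjugation symmetry. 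That produces $\Real$ and $\Imag$ directly, and \eqref{eqn:sig-sigt-relation} (reality of $\mc{H}^{\rGP}_n$) is then used only to discard $\int\Real\sigma_{2n}$ and $\int\Imag\sigma_{2n+1}$.

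\emph{The additive constant.} Your treatment of the constant $\frac{i}{2}(\phi_--\phi_+)$ from \eqref{eqn:a-sig-sigt-expand} is incorrect. The constant does not depend on $\lambda$, so it is even under $\lambda\mapsto-\lambda$. It would therefore appear as a $K^0$ term in $\Even_\lambda$, not in $\Odd_\lambda$. It cannot be absorbed into $\sigma_0$, which changes sign under $\lambda\mapsto-\lambda$. The stated expansions hold for $\log a^{\rGP}$ with this constant removed (equivalently, $\phi_+=\phi_-$), which is what the paper's proof implicitly assumes.
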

\begin{proof}
    Due to \eqref{eqn:sig-sigt-relation}, the real and imaginary parts on the right-hand side are superfluous.
    Define now $W_\pm^\sharp(X) = W_\pm(- X)$. An analysis of \eqref{eqn:rGP-sig}--\eqref{eqn:rGP-rt} reveals that
    \begin{align*}
        r^\rGP(X, (- \lambda, k); (W_+, W_-)) &= \ti{r}^\rGP(- X, (\lambda, k); (- W_-^\sharp, - W_+^\sharp))
        \\ \sigma^\rGP(X, (- \lambda, k); (W_+, W_-)) &= \ti{\sigma}^\rGP(- X, (\lambda, k); (- W_-^\sharp, - W_+^\sharp)) \,.
    \end{align*}
    Observe that $(W_+, W_-) \mapsto (- W_-^\sharp, - W_+^\sharp)$ corresponds to $q \mapsto q^\sharp$. 
    Then one can proceed as in the proof of \eqref{eqn:sig-sigt-relation} to obtain
    \begin{align*}
        \int_{\R} \ti{\sigma}^\rGP_n(- X, (\lambda, k); (- W_-^\sharp, - W_+^\sharp)) \dd X &= (-1)^{n+1} \int_{\R} \sigma^\rGP_n(X, (\lambda, k); (W_+, W_-)) \dd X \,,
    \end{align*}
    which yields the claimed expansions.
\end{proof}

\subsection{Proofs of Theorem \ref{thm:approx-2} and Corollary \ref{cor:approx-global}} \label{section:proof-of-thm-approx-2}
Let us emphasize that these recurrence relations are convenient for the explicit computation of the expansion coefficients, but difficult to work with in proofs by induction.
Instead, it is preferable to work with their generating functions, which solve the Riccati equations derived above, as operations on these generating functions elegantly represent complicated steps of a proof by induction.
As an example, consider the following lemma.
\begin{lemma} \label{lem:vanish}
    For all $U \in \mc{S}(\R; \R)$ and $n \geq 0$
    \begin{align*}
        \int_{\R} \sigma^\KdV_{2n}(X; U) \dd X &= 0 \,.
    \end{align*}
\end{lemma}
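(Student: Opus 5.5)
Write the formal power series $\bm{\sigma}^\KdV(X,k) = \sum_n \sigma^\KdV_n (2ik)^{-n}$ and split it into even and odd parts in the variable $2ik$:
\begin{align*}
    \bm{\sigma}^\KdV = \bm{\sigma}_e + \bm{\sigma}_o,
    \qquad \bm{\sigma}_e = \sum_{n} \frac{\sigma^\KdV_{2n}}{(2ik)^{2n}},
    \qquad \bm{\sigma}_o = \sum_n \frac{\sigma^\KdV_{2n+1}}{(2ik)^{2n+1}} .
\end{align*}
The claim is that $\int_\R \bm{\sigma}_e \dd X = 0$ coefficientwise. We show that $\bm{\sigma}_e$ is an exact derivative of a formal series with Schwartz coefficients.

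Substitute into the Riccati equation \eqref{eqn:KdV-sig}, $\del_X \bm{\sigma} = (2ik)\bm{\sigma} - \bm{\sigma}^2 + U$, read as an identity of formal series in $(2ik)^{-1}$. Replacing $2ik$ by $-2ik$ swaps the parities. Taking the even part (in $2ik$) of both sides gives
\begin{align*}
    \del_X \bm{\sigma}_e = (2ik)\bm{\sigma}_o - \bm{\sigma}_e^2 - \bm{\sigma}_o^2 + U,
\end{align*}
and the odd part gives
\begin{align*}
    \del_X \bm{\sigma}_o = (2ik)\bm{\sigma}_e - 2\bm{\sigma}_e\bm{\sigma}_o .
\end{align*}
The second relation reads $\bm{\sigma}_e\,(2ik - 2\bm{\sigma}_o) = \del_X \bm{\sigma}_o$. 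Because $\sigma^\KdV_0 = 0$, the series $\bm{\sigma}_o$ starts at order $(2ik)^{-1}$. Hence $1 - 2\bm{\sigma}_o/(2ik)$ is an invertible formal series with leading coefficient $1$, and
\begin{align*}
    \bm{\sigma}_e = \frac{\del_X \bm{\sigma}_o}{2ik\,\bigl(1 - 2(2ik)^{-1}\bm{\sigma}_o\bigr)} = -\frac{1}{2}\,\del_X \log\!\Bigl(1 - \frac{2\bm{\sigma}_o}{2ik}\Bigr),
\end{align*}
where the logarithm is defined by its power series. Its coefficients are polynomials in the $\sigma^\KdV_n$.

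By the recurrence \eqref{eqn:KdV-sig-rec}, each $\sigma^\KdV_n$ is a differential polynomial in $U$ with no constant term, so it lies in $\mc{S}(\R)$ when $U \in \mc{S}(\R;\R)$. Therefore each coefficient of the logarithm is a Schwartz function $F_n$, and $\sigma^\KdV_{2n} = -\tfrac12 \del_X F_n$. Integrating over $\R$ gives zero, since $F_n$ vanishes at $\pm\infty$.

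The only delicate point is the legitimacy of the formal manipulations. These are: the parity splitting, the division by $1 - 2\bm{\sigma}_o/(2ik)$, and the logarithm. All of them are ring operations in the formal power series ring in $(2ik)^{-1}$ over differential polynomials in $U$. This ring is closed under $\del_X$ and under composition with power series in elements of positive order. Checking that $\bm{\sigma}_o$ has positive order, which uses $\sigma^\KdV_0 = 0$, is the key step.

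As a sanity check, the expansion $\sigma_2 = \del_X\sigma_1 + 2\sigma_0\sigma_1 = -U_X$ is indeed a derivative. Alternatively, one could argue via the identity $\int\sigma = \int\ti\sigma$ together with the reflection symmetry $X \mapsto -X$, as in the proof of \eqref{eqn:sig-sigt-relation}. The direct generating-function argument above is cleaner and is the one I would write.
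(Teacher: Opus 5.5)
Your proof is correct and follows the paper's argument. You split $\sigma^\KdV$ into even and odd orders, use the odd part of the Riccati equation \eqref{eqn:KdV-sig} to write the even part as $\frac{\del_X \sigma_o}{2ik - 2\sigma_o} = -\frac12 \del_X \log\bigl(1 - \frac{2\sigma_o}{2ik}\bigr)$, and conclude that its coefficients are exact derivatives of Schwartz functions. Your normalization of the logarithm is slightly cleaner than the paper's $\log(2ik - 2O)$, since it is a genuine formal power series in $(2ik)^{-1}$ with no constant term.
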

\begin{proof}
    We decompose $\bm{\sigma}^\KdV = E + O$ into the even and odd orders of the formal power series.
    Then \eqref{eqn:KdV-sig} implies
    \begin{align*}
        O_X &= (2 i k) E - 2 E O \,.
    \end{align*}
    In particular
    \begin{align*}
        E &= \frac{O_X}{2 i k - 2 O} = - \frac{(2 i k - 2 O)_X}{2 i k - 2 O} = - \frac12 (\log(2 i k - 2 O))_X \,.
    \end{align*}
    Since the right-hand side is a formal power series where each coefficient is a derivative of a polynomial in $U$ and its derivatives, its integral vanishes.
\end{proof}
This is essentially a proof by induction, but all the work of the induction step is hidden in the calculus of formal power series.
In order to transform it into an actual proof by induction one needs to compare the coefficients at each order, which yields recurrence relations. 
The calculations with generating functions then represent the inductive step for these recurrence relations. 
We generally skip the base case, which sometimes also falls out of the generating function calculations, but otherwise can always be checked manually.

Using the generating-function approach, we prove our approximation result for the densities of the Hamiltonians.
In the following one should understand $k \in i \R_+$ to be sufficiently large so that $\lambda \in i \R$.
\begin{lemma} \label{lem:approx-1}
    Define $W_\pm^\sharp(X) = W_\pm(-X)$. We assume that $k \in i \R$ for the definition of the real and imaginary part of formal power series in the variable $2 i k$. 
    Let $\bm{\lambda}$ be a purely imaginary formal power series in the variable $2 i k$ which satisfies $\bm{\lambda}^2 = 1 - \frac{\epsilon^2}{2} (2 i k)^2$.
    Then
    \begin{align} 
        \label{eqn:sig-approx-1} \Real[\bm{\sigma}^\rGP](X, k; W) - \frac{1}{i \bm{\lambda}} \Imag[\bm{\sigma}^\rGP](X, k; W) 
        &= \bm{\sigma}^\KdV(X, k; - W_-) + O(\epsilon^2)
        \\ \label{eqn:sig-approx-2} \Real[\bm{\sigma}^\rGP](X, k; W) + \frac{1}{i \bm{\lambda}} \Imag[\bm{\sigma}^\rGP](X, k; W) 
        &= \bm{\ti{\sigma}}^\KdV(- X, k; W_+^\sharp) + O(\epsilon^2)
        \\ \label{eqn:sig-approx-3} \Real[\bm{\ti{\sigma}}^\rGP](X, k; W) - \frac{1}{i \bm{\lambda}} \Imag[\bm{\ti{\sigma}}^\rGP](X, k; W) 
        &= \bm{\ti{\sigma}}^\KdV(X, k; - W_-) + O(\epsilon^2)
        \\ \label{eqn:sig-approx-4} \Real[\bm{\ti{\sigma}}^\rGP](X, k; W) + \frac{1}{i \bm{\lambda}} \Imag[\bm{\ti{\sigma}}^\rGP](X, k; W) 
        &= \bm{\sigma}^\KdV(- X, k; W_+^\sharp) + O(\epsilon^2)
    \end{align}
    and
    \begin{align*}
        \Real\left[\frac{i \epsilon}{\sqrt{2}} \bm{r}^\rGP\right](X, k; W) - \frac{1}{i \bm{\lambda}} \Imag\left[\frac{i \epsilon}{\sqrt{2}} \bm{r}^\rGP\right](X, k; W) 
        &= O(\epsilon^2)
        \\ \Real\left[\frac{i \epsilon}{\sqrt{2}} \bm{r}^\rGP\right](X, k; W) + \frac{1}{i \bm{\lambda}} \Imag\left[\frac{i \epsilon}{\sqrt{2}} \bm{r}^\rGP\right](X, k; W) 
        &= \bm{\ti{r}}^\KdV(- X, k; W_+^\sharp) + O(\epsilon^2)
        \\ \Real\left[\frac{i \epsilon}{\sqrt{2}} \bm{\ti{r}}^\rGP\right](X, k; W) - \frac{1}{i \bm{\lambda}} \Imag\left[\frac{i \epsilon}{\sqrt{2}} \bm{\ti{r}}^\rGP\right](X, k; W) 
        &= \bm{\ti{r}}^\KdV(X, k; - W_-) + O(\epsilon^2)
        \\ \Real\left[\frac{i \epsilon}{\sqrt{2}} \bm{\ti{r}}^\rGP\right](X, k; W) + \frac{1}{i \bm{\lambda}} \Imag\left[\frac{i \epsilon}{\sqrt{2}} \bm{\ti{r}}^\rGP\right](X, k; W) 
        &= O(\epsilon^2) \,.
    \end{align*}
\end{lemma}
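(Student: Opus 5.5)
We work entirely at the level of formal power series in $2ik$, using the generating-function approach of Lemma \ref{lem:vanish}. The idea is to split the Riccati system \eqref{eqn:rGP-sig}--\eqref{eqn:rGP-r} into real and imaginary parts. For $k\in i\R_+$, $\bm\lambda$ is purely imaginary, so $2ik$ is real and $i\bm\lambda$ is real. We write
\begin{align*}
\bm\sigma^\rGP = R + iI, \qquad \tfrac{i\epsilon}{\sqrt2}\bm r^\rGP = \rho + i\iota,
\end{align*}
and form the combinations $S_\mp = R \mp \frac{1}{i\bm\lambda} I$ and $\rho_\mp = \rho \mp \frac{1}{i\bm\lambda}\iota$. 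The claim is that $S_-$ solves \eqref{eqn:KdV-sig} with potential $-W_-$ up to $O(\epsilon^2)$, and the analogous statements hold for the other combinations.

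\textbf{Step 1: equations for the combinations.} Write $\bm\lambda = i\mu$ with $\mu$ real. Take the real and imaginary parts of \eqref{eqn:rGP-sig} and \eqref{eqn:rGP-r}. The term $-(\sigma)^2 = -(R^2 - I^2) - 2iRI$ recombines into $-S_\mp^2$ using $I^2 = \mu^2 (\mu^{-1}I)^2$, $\mu^2 = -\bm\lambda^2 = -1 + \frac{\epsilon^2}{2}(2ik)^2$, and the identity
\begin{align*}
(R \mp \mu^{-1} I)^2 = R^2 \mp 2\mu^{-1}RI + \mu^{-2}I^2 .
\end{align*}
Up to $O(\epsilon^2)$ this gives $\mu^{-2}I^2 \approx -I^2$. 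The corrections to this recombination carry the factor $\epsilon^2(2ik)^2$, and I expect them to be absorbed as $O(\epsilon^2)$ in the sense of Theorem \ref{thm:approx-2}, namely polynomially in $\epsilon$ coefficientwise. The linear terms combine as
\begin{align*}
\tfrac12\big(-W_-(1+\lambda) + W_+(1-\lambda)\big) = \tfrac12(W_+ - W_-) - \tfrac{\lambda}{2}(W_+ + W_-).
\end{align*}
Taking $R \mp \mu^{-1} I$ of the right-hand side, the $\lambda$-part contributes $\mp\frac{i\mu}{i\mu}\frac12(W_++W_-)$ in real form, so the total is $\frac12(W_+-W_-) \mp \frac12(W_++W_-)$. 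This equals $-W_-$ for the upper sign and $W_+$ for the lower sign, which matches the potentials $-W_-$ and $W_+$. The remaining terms are $-\epsilon^2 W_-W_+/2 = O(\epsilon^2)$ and the coupling $-(W_+)_X\,\frac{i\epsilon}{\sqrt2}r$.

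\textbf{Step 2: the $r$-equation.} Multiply \eqref{eqn:rGP-r} by $\frac{i\epsilon}{\sqrt2}$ and split it in the same way. At leading order in $\epsilon$, and after rescaling by $\mu$-factors, $\rho_-$ satisfies a Riccati equation whose only consistent solution with the boundary data $r_0 = 1$ is $O(\epsilon^2)$. $\rho_+$ satisfies the equation of $\bm{\ti r}^\KdV$ reflected in $X$, namely \eqref{eqn:KdV-rt} with $X \mapsto -X$ and potential $W_+^\sharp$. This requires identifying $\frac{\epsilon^2}{2}\cdot\epsilon^{-2}(1\pm\lambda)$-type factors with $\frac14(2ik)^2$ through $\lambda^2 - 1 = 2\epsilon^2k^2$. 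Uniqueness of formal power series solutions with prescribed leading coefficients then gives the four $r$-identities, provided the initial coefficients are checked by hand. One must use $\tilde r_0 = -1$, $r_0 = 1$, and the expansion \eqref{eqn:lam-expand} of $\lambda$.

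\textbf{Step 3: feeding back into $\sigma$.} Insert the $r$-results into Step 1. For $S_-$, the coupling term $(W_+)_X\rho_-$ is $O(\epsilon^2)$, so $S_-$ solves \eqref{eqn:KdV-sig} with $U = -W_-$ modulo $O(\epsilon^2)$. Since the leading coefficient is $0$ and the recursion \eqref{eqn:KdV-sig-rec} determines all coefficients, this yields \eqref{eqn:sig-approx-1}. For $S_+$ the coupling survives through $\rho_+ \approx \bm{\ti r}^\KdV(-X)$. After $X \mapsto -X$ this reproduces the term $U_X \ti r$ in \eqref{eqn:KdV-sigt} with $U = W_+^\sharp$, which gives \eqref{eqn:sig-approx-2}. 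Identities \eqref{eqn:sig-approx-3}--\eqref{eqn:sig-approx-4} follow from the symmetry in the proof of Lemma \ref{lem:a-ev-od-expansion}, which exchanges $\sigma \leftrightarrow \ti\sigma$ via $\lambda \mapsto -\lambda$ and $(W_+,W_-) \mapsto (-W_-^\sharp, -W_+^\sharp)$, or alternatively by repeating the computation with \eqref{eqn:rGP-sigt} and \eqref{eqn:rGP-rt}.

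\textbf{Main obstacle.} The main obstacle is bookkeeping the meaning of $O(\epsilon^2)$. Since $\lambda$ and $c_n$ contain negative powers of $\epsilon$, a term like $\epsilon^2(2ik)^2 I^2$ or $\epsilon^{-2}(1-\lambda)$ must be checked to be coefficientwise a polynomial with an overall factor $\epsilon^2$, rather than producing $\epsilon^0$ contributions at shifted orders. I would handle this by working in the variable $\mu$ and expanding $\mu^{-1} = (2ik)^{-1}(\epsilon^2/2 - (2ik)^{-2})^{-1/2}$, which is regular after the right normalization. I would also verify that the equations for $S_\mp$ and $\rho_\mp$ decouple exactly at $\epsilon = 0$.
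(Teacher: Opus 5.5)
This is essentially the paper's proof. The paper forms the same combinations $\Real[\cdot]\pm\frac{1}{i\lambda}\Imag[\cdot]$ of $\sigma^\rGP$, $\ti\sigma^\rGP$, $\frac{i\epsilon}{\sqrt2}r^\rGP$, $\frac{i\epsilon}{\sqrt2}\ti r^\rGP$, rewrites the Riccati equations as closed systems in them (your recombination correction appears there as $\frac{\epsilon^2}{2}(2ik)^2\big(\frac{S_+-S_-}{2}\big)^2$), proves $\rho_-=O(\epsilon^2)$ and its tilde analogue by induction, and reads off \eqref{eqn:KdV-sig}, \eqref{eqn:KdV-sigt}, \eqref{eqn:KdV-rt} modulo $O(\epsilon^2)$.

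The one point to fix is your plan for the $\epsilon$-bookkeeping. The coefficients of your $\mu^{-1}$, i.e. of $-(i\lambda)^{-1}$, grow like $\epsilon^{-1-2m}$ at order $(2ik)^{-1-2m}$, so no normalization of that expansion is regular. The $\epsilon$-regularity of $I/(i\lambda)$ and $\iota/(i\lambda)$ comes only from cancellations, and you obtain it by running the induction on the closed system itself. There the top coefficient enters through the $(2ik)^2$-terms linearly, with an $\epsilon$-independent invertible part plus $\epsilon^2$-corrections. Also, $\rho_-=O(\epsilon^2)$ must be carried in the induction hypothesis, because the $\rho_-$ equation is not a standalone Riccati equation: it couples to $\rho_+$ and contains the singular term $-2\epsilon^{-2}\rho_-^2$.
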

\begin{proof}
    We define
    \begin{align*}
        \bm{\mu} &= \Real[\bm{\sigma}^\rGP] + \frac{1}{i \bm{\lambda}} \Imag[\bm{\sigma}^\rGP] 
        & \bm{\nu} &= \Real[\bm{\sigma}^\rGP] - \frac{1}{i \bm{\lambda}} \Imag[\bm{\sigma}^\rGP]
        \\ \bm{\ti{\mu}} &= \Real[\bm{\ti{\sigma}}^\rGP] + \frac{1}{i \bm{\lambda}} \Imag[\bm{\ti{\sigma}}^\rGP]                 
        & \bm{\ti{\nu}} &= \Real[\bm{\ti{\sigma}}^\rGP] - \frac{1}{i \bm{\lambda}} \Imag[\bm{\ti{\sigma}}^\rGP]
        \\ \bm{\alpha} &= \Real\left[\frac{i \epsilon}{\sqrt{2}} \bm{r}^\rGP\right] + \frac{1}{i \bm{\lambda}} \Imag\left[\frac{i \epsilon}{\sqrt{2}} \bm{r}^\rGP\right] 
        & \bm{\beta} &= \Real\left[\frac{i \epsilon}{\sqrt{2}} \bm{r}^\rGP\right] - \frac{1}{i \bm{\lambda}} \Imag\left[\frac{i \epsilon}{\sqrt{2}} \bm{r}^\rGP\right]
        \\ \bm{\ti{\alpha}} &= \Real\left[\frac{i \epsilon}{\sqrt{2}} \bm{\ti{r}}^\rGP\right] + \frac{1}{i \bm{\lambda}} \Imag\left[\frac{i \epsilon}{\sqrt{2}} \bm{\ti{r}}^\rGP\right] 
        & \bm{\ti{\beta}} &= \Real\left[\frac{i \epsilon}{\sqrt{2}} \bm{\ti{r}}^\rGP\right] - \frac{1}{i \bm{\lambda}} \Imag\left[\frac{i \epsilon}{\sqrt{2}} \bm{\ti{r}}^\rGP\right]
    \end{align*}
    From \eqref{eqn:rGP-r}--\eqref{eqn:rGP-rt} we derive the system
    \begin{align*}
        \\ \bm{\alpha}_X - W_+ \bm{\alpha}^2 + (2 i k)^2 \left( \frac{\epsilon^2}{2} W_+ \left(\frac{\bm{\alpha} - \bm{\beta}}{2}\right)^2 - \left(\frac{\bm{\alpha} + \bm{\beta}}{2}\right) \left(\frac{\bm{\alpha} - \bm{\beta}}{2}\right) \right) + 1 - \frac{\epsilon^2}{2} W_- &= 0
        \\ \bm{\beta}_X - (W_+ + 2 \epsilon^{-2}) \bm{\beta}^2 + (2 i k)^2 \left( \frac{\epsilon^2}{2} W_+ \left(\frac{\bm{\alpha} - \bm{\beta}}{2}\right)^2 - \bm{\beta} \left(\frac{\bm{\alpha} - \bm{\beta}}{2}\right) \right) - \frac{\epsilon^2}{2} W_- &= 0
        \\ - \bm{\ti{\alpha}}_X + (W_- - 2 \epsilon^{-2}) \bm{\ti{\alpha}}^2 - (2 i k)^2 \left( \frac{\epsilon^2}{2} W_- \left(\frac{\bm{\ti{\alpha}} - \bm{\ti{\beta}}}{2}\right)^2 - \bm{\ti{\alpha}} \left(\frac{\bm{\ti{\alpha}} - \bm{\ti{\beta}}}{2}\right) \right) + \frac{\epsilon^2}{2} W_+ &= 0
        \\ - \bm{\ti{\beta}}_X + W_- \bm{\ti{\alpha}}^2 - (2 i k)^2 \left( \frac{\epsilon^2}{2} W_- \left(\frac{\bm{\ti{\alpha}} - \bm{\ti{\beta}}}{2}\right)^2 - \left(\frac{\bm{\ti{\alpha}} + \bm{\ti{\beta}}}{2}\right) \left(\frac{\bm{\ti{\alpha}} - \bm{\ti{\beta}}}{2}\right) \right) + 1 + \frac{\epsilon^2}{2} W_+ &= 0 \,.
    \end{align*}
    This implies
    \begin{align} \label{eqn:ass-r}
        \bm{\alpha}, \bm{\ti{\beta}} &= O(\epsilon^0)
        & \bm{\ti{\alpha}}, \bm{\beta} &= O(\epsilon^2)
    \end{align}
    by induction.
    More precisely, we have
    \begin{align*}
        \bm{\alpha}_X - \left(W_+ + \frac14 (2 i k)^2\right) \bm{\alpha}^2 + 1 &= O(\epsilon^2)
        \\ - \bm{\ti{\beta}}_X - \left(- W_- + \frac14 (2 i k)^2\right) \bm{\ti{\beta}}^2 + 1 &= O(\epsilon^2) \,.
    \end{align*}
    As a result, the claimed approximations for $\bm{r}^\rGP$ and $\bm{\ti{r}}^\rGP$ hold.
    On the other hand, we obtain from \eqref{eqn:rGP-sig}--\eqref{eqn:rGP-sigt} the system
    \begin{align*}
        \bm{\mu}_X - (2 i k) \bm{\mu} + \bm{\mu}^2 - \frac{\epsilon^2}{2} (2 i k)^2 \left(\frac{\bm{\mu} - \bm{\nu}}{2}\right)^2 - W_+ + \epsilon^2\frac{W_- W_+}{2} + \del_X W_+ \bm{\alpha} &= 0
        \\ \bm{\nu}_X - (2 i k) \bm{\nu} + \bm{\nu}^2 - \frac{\epsilon^2}{2} (2 i k)^2 \left(\frac{\bm{\mu} - \bm{\nu}}{2}\right)^2 + W_- + \epsilon^2\frac{W_- W_+}{2} + \del_X W_+ \bm{\beta}  &= 0
        \\ - \bm{\ti{\mu}}_X - (2 i k) \bm{\ti{\mu}} + \bm{\ti{\mu}}^2 - \frac{\epsilon^2}{2} (2 i k)^2 \left(\frac{\bm{\ti{\mu}} - \bm{\ti{\nu}}}{2}\right)^2 - W_+ + \epsilon^2\frac{W_- W_+}{2} + \del_X W_- \bm{\ti{\alpha}} &= 0
        \\ - \bm{\ti{\nu}}_X - (2 i k) \bm{\ti{\nu}} + \bm{\ti{\nu}}^2 - \frac{\epsilon^2}{2} (2 i k)^2 \left(\frac{\bm{\ti{\mu}} - \bm{\ti{\nu}}}{2}\right)^2 + W_- + \epsilon^2\frac{W_- W_+}{2} + \del_X W_- \bm{\ti{\beta}}  &= 0 \,.
    \end{align*}
    Again with induction, we find that
    \begin{align*}
        \bm{\mu}_X - (2 i k) \bm{\mu} + \bm{\mu}^2 - W_+ + (W_+)_X \bm{\alpha} &= O(\epsilon^2)
        \\ \bm{\nu}_X - (2 i k) \bm{\nu} + \bm{\nu}^2 + W_- &= O(\epsilon^2)
        \\ - \bm{\ti{\mu}}_X - (2 i k) \bm{\ti{\mu}} + \bm{\ti{\mu}}^2 - W_+ &= O(\epsilon^2)
        \\ - \bm{\ti{\nu}}_X - (2 i k) \bm{\ti{\nu}} + \bm{\ti{\nu}}^2 + W_- + (W_-)_X \bm{\ti{\beta}} &= O(\epsilon^2) \,.
    \end{align*}
    This concludes the proof of the approximations for $\bm{\sigma}^\rGP$ and $\bm{\ti{\sigma}}^\rGP$.
\end{proof}

\begin{proof}[Proof of Theorem \ref{thm:approx-2}]
    We assume again a setting where $\lambda, k \in \conj{\mc{Q}_1}$.
    From the definition \eqref{eqn:def-hamil} and the fact that the Hamiltonians are real, we obtain
    \begin{align*}
        \mc{H}^\rGP_{2n}(W) &= (-1)^{n+1} (\sqrt{2} \epsilon)^{2n+1} \int_{\R} \Real \sigma^\rGP_{2n+1}(X; W) \dd X
        \\ \mc{H}^\rGP_{2n-1}(W) &= (-1)^n (\sqrt{2} \epsilon)^{2n} \int_{\R} \Imag \sigma^\rGP_{2n}(X; W) \dd X \,.
    \end{align*}
    Now combining \eqref{eqn:GP-tGP} with \eqref{eqn:def-energy} and Lemma \ref{lem:approx-1} yields \eqref{eqn:approx-both}.
    Note that $\sigma^\KdV_n(\pm W_\pm)$ is a polynomial in $U$ and its derivatives with an even number of derivatives present in each monomial.
    Therefore $\int_{\R} \sigma^\KdV_n(\mp X; \pm W_\pm^\sharp) \dd X = \int_{\R} \sigma^\KdV_n(X; \pm W_\pm) \dd X$.

    The main work is done, but we would like to gain a precise understanding of the number of derivatives present in $\bm{P}_n$ and prove \eqref{eqn:extra}.
    Below we prove results about $\mc{H}^\rGP$, which then directly imply the claims about $\mc{H}^\rtGP$.
    For $k \in \N$ and a polynomial $P$ in $W_+$, $W_-$, and their derivatives, we define $\pi_k P$ to be the sum of all monomials in $P$ whose total number of derivatives is $k$.
    In the following we identify such polynomials $P$ up to formal integration by parts, as the claims of Theorem \ref{thm:approx-2} concern only integrals $\int_{\R} P \dd X$.
    
    We set now $\Eqn = \rGP$ in order to simplify the notation.
    From \eqref{eqn:rGP-sig-rec-0}--\eqref{eqn:rGP-sig-rec-n+1} we derive
    \begin{align*}
        \Real \sigma^\Eqn_0 &= 0
        \\ \Real \sigma^\Eqn_1 &= - \frac{\epsilon^2}{8} (W_+ - W_-)^2 + \frac12 (W_- - W_+)
        \\ \Real \sigma^\Eqn_{n+1} &= \del_X \Real \sigma^\Eqn_n + \sum_{k=0}^n (\Real \sigma^\Eqn_{n-k} \Real \sigma^\Eqn_k - \Imag \sigma^\Eqn_{n-k} \Imag \sigma^\Eqn_k) - \frac{\epsilon}{\sqrt{2}} \del_X W_+ \Imag r^\Eqn_n
    \end{align*}
    and
    \begin{align*}
        \Imag \sigma^\Eqn_0 &= - \frac{\epsilon}{2 \sqrt{2}} (W_+ + W_-)
        \\ \Imag \sigma^\Eqn_1 &= \frac{\epsilon}{2 \sqrt{2}} \del_X (W_+ - W_-)
        \\ \Imag \sigma^\Eqn_{n+1} &= \del_X \Imag \sigma^\Eqn_n + \sum_{k=0}^n 2 \Real \sigma^\Eqn_{n-k} \Imag \sigma^\Eqn_k + \frac12 \Imag c_n (W_- + W_+) + \frac{\epsilon}{\sqrt{2}} \del_X W_+ \Real r^\Eqn_n
    \end{align*}
    as well as
    \begin{align*}
        \Imag r^\Eqn_0 &= 0
        \\ \Imag r^\Eqn_1 &= - \frac{\epsilon}{\sqrt{2}} \left( W_+ - W_- + \frac{2}{\epsilon^2} \right)
        \\ \Imag r^\Eqn_{n+1} &= \del_X \Imag r^\Eqn_n + \frac{\epsilon}{\sqrt{2}} \left( - W_+ - \frac{1}{\epsilon^2} \right) \sum_{k=0}^n (\Real r^\Eqn_{n-k} \Real r^\Eqn_k - \Imag r^\Eqn_{n-k} \Imag r^\Eqn_k) 
        \\ &+ \frac{1}{\sqrt{2} \epsilon} \sum_{m=0}^n \sum_{k=0}^m \Imag c_{n-m} 2 \Real r^\Eqn_{m-k} \Imag r^\Eqn_k + \frac{\Imag c_{-1}}{\sqrt{2} \epsilon} \sum_{k=1}^n 2 \Real r^\Eqn_{n+1-k} \Imag r^\Eqn_k \,.
    \end{align*}
    We do not write out the recurrence relation for $\Real r^\Eqn_n$ since it plays no significant role. 
    The first fact that we read from these recurrence relations is that $\Real \sigma^\Eqn_{2n+1}$ and $\Imag \sigma^\Eqn_{2n}$ are sums of monomials with an even number of derivatives, while all monomials in $\Real \sigma^\Eqn_{2n}$ and $\Imag \sigma^\Eqn_{2n+1}$ have an odd number of derivatives.
    Studying the recurrence relation for $\Imag r^\Eqn_n$ explicitly, we see that $\pi_k r^\Eqn_n = 0$ for all $k \geq n - 1$ and
    \begin{align*}
        \pi_0 \Imag r^\Eqn_1 &= - \frac{\epsilon}{\sqrt{2}} \left( W_+ - W_- + \frac{2}{\epsilon^2} \right)
        \\ \pi_n \Imag r^\Eqn_{n+1} &= \del_X \pi_{n-1} \Imag r^\Eqn_n \,,
    \end{align*}
    from which we deduce for all $n \geq 1$ that
    \begin{align*}
        \pi_{n-1} \Imag r^\Eqn_n &= - \frac{\epsilon}{\sqrt{2}} \del_X^{n-1} \left( W_+ - W_- + \frac{2}{\epsilon^2} \right) \,.
    \end{align*}
    Similarly, we find that $\pi_k \Imag \sigma^\Eqn_n = 0$ for all $k \geq n$ and
    \begin{align*}
        \pi_0 \Imag \sigma^\Eqn_0 &= - \frac{\epsilon}{2 \sqrt{2}} (W_+ + W_-)
        \\ \pi_1 \Imag \sigma^\Eqn_1 &= \frac{\epsilon}{2 \sqrt{2}} \del_X (W_+ - W_-)
        \\ \pi_{n+1} \Imag \sigma^\Eqn_{n+1} &= \pi_n \del_X \Imag \sigma^\Eqn_n \,,
    \end{align*}
    which implies
    \begin{align*}
        \pi_n \Imag \sigma^\Eqn_n &= \begin{cases}
            - \frac{\epsilon}{2 \sqrt{2}} (W_+ + W_-) &, n = 0
            \\ \frac{\epsilon}{2 \sqrt{2}} \del_X^n (W_+ - W_-) &, n \geq 1
        \end{cases} \,.
    \end{align*}
    Lastly, we have $\pi_k \Real \sigma^\Eqn_n = 0$ for $k \geq n$ and
    \begin{align*}
        \\ \pi_0 \Real \sigma^\Eqn_1 &= - \frac{\epsilon^2}{8} (W_+ - W_-)^2 + \frac12 (W_- - W_+)
        \\ \pi_n \Real \sigma^\Eqn_{n+1} &= \del_X \pi_{n-1} \Real \sigma^\Eqn_n - \sum_{k=0}^n (\pi_k \Imag \sigma^\Eqn_k) (\pi_{n-k} \Imag \sigma^\Eqn_{n-k}) - \frac{\epsilon}{\sqrt{2}} \del_X W_+ \pi_{n-1} \Imag r^\Eqn_n 
        \\ &= \del_X \pi_{n-1} \Real \sigma^\Eqn_n 
        - \sum_{k=1}^{n-1} \frac{\epsilon}{2 \sqrt{2}} \del_X^k (W_+ - W_-) \frac{\epsilon}{2 \sqrt{2}} \del_X^{n-k} (W_+ - W_-)
        \\ &+ \frac{\epsilon}{\sqrt{2}} (W_+ + W_-) \frac{\epsilon}{2 \sqrt{2}} \del_X^n (W_+ - W_-) 
        + \frac{\epsilon}{\sqrt{2}} \del_X W_+ \frac{\epsilon}{\sqrt{2}} \del_X^{n-1} \left( W_+ - W_- + \frac{2}{\epsilon^2} \right) 
        \\ &= \del_X \pi_{n-1} \Real \sigma^\Eqn_n + \frac{\epsilon^2}{8} \Bigg( 
            - \sum_{k=1}^{n-1} \del_X^k (W_+ - W_-) \del_X^{n-k} (W_+ - W_-) 
        \\ &+ 2 (W_+ + W_-) \del_X^n (W_+ - W_-)
            + 4 \del_X W_+ \del_X^{n-1} (W_+ - W_-)
        \Bigg)
        \\ &+ \mathds{1}_{\{n=1\}} \del_X W_+ \,.
    \end{align*}
    This implies
    \begin{align*}
        \int_{\R} \pi_{2n} \Real \sigma^\Eqn_{2n+1} \dd X &= (-1)^{n+1} \frac{\epsilon^2}{8} \int_{\R} (\del_X^n (W_+ - W_-))^2 \dd X \,,
    \end{align*}
    from which we obtain \eqref{eqn:extra}.
    So far we have shown that $\Real \sigma^\Eqn_{2n+1}$ has up to $2 n$ derivatives present in each monomial. 
    Via repeated integration by parts, we can reduce the number of derivatives on the factor which has the most, and thereby ensure that each monomial in $\bm{P}_{2n}$ has no more than $n$ derivatives on any factor.
    For $\Imag \sigma^\Eqn_{2n}$ we observe that the terms $\pi_{2n} \Imag \sigma^\Eqn_{2n}$ with the most derivatives vanish under integration by parts, so we obtain at most $n - 1$ derivatives on any factor in $\bm{P}_{2n-1}$.
    As a result of these elaborations, we also know that $\bm{P}_n$ has at most $\floor{\frac{n}{2}}$ derivatives in each monomial.

    It remains to prove the absence of constant or linear monomials in $\bm{P}_n$ and $\bm{Q}_n$. This follows by direct computation for $n \in \{-1, 0\}$.
    For $n \geq 1$ we shall obtain it from the absence of constant or linear monomials in $\mc{H}^\rtGP_n$ and $\mc{E}^\KdV_{n-1}$, which for the energies $\mc{E}^\KdV_{n-1}$ follows directly from \eqref{eqn:KdV-sig-rec}.
    If $P$ is a polynomial as above, we denote by $\tau_0 P$ its constant part and by $\tau_1 P$ its linear part.
    First, we note that \eqref{eqn:rGP-sig-rec-0}--\eqref{eqn:rGP-sig-rec-n+1} directly imply $\tau_0 \sigma^\Eqn_n = 0$. 
    Since $\pi_k \tau_1 \sigma^\Eqn_n$ vanishes under integration by parts for all $k \geq 1$, it suffices to determine $\pi_0 \tau_1 \sigma^\Eqn_n$.
    Here \eqref{eqn:rGP-sig-rec-0}--\eqref{eqn:rGP-sig-rec-n+1} imply
    \begin{align*}
        \pi_0 \tau_1 \sigma^\Eqn_0 &= \frac{c_{-1}}{2} (W_+ + W_-)
        & \pi_0 \tau_1 \sigma^\Eqn_1 &= \frac12 (W_- - W_+)
        & \pi_0 \tau_1 \sigma^\Eqn_{n+1} &= \frac{c_n}{2} (W_+ + W_-) \,.
    \end{align*}
    Since $c_n = 0$ for even $n$ it remains to prove the claim for the odd Hamiltonians
    \begin{align*}
        \mc{H}^\rtGP_{2n-1}(W) &= \int_{\R} \sum_{m=0}^n \binom{-\frac12}{n-m} 4^{n-m} 
        (-1)^m (\sqrt{2} \epsilon)^{2m} \Imag \sigma^\rGP_{2m}(X; W) \dd X \,.
    \end{align*}
    Here we have
    \begin{align*}
        \pi_0 \tau_1 \sum_{m=0}^n \binom{-\frac12}{n-m} 4^{n-m} (-1)^m (\sqrt{2} \epsilon)^{2m} \Imag \sigma^\rGP_{2m}
        &= \sum_{m=0}^n \binom{-\frac12}{n-m} 4^{n-m} (-1)^m (\sqrt{2} \epsilon)^{2m} \frac12 \Imag c_{2m-1} (W_+ + W_-)
        \\ &= - \mathds{1}_{\{n=0\}} \frac{\epsilon}{2 \sqrt{2}} (W_+ + W_-) \,.
    \end{align*}

\end{proof}

\begin{proof}[Proof of Corollary \ref{cor:approx-global}]
    Define for $n \in \N$ the energies
    \begin{align*}
        \ti{E}^n(U) &= \sum_{l=0}^n \binom{n}{l} \mc{E}^\KdV_l(U) \,.
    \end{align*}
    According to \cite[Theorem 9.1]{KochTataru2018} there exists $\delta \in (0, 1)$ such that for every $n \in \N$ the smallness condition $\|U\|_{H^{-1}} \leq \delta$ implies
    \begin{align*}
        |\ti{E}^n(U) - \|U\|_{H^n}^2| &\leq c(n) \|U\|_{H^{-1}} \|U\|_{H^n}^2 \,.
    \end{align*}
    We calculate
    \begin{align*}
        \mc{E}^{\rtGP,\pm}_n(W) &= \sum_{l=0}^n \binom{n}{l} (\sqrt{2} \epsilon)^{-2l-3} \big(\mc{H}^\rtGP_{2l+2}(W) \mp 2 \mc{H}^\rtGP_{2l+1}(W)\big) 
        \\ &= \sum_{l=0}^n \binom{n}{l} \left( \mc{E}^\KdV_{l}(\pm W_\pm) + \frac{\epsilon^2}{2} \int_{\R} \bm{P}_{2l+2} \mp \bm{P}_{2l+1} \dd X \right) 
        \\ &= \sum_{l=0}^n \binom{n}{l} \left( \mc{E}^\KdV_{l}(\pm W_\pm) + \frac{\epsilon^2}{8} \|\del_X^{l+1} (W_+ - W_-)\|_{L^2}^2 + \frac{\epsilon^2}{2} \int_{\R} \bm{Q}_{2l+2} \mp \bm{P}_{2l+1} \dd X \right) 
        \\ &= \ti{E}^n(\pm W_\pm) + \frac{\epsilon^2}{8} \|\del_X (W_+ - W_-)\|_{H^n}^2 + \frac{\epsilon^2}{2} \int_{\R} \sum_{l=0}^n \binom{n}{l} (\bm{Q}_{2l+2} \mp \bm{P}_{2l+1}) \dd X
        \\ &= \|W_\pm\|_{H^n}^2 + \frac{\epsilon^2}{8} \|\del_X (W_+ - W_-)\|_{H^n}^2 
        \\ &+ \ti{E}^n(\pm W_\pm) - \|W_\pm\|_{H^n}^2 + \frac{\epsilon^2}{2} \int_{\R} \sum_{l=0}^n \binom{n}{l} (\bm{Q}_{2l+2} \mp \bm{P}_{2l+1}) \dd X
    \end{align*}
    An analysis of the recurrence relations \eqref{eqn:rGP-sig-rec-0}--\eqref{eqn:rGP-sig-rec-n+1}, \eqref{eqn:rGP-r-rec-0}--\eqref{eqn:rGP-r-rec-n+1} reveals that the polynomials $\bm{Q}_{2l+2}$ and $\bm{P}_{2l+1}$ for $0 \leq l \leq n$ have degree less than or equal to $n + 3$, and at most $n$ derivatives on a single factor.
    Furthermore, the total number of derivatives in any monomial is strictly less than $n + 1$, and we know from Theorem \ref{thm:approx-2} that they contain no constant or linear monomials. 
    For $n \geq 1$ this allows us to combine the Cauchy--Schwarz inequality and the Sobolev embedding $H^1 \xhookrightarrow{\quad} L^\infty$ to estimate
    \begin{align*}
        \left|\int_{\R} \sum_{l=0}^n (\bm{Q}_{2l+2} \mp \bm{P}_{2l+1}) \dd X\right| &\leq C \|W\|_{H^n}^2 (1 + \|W\|_{H^n})^{n+1} \,.
    \end{align*}
    By a similar analysis of $\mc{E}^\KdV_n$ we find in particular that $\mc{H}^\rtGP_n(W) < \infty$ if $W \in H^{\floor{\frac{n}{2}}}$.
\end{proof}

\section{\texorpdfstring{Approximation of solutions to the \rtGP hierarchy by solutions to the \KdV hierarchy}
{Approximation of solutions to the εhG̃P̃ hierarchy by solutions to the KdV hierarchy}}

\label{section:approximation}
\subsection{Proof of Proposition \ref{prop:1}}

Proposition \ref{prop:1} is an assembly of known results. 
Most importantly, \cite[Theorem 1.1]{KenigPilod} by C.~E.~Kenig and D.~Pilod states local well-posedness of \eqref{eqn:KdV-n}.
For any initial data, the local solution to the $n$-th flow \hyperref[eqn:KdV-n]{$(\KdV_n)$} can be globalized by the standard argument of combining a blow-up alternative with the existence of conserved energies that control the Sobolev norm of the solution.
Then the solution to the $N$-truncated \KdV hierarchy is constructed by using this well-posedness result to move for a time $t_0$ along the flow of $(\KdV_0)$, then for a time $t_1$ along the flow of $(\KdV_1)$, and so on.
Due to the commutativity of the flows, this is independent of the choice of ordering. 
The commutativity of the flows is a well-known result which can be proven, for example, by proceeding as in \cite[Section 4]{LiaoWegner2025}, using the weighted Sobolev spaces $H^s \cap H^{s',1}$.
Then by density the commutativity of the flows on $H^s$ follows.

It remains to use certain conserved quantities to prove the uniform bound \eqref{eqn:KdV-bound}.
While the energies $\mc{E}^\KdV_n$ are of course conserved, they do not have the desired coercivity property. 
A first result controlling the Sobolev norm of the solution by certain combinations of $\mc{E}^\KdV_n$ in the periodic case was given in \cite[Theorem 3.1]{Lax1975} by P.~D.~Lax.
We shall use the recent results \cite[Theorem 1.1]{KillipVisanZhang2018} by R.~Killip, M.~Vi\c san, and X.~Zhang, and \cite[Theorem 9.1]{KochTataru2018} by H.~Koch and D.~Tataru.
The former result is only stated for Schwartz solutions, but the continuity of the local flow maps on Sobolev spaces allows us to approximate.

\begin{proof}[Proof of Proposition \ref{prop:1}]
    Since the low regularity conserved quantities used in \cite[Theorem 1.1]{KillipVisanZhang2018} are conserved for every flow in the \KdV hierarchy, we know that
    \begin{align*}
        \sup_{\bm{T} \in \R^{N+1}} \|U(\bm{T})\|_{H^{-1}} &\leq C (1 + \|U_0\|_{H^{-1}}^2) \|U_0\|_{H^{-1}} \,,
    \end{align*}
    so it suffices to prove for every $T > 0$ that
    \begin{align*}
        \sup_{|\bm{T}| \leq T} \|U(\bm{T})\|_{H^{s'}} &\leq C(s') \sup_{|\bm{T}| \leq T} \big(1 + \|U(\bm{T})\|_{H^{-1}}^{2 s'}\big) \|U_0\|_{H^{s'}} \,.
    \end{align*}
    To show this we use \cite[Theorem 9.1]{KochTataru2018}, which provides a smallness constant $\delta \in (0, 1)$ and for every $s' > - 1$ a functional $\ti{E}^{s'}: H^{s'} \rightarrow \R$ such that $\|U\|_{H^{-1}} \leq \delta$ implies
    \begin{align*}
        |\ti{E}^{s'}(U) - \|U\|_{H^{s'}}^2| &\leq c(s') \|U\|_{H^{-1}} \|U\|_{H^{s'}}^2 \,.
    \end{align*}
    Crucially, the functionals $\ti{E}^{s'}$ are conserved along every flow of the \KdV hierarchy.
    In particular, if
    \begin{align*}
        \sup_{|\bm{T}| \leq T} \|U(\bm{T})\|_{H^{-1}} \leq \delta_0 = \min\left\{\delta, \frac{1}{2 c(s')}\right\} \,,
    \end{align*}
    then
    \begin{align*}
        \|U(\bm{T})\|_{H^{s'}}^2 &\leq \ti{E}^{s'}(U(\bm{T})) + c(s') \|U(\bm{T})\|_{H^{-1}} \|U(\bm{T})\|_{H^{s'}}^2 
        \leq \ti{E}^{s'}(U(\bm{T})) + \frac12 \|U(\bm{T})\|_{H^{s'}}^2 \,,
    \end{align*}
    which implies
    \begin{align*}
        \sup_{|\bm{T}| \leq T} \|U(\bm{T})\|_{H^{s'}}^2 &\leq 2 \ti{E}^{s'}(U_0)
        \leq 2 \big(\|U_0\|_{H^{s'}}^2 + c(s') \|U_0\|_{H^{-1}} \|U_0\|_{H^{s'}}^2\big)
        \leq 3 \|U_0\|_{H^{s'}}^2 \,.
    \end{align*}
    We therefore assume $\sup_{|\bm{T}| \leq T} \|U(\bm{T})\|_{H^{-1}} > \delta_0$ and use the scaling invariance of the \KdV hierarchy to regain the required smallness.
    Specifically, we define $u(\bm{T}, X) = \omega^2 U(\Omega \bm{T}, \omega X)$ where 
    \begin{align*}
        \omega &= \frac{\delta_0^2}{\sup_{|\bm{T}| \leq T} \|U(\bm{T})\|_{H^{-1}}^2} \in (0, 1)
        \text{ and } \Omega \in \R^{(N+1) \times (N+1)} \text{ with } \Omega_{n,m} = \mathds{1}_{\{n=m\}} \omega^{2n+1} \,.
    \end{align*}
    By the representation \eqref{eqn:KdV-structure} below, $u$ is still a solution to the \KdV hierarchy in the sense of this theorem.
    It satisfies
    \begin{align*}
        \sup_{|\Omega \bm{T}| \leq T} \|u(\bm{T})\|_{H^{-1}}^2 
        &= \sup_{|\Omega \bm{T}| \leq T} \int_{\R} \frac{\omega}{\frac{1}{\omega^2} + \xi^2} |\ha{U}(\Omega \bm{T}, \xi)|^2 \dd \xi 
        \leq \omega \sup_{|\bm{T}| \leq T} \|U(\bm{T})\|_{H^{-1}}^2 = \delta_0^2 \,.
    \end{align*}
    The above reasoning now implies
    \begin{align*}
        \sup_{|\Omega \bm{T}| \leq T} \|u(\bm{T})\|_{H^{s'}}^2 &\leq 3 \|u(\bm{0})\|_{H^{s'}}^2 \,,
    \end{align*}
    which after scaling yields
    \begin{align*}
        \sup_{|\bm{T}| \leq T} \|U(\bm{T})\|_{H^{s'}}^2 &\leq 3 \omega^{-2s'} \|U_0\|_{H^{s'}}^2 \,.
    \end{align*}
\end{proof}

\subsection{Proof of Proposition \ref{prop:2}}

\begin{lemma}[Continuity argument] \label{lem:cont}
    For every $C_0 > 0$ and $h \in \N$ there exists some $\delta_0, \ti{\epsilon}_0, C_1 > 0$ such that the following holds.
    Let $r^\ast \in (0, \infty) \cup \{\infty\}$ and consider two functions $g_\pm \in C([0, r^\ast); [0, \infty))$ that satisfy for some $\ti{\epsilon} \in (0, \ti{\epsilon}_0)$ and all $r \in [0, r^\ast)$
    \begin{align*}
        |g_\pm(r) - g_\pm(0)| &\leq C_0 (g_\pm(r) + g_\pm(0)) (g_+(r) + g_-(r))^{\frac12}
        \\ &+ \ti{\epsilon}^2 C_0 (g_+(r) + g_-(r) + g_+(0) + g_-(0)) (1 + g_+(r) + g_-(r) + g_+(0) + g_-(0))^{h+1} \,.
    \end{align*}
    Then $g_+(0) + g_-(0) < \delta_0$ implies 
    \begin{align*}
        g_\pm(r) &\leq \begin{cases}
            3 g_\pm(0) &, g_\pm(0) > 0
            \\ 3 g_\mp(0) &, g_\pm(0) = 0
        \end{cases} + \ti{\epsilon}^2 C_1 (g_+(0) + g_-(0))
    \end{align*}
    for all $r \in [0, r^\ast)$.
\end{lemma}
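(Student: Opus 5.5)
This is a standard bootstrap (continuity) argument. Set $G(r) = g_+(r) + g_-(r)$ and $G_0 = G(0) < \delta_0$, where $\delta_0$ and $\ti{\epsilon}_0$ will be chosen small depending only on $C_0$ and $h$.

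\textbf{Step 1: bootstrap set.} Let
\begin{align*}
    S = \{ r \in [0, r^\ast) : G(\rho) \leq 4 G_0 \text{ for all } \rho \in [0, r] \} \,.
\end{align*}
By continuity of $g_\pm$, $S$ is a nonempty closed interval in $[0, r^\ast)$ containing $0$. If $G_0 = 0$, the hypothesis gives $g_\pm(r) \leq C_0 g_\pm(r) G(r)^{1/2} + \ti{\epsilon}^2 C_0 G(r) (1 + G(r))^{h+1}$. Summing over $\pm$ and using continuity near $G = 0$ forces $G \equiv 0$, so we may assume $G_0 > 0$.

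\textbf{Step 2: improved bound on $S$.} For $r \in S$ we have $G(r)^{1/2} \leq 2 \delta_0^{1/2}$ and $(1 + G(r) + G_0)^{h+1} \leq 6^{h+1}$ once $\delta_0 \leq 1$. The hypothesis then gives
\begin{align*}
    g_\pm(r) \bigl(1 - 2 C_0 \delta_0^{1/2}\bigr) \leq g_\pm(0) \bigl(1 + 2 C_0 \delta_0^{1/2}\bigr) + \ti{\epsilon}^2 C_0 6^{h+1} \cdot 5 G_0 \,.
\end{align*}
Choose $\delta_0$ so small that $2 C_0 \delta_0^{1/2} \leq \frac14$. Then $(1 + \frac14)/(1 - \frac14) = \frac53 < 3$, and
\begin{align*}
    g_\pm(r) \leq \tfrac53 g_\pm(0) + \ti{\epsilon}^2 C_1 G_0 \,, \qquad C_1 = \tfrac{4}{3} \cdot 5 \cdot 6^{h+1} C_0 \,.
\end{align*}
This is already the claimed estimate when $g_\pm(0) > 0$. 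When $g_\pm(0) = 0$, the bound $g_\pm(r) \leq \ti{\epsilon}^2 C_1 G_0$ is stronger than the claimed one, since $3 g_\mp(0) \geq 0$ is merely added. The unusual case distinction in the statement is therefore harmless.

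\textbf{Step 3: closing the bootstrap.} Summing the two estimates from Step 2 gives, on $S$,
\begin{align*}
    G(r) \leq \bigl(\tfrac53 + 2 \ti{\epsilon}_0^2 C_1\bigr) G_0 \leq 2 G_0 < 4 G_0 \,,
\end{align*}
provided $\ti{\epsilon}_0$ is chosen with $2 \ti{\epsilon}_0^2 C_1 \leq \frac13$. The strict inequality and continuity of $G$ show that $S$ is relatively open in $[0, r^\ast)$. Since $S$ is also closed and nonempty, connectedness gives $S = [0, r^\ast)$, and Step 2 then holds for all $r \in [0, r^\ast)$.

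\textbf{Main obstacle.} The only delicate point is bookkeeping. The constants $\delta_0$ and $\ti{\epsilon}_0$ must depend only on $C_0$ and $h$, and the factor $G(r)^{1/2}$ must be controlled by the bootstrap bound, not by $g_\pm$ individually. The degenerate case $G_0 = 0$ also has to be handled, which Step 1 does.
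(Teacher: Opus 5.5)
Your proof is correct. It uses the same continuity method as the paper, but with a different bootstrap quantity, and that choice changes the bookkeeping.

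\textbf{The paper's route.} It argues by contradiction at a first time $r_+$ at which the target inequality itself saturates. Because the bootstrap hypothesis is the componentwise conclusion, it must split into three cases according to which of $g_\pm(0)$ vanish.

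\textbf{Your route.} You bootstrap only the auxiliary bound $G \leq 4G_0$ on the sum. This controls the factor $G(r)^{1/2}$ and the polynomial factor $(1+G(r)+G_0)^{h+1}$ uniformly in one step. The componentwise estimate then follows directly, and summing it closes the bootstrap.

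\textbf{What this buys.}
\begin{itemize}
\item A strictly sharper conclusion, $g_\pm(r) \leq \frac53 g_\pm(0) + \ti{\epsilon}^2 C_1 G_0$. In particular $g_\pm(r) \leq \ti{\epsilon}^2 C_1 G_0$ whenever $g_\pm(0)=0$, so the $3g_\mp(0)$ case distinction in the statement is unnecessary.
\item The only case needing separate treatment is $G_0 = 0$. You handle it correctly by the open--closed argument on $\{G=0\}$; your choice $2\ti{\epsilon}_0^2 C_1 \leq \frac13$ already gives the smallness that argument needs.
\item Explicit constants $\delta_0$, $\ti{\epsilon}_0$, $C_1$ depending only on $C_0$ and $h$. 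In the paper's first case, absorbing the term $8\ti{\epsilon}^2 C_0\, g_-(0)(\dots)^{h+1}$ into $E$ requires $C_1$ to be large relative to $C_0$. The paper leaves this implicit, and your formula for $C_1$ makes it transparent.
\end{itemize}
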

\begin{proof}
    Set $E = \ti{\epsilon}^2 C_1 (g_+(0) + g_-(0))$.
    We prove by contradiction, so we assume that the estimate fails before $r^\ast$.
    More specifically, we may assume without loss of generality that the estimate for $g_+$ becomes saturated at some time $r_+ \in (0, r^\ast)$, and holds for both $g_+$ and $g_-$ on the interval $(0, r_+)$.
    This means either $g_+(r_+) = 3 g_+(0) + E$ and $g_+(0) > 0$, or $g_+(r_+) = 3 g_-(0) + E$ and $g_+(0) = 0$,
    and furthermore for all $r \in (0, r_+)$ we have $g_\pm(r) \leq 3 g_\pm(0) + E$ if $g_\pm(0) > 0$, and $g_\pm(r) \leq 3 g_\mp(0) + E$ if $g_\pm(0) = 0$.
    In the case $g_+(0) > 0$ this implies
    \begin{align*}
        2 g_+(0) + E &\leq C_0 (8 g_+(0) + E) (6 g_+(0) + 6 g_-(0) + 2 E)^{\frac12}
        \\ &+ \ti{\epsilon}^2 C_0 (8 g_+(0) + 8 g_-(0) + 2 E) (1 + 8 g_+(0) + 8 g_-(0) + 2 E)^{h+1} \,,
    \end{align*}
    which leads to a contradiction if $\delta_0$ and $\ti{\epsilon}_0$ are sufficiently small and $g_+(0) + g_-(0) < \delta_0$.
    The next case is $g_+(0) = 0$ and $g_-(0) > 0$.
    Here we have
    \begin{align*}
        g_+(r) &\leq C_0 g_+(r) (g_+(r) + g_-(r))^{\frac12}
        \\ &+ \ti{\epsilon}^2 C_0 (g_+(r) + g_-(r) + g_-(0)) (1 + g_+(r) + g_-(r) + g_-(0))^{h+1} \,.
    \end{align*}
    and $\max\{g_+(r), g_-(r)\} \leq 3 g_-(0) + E$ for all $r \in (0, r_+)$.
    In particular
    \begin{align*}
        3 g_-(0) + E = g_+(r_+) = \lim_{r \nearrow r_+} g_+(r) 
        &\leq C_0 (3 g_-(0) + E) (6 g_-(0) + 2 E)^{\frac12}
        \\ &+ \ti{\epsilon}^2 C_0 (8 g_-(0) + E) (1 + 8 g_-(0) + E)^{h+1} \,,
    \end{align*}
    which again leads to a contradiction for sufficiently small $\delta_0$ and $\ti{\epsilon}_0$.
    The final case is $g_+(0) = g_-(0) = 0$, where
    \begin{align*}
        g_\pm(r) &\leq C_0 g_\pm(r) (g_+(r) + g_-(r))^{\frac12}
        \\ &+ \ti{\epsilon}^2 C_0 (g_+(r) + g_-(r)) (1 + g_+(r) + g_-(r))^{h+1} \,.
    \end{align*}
    If either $g_+$ or $g_-$ are ever positive on $[0, r^\ast)$, then they assume arbitrarily small values for some $r \in (0, r^\ast)$, which is not compatible with the estimate when $\delta_0$ and $\ti{\epsilon}_0$ are sufficiently small.
\end{proof}

We can now prove the proposition.
\begin{proof}[Proof of Proposition \ref{prop:2}]
    This proposition is \cite[Corollary 1.7]{LiaoWegner2025} with the addition of (iv), which we now prove.
    A lower bound for the distance to vacuum using the energy is given, for example, in \cite[Lemma 1]{BethuelGravejatSaut2008} and \cite[Lemma 1.2]{Wegner2023}.
    Then continuity and boundedness of $W$ in $H^{s-1}$ follow from the local bilipschitz property of the Madelung transform $q \mapsto \big(\big(1 + \frac{w_+ - w_-}{2}\big)^2, w_+ + w_-\big)$ established in \cite[Corollary 1.7, Lemma 2.4]{Wegner2023} together with $q \in C(\R^{N+1}; X^s)$.
    For a detailed proof of the fact that \eqref{eqn:rGP-n} is solved in the sense of distributions in the case $n = 2$, we refer to \cite[Section 3]{Wegner2023}. 
    The cases $n \geq 3$ are analogous. 

    In fact the solutions are strong in the sense that $T_n \mapsto W(\bm{c}^{\NLS \mapsto \rtGP} \bm{T}) \in C^1(H^{s-2\floor{\frac{n}{2}}-2})$ satisfies \eqref{eqn:rtGP-n} in that space.
    We obtain this by transferring the differentiability of the solutions from (iii) of Proposition \ref{prop:2} through the Madelung transform.
    Away from vacuum, this boils down to combining product estimates and the boundedness of certain smooth nonlinear mappings on Sobolev spaces, which can be performed as in \cite{Wegner2023}.
    Note that the highest order of derivative appearing in \eqref{eqn:rtGP-n} is $2\floor{\frac{n}{2}}+1$.

    It remains to prove \eqref{eqn:approx-global-explicit}, so we let $h \in \{1, \dots, s - 2\}$. Here we use Corollary \ref{cor:approx-global}.
    In order to obtain an estimate for $\|W_\pm\|_{H^h}$ from this bound we need smallness of $\|W_\pm\|_{H^{-1}}$.
    We shall obtain smallness for $\|W\|_{H^{h+1}}$ globally in time by slightly relaxing our scaling.
    Specifically, we define $\ti{\bm{T}}$, $\ti{X}$, and $\ti{W}$ in analogy to $\bm{T}$, $X$, and $W$, with $\epsilon$ replaced by 
    \begin{align*}
        \ti{\epsilon} = \epsilon \max\{1, \delta^{-\frac13} \|W(\bm{0})\|_{H^{h+1}}^{\frac23}\}
    \end{align*}
    for some $\delta = \delta(h) > 0$ to be chosen later.
    Then $\ti{W}$ has the same properties as described above, except $\epsilon$ is replaced by $\ti{\epsilon}$.
    Since
    \begin{align*}
        \ti{W}(\ti{\bm{T}}, \ti{X}) &= \frac{1}{\ti{\epsilon}^2} w(\bm{t}, x) 
        = \left(\frac{\epsilon}{\ti{\epsilon}}\right)^2 \frac{1}{\epsilon^2} w(\bm{t}, x) 
        = \left(\frac{\epsilon}{\ti{\epsilon}}\right)^2 W(\bm{T}, X)
        = \left(\frac{\epsilon}{\ti{\epsilon}}\right)^2 W\left(\bm{T}, \frac{\epsilon}{\ti{\epsilon}} \ti{X}\right)
    \end{align*}
    we have for some constant $C = C(h) > 0$
    \begin{align*} 
        C^{-1} \left(\frac{\epsilon}{\ti{\epsilon}}\right)^{\frac32+h} \|W(\bm{T})\|_{H^h} 
        &\leq \|\ti{W}(\ti{\bm{T}})\|_{H^h}
        \leq C \left(\frac{\epsilon}{\ti{\epsilon}}\right)^{\frac32} \|W(\bm{T})\|_{H^h} \,,
    \end{align*}
    in particular $\|\ti{W}(\bm{0})\|_{H^{h+1}}^2 < \delta$.
    By Corollary \ref{cor:approx-global} there exists a choice of $\delta$ and $C = C(h) > 0$ so that $\|\ti{W}\|_{H^{-1}}^2 < 4 \delta$ implies
    \begin{align*}
        \left| \mc{E}^{\rtGP,\pm}_h(\ti{W}) - \|\ti{W}_\pm\|_{H^h}^2 - \frac{\ti{\epsilon}^2}{8} \|\del_X (\ti{W}_+ - \ti{W}_-)\|_{H^h}^2 \right| 
        &\leq C \|\ti{W}_\pm\|_{H^h}^2 \|\ti{W}_\pm\|_{H^{-1}} 
        \\ &+ \ti{\epsilon}^2 C \|\ti{W}\|_{H^h}^2 (1 + \|\ti{W}\|_{H^h})^{h+1} \,.
    \end{align*}
    We use this to estimate
    \begin{align*}
        & \left| \|\ti{W}_\pm(\ti{\bm{T}})\|_{H^h}^2 + \frac{\ti{\epsilon}^2}{8} \|\del_X (\ti{W}_+ - \ti{W}_-)(\ti{\bm{T}})\|_{H^h}^2 - \|\ti{W}_\pm(\bm{0})\|_{H^h}^2 - \frac{\ti{\epsilon}^2}{8} \|\del_X (\ti{W}_+ - \ti{W}_-)(\bm{0})\|_{H^h}^2 \right|
        \\ &\leq C \left( \|\ti{W}_\pm(\ti{\bm{T}})\|_{H^h}^2 \|\ti{W}_\pm(\ti{\bm{T}})\|_{H^{-1}} + \|\ti{W}_\pm(\bm{0})\|_{H^h}^2 \|\ti{W}_\pm(\bm{0})\|_{H^{-1}} \right)
        \\ &+ \ti{\epsilon}^2 C \left( \|\ti{W}(\ti{\bm{T}})\|_{H^h}^2 (1 + \|\ti{W}(\ti{\bm{T}})\|_{H^h})^{h+1} + \|\ti{W}(\bm{0})\|_{H^h}^2 (1 + \|\ti{W}(\bm{0})\|_{H^h})^{h+1} \right)
    \end{align*}
    as long as $\|\ti{W}(\bm{0})\|_{H^{-1}}^2, \|\ti{W}(\ti{\bm{T}})\|_{H^{-1}}^2 < 4 \delta$.
    Define
    \begin{align*}
        g_\pm(r) &= \sup_{\{|\ti{\bm{T}}| \leq r\}} \left( \|\ti{W}_\pm(\ti{\bm{T}})\|_{H^h}^2 + \frac{\ti{\epsilon}^2}{8} \|\del_X (\ti{W}_+ - \ti{W}_-)(\ti{\bm{T}})\|_{H^h}^2 \right)
        \\ r^\ast &= \inf\{r > 0: g_+(r) + g_-(r) \geq 4 \delta\}
    \end{align*}
    and note that $g_\pm$ is continuous.

    By choosing $\epsilon < \epsilon_0$ for some appropriate $\epsilon_0 = \epsilon_0(\delta, \|W(\bm{0})\|_{H^{h+1}})$, we can ensure that $\ti{\epsilon} \in (\epsilon, 1)$.
    Then the above inequality implies for $r \in [0, r^\ast)$ that for some $C_0 = C_0(h)$ we have
    \begin{align*}
        |g_\pm(r) - g_\pm(0)| &\leq C_0 (g_\pm(r) + g_\pm(0)) (g_+(r) + g_-(r))^{\frac12}
        \\ &+ \ti{\epsilon}^2 C_0 (g_+(r) + g_-(r) + g_+(0) + g_-(0)) (1 + g_+(r) + g_-(r) + g_+(0) + g_-(0))^{h+1} \,.
    \end{align*}
    By Lemma \ref{lem:cont} there exists some $\delta_0 = \delta_0(h) > 0$, $\ti{\epsilon}_0 = \ti{\epsilon}_0(h) > 0$, and $C_1 = C_1(h) > 0$ such that $g_+(0) + g_-(0) < \delta_0$ and $\ti{\epsilon} < \ti{\epsilon}_0$ imply
    \begin{align*}
        g_\pm(r) &\leq \begin{cases}
            3 g_\pm(0) &, g_\pm(0) > 0
            \\ 3 g_\mp(0) &, g_\pm(0) = 0
        \end{cases} + \ti{\epsilon}^2 C_1 (g_+(0) + g_-(0))
    \end{align*}
    for all $r \in [0, r^\ast)$.
    Since $g_\pm(0) \lesssim \|\ti{W}(\bm{0})\|_{H^{h+1}}^2$ we can obtain $g_+(0) + g_-(0) < \min\{\delta_0, \delta\}$ and also $\ti{\epsilon} < \ti{\epsilon}_0$ from an appropriately small choice of $\epsilon_0 = \epsilon_0(h, \delta, \delta_0, \|W(\bm{0})\|_{H^{h+1}})$.
    Then the estimate just proved implies $g_+(r) + g_-(r) < 4 \delta$, so $r^\ast = \infty$.
    
    If $g_\pm(0) = 0$ then $(\ti{W}_+ - \ti{W}_-)(\bm{0})$ is constant in $\ti{X}$, hence $W_+(\bm{0}) = W_-(\bm{0})$. 
    Therefore $g_\pm(0) = 0$ implies $W(\bm{0}) = 0$, which is a trivial case.
    If $g_\pm(0) > 0$ then we finally obtain \eqref{eqn:approx-global-explicit}.
\end{proof}

\subsection{Proof of Theorem \ref{thm:1}}

We are now ready to give the proof of our main result.
To simplify the notation, we set $\bm{c} = \bm{c}^{\NLS \mapsto \rtGP}$.
\begin{proof}[Proof of Theorem \ref{thm:1}]
    Let $\bm{T} \in \R^{N+1}$ and recall that $\bm{T} = (T_0, \dots, T_N)$. We shall use an energy inequality with Grönwall's lemma.
    Note, however, that we do not wish to apply Grönwall's lemma several times, i.~e. for each flow in the hierarchy, as this would require reusing solutions obtained by our well-posedness theory as initial data and lead to worse estimates.
    Instead, we shall apply Grönwall's lemma only once.

    \textbf{Continuous path from $\bm{0}$ to $\bm{T}$.}
    Let $n \in \{0, \dots, N\}$ and define $m = m(n) = \floor{\frac{n-1}{2}} \in \{-1, \dots, M\}$ where $M = \floor{\frac{N-1}{2}}$.
    In order to simplify the argument below we assume that $|T_0|, \dots, |T_N| > 0$. The degenerate cases only require minor modifications.
    Define $\tau_n = |T_0| + \dots + |T_n|$ as well as $\tau_{-1} = 0$ and set
    \begin{align*}
        (\bm{T}^\tau)_n &= \begin{cases}
            0 &, \tau \in [0, \tau_{n-1})
            \\ \frac{\tau - \tau_{n-1}}{\tau_n - \tau_{n-1}} T_n &, \tau \in [\tau_{n-1}, \tau_n)
            \\ T_n &, \tau \in [\tau_n, \infty)
        \end{cases} \,.
    \end{align*}
    Then $(\bm{T}^\tau)_{\tau \in [0, \infty)}$ is a continuous path with $\bm{T}^0 = \bm{0}$ and $(\bm{T}^\tau)_n = T_n$ if $\tau \geq \tau_n$.
    In particular $\bm{T}^{\tau_N} = \bm{T}$.
    We use Theorem \ref{thm:2} to write
    \begin{align*}
        W(\bm{c} \bm{T}^\tau) &= W(\bm{0}) + \int_0^{\tau_N \land \tau} \frac{\dd}{\dd {\tau'}} W(\bm{c} \bm{T}^{\tau'}) \dd \tau'
        \\ &= W(\bm{0}) + \sum_{n=0}^N \sign(T_n) \int_{\tau_{n-1} \land \tau}^{\tau_n \land \tau} \frac{\dd}{\dd (\bm{T}^{\tau'})_n} W(\bm{c} \bm{T}^{\tau'}) \dd \tau'
        \\ &= W(\bm{0}) + \sum_{n=0}^N \sign(T_n) \int_{\tau_{n-1} \land \tau}^{\tau_n \land \tau}
        \begin{pmatrix}
            - (-1)^n \frac12 \del_X \frac{\delta}{\delta W_+} \mc{E}^\KdV_m(W_+(\bm{c} \bm{T}^{\tau'}))
            \\ - \frac12 \del_X \frac{\delta}{\delta (- W_-)} \mc{E}^\KdV_m(- W_-(\bm{c} \bm{T}^{\tau'}))
        \end{pmatrix} + \epsilon^2 \bm{R}_n(\bm{T}^{\tau'}) \dd \tau' \,.
    \end{align*}
    Similarly, we can write
    \begin{align*}
        \begin{pmatrix}
            U_+(\bm{b}^+ \bm{T}^\tau)
            \\ U_-(\bm{b}^- \bm{T}^\tau)
        \end{pmatrix} &= U(\bm{0}) + \int_0^{\tau_N \land \tau} \frac{\dd}{\dd \tau'} 
        \begin{pmatrix}
            U_+(\bm{b}^+ \bm{T}^{\tau'})
            \\ U_-(\bm{b}^- \bm{T}^{\tau'})
        \end{pmatrix} \dd \tau'
        \\ &= U(\bm{0}) + \sum_{n=0}^N \sign(T_n) \int_{\tau_{n-1} \land \tau}^{\tau_n \land \tau} \frac{\dd}{\dd (\bm{T}^{\tau'})_n} 
        \begin{pmatrix}
            U_+(\bm{b}^+ \bm{T}^{\tau'})
            \\ U_-(\bm{b}^- \bm{T}^{\tau'})
        \end{pmatrix} \dd \tau'
        \\ &= U(\bm{0}) + \sum_{n=0}^N \sign(T_n) \int_{\tau_{n-1} \land \tau}^{\tau_n \land \tau} 
        \begin{pmatrix}
            - (-1)^n \frac12 \del_X \frac{\delta}{\delta U_+} \mc{E}^\KdV_m(U_+(\bm{b}^+ \bm{T}^{\tau'}))
            \\ - \frac12 \del_X \frac{\delta}{\delta (- U_-)} \mc{E}^\KdV_m(- U_-(\bm{b}^- \bm{T}^{\tau'}))
        \end{pmatrix} \dd \tau' \,.
    \end{align*}

    \textbf{Energy estimate.}
    Define
    \begin{align*}
        V(\bm{T}) &= \begin{pmatrix}
            V_+(\bm{T})
            \\ V_-(\bm{T})
        \end{pmatrix} = \begin{pmatrix}
            U_+(\bm{b}^+ \bm{T})
            \\ U_-(\bm{b}^- \bm{T})
        \end{pmatrix}
        - W(\bm{c} \bm{T}) \,.
    \end{align*}
    Since $W(\bm{0}) = U(\bm{0})$ and $\bm{R}_0 = \bm{R}_1 = 0$, we know that $V(\bm{T}^\tau) = 0$ for $\tau \in [0, \tau_1]$.
    We therefore have
    \begin{align*}
        V(\bm{T}^\tau) &= \sum_{n=2}^N \sign(T_n) \int_{\tau_{n-1} \land \tau}^{\tau_n \land \tau} (\bm{F}_n(\tau') - \epsilon^2 \bm{R}_n(\bm{T}^{\tau'})) \dd \tau' \,.
    \end{align*}
    with
    \begin{align*}
        \bm{F}_n(\tau) &= \begin{pmatrix}
            - (-1)^n \frac12 \del_X \frac{\delta}{\delta W_+} \big(\mc{E}^\KdV_m(W_+(\bm{c} \bm{T}^\tau) + V_+(\bm{T}^\tau)) - \mc{E}^\KdV_m(W_+(\bm{c} \bm{T}^\tau))\big)
            \\ - \frac12 \del_X \frac{\delta}{\delta (- W_-)} \big(\mc{E}^\KdV_m(- W_-(\bm{c} \bm{T}^\tau) - V_-(\bm{T}^\tau)) - \mc{E}^\KdV_m(- W_-(\bm{c} \bm{T}^\tau))\big)
        \end{pmatrix} \text{ for } \tau \in [\tau_{n-1}, \tau_n) \,.
    \end{align*}
    In particular, for all $h \in \{0, \dots, \floor{\frac{s-N}{2}}\}$ we have
    \begin{align*}
        \int_{\R} (\del_X^h V(\bm{T}^\tau))^2 \dd X &= \sum_{n=2}^N \sign(T_n) \int_{\tau_{n-1} \land \tau}^{\tau_n \land \tau} \int_{\R} 2 \del_X^h V(\bm{T}^{\tau'}) \del_X^h (\bm{F}_n(\tau') - \epsilon^2 \bm{R}_n(\bm{T}^{\tau'})) \dd X \dd \tau' \,.
    \end{align*}
    For notational convenience, let us define for a set of functions $F$ in the variable $X$ and parameters $K, L \in \N$ the sets
    \begin{align*}
        \mc{O}^{K,L}(F) &= \left\{ \sum_{k=0}^K \underset{|l| \leq L}{\sum_{l = (l_1, \dots, l_k) \in \N^k}} \sum_{f = (f_1, \dots, f_k) \in F^k} c^{k,l}_f \prod_{j=1}^k \del_X^{l_j} f_j : c^{k,l}_f \in \C[\epsilon] \right\} \,.
    \end{align*}
    Studying the recurrence relations \eqref{eqn:rGP-sig-rec-0}--\eqref{eqn:rGP-sig-rec-n+1}, \eqref{eqn:rGP-r-rec-0}--\eqref{eqn:rGP-r-rec-n+1}, as we do in the proof of Theorem \ref{thm:approx-2} at the end of Section \ref{section:transmissioncoefficient}, reveals that 
    $\frac{\delta}{\delta W_\pm} \mc{H}^{\rtGP}_n(W) \in \mc{O}^{n+1,2\floor{\frac{n}{2}}}(W_+, W_-)$. 
    A similar analysis of \eqref{eqn:KdV-sig-rec} yields $\frac{\delta}{\delta U} \mc{E}^\KdV_m(U) \in \mc{O}^{m+1,2m}(U)$.
    Then
    \begin{align*}
        \bm{R}_n &\in \mc{O}^{n+2,2\floor{\frac{n}{2}}+1}(W_+, W_-) + \frac{\mc{O}^{n+2,2\floor{\frac{n}{2}}+1}(W_+, W_-)}{1 + \frac{\epsilon^2}{2} (W_+ - W_-)} + \frac{\mc{O}^{n+2,2\floor{\frac{n}{2}}+1}(W_+, W_-)}{(1+ \frac{\epsilon^2}{2} (W_+ - W_-))^2} \,.
    \end{align*}
    We use Sobolev embedding $H^1 \xhookrightarrow{\quad} L^\infty$ and the $H^h$-algebra property for $h \geq 1$ as well as \cite[Theorem 2.87]{BCD} to find a constant $C(n, h, \delta) > 0$ such that
    \begin{align*}
        \|\bm{R}_n\|_{H^h} &\leq C(n, h, \delta) \left( 1 + \|W\|_{H^{n+1+h}}^{n+4} \right) \,.
    \end{align*}
    Then
    \begin{align*}
        \int_{\R} \del_X^h V \epsilon^2 \del_X^h \bm{R}_n \dd X
        &\leq \mathds{1}_{\{n \geq 2\}} \left( \|V\|_{H^h}^2 + \epsilon^4 C(N, h, \delta)^2 \left( 1 + \|W\|_{H^{n+1+h}}^{2n+8} \right) \right) \,.
    \end{align*}
    We are able to include the indicator function because $\bm{R}_0 = \bm{R}_1 = 0$.
    A more detailed analysis of \eqref{eqn:KdV-sig-rec} yields the representation
    \begin{align} \label{eqn:KdV-structure}
        \frac12 \del_X \frac{\delta}{\delta U} \mc{E}^\KdV_m 
        &= (-1)^m \del_X^{2m+1} U 
        + \sum_{k=2}^{m+1} \underset{|l| + 2 k = 2m+3}{\sum_{l = (l_1, \dots, l_k) \in \N^k}} c^{k,l} \prod_{j=1}^k \del_X^{l_j} U
    \end{align}
    with coefficients $c^{k,l} \in \C$.
    Then
    \begin{align*}
        & \del_X \frac{\delta}{\delta (\pm W_\pm)} \big( \mc{E}^\KdV_m(\pm W_\pm \pm V_\pm) - \mc{E}^\KdV_m(\pm W_\pm) \big)
        \\ &= \pm (-1)^m \del_X^{2m+1} V_\pm
        + \sum_{k=2}^{m+1} \underset{|l| \leq 2m-1}{\sum_{l = (l_1, \dots, l_k) \in \N^k}} c^{k,l} \sum_{i=1}^k \del_X^{l_i} (\pm V_\pm) \prod_{j=1}^{i-1} \del_X^{l_j} (\pm W_\pm) \prod_{j=i+1}^k \del_X^{l_j} (\pm W_\pm \pm V_\pm) \,.
    \end{align*}
    Using integration by parts and $\del_X^{m+h} V_\pm \in H^1(\R)$, we find that
    \begin{align*}
        \int_{\R} \del_X^h V_\pm \del_X^h \del_X^{2m+1} V_\pm \dd X
        &= \int_{\R} (-1)^m \del_X\left( \frac12 \big(\del_X^{m+h} V_\pm\big)^2 \right) \dd X = 0 \,.
    \end{align*}
    Similarly, using repeated integration by parts and the Sobolev embedding $H^1 \xhookrightarrow{\quad} L^\infty$, we estimate
    \begin{align*}
        & \left| \sum_{k=2}^{m+1} \underset{|l| \leq 2m-1}{\sum_{l = (l_1, \dots, l_k) \in \N^k}} c^{k,l} \sum_{i=1}^k \int_{\R} \del_X^h V_\pm \del_X^h\left( \del_X^{l_i} V_\pm \prod_{j=1}^{i-1} \del_X^{l_j} (\pm W_\pm) \prod_{j=i+1}^k \del_X^{l_j} (\pm U_\pm) \right) \dd X \right|
        \\ &\leq C^{m+1} \sum_{r=0}^h \sum_{k=2}^{m+1} \underset{|l| \leq 2m-1+2(h-r)}{\sum_{l = (l_1, \dots, l_k) \in \N^k}} \sum_{i=1}^k \int_{\R} (\del_X^r V_\pm)^2 \prod_{j=1}^{i-1} \del_X^{l_j} (\pm W_\pm) \prod_{j=i+1}^k \del_X^{l_j} (\pm U_\pm) \dd X
        \\ &\leq C^{m+1} \sum_{r=0}^h \int_{\R} (\del_X^r V_\pm)^2 \dd X \sum_{k=2}^{m+1} \underset{|l| \leq 2m-1+2(h-r)}{\sum_{l = (l_1, \dots, l_k) \in \N^k}} \sum_{i=1}^k \underset{j \neq i}{\prod_{j=1}^k} \big(\|\del_X^{l_j} W_\pm\|_{L^\infty} + \|\del_X^{l_j} U_\pm\|_{L^\infty}\big)
        \\ &\leq C^{m+1} \sum_{r=0}^h \int_{\R} (\del_X^r V_\pm)^2 \dd X \big( 1 + \|(W_\pm, U_\pm)\|_{H^{2(m+h-r)}}^m \big) \,.
    \end{align*}

    \textbf{Grönwall's lemma for linear systems.}
    In summary, for all $l \in \{0, \dots, h\}$ and $\tau \in [0, \tau_N]$ we have
    \begin{align*}
        \int_{\R} (\del_X^l V(\bm{T}^\tau))^2 \dd X 
        &\leq \mathds{1}_{\{\tau \geq \tau_1\}} C(N, h, \delta) \left( a_l + \int_0^\tau \sum_{r=0}^h H_{l,r} \int_{\R} (\del_X^r V(\bm{T}^{\tau'}))^2 \dd X \dd \tau' \right) \,,
    \end{align*}
    where we consider the real numbers
    \begin{align*}
        a_l &= (\tau_N - \tau_1) \epsilon^4 \big(1 + \|W\|_{L^\infty_{\bm{T}} H^{N+1+l}_X}^{2N+8}\big)
        \\ H_{l,r} &= \mathds{1}_{r \leq l} \big(1 + \|(W, U)\|_{L^\infty_{\bm{T}} H^{2(M+l-r)}_X}^M\big)
    \end{align*}
    as the entries of a vector $a \in \R^{h+1}$ and a matrix $H \in \R^{(h+1) \times (h+1)}$.
    Grönwall's lemma for linear systems (see \cite[Theorem]{ChandraDavis1976}) yields
    \begin{align*}
        \sup_{\tau \in [0, \tau_N]} \int_{\R} (\del_X^h V(\bm{T}^\tau))^2 \dd X 
        &\leq C \Bigg(\Bigg(1 + \int_0^{\tau_N - \tau_1} e^{C (\tau_N - \tau_1 - \tau) H} H \dd \tau \Bigg) a \Bigg)_h 
        \\ &= C (e^{C (\tau_N - \tau_1) H} a)_h
    \end{align*}
    for some constant $C = C(N, h, \delta) > 0$.
    It remains to estimate $(e^{C (\tau_N - \tau_1) H} a)_h$.
    We define some notation for the part $\ti{H}$ of $H$ which does not lie on the diagonal:
    \begin{align*}
        \ti{H} &= H - \Big( 1 + \|(W, U)\|_{L^\infty_{\bm{T}} H^{2M}_X}^M \Big)
        \\ \ti{H}_{l,r} &= \mathds{1}_{\{0 \leq r < l\}} g_{l-r}
        \quad \text{ where } \quad g_j = 1 + \|(W, U)\|_{L^\infty_{\bm{T}} H^{2(M+j)}_X}^M \,.
    \end{align*}
    Then since $\ti{H}$ and $H - \ti{H}$ commute, we have
    \begin{align*}
        |(e^{(\tau_N - \tau_1) H} a)_h| &\leq |(e^{(\tau_N - \tau_1) \ti{H}} a)_h| e^{(\tau_N - \tau_1) \big(1 + \|(W, U)\|_{L^\infty_{\bm{T}} H^{2M}_X}^M\big)} \,.
    \end{align*}
    Next, we estimate for $l \geq 1$
    \begin{align*}
        |(\ti{H}^l)_{h,r}| &= \sum_{h = j_0 > \dots > j_l = r} g_{j_0 - j_1} \dots g_{j_{l-1} - j_l} 
        \leq \binom{h-r-1}{l-1} g_{h-r}^l
    \end{align*}
    and then
    \begin{align*}
        (e^{(\tau_N - \tau_1) \ti{H}} a)_h &= \sum_{l=0}^\infty \frac{(\tau_N - \tau_1)^l}{l!} (\ti{H}^l a)_h
        = \sum_{l=0}^\infty \frac{(\tau_N - \tau_1)^l}{l!} \sum_{r=0}^h (\ti{H}^l)_{h,r} a_r
        \leq (h + 1) a_h (1 + (\tau_N - \tau_1) g_h)^h \,.
    \end{align*}
    This yields
    \begin{align*}
        |(e^{(\tau_N - \tau_1) H} a)_h| &\leq \epsilon^4 (\tau_N - \tau_1) C \big(1 + \|W\|_{L^\infty_{\bm{T}} H^{N+1+h}_X}^{2N+8}\big) 
        \big(1 + (\tau_N - \tau_1)\|(W, U)\|_{L^\infty_{\bm{T}} H^{2(M+h)}_X}^M\big)^h
        \\ &\quad \times \exp\left( C (\tau_N - \tau_1) \big(1 + \|(W, U)\|_{L^\infty_{\bm{T}} H^{2M}_X}^M\big) \right) \,.
    \end{align*}

\end{proof}

\appendix

\section{\texorpdfstring{Approximation of the perturbation determinant $\alpha^\rGP$ by $\alpha^\KdV$}
{Approximation of the perturbation determinant αᴳᴾ by αᴷᵈⱽ}}
\label{section:fredholmdeterminant}

\subsection{Definitions}
Let $n \in \{1, 2\}$.
We use the convention of identifying functions $f \in L^2(\R; \C^n)$ with the operator of multiplication by $f$.
We write $(L(L^2(\R; \C^n)), \|\cdot\|_{L^2 \rightarrow L^2})$ for the space of bounded linear operators on $L^2(\R; \C^n)$ equipped with the operator norm, 
and say that an operator $A \in L(L^2(\R; \C^n))$ is Hilbert-Schmidt if there exists a kernel $K \in L^2(\R \times \R; \C^{n \times n})$ such that
\begin{align*}
    A f(X) &= \int_{\R} K(X, Y) f(Y) \dd Y \,.
\end{align*}
In this case we define the Hilbert-Schmidt norm as
\begin{align*}
   \|A\|_{\HS}^2 &= \int_{\R^2} |K(X, Y)|^2 \dd X \dd Y \,.
\end{align*}
We say that an operator $A \in L(L^2(\R; \C^n))$ is trace-class if $A = A_1 A_2$ for two Hilbert-Schmidt operators $A_1$ and $A_2$.
Writing $K_1$ and $K_2$ for their kernels, we define the trace
\begin{align*}
    \tr[A] &= \tr[A_1 A_2] = \int_{\R} \int_{\R} \tr[K_1(X, Y) K_2(Y, X)] \dd X \dd Y \,.
\end{align*}
In particular
\begin{align*}
    \|A\|_{\HS}^2 &= \tr[A^\ast A] \,.
\end{align*}
If the trace $\tr[K]$ of the kernel $K$ of $A$ is continuous, then
\begin{align*}
    \tr[A] &= \int_{\R} \tr[K(X, X)] \dd X \,.
\end{align*}
The following lemma collects some results regarding the trace and the Hilbert-Schmidt norm.
\begin{lemma}[{\cite[Lemmas 1.4, 1.5]{KillipVisanZhang2018}}] \label{lem:20}
    Let $A, A_j: L^2(\R; \C^n) \rightarrow L^2(\R; \C^n)$, $1 \leq j \leq l$, $2 \leq l \in \N$ be linear operators with finite Hilbert-Schmidt norm.
    Then
    \begin{align}
        \label{eqn:HS-estimate-1} \|A\|_{L^2 \rightarrow L^2} &\leq \|A\|_{\HS}
        \\ \label{eqn:HS-estimate-2} |\tr[A_1 \dots A_l]| &\leq \|A_1\|_{\HS} \dots \|A_l\|_{\HS} \,.
    \end{align}
    If $A = A(s) \in C^1_b(\R; \HS)$ with $\sup_{s \in \R} \|A(s)\|_{\HS} < \frac13$,
    then there exists an open interval $I \ni 0$ on which the series
    \begin{align*}
        \alpha(s) &= \sum_{l=2}^\infty \frac{(-1)^l}{l} \tr[A(s)^l]
    \end{align*}
    is absolutely convergent and defines a function $\alpha \in C^1(I; \C)$ with derivative
    \begin{align} \label{eqn:alpha-diff}
        \del_s \alpha(s) &= \sum_{l=2}^\infty (-1)^l \tr[A(s)^{l-1} \del_s A(s)] \,.
    \end{align} 
    We have the bound
    \begin{align*}
        \sup_{s \in I} |\alpha(s)| &\leq \frac23 \sup_{s \in I} \|A(s)\|_{\HS}^2 \,.
    \end{align*}
\end{lemma}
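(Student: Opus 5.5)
This lemma is quoted from \cite[Lemmas 1.4, 1.5]{KillipVisanZhang2018}, so one option is simply to invoke it. For completeness, here is how I would prove it directly from the kernel definitions.

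\textbf{Operator-norm bound.} For \eqref{eqn:HS-estimate-1}, apply Cauchy--Schwarz in $Y$ to $Af(X)=\int_{\R} K(X,Y)f(Y)\dd Y$. This gives
\begin{align*}
    |Af(X)|^2 &\leq \int_{\R} |K(X,Y)|^2\dd Y\;\|f\|_{L^2}^2 ,
\end{align*}
and integrating in $X$ yields $\|Af\|_{L^2}\le\|A\|_{\HS}\|f\|_{L^2}$.

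\textbf{Trace bound.} For \eqref{eqn:HS-estimate-2} with $l=2$, apply Cauchy--Schwarz to the double integral $\int\!\!\int \tr[K_1(X,Y)K_2(Y,X)]$, using that the Frobenius norm is submultiplicative. For $l\ge 3$, use the ideal property
\begin{align*}
    \|BC\|_{\HS} &\leq \|B\|_{L^2\to L^2}\|C\|_{\HS} .
\end{align*}
This holds because the kernel of $BC$ in the first variable is $B$ applied columnwise to the kernel of $C$. Then write $A_1\cdots A_l=A_1(A_2\cdots A_l)$, apply the $l=2$ case, and combine the ideal property with \eqref{eqn:HS-estimate-1} inductively.

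\textbf{Absolute convergence of $\alpha$.} For $l\ge2$, \eqref{eqn:HS-estimate-2} gives $|\tr[A^l]|\le\|A\|_{\HS}^l$. With $\|A\|_{\HS}\le\eta<\frac13$ the series is dominated by $\sum_l \eta^l/l$, so it converges absolutely and uniformly in $s$. The same domination gives
\begin{align*}
    |\alpha| &\leq \sum_{l\ge2}\frac{\eta^l}{l}\leq \frac{\eta^2}{2}\cdot\frac{1}{1-\eta}\leq \frac34\,\eta^2 .
\end{align*}
To sharpen this to $\frac23\eta^2$, bound the tail $l\ge3$ by $\frac{\eta^2}{3}\cdot\frac{\eta}{1-\eta}\le\frac{\eta^2}{6}$. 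Together with the $l=2$ term $\frac{\eta^2}{2}$, this gives exactly $\frac23\eta^2$.

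\textbf{Differentiability.} Differentiate term by term. The product rule gives
\begin{align*}
    \del_s\tr[A^l] &= \sum_{j} \tr\big[A^{j}(\del_s A)A^{l-1-j}\big] .
\end{align*}
Cyclicity of the trace reduces this to $l\,\tr[A^{l-1}\del_sA]$; cyclicity for a product of HS operators follows from Fubini on the kernels. The differentiated series is dominated by $\sum_l \eta^{l-1}\|\del_sA\|_{\HS}$, which converges uniformly on $\R$. Hence $\alpha\in C^1$ with derivative \eqref{eqn:alpha-diff}, and one can even take $I=\R$.

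\textbf{Main obstacle.} The only delicate point is justifying cyclicity and term-by-term differentiation in the HS topology. I would handle this with the multilinear bounds \eqref{eqn:HS-estimate-2} and the mean value theorem applied to the difference quotients of $A(s)$.
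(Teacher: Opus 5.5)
Your proof is correct. The paper gives no argument of its own for this lemma; it simply quotes \cite[Lemmas 1.4, 1.5]{KillipVisanZhang2018}. What you have written is therefore a self-contained replacement built from the standard trace-ideal calculus: Cauchy--Schwarz on kernels, the ideal property $\|BC\|_{\HS}\le\|B\|_{L^2\to L^2}\|C\|_{\HS}$, cyclicity via Fubini, and domination of the series. Compared with the citation, your route gains two things:
\begin{itemize}
\item It checks the specific constants used later in the paper, namely the threshold $\frac13$ and the factor $\frac23$, through your split $\frac{\eta^2}{2}+\frac{\eta^3}{3(1-\eta)}\le\frac23\eta^2$.
\item It shows that under the stated hypotheses one may take $I=\R$, which is stronger than the lemma's local statement.
\end{itemize}

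Two small points should be tidied.
\begin{itemize}
\item In the $l=2$ trace bound, the pointwise inequality you need is $|\tr[MN]|\le\|M\|_F\|N\|_F$. This is Cauchy--Schwarz for the Frobenius inner product, since $\tr[MN]=\sum_{i,j}M_{ij}N_{ji}$; it is not submultiplicativity. Submultiplicativity combined with $|\tr P|\le\sqrt n\,\|P\|_F$ only gives the bound up to a factor $\sqrt n$, and that loss would spoil the exact constants $\frac13$ and $\frac23$ that the paper relies on.
\item The paper defines $\tr$ through a factorization into two Hilbert--Schmidt operators. Your manipulations (regrouping as $A_1(A_2\cdots A_l)$, and using cyclicity) tacitly assume this trace does not depend on the chosen factorization. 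That is a standard fact, but it is worth stating explicitly.
\end{itemize}
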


Recall from \eqref{eqn:surface} the spectral parameters $(\lambda, z, k) \in \K \subset \C^3$ which satisfy $z^2 = (\sqrt{2} \epsilon k)^2 = \lambda^2 - 1$.
We assume for the rest of this section that $z, k \in i \R_+$ are large and choose $\lambda = \sqrt{1 + 2 \epsilon^2 k^2}$ as the principal square root. We define
\begin{align*}
    \kappa &= \frac{z}{\sqrt{2} \epsilon i} = \frac{k}{i} \in \R_+ & \kappa^2 &= \frac{1 - \lambda^2}{2 \epsilon^2} \,.
\end{align*}

For notational convenience, we use the $\kappa$-dependent Sobolev norms
\begin{align*}
    \|f\|_{H^{-1}_\kappa} &= \left( \int_{\R} \frac{1}{\xi^2 + 4 \kappa^2} |\ha{f}(\xi)|^2 \dd \xi \right)^{\frac12} \leq \min\left\{\frac{1}{\min\{1,2\kappa\}} \|f\|_{H^{-1}}, \frac{1}{2 \kappa} \|f\|_{L^2}\right\}
    \\ \|f\|_{L^2_\kappa} &= \left( \int_{\R} \frac{\xi^2 + 2 \kappa^2}{\xi^2 + 4 \kappa^2} |\ha{f}(\xi)|^2 \dd \xi \right)^{\frac12} \leq \|f\|_{L^2} \,.
\end{align*}
Here the Fourier transform is defined by
\begin{align*}
    \ha{f}(\xi) &= \frac{1}{\sqrt{2 \pi}} \int_{\R} e^{- i x \xi} f(x) \dd x \,.
\end{align*}

We assume $W = (W_+, W_-) \in L^2(\R; \R^2)$ or $W \in \mc{S}(\R; \R^2)$ depending on the context.
We are interested in the Lax operators
\begin{align*}
    L^\rGP(W) &= \begin{pmatrix}
        - \epsilon^2 W_- + 1 & i \sqrt{2} \epsilon \del_X \\ i \sqrt{2} \epsilon \del_X & - \epsilon^2 W_+ - 1
    \end{pmatrix}
    & L^\KdV(\pm W_\pm) &= - \del_X^2 \pm W_\pm
    \\ L^\rGP_0 &= \begin{pmatrix}
        1 & i \sqrt{2} \epsilon \del_X \\ i \sqrt{2} \epsilon \del_X & - 1
    \end{pmatrix}
    & L^\KdV_0 &= - \del_X^2
\end{align*}
and their resolvents
\begin{align*}
    R^\rGP(W) &= \left(\frac{L^\rGP(W) - \lambda}{\epsilon^2}\right)^{-1}
    & R^\KdV(\pm W_\pm) &= (L^\KdV(\pm W_\pm) + \kappa^2)^{-1}
    \\ R^\rGP_0 &= \left(\frac{L^\rGP_0 - \lambda}{\epsilon^2}\right)^{-1} 
    & R^\KdV_0 &= (L^\KdV_0 + \kappa^2)^{-1} \,.
\end{align*}

Define
\begin{align*}
    \mc{W} &= \begin{pmatrix}
        - W_- & 0 
        \\ 0 & - W_+
    \end{pmatrix}
\end{align*}
and note that $L^\rGP(W) = L^\rGP_0 + \epsilon^2 \mc{W}$ and $L^\KdV(\pm W_\pm) = L^\KdV_0 \pm W_\pm$.
In the following, we often drop the dependence of the Lax operators and resolvents on $W$ and $(\pm W_\pm)$ from our notation.
With
\begin{align*}
    \mc{U} &= \begin{pmatrix}
        \frac{\lambda + 1}{2} & \frac{i \epsilon}{\sqrt{2}} \del_X
        \\ \frac{i \epsilon}{\sqrt{2}} \del_X & \frac{\lambda - 1}{2}
    \end{pmatrix}
\end{align*}
we can write
\begin{align*}
    R^\rGP_0 &= R^\KdV_0 \mc{U} = \mc{U} R^\KdV_0 = \sqrt{R^\KdV_0} \mc{U} \sqrt{R^\KdV_0} \,.
\end{align*}
Next, we use the Green's functions
\begin{align*}
    G^\KdV_0 &= \frac{1}{\sqrt{2 \pi}} \mc{F}^{-1}\left[R^\KdV_0\big\vert_{\del_X = i \xi}\right]
    = \frac{1}{2 \kappa} e^{- \kappa |X|}
    \\ G^\rGP_0 &= \frac{1}{\sqrt{2 \pi}} \mc{F}^{-1}\left[R^\rGP_0\Big\vert_{\del_X = i \xi}\right]
    = \frac{1}{2 \kappa} e^{- \kappa |X|} \begin{pmatrix}
        \frac{\lambda + 1}{2} & - \frac{i \epsilon}{\sqrt{2}} \kappa \sign(X)
        \\ - \frac{i \epsilon}{\sqrt{2}} \kappa \sign(X) & \frac{\lambda - 1}{2}
    \end{pmatrix}
\end{align*}
to define the kernels
\begin{align*}
    K^\KdV_0(X, Y) &= G^\KdV_0(X - Y)
    & K^\rGP_0(X, Y) &= G^\rGP_0(X - Y) \,.
\end{align*}
Then $K^\KdV_0 \in L^\infty(\R \times \R)$ and $K^\rGP_0 \in (L^\infty(\R \times \R))^{2 \times 2}$ are the integral kernels of $R^\KdV_0$ and $R^\rGP_0$ respectively.
In order to give a formula for the Green's function $G^\rGP$, for which $K^\rGP(X, Y) = G^\rGP(X, Y)$ is the kernel of $R^\rGP$, we use the Jost solutions $\Phi^{\rGP,\pm}_j$, their boundary data $E^{\rGP,\pm}$, and the transmission coefficient $a^\rGP$.
Their definitions may be found in Section \ref{section:transmissioncoefficient}.
For simplicity, we suppress the dependence of these scattering quantities on $k$ from our notation in the following lemma.
\begin{lemma} \label{lem:greens}
    The Green's function $G^\rGP$ of the operator
    \begin{align*}
        \frac{L^{\rGP} - \lambda}{\epsilon^2} &= \begin{pmatrix}
            - W_- - \frac{\lambda - 1}{\epsilon^2} & \frac{i \sqrt{2}}{\epsilon} \del_X \\
            \frac{i \sqrt{2}}{\epsilon} \del_X & - W_+ - \frac{\lambda + 1}{\epsilon^2}
        \end{pmatrix} \,,
    \end{align*}
    defined by the property that $\left(\frac{L^{\rGP} - \lambda}{\epsilon^2}\right) G^\rGP(X, Y) = \delta_0(X - Y)$, is given by
    \begin{align*}
        G^\rGP(X, Y)
        &= \frac{\epsilon}{i \sqrt{2}} \frac{1}{a^\rGP(k) \det E^{\rGP,+}} \begin{cases}
            \begin{pmatrix}
                - \Phi^{\rGP,-}_{1,1}(X) \Phi^{\rGP,+}_{2,1}(Y) & \Phi^{\rGP,-}_{1,1}(X) \Phi^{\rGP,+}_{2,2}(Y)
                \\ - \Phi^{\rGP,-}_{1,2}(X) \Phi^{\rGP,+}_{2,1}(Y) & \Phi^{\rGP,-}_{1,2}(X) \Phi^{\rGP,+}_{2,2}(Y)
            \end{pmatrix} &, X > Y
            \\ \begin{pmatrix}
                - \Phi^{\rGP,+}_{2,1}(X) \Phi^{\rGP,-}_{1,1}(Y) & \Phi^{\rGP,+}_{2,1}(X) \Phi^{\rGP,-}_{1,2}(Y)
                \\ - \Phi^{\rGP,+}_{2,2}(X) \Phi^{\rGP,-}_{1,1}(Y) & \Phi^{\rGP,+}_{2,2}(X) \Phi^{\rGP,-}_{1,2}(Y)
            \end{pmatrix} &, X < Y
        \end{cases} \,.
    \end{align*}
\end{lemma}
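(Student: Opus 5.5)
The natural approach is the classical Jost-solution construction of the Green's function for a first-order $2\times 2$ system. It has three steps.

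\textbf{Step 1: rewrite the equation as a first-order system.} Write $\frac{L^\rGP - \lambda}{\epsilon^2} u = f$. Multiplying by $\epsilon^2$ and solving for $\del_X u$ gives an equation of the form
\begin{align*}
\del_X u = A^\rGP(X, k) u + B f, \qquad B = \frac{\epsilon}{i\sqrt{2}} \sigma_1 .
\end{align*}
This uses the fact, from Definition \ref{def:jost}, that $L^\rGP \Phi = \lambda \Phi$ is equivalent to $\del_X \Phi = A^\rGP \Phi$. Concretely, the first row reads $i\sqrt{2}\epsilon\,\del_X u_2 = (\lambda - 1 + \epsilon^2 W_-)u_1 + \epsilon^2 f_1$, so $\del_X u_2 = (A^\rGP u)_2 + \frac{\epsilon}{i\sqrt{2}} f_1$. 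The second row gives the analogous equation for $\del_X u_1$.

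\textbf{Step 2: variation of constants.} Because $\Imag k>0$, the relevant decaying solutions are $\Phi^{\rGP,-}_1$ as $X\to-\infty$ and $\Phi^{\rGP,+}_2$ as $X\to+\infty$. The Green's function therefore has the form
\begin{align*}
G^\rGP(X,Y) = \begin{cases}
\Phi^{\rGP,-}_1(X)\, c_-(Y)^T, & X>Y\ (\text{sic: decaying at } -\infty \text{ placed on the side } X<Y),\\
\Phi^{\rGP,+}_2(X)\, c_+(Y)^T, & \text{the other side.}
\end{cases}
\end{align*}
The matching of sides must be fixed so that each piece uses the solution that decays in its half-line. The formula in the lemma has $\Phi^{-}_1(X)$ for $X>Y$, which suggests the paper's convention pairs columns differently, so I would check this against the kernel orientation $\int K(X,Y) f(Y)\,dY$.

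The jump condition at $X=Y$ is
\begin{align*}
G(Y^+,Y) - G(Y^-,Y) = B .
\end{align*}
I would solve it with the $2\times2$ matrix $(\Phi^{-}_1|\Phi^{+}_2)(Y)$. Its determinant is $x$-independent, since $\tr A^\rGP = 0$, and by \eqref{eqn:trans-3} it equals $a^\rGP \det E^{\rGP,+}$. Inverting with the adjugate and multiplying by $B=\frac{\epsilon}{i\sqrt2}\sigma_1$ produces exactly the entries $\pm\Phi_{1,i}\Phi_{2,j}$ with prefactor $\frac{\epsilon}{i\sqrt2}\frac{1}{a\det E^{+}}$. The $\sigma_1$ factor accounts for the swapped index pattern in the lemma's formula.

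\textbf{Step 3: uniqueness and identification.} For $k\in i\R_+$ large, $a^\rGP\neq 0$ by Lemma \ref{lem:jost}(ii). The constructed kernel is then bounded and exponentially decaying in $|X-Y|$, so it is the kernel of the bounded inverse. Uniqueness holds because the homogeneous equation has no $L^2$ solution when $a\neq0$.

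\textbf{Expected difficulty.} The main obstacle is bookkeeping: matching which Jost solution sits on which side, getting the transpose and adjugate signs right, and confirming that the $\sigma_1$ from $B$ reproduces the exact sign pattern stated. I would settle this by checking the $W=0$ case, where the result must reduce to $G^\rGP_0$ as computed above.
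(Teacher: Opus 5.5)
Your Step 1 (the first-order form with $B=\frac{\epsilon}{i\sqrt2}\sigma_1$) and the Wronskian fact in Step 2 (constant because $\tr A^\rGP=0$, and equal to $a^\rGP\det E^{\rGP,+}$ by \eqref{eqn:trans-3}) are exactly the ingredients the paper uses. The paper, however, constructs nothing. It takes the stated kernel and notes that for $X\neq Y$ it is a Jost solution in $X$ times a row vector in $Y$, so it solves the homogeneous system. It then checks that $G(Y^+,Y)-G(Y^-,Y)=\frac{\epsilon}{i\sqrt2}\sigma_1$: the diagonal entries of the jump cancel, and the off-diagonal ones are the Wronskian. The lemma characterizes $G^\rGP$ only by the distributional identity, so this is the whole proof.

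The gap is in the bookkeeping you deferred, and it breaks both your construction and Step 3. The orientation you correctly insist on is $\Phi^{\rGP,-}_1(X)$ for $X<Y$ and $\Phi^{\rGP,+}_2(X)$ for $X>Y$. Imposing the jump $\frac{\epsilon}{i\sqrt2}\sigma_1$ on that ansatz gives
\[
G(X,Y)=\frac{\epsilon}{i\sqrt2\,a^\rGP\det E^{\rGP,+}}\begin{cases}\Phi^{\rGP,+}_2(X)\,\big(\Phi^{\rGP,-}_{1,1}(Y),\,-\Phi^{\rGP,-}_{1,2}(Y)\big), & X>Y,\\ \Phi^{\rGP,-}_1(X)\,\big(\Phi^{\rGP,+}_{2,1}(Y),\,-\Phi^{\rGP,+}_{2,2}(Y)\big), & X<Y.\end{cases}
\]
This is minus the lemma's expression with the two cases interchanged, not the lemma's expression. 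The two kernels differ by a homogeneous solution with no jump, so both satisfy the defining property. But the stated kernel puts $\Phi^{\rGP,-}_1(X)$ on $X>Y$, and this grows like $e^{\kappa X}$ as $X\to+\infty$ when $a^\rGP\neq0$. Hence every claim in your Step 3 fails for the stated formula: boundedness, exponential decay, being the kernel of the bounded inverse, and identification by uniqueness (uniqueness would select the display above instead).

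Your proposed $W=0$ check would expose this. There the stated formula gives $G_{1,1}(X,X)=-\frac{\lambda+1}{4\kappa}$ and $G_{2,2}(X,X)=-\frac{\lambda-1}{4\kappa}$, plus off-diagonal growth $e^{\kappa|X-Y|}$. By contrast, $G^\rGP_0$ has the opposite diagonal signs and decays. To prove the lemma as stated, drop the decaying ansatz and Step 3, and verify the given kernel directly as the paper does: homogeneous off the diagonal, plus the jump condition.
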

\begin{proof}
    For every $Y \in \R$ and $X \in \R \setminus \{Y\}$ each column of $G^\rGP(X, Y)$ is a solution to the scattering problem $\left(\frac{L^{\rGP} - \lambda}{\epsilon^2}\right) G^\rGP(X, Y) = 0$. 
    Furthermore, the jump condition
    \begin{align*}
        & \lim_{X \searrow Y} G^\rGP(X, Y) - \lim_{X \nearrow Y} G^\rGP(X, Y) 
        \\ &= \frac{\epsilon}{i \sqrt{2}} \frac{1}{a^\rGP(k) \det E^{\rGP,+}} 
        \begin{pmatrix}
            \Phi^{\rGP,+}_{2,1} \Phi^{\rGP,-}_{1,1} - \Phi^{\rGP,-}_{1,1} \Phi^{\rGP,+}_{2,1}
            & \Phi^{\rGP,-}_{1,1} \Phi^{\rGP,+}_{2,2} - \Phi^{\rGP,+}_{2,1} \Phi^{\rGP,-}_{1,2}
            \\ \Phi^{\rGP,+}_{2,2} \Phi^{\rGP,-}_{1,1} - \Phi^{\rGP,-}_{1,2} \Phi^{\rGP,+}_{2,1}
            & \Phi^{\rGP,-}_{1,2} \Phi^{\rGP,+}_{2,2} - \Phi^{\rGP,+}_{2,2} \Phi^{\rGP,-}_{1,2}
        \end{pmatrix}(Y) 
        \\ &= \frac{\epsilon}{i \sqrt{2}} \begin{pmatrix}
            0 & 1 \\ 1 & 0
        \end{pmatrix}
    \end{align*}
    holds, which implies that $\left(\frac{L^{\rGP} - \lambda}{\epsilon^2}\right) G^\rGP(X, Y) = \delta_0(X - Y)$ in the sense of distributions.
\end{proof}
An analogous representation for the Green's function of $L^\KdV$ exists, but is not necessary for our purposes.

\subsection{Approximation of perturbation determinants}
In this section we develop the theory of perturbation determinants of the Lax operators $L^\rGP$ and $L^\KdV$, 
following in structure the seminal papers \cite{KillipVisanHarropGriffiths2024,KillipVisan,KillipVisanZhang2018} which concern $L^\KdV$ and $L^\NLS$.
These techniques have recently been used effectively to construct low-regularity conserved quantities and prove well-posedness of various completely integrable PDEs (see also \cite{BringmannKillipVisan2021,KillipLaurensVisan2024}).
Our contribution is a proof that an analogue of the approximation property observed in Theorem \ref{thm:approx-2} and Lemma \ref{lem:approx-1} also holds for the perturbation determinants.
This is the content of Lemma \ref{lem:approx-3} below.
The perturbation determinants in question are the quantities
\begin{align*}
    \log a^\rGP(k; W) &= \alpha^\rGP(\kappa; W) - \tr[R^\rGP_0 \mc{W}] = - \log \det\left( \left(\frac{L^\rGP_0 - \lambda}{\epsilon^2}\right)^{-1} \frac{L^\rGP(W) - \lambda}{\epsilon^2} \right)
    \\ \log a^\KdV(k; \pm W_\pm) &= \alpha^\KdV(\kappa; \pm W_\pm) - \tr[R^\KdV_0 (\pm W_\pm)] = - \log \det\left( \left(L^\KdV_0 + \kappa^2\right)^{-1} \left(L^\KdV(\pm W_\pm) + \kappa^2\right) \right) \,.
\end{align*}
Here the formulation on the right-hand side should only be understood in a formal sense.
This can be rigorously defined via the absolutely convergent series
\begin{align*}
    \alpha^\rGP(\kappa; W) - \tr[R^\rGP_0 \mc{W}] &= \sum_{l=1}^\infty \frac{(-1)^l}{l} \tr[(R^\rGP_0 \mc{W})^l]
    \\ \alpha^\KdV(\kappa; \pm W_\pm) - \tr[R^\KdV_0 (\pm W_\pm)] &= \sum_{l=1}^\infty \frac{(-1)^l}{l} \tr[(R^\KdV_0 (\pm W_\pm))^l] \,,
\end{align*}
which is the content of Lemma \ref{lem:alpha} below. 
Subsequently, in Lemma \ref{lem:a-alpha-correspondence}, we prove the correspondence between these series and the transmission coefficient that we have stated above. 
We shall use the smallness of certain Hilbert-Schmidt norms to conclude the absolute convergence of these series as geometric series.
\begin{lemma} \label{lem:HS-explicit}
    We explicitly compute the following Hilbert-Schmidt norms:
    \begin{align}
        \label{eqn:HS-1} \left\|\sqrt{R^\KdV_0} W_\pm \sqrt{R^\KdV_0}\right\|_{\HS}^2 &= \frac{1}{\kappa} \|W_\pm\|_{H^{-1}_\kappa}^2
        \\ \label{eqn:HS-1.5} \left\|\sqrt{R^\KdV_0} i \del_X W_\pm \sqrt{R^\KdV_0}\right\|_{\HS}^2 &= \frac{1}{2 \kappa} \|W_\pm\|_{L^2_\kappa}^2
        \\ \label{eqn:HS-2} \left\|\sqrt{R^\KdV_0} \mc{U} \mc{W} \sqrt{R^\KdV_0}\right\|_{\HS}^2 
        &= \left|\frac{\lambda + 1}{2}\right|^2 \frac{1}{\kappa} \|W_-\|_{H^{-1}_\kappa}^2
        + \left|\frac{\lambda - 1}{2}\right|^2 \frac{1}{\kappa} \|W_+\|_{H^{-1}_\kappa}^2
        \\ &+ \frac{\epsilon^2}{2} \frac{1}{2 \kappa} \|W_-\|_{L^2_\kappa}^2
        + \frac{\epsilon^2}{2} \frac{1}{2 \kappa} \|W_+\|_{L^2_\kappa}^2 \,.
    \end{align}
\end{lemma}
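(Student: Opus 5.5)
We will compute each Hilbert--Schmidt norm directly from the integral kernel. For the first identity, write $\sqrt{R^\KdV_0} W_\pm \sqrt{R^\KdV_0}$ and use $\|A\|_{\HS}^2 = \tr[A^\ast A]$ together with cyclicity of the trace, so that
\begin{align*}
    \left\|\sqrt{R^\KdV_0} W_\pm \sqrt{R^\KdV_0}\right\|_{\HS}^2 = \tr\big[R^\KdV_0 W_\pm R^\KdV_0 W_\pm\big] = \int_{\R}\int_{\R} G^\KdV_0(X-Y)^2 W_\pm(Y) W_\pm(X) \dd X \dd Y \,,
\end{align*}
which is legitimate for Schwartz $W_\pm$ and then extends by density. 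Since $(G^\KdV_0)^2 = \frac{1}{4\kappa^2} e^{-2\kappa|X|}$ and the Fourier transform of $e^{-2\kappa|X|}$ is $\sqrt{2/\pi}\,\frac{2\kappa}{\xi^2+4\kappa^2}$, Plancherel turns the double integral into $\frac{1}{\kappa}\int_{\R} \frac{|\ha{W_\pm}(\xi)|^2}{\xi^2+4\kappa^2}\dd\xi$. This gives \eqref{eqn:HS-1} once the $\sqrt{2\pi}$ factors from our Fourier convention are tracked.

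For \eqref{eqn:HS-1.5} we proceed the same way. Because $\del_X$ is anti-selfadjoint and $i\del_X$ is selfadjoint, we get $\tr[R^\KdV_0 W (i\del_X)R^\KdV_0 (i\del_X) W]$ up to ordering, where $i\del_X$ acts after multiplication by $W$ as an operator composition. The relevant kernel is then $i\del_X G^\KdV_0 = -\frac{i}{2}\sign(X)e^{-\kappa|X|}$, whose square is $-\frac14 e^{-2\kappa|X|}$. Alternatively, and more safely, we work on the Fourier side: the kernel of $\sqrt{R_0}\, i\del_X W\sqrt{R_0}$ is, in frequency, $\frac{-\xi\,\ha W(\xi-\eta)}{\sqrt{(\xi^2+\kappa^2)(\eta^2+\kappa^2)}}$ times $(2\pi)^{-1/2}$. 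Squaring and integrating in the transverse variable with $\zeta=\xi-\eta$ fixed, we need the elementary integral
\begin{align*}
    \int_{\R} \frac{\xi^2}{(\xi^2+\kappa^2)((\xi-\zeta)^2+\kappa^2)}\dd\xi = \frac{\pi}{\kappa}\cdot\frac{\zeta^2+2\kappa^2}{\zeta^2+4\kappa^2} \,,
\end{align*}
which we evaluate by residues (partial fractions). The same route recovers \eqref{eqn:HS-1} from $\int \frac{\dd\xi}{(\xi^2+\kappa^2)((\xi-\zeta)^2+\kappa^2)} = \frac{2\pi}{\kappa(\zeta^2+4\kappa^2)}$, so we would use the Fourier method uniformly for both identities.

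For \eqref{eqn:HS-2}, observe that
\begin{align*}
    \mc{U}\mc{W} = \begin{pmatrix} -\frac{\lambda+1}{2}W_- & -\frac{i\epsilon}{\sqrt2}\del_X W_+ \\ -\frac{i\epsilon}{\sqrt2}\del_X W_- & -\frac{\lambda-1}{2}W_+\end{pmatrix} \,,
\end{align*}
with $\del_X$ again acting after multiplication. The HS norm squared of a matrix operator is the sum of the HS norms squared of its entries, because the integrand $|K|^2$ is the Frobenius norm. Conjugation by the scalar $\sqrt{R^\KdV_0}$ acts entrywise, so the four entries are exactly of the two types already computed, with scalar prefactors $|\frac{\lambda\pm1}{2}|^2$ and $\frac{\epsilon^2}{2}$. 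Summing gives \eqref{eqn:HS-2}.

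The main obstacle is only bookkeeping: the normalization constants, namely the $2\pi$ factors, $\frac{1}{2\kappa}$ versus $\frac1\kappa$, and the order of $\del_X$ relative to multiplication. The residue integral with $\xi^2$ in the numerator is the one place where an actual computation is needed; we would check it at $\zeta=0$, where it equals $\pi/(2\kappa)$.
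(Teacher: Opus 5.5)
Your proof is correct. For \eqref{eqn:HS-2} it is the same as the paper's: expanding $\tr[A^\ast A]$ for the $2\times 2$ operator matrix, as the paper does, amounts to summing the squared Hilbert--Schmidt norms of the four entries.

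The difference lies in \eqref{eqn:HS-1} and \eqref{eqn:HS-1.5}. The paper cites \cite[Proposition 2.1]{KillipVisanZhang2018} for \eqref{eqn:HS-1} and proves \eqref{eqn:HS-1.5} in physical space. It writes $\|\sqrt{R^\KdV_0}\, i\del_X W_\pm \sqrt{R^\KdV_0}\|_{\HS}^2 = \tr[R^\KdV_0 W_\pm i\del_X R^\KdV_0 i\del_X W_\pm]$ as a double integral of Green's-function kernels and integrates by parts. The derivative $\del_Y$ falling on $\sign(X-Y) e^{-2\kappa|X-Y|}$ produces a multiple of $e^{-2\kappa|X-Y|}$ plus a delta on the diagonal. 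This gives $\frac{1}{2\kappa}\|W_\pm\|_{L^2}^2 - \kappa \|W_\pm\|_{H^{-1}_\kappa}^2 = \frac{1}{2\kappa}\|W_\pm\|_{L^2_\kappa}^2$, reusing \eqref{eqn:HS-1}.

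You instead take the $L^2$ norm of the frequency-side kernel and reduce both identities to two explicit Lorentzian convolution integrals, which you state correctly. Your splitting $\xi^2 = (\xi^2+\kappa^2) - \kappa^2$ derives the second integral from the first. It is the Fourier-side form of the identity $-\del_X^2 R^\KdV_0 = 1 - \kappa^2 R^\KdV_0$ that the paper's integration by parts implements. Your route is self-contained, with no citation needed for \eqref{eqn:HS-1}. It is also immune to ordering issues, since the Hilbert--Schmidt norm is just the $L^2$ norm of the kernel. The paper's route stays within the Green's-function framework used throughout the appendix.

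One caution: drop the physical-space shortcut from your second paragraph. The relative order of $i\del_X$ and $W_\pm$ does matter there, since $\sqrt{R^\KdV_0} W_\pm i\del_X \sqrt{R^\KdV_0}$ is not self-adjoint. Literally squaring the kernel $i\del_X G^\KdV_0$ as in \eqref{eqn:HS-1} yields only $-\kappa\|W_\pm\|_{H^{-1}_\kappa}^2$, which is negative. It misses the diagonal contribution $\frac{1}{2\kappa}\|W_\pm\|_{L^2}^2$, which comes from $\del_X$ hitting the jump of $\sign(X-Y)$, or equivalently from the commutator $[\del_X, W_\pm] = \del_X W_\pm$. Doing the computation on the Fourier side, as you ultimately propose, is what makes the argument correct.
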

\begin{proof}
    For \eqref{eqn:HS-1}, we refer to \cite[Proposition 2.1]{KillipVisanZhang2018}.
    We shall reduce \eqref{eqn:HS-1.5} and \eqref{eqn:HS-2} to applications of \eqref{eqn:HS-1}. 
    Towards this, we decompose
    \begin{align*}
        \left\|\sqrt{R^\KdV_0} \mc{U} \mc{W} \sqrt{R^\KdV_0}\right\|_{\HS}^2 
        &= \tr\left[\sqrt{R^\KdV_0} \mc{W} \mc{U}^\ast \sqrt{R^\KdV_0} \sqrt{R^\KdV_0} \mc{U} \mc{W} \sqrt{R^\KdV_0}\right]
        \\ &= \tr\left[R^\KdV_0 \begin{pmatrix}
            W_- \frac{\conj{\lambda} + 1}{2} & W_- \frac{i \epsilon}{\sqrt{2}} \del_X
            \\ W_+\frac{i \epsilon}{\sqrt{2}} \del_X & W_+\frac{\conj{\lambda} - 1}{2}
        \end{pmatrix} R^\KdV_0 \begin{pmatrix}
            \frac{\lambda + 1}{2} W_- & \frac{i \epsilon}{\sqrt{2}} \del_X W_+
            \\ \frac{i \epsilon}{\sqrt{2}} \del_X W_- & \frac{\lambda - 1}{2} W_+
        \end{pmatrix} \right]
        \\ &= \tr\left[R^\KdV_0 W_- \frac{\conj{\lambda} + 1}{2} R^\KdV_0 \frac{\lambda + 1}{2} W_-\right]
        + \tr\left[R^\KdV_0 W_- \frac{i \epsilon}{\sqrt{2}} \del_X R^\KdV_0 \frac{i \epsilon}{\sqrt{2}} \del_X W_-\right]
        \\ &+ \tr\left[R^\KdV_0 W_+ \frac{i \epsilon}{\sqrt{2}} \del_X R^\KdV_0 \frac{i \epsilon}{\sqrt{2}} \del_X W_+\right]
        + \tr\left[R^\KdV_0 W_+ \frac{\conj{\lambda} - 1}{2} R^\KdV_0 \frac{\lambda - 1}{2} W_+\right] 
        \\ &= \left|\frac{\lambda + 1}{2}\right|^2 \left\|\sqrt{R^\KdV_0} W_- \sqrt{R^\KdV_0}\right\|_{\HS}^2
        + \left|\frac{\lambda - 1}{2}\right|^2 \left\|\sqrt{R^\KdV_0} W_+ \sqrt{R^\KdV_0}\right\|_{\HS}^2
        \\ &+ \frac{\epsilon^2}{2} \left\|\sqrt{R^\KdV_0} W_- i \del_X \sqrt{R^\KdV_0}\right\|_{\HS}^2
        + \frac{\epsilon^2}{2} \left\|\sqrt{R^\KdV_0} W_+ i \del_X \sqrt{R^\KdV_0}\right\|_{\HS}^2 \,.
    \end{align*}
    It remains to compute
    \begin{align*}
        \left\|\sqrt{R^\KdV_0} i \del_X W_\pm \sqrt{R^\KdV_0}\right\|_{\HS}^2 
        = \left\|\sqrt{R^\KdV_0} W_\pm i \del_X \sqrt{R^\KdV_0}\right\|_{\HS}^2 
        &= \tr[R^\KdV_0 W_\pm i \del_X R^\KdV_0 i \del_X W_\pm] \,,
    \end{align*}
    which can be written, using the kernels
    \begin{align*}
        R^\KdV_0 W_\pm i \del_X f(X) &= \int_{\R} \frac{- i}{2 \kappa} e^{- \kappa |X - Y|} (\kappa \sign(X - Y) W_\pm(Y) + \del_Y W_\pm(Y)) f(Y) \dd Y
        \\ R^\KdV_0 i \del_X W_\pm f(X) &= \int_{\R} \frac{- i}{2 \kappa} e^{- \kappa |X - Y|} \kappa \sign(X - Y) W_\pm(Y) f(Y) \dd Y \,,
    \end{align*}
    as
    \begin{align*}
        & \tr[R^\KdV_0 W_\pm i \del_X R^\KdV_0 i \del_X W_\pm] 
        \\ &= \int_{\R} \int_{\R} \frac{- 1}{4 \kappa^2} e^{- \kappa |X - Y|} (\kappa \sign(X - Y) W_\pm(Y) + \del_Y W_\pm(Y)) e^{- \kappa |Y - X|} \kappa \sign(Y - X) W_\pm(X) \dd X \dd Y
        \\ &= \int_{\R} \int_{\R} \frac{1}{4 \kappa^2} e^{- 2 \kappa |X - Y|} \kappa^2 W_\pm(X) W_\pm(Y) \dd X \dd Y
        \\ &+ \int_{\R} \int_{\R} \frac{1}{4 \kappa^2} e^{- 2 \kappa |X - Y|} \kappa \sign(X - Y) W_\pm(X) \del_Y W_\pm(Y) \dd X \dd Y
        \\ &= \int_{\R} \int_{\R} \frac{1}{4 \kappa^2} e^{- 2 \kappa |X - Y|} \kappa^2 W_\pm(X) W_\pm(Y) \dd X \dd Y
        \\ &- \int_{\R} \int_{\R} \frac{1}{4 \kappa^2} e^{- 2 \kappa |X - Y|} (2 \kappa^2 - 2 \kappa \delta_0(X - Y)) W_\pm(X) W_\pm(Y) \dd X \dd Y
        \\ &= - \int_{\R} \int_{\R} \frac{1}{4 \kappa^2} e^{- 2 \kappa |X - Y|} \kappa^2 W_\pm(X) W_\pm(Y) \dd X \dd Y
        + 2 \kappa \int_{\R} \frac{1}{4 \kappa^2} |W_\pm(X)|^2 \dd X \,.
    \end{align*}
    We already know from \eqref{eqn:HS-1} that
    \begin{align*}
        \int_{\R} \int_{\R} \frac{1}{4 \kappa^2} e^{- 2 \kappa |X - Y|} \kappa^2 W_\pm(X) W_\pm(Y) \dd X \dd Y
        &= \kappa \|W_\pm\|_{H^{-1}_\kappa}^2 \,,
    \end{align*}
    which finishes the proof.
\end{proof}
As a consequence of this lemma we have $\sqrt{R^\KdV_0} \mc{W} \sqrt{R^\KdV_0}, \sqrt{R^\KdV_0} \mc{U} \mc{W} \sqrt{R^\KdV_0} \in L(L^2(\R; \C^2))$.
In the following, we sometimes use the operators $R^\KdV_0 \mc{W}$ and $R^\rGP_0 \mc{W}$ instead for the sake of clarity in formal calculations, 
but always in a context where eventually a trace or Hilbert-Schmidt norm is taken, returning us to the well-behaved versions of these operators.
\begin{lemma} \label{lem:alpha}
    Let $\epsilon \in (0, 1)$. For all $0 < \kappa < \frac{1}{\sqrt{2} \epsilon}$ such that
    \begin{align*}
        \frac{1}{\kappa} \|W\|_{H^{-1}_\kappa}^2 &\leq \frac{1}{30}
        & \frac{1}{\kappa} \|W\|_{L^2_\kappa}^2 &\leq \frac{1}{15}
    \end{align*} 
    the series
    \begin{align*}
        \alpha^\KdV(\kappa; \pm W_\pm) &= \sum_{l=2}^\infty \frac{(-1)^l}{l} \tr[(R^\KdV_0 (\pm W_\pm))^l]
        \\ \alpha^\rGP(\kappa; W) &= \sum_{l=2}^\infty \frac{(-1)^l}{l} \tr[(R^\rGP_0 \mc{W})^l]
    \end{align*}
    are absolutely convergent and satisfy
    \begin{align*}
        \frac32 |\alpha^\KdV(\kappa; \pm W_\pm)| &\leq \frac{1}{\kappa} \|W_\pm\|_{H^{-1}_\kappa}^2
        \\ \frac32 |\alpha^\rGP(\kappa; W)| &\leq \left|\frac{\lambda + 1}{2}\right|^2 \frac{1}{\kappa} \|W_-\|_{H^{-1}_\kappa}^2
            + \left|\frac{\lambda - 1}{2}\right|^2 \frac{1}{\kappa} \|W_+\|_{H^{-1}_\kappa}^2
            + \frac{\epsilon^2}{2} \frac{1}{2 \kappa} \|W_-\|_{L^2_\kappa}^2
            + \frac{\epsilon^2}{2} \frac{1}{2 \kappa} \|W_+\|_{L^2_\kappa}^2 \,.
    \end{align*}
    Moreover, the resolvent
    \begin{align*}
        R^\rGP &= \left( \frac{L^\rGP - \lambda}{\epsilon^2} \right)^{-1} \in L(L^2(\R; \C^2))
    \end{align*} 
    is represented by the series
    \begin{align} \label{eqn:resolvent}
        R^\rGP &= \sum_{l=0}^\infty (-1)^l (R^\rGP_0 \mc{W})^l R^\rGP_0
        = \sum_{l=0}^\infty (-1)^l \sqrt{R^\KdV_0} \left(\sqrt{R^\KdV_0} \mc{U} \mc{W} \sqrt{R^\KdV_0}\right)^l \sqrt{R^\KdV_0} \mc{U} \,,
    \end{align}
    which is absolutely convergent with respect to the operator norm $\|\cdot\|_{L^2 \rightarrow L^2}$.
\end{lemma}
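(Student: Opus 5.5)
The plan is to reduce everything to Lemma \ref{lem:20} and Lemma \ref{lem:HS-explicit}. The key algebraic observation is the factorization $R^\rGP_0 = \sqrt{R^\KdV_0}\,\mc{U}\sqrt{R^\KdV_0}$ together with cyclicity of the trace. This gives
\[
\tr[(R^\rGP_0 \mc{W})^l] = \tr\big[(\sqrt{R^\KdV_0}\,\mc{U}\mc{W}\sqrt{R^\KdV_0})^l\big],
\]
and similarly $\tr[(R^\KdV_0 W_\pm)^l] = \tr[(\sqrt{R^\KdV_0} W_\pm \sqrt{R^\KdV_0})^l]$. So both series take the form $\sum_{l\geq 2}\frac{(-1)^l}{l}\tr[A^l]$ with $A$ Hilbert--Schmidt, where the Hilbert--Schmidt norm of $A$ is computed exactly in \eqref{eqn:HS-1} and \eqref{eqn:HS-2}.

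The first step is to check $\|A\|_{\HS}^2<\frac19$ under the hypotheses. For KdV this is immediate from $\frac1\kappa\|W\|_{H^{-1}_\kappa}^2\leq\frac1{30}$. For rGP, the condition $0<\kappa<\frac{1}{\sqrt2\epsilon}$ together with the principal root $\lambda=\sqrt{1-2\epsilon^2\kappa^2}\in(0,1)$ gives $|\frac{\lambda\pm1}{2}|\leq1$. Then \eqref{eqn:HS-2} is bounded by
\[
\tfrac1{30}+\tfrac{\epsilon^2}{4}\cdot\tfrac{1}{15}<\tfrac19 ,
\]
using $\epsilon<1$.

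The second step invokes \eqref{eqn:HS-estimate-2}, which gives $|\tr[A^l]|\leq\|A\|_{\HS}^l$. The series is then dominated by a geometric series, which proves absolute convergence. The bound follows from
\[
|\alpha|\leq\sum_{l\geq2}\tfrac1l\|A\|_{\HS}^l\leq\tfrac12\|A\|_{\HS}^2\,\frac{1}{1-\|A\|_{\HS}}\leq\tfrac23\|A\|_{\HS}^2 ,
\]
since $\|A\|_{\HS}\leq\frac13$ (this is the bound in Lemma \ref{lem:20}). Substituting the explicit norms yields the two stated inequalities.

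For the resolvent, I would write
\[
\frac{L^\rGP-\lambda}{\epsilon^2} = \frac{L^\rGP_0-\lambda}{\epsilon^2}+\mc{W} = (R^\rGP_0)^{-1}(1+R^\rGP_0\mc{W}).
\]
Formally this gives the Neumann series $R^\rGP=\sum_l(-1)^l(R^\rGP_0\mc{W})^lR^\rGP_0$. To make it rigorous I would rewrite each term as
\[
(R^\rGP_0\mc{W})^lR^\rGP_0 = \sqrt{R^\KdV_0}\,A^l\sqrt{R^\KdV_0}\,\mc{U},
\]
which uses that $\mc{U}$ commutes with $R^\KdV_0$, both being Fourier multipliers. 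Then $\|A^l\|_{L^2\to L^2}\leq\|A\|_{\HS}^l\leq 3^{-l}$ by \eqref{eqn:HS-estimate-1}. In addition, $\sqrt{R^\KdV_0}$ is bounded with norm $\kappa^{-1}$, and $\sqrt{R^\KdV_0}\,\mc{U}$ is bounded because its symbol $(\xi^2+\kappa^2)^{-1/2}\big(\frac{\lambda\pm1}{2},\ \frac{\epsilon}{\sqrt2}\xi\big)$ is bounded. So the series converges absolutely in operator norm.

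It remains to verify that the sum is a two-sided inverse of $\frac{L^\rGP-\lambda}{\epsilon^2}$. This is the telescoping identity $(1+R^\rGP_0\mc{W})\sum_l(-R^\rGP_0\mc{W})^l=1$, applied on the dense domain $H^1$. I expect this domain and rigour bookkeeping to be the main obstacle, since $R^\rGP_0\mc{W}$ itself need not be bounded when $W$ is only in $L^2$. I would handle it by conjugating: I would verify the identity $(1+A)\sum(-A)^l=1$ in the bounded setting, and then transfer it using the factorization above.
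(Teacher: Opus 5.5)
Your proposal is correct and follows the paper's route: the paper's proof is exactly the one-line reduction to Lemma \ref{lem:20} and the Hilbert--Schmidt norms of Lemma \ref{lem:HS-explicit}, which you spell out in detail (trace cyclicity, the smallness check, and the operator-norm Neumann series for the resolvent). One small slip: the step $\frac12\|A\|_{\HS}^2(1-\|A\|_{\HS})^{-1}\leq\frac23\|A\|_{\HS}^2$ needs $\|A\|_{\HS}\leq\frac14$ rather than $\frac13$, but your own estimate $\|A\|_{\HS}^2\leq\frac1{30}+\frac1{60}=\frac1{20}$ supplies this, and alternatively you can cite the bound stated in Lemma \ref{lem:20} directly.
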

\begin{proof}
    This follows from Lemma \ref{lem:20} together with the Hilbert-Schmidt norms computed in Lemma \ref{lem:HS-explicit}.
\end{proof}
We have adapted the method of proof of the following lemma from \cite[Theorem 2.6]{Klaus2022}. 
We refer to the paragraph above this theorem for some context on the history of this argument, and other approaches to proving that the perturbation determinant equals the logarithm of the transmission coefficient.
\begin{lemma} \label{lem:a-alpha-correspondence}
    Assume the setting of Lemma \ref{lem:alpha}, and in addition that $W \in \mc{S}(\R; \R^2)$ so that we are in a setting where we have defined the transmission coefficients $a^\rGP(k; W)$ and $a^\KdV(k; \pm W_\pm)$ (see \eqref{eqn:trans-3}).
    We have
    \begin{align} \label{eqn:rGP-alpha-a}
        \alpha^\rGP(\kappa; W) - \log a^\rGP(k; W)
        &= \tr[R^\rGP_0 \mc{W}]
        = \frac{1}{2 \kappa} \int_{\R} \frac12 (W_+ - W_-) - \frac{\lambda}{2} (W_+ + W_-) \dd X
    \end{align}
    and similarly
    \begin{align} \label{eqn:KdV-alpha-a}
        \alpha^\KdV(\kappa; \pm W_\pm) - \log a^\KdV(k; \pm W_\pm)
        &= \tr[R^\KdV_0 (\pm W_\pm)]
        = \frac{1}{2 \kappa} \int_{\R} \pm W_\pm \dd X \,.
    \end{align}
\end{lemma}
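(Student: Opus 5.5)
We prove \eqref{eqn:rGP-alpha-a} by a deformation argument in the coupling constant, following the structure of \cite[Theorem 2.6]{Klaus2022}. The second equality in \eqref{eqn:rGP-alpha-a} is a direct computation. Since $\mc{W}$ is diagonal, only the diagonal entries of $G^\rGP_0(0)$ enter the trace. These are $\frac{1}{2\kappa}\frac{\lambda\pm1}{2}$, and the $\sign$-terms sit off the diagonal and drop out. This gives
\begin{align*}
    \tr[R^\rGP_0 \mc{W}] = \frac{1}{2\kappa}\int_{\R} - \frac{\lambda+1}{2} W_- - \frac{\lambda-1}{2} W_+ \dd X = \frac{1}{2 \kappa} \int_{\R} \frac12 (W_+ - W_-) - \frac{\lambda}{2} (W_+ + W_-) \dd X \,.
\end{align*}
The \KdV case \eqref{eqn:KdV-alpha-a} is identical with $G^\KdV_0(0) = \frac{1}{2\kappa}$. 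The content of the lemma is therefore the identity $\log a^\rGP(k;W) = \sum_{l\geq1}\frac{(-1)^l}{l}\tr[(R^\rGP_0\mc{W})^l]$, and we prove it via the path $s \mapsto sW$, $s\in[0,1]$.

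\textbf{Step 1 (both sides vanish at $s=0$).} For $s \in [0,1]$ the Hilbert--Schmidt bounds of Lemma \ref{lem:HS-explicit} scale by $s^2$, so the hypotheses of Lemma \ref{lem:alpha} hold uniformly along the path. Both $F(s) = \sum_{l\geq1}\frac{(-1)^l}{l}\tr[(sR^\rGP_0\mc{W})^l]$ and $s\mapsto \log a^\rGP(k;sW)$ are $C^1$ and vanish at $s=0$. For the latter, the Jost solutions are free exponentials and $a=1$. We also check that the branch of $\log$ fixed in Section \ref{section:transmissioncoefficient} is the continuous one along the path; this holds because $\phi_\pm$ do not depend on $s$, as $W$ has zero boundary data.

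\textbf{Step 2 (derivative of the series).} By \eqref{eqn:alpha-diff} (including the $l=1$ term) and \eqref{eqn:resolvent},
\begin{align*}
    F'(s) = \sum_{l\geq1}(-1)^l s^{l-1}\tr[(R^\rGP_0\mc{W})^{l-1}R^\rGP_0\mc{W}] = -\tr[R^\rGP(sW)\mc{W}] \,.
\end{align*}
This is justified by writing every term in the symmetrized form $\sqrt{R^\KdV_0}\,\mc{U}\mc{W}\sqrt{R^\KdV_0}$ and using \eqref{eqn:HS-estimate-2}. Because the kernel $G^\rGP$ from Lemma \ref{lem:greens} has continuous diagonal entries at $X=Y$ (the jump sits only in the off-diagonal entries, by the jump computation there), we obtain
\begin{align*}
    F'(s) = -\int_{\R} \tr[G^\rGP(X,X;sW)\mc{W}(X)]\dd X \,,
\end{align*}
with the diagonal entries given explicitly by $\frac{\epsilon}{i\sqrt2}\frac{1}{a\det E^{\rGP,+}}\big(-\Phi^{-}_{1,1}\Phi^{+}_{2,1}\big)$ and $\frac{\epsilon}{i\sqrt2}\frac{1}{a\det E^{\rGP,+}}\Phi^{-}_{1,2}\Phi^{+}_{2,2}$.

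\textbf{Step 3 (variation of $\log a$).} We compute $\frac{\dd}{\dd s}\log a^\rGP(k;sW)$ from the Wronskian formula \eqref{eqn:trans-3}. Differentiating the Jost ODE \eqref{eqn:jost-ode} in $s$ and using variation of constants with the boundary normalization \eqref{eqn:jost-bdry} (which is independent of $s$), we get
\begin{align*}
    \del_s\det(\Phi^{-}_1|\Phi^{+}_2) = \int_{\R} \det\big(\del_s A\,\Phi^{-}_1 \,\big|\, \Phi^{+}_2\big)\ \text{-type integrand}\dd X \,.
\end{align*}
More precisely, $\frac{\dd}{\dd X}\det(\del_s\Phi^-_1|\Phi^+_2) = \det(\del_sA\,\Phi^-_1|\Phi^+_2)$ because $\tr A=0$, and the boundary terms vanish in the limits $X\to\pm\infty$. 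With
\begin{align*}
    \del_s A = -\frac{i\epsilon}{\sqrt2}\begin{pmatrix}0&W_+\\W_-&0\end{pmatrix}
\end{align*}
this yields
\begin{align*}
    \frac{\dd}{\dd s}\log a = \frac{-i\epsilon}{\sqrt2\,a\det E^{\rGP,+}}\int_{\R} \big(W_+\Phi^-_{1,2}\Phi^+_{2,2} - W_-\Phi^-_{1,1}\Phi^+_{2,1}\big)\dd X \,.
\end{align*}
Comparing with Step 2 (recall $\mc{W}=\diag(-W_-,-W_+)$), this equals $F'(s)$. Integrating in $s$ then yields the claim.

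\textbf{Main obstacle.} We expect the delicate points to be, first, matching normalizations and signs exactly between Lemma \ref{lem:greens} and the Wronskian variation, and second, justifying the identity $\tr[R^\rGP\mc{W}]=\int\tr[G^\rGP(X,X)\mc{W}]\dd X$ even though $R^\rGP\mc{W}$ is not obviously trace class. For the latter we would first take $W\in\mc{S}$ and work with the series representation \eqref{eqn:resolvent} term by term, using continuity of the diagonal kernel. If the boundary terms in Step 3 cause trouble, we would instead use the modified Jost solutions $\Psi$ and the bounds \eqref{eqn:sup-jost-1}--\eqref{eqn:sup-jost-2}. The \KdV case follows the same outline with the scalar Green's function.
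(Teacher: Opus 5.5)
Your route is the paper's: both sides vanish at $W=0$, and you compare the coupling-constant derivative of the series, $-\tr[R^\rGP\mc{W}]$ read off the diagonal of the resolvent kernel, with the variation of $\log a$. The paper differentiates in an arbitrary direction $V$ rather than along $s\mapsto sW$, which changes nothing.

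Your Step~3 is correct, and deriving it from the Wronskian is cleaner than the paper's formal use of a two-sided Green's function for $\del_s\Phi$. Strictly, at $X\to+\infty$ it is $\det(\Phi^{\rGP,-}_1|\del_s\Phi^{\rGP,+}_2)$ that vanishes, while $\det(\del_s\Phi^{\rGP,-}_1|\Phi^{\rGP,+}_2)\to\del_s(a^\rGP\det E^{\rGP,+})$; the resulting formula is still right.

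The step that fails is Step~2. The entries you take from Lemma~\ref{lem:greens} are not the diagonal of the kernel of $R^\rGP(sW)$. For $k=i\kappa$ that formula uses $\Phi^{\rGP,-}_1(X)$ for $X>Y$, which grows like $e^{\kappa X}$ as $X\to+\infty$. It is a fundamental solution, but not the bounded one. The $L^2$ kernel uses $\Phi^{\rGP,+}_2(X)$ for $X>Y$ and $\Phi^{\rGP,-}_1(X)$ for $X<Y$. Imposing the same jump $\frac{\epsilon}{i\sqrt2}\sigma_1$, with $c=\frac{\epsilon}{i\sqrt2}\frac{1}{a^\rGP(k)\det E^{\rGP,+}}$ and $\Phi^\pm$ short for $\Phi^{\rGP,\pm}$, gives
\begin{align*}
G^\rGP(X,Y) &= c\begin{pmatrix} \Phi^{+}_{2,1}(X)\Phi^{-}_{1,1}(Y) & -\Phi^{+}_{2,1}(X)\Phi^{-}_{1,2}(Y) \\ \Phi^{+}_{2,2}(X)\Phi^{-}_{1,1}(Y) & -\Phi^{+}_{2,2}(X)\Phi^{-}_{1,2}(Y) \end{pmatrix} \quad \text{for } X>Y, \\
G^\rGP(X,Y) &= c\begin{pmatrix} \Phi^{-}_{1,1}(X)\Phi^{+}_{2,1}(Y) & -\Phi^{-}_{1,1}(X)\Phi^{+}_{2,2}(Y) \\ \Phi^{-}_{1,2}(X)\Phi^{+}_{2,1}(Y) & -\Phi^{-}_{1,2}(X)\Phi^{+}_{2,2}(Y) \end{pmatrix} \quad \text{for } X<Y.
\end{align*}
Its diagonal entries $c\,\Phi^-_{1,1}\Phi^+_{2,1}$ and $-c\,\Phi^-_{1,2}\Phi^+_{2,2}$ are the negatives of the ones you used. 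At $W=0$ this kernel gives $\frac{1}{2\kappa}\frac{\lambda\pm1}{2}$ on the diagonal, as $G^\rGP_0$ does, while Lemma~\ref{lem:greens} gives $-\frac{1}{2\kappa}\frac{\lambda\pm1}{2}$.

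With the correct kernel, your Steps~2 and~3 give $F'(s)=-\frac{\dd}{\dd s}\log a^\rGP(k;sW)$. Hence $\log a^\rGP(k;W)=\tr[R^\rGP_0\mc{W}]-\alpha^\rGP(\kappa;W)$, which has the opposite sign in front of $\log a$. This is not a bookkeeping artifact: the statement as written already fails at first order in $s$.

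Since $\alpha^\rGP=O(s^2)$, the statement would require $\frac{\dd}{\dd s}\log a^\rGP(k;sW)|_{s=0}=-\tr[R^\rGP_0\mc{W}]$. Your Step~3 with the free Jost solutions $\Phi^{-}_1=E^{\rGP,-}_1e^{\kappa X}$ and $\Phi^{+}_2=E^{\rGP,+}_2e^{-\kappa X}$, using $(\lambda-\sqrt2\epsilon k)^{-1}=\lambda+\sqrt2\epsilon k$, gives instead $-\frac{1}{2\kappa}\int_{\R}\frac{\lambda+1}{2}W_-+\frac{\lambda-1}{2}W_+\dd X=+\tr[R^\rGP_0\mc{W}]$.

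Likewise for \KdV, $a^\KdV=1-\frac{1}{2ik}\int_{\R}U\dd X+O(U^2)$ gives $\log a^\KdV(i\kappa;U)=\frac{1}{2\kappa}\int_{\R}U\dd X+O(U^2)$. This agrees with \eqref{eqn:a-expansion} and with the Jost--Pais formula $a^\KdV(i\kappa;U)=\det(1+\sqrt{R^\KdV_0}U\sqrt{R^\KdV_0})$, i.e.\ $\alpha^\KdV=-\log a^\KdV+\frac{1}{2\kappa}\int_{\R}U\dd X$, which I believe is the form Killip--Vi\c san state.

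So what your argument proves, once the kernel is fixed, is $\alpha+\log a=\tr[\cdots]$ in both \eqref{eqn:rGP-alpha-a} and \eqref{eqn:KdV-alpha-a}. The paper's proof identifies the function of Lemma~\ref{lem:greens} with the kernel of $R^\rGP$ in the same way and inherits the same sign error. Because the sign flips simultaneously for \rGP and \KdV, Lemma~\ref{lem:approx-3} and Proposition~\ref{prop:approx-4} are unaffected.
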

\begin{proof}
    We compute
    \begin{align*}
        \tr[R^\rGP_0 \mc{W}] &= \tr\left[ R^\KdV_0 \begin{pmatrix}
        \frac{\lambda + 1}{2} & \frac{i \epsilon}{\sqrt{2}} \del_X
        \\ \frac{i \epsilon}{\sqrt{2}} \del_X & \frac{\lambda - 1}{2}
    \end{pmatrix} \mc{W} \right]
    \\ &= \tr\left[R^\KdV_0 \frac{\lambda+1}{2} (- W_-)\right]
    + \tr\left[R^\KdV_0 \frac{\lambda-1}{2} (- W_+)\right]
    \\ &= \frac{\lambda+1}{2} \int_{\R} \frac{1}{2 \kappa} e^{- \kappa |X - X|} (- W_-(X)) \dd X
    + \frac{\lambda-1}{2} \int_{\R} \frac{1}{2 \kappa} e^{- \kappa |X - X|} (- W_+(X)) \dd X
    \\ &= \frac{1}{2 \kappa} \int_{\R} \frac12 (W_+ - W_-) - \frac{\lambda}{2} (W_+ + W_-) \dd X 
    \end{align*}
    and
    \begin{align*}
        \tr[R^\KdV_0 (\pm W_\pm)] &= \frac{1}{2 \kappa} \int_{\R} \pm W_\pm \dd X \,.
    \end{align*}
    For \eqref{eqn:KdV-alpha-a} we refer to \cite[Proposition 2.4]{KillipVisan}, so it remains to prove \eqref{eqn:rGP-alpha-a}.

    By explicitly computing the Jost solution $\Psi^{\rGP,-}_{1,1}(X; 0)$, which is constant, we find that $\log a^\rGP(k; 0) = 0$, which matches $\alpha^\rGP(\kappa; 0) = 0$.
    It therefore suffices to prove for any $W, V \in \mc{S}(\R; \R^2)$ with
    \begin{align*}
        V &= (V_+, V_-) & \mc{V} &= \begin{pmatrix} - V_- & 0 \\ 0 & - V_+ \end{pmatrix}
    \end{align*}
    that the following limits exist and are identical:
    \begin{align*}
        \frac{\dd}{\dd s}\Big\vert_{s=0} \log a^\rGP(k; W + s V) &= \frac{\dd}{\dd s}\Big\vert_{s=0} (\alpha^\rGP(\kappa; W + s V) - \tr[R^\rGP_0 (\mc{W} + s \mc{V})]) \,.
    \end{align*}
    Set $\Phi^\rGP = (\Phi^{\rGP,-}_1|\Phi^{\rGP,+}_2)$.
    We start with the formal calculation
    \begin{align*}
        0 &= \frac{\dd}{\dd s} \Big\vert_{s=0} \left( \frac{L^\rGP(W + s V) - \lambda}{\epsilon^2} \Phi^{\rGP,\pm}_j(k; W + s V) \right)
        \\ &= \mc{V} \Phi^{\rGP,\pm}_j(k; W) + \frac{L^\rGP(W) - \lambda}{\epsilon^2} \frac{\dd}{\dd s}\Big\vert_{s=0} \Phi^{\rGP,\pm}_j(k; W + s V) \,,
    \end{align*}
    which implies
    \begin{align*}
        \frac{\dd}{\dd s}\Big\vert_{s=0} \Phi^\rGP(X, k; W + s V) 
        &= \int_{\R} G^\rGP(X, Y) (- \mc{V}(Y)) \Phi^\rGP(Y, k; W) \dd Y \,.
    \end{align*}
    For the derivative of the transmission coefficient this yields
    \begin{align*}
        & \frac{\dd}{\dd s}\Big\vert_{s=0} \log a^\rGP(k; W + s V) 
        \\ &= \frac{\dd}{\dd s}\Big\vert_{s=0} \log\left( \frac{\det \Phi^\rGP(X, k; W + s V)}{\det E^{\rGP,+}(k)} \right)
        \\ &= \tr\left[ \Phi^{\rGP}(X, k; W)^{-1} \int_{\R} G^\rGP(X, Y) (- \mc{V}(Y)) \Phi^\rGP(Y, k; W) \dd Y \right]
        \\ &= \int_{\R} \tr\left[\Phi^\rGP(Y, k; W) \Phi^{\rGP}(X, k; W)^{-1} G^\rGP(X, Y) (- \mc{V}(Y))\right] \dd Y
        \\ &= \int_{\R} \tr\left[ \left( \mathds{1}_{\{X < Y\}} \lim_{X' \nearrow Y} G^\rGP(X', Y) +  \mathds{1}_{\{X > Y\}} \lim_{X' \searrow Y} G^\rGP(X', Y) \right) (- \mc{V}(Y))\right] \dd Y
        \\ &= \int_{\R} G^\rGP_{1,1}(X, X) V_-(X) + G^\rGP_{2,2}(X, X) V_+(X) \dd X \,.
    \end{align*}
    Here we have used that $G^\rGP_{1,1}$ and $G^\rGP_{2,2}$ are continuous. 
    Since the Green's function is bounded and $V$ is Schwartz, the integrals are absolutely convergent and the derivative actually exists.
    On the other hand, we know from Lemma \ref{lem:20} that $\alpha^\rGP(\kappa; W + s V)$ is continuously differentiable.
    Specifically \eqref{eqn:alpha-diff} yields
    \begin{align*}
        \frac{\dd}{\dd s}\Big\vert_{s=0} \alpha^\rGP(\kappa; W + s V)
        &= \sum_{l=2}^\infty (-1)^l \tr[(R^\rGP_0 \mc{W})^{l-1} R^\rGP_0 \mc{V}]
        \\ &= \tr[(R^\rGP_0 - R^\rGP) \mc{V}]
        \\ &= \int_{\R} \tr[(G^\rGP - G^\rGP_0)(X, X) (- \mc{V})(X)] \dd X
        \\ &= \int_{\R} (G^\rGP - G^\rGP_0)_{1,1}(X, X) V_-(X) + (G^\rGP - G^\rGP_0)_{2,2}(X, X) V_+(X) \dd X \,.
    \end{align*}
    Here we have used that $G^\rGP$ and $G^\rGP_0$ have continuous diagonal parts.
    The proof is finished with the additional observation that
    \begin{align*}
        - \frac{\dd}{\dd s}\Big\vert_{s=0} \tr[R^\rGP_0 (\mc{W} + s \mc{V})] &= \int_{\R} (G^\rGP_0)_{1,1}(X, X) V_-(X) + (G^\rGP_0)_{2,2}(X, X) V_+(X) \dd X \,.
    \end{align*}
\end{proof}

Recall from \eqref{eqn:def-even-odd} the definition of the operators $\Even_\lambda$ and $\Odd_\lambda$.
In the following, we suppress the dependence of various quantities on $\lambda$ from the notation, but for the application of these operators this dependence is of course relevant.
\begin{lemma} \label{lem:approx-3}
    Let $W = (W_+, W_-) \in \mc{S}(\R; \R^2)$, $\epsilon \in (0, 1)$, and $0 < \kappa < \frac{1}{\sqrt{2} \epsilon}$ with $\lambda \in (0, 1)$.
    Assume that
    \begin{align*}
        \max\left\{\frac{1}{\kappa},\frac{1}{\kappa^3}\right\} \|W\|_{L^2}^2 &\leq \frac{1}{30} \,.
    \end{align*}
    Then the series $\alpha^\rGP(\kappa; W)$ and $\alpha^\KdV(\kappa; \pm W_\pm)$ converge.
    Furthermore, we have
    \begin{align} 
        \label{eqn:approx-l-1} \Even_\lambda\left[\tr[R^\rGP_0 \mc{W}]\right] \mp \frac{1}{\lambda} \Odd_\lambda \left[\tr[R^\rGP_0 \mc{W}]\right] &= \tr[R^\KdV_0 (\pm W_\pm)]
        \\ \label{eqn:approx-alpha} \Even_\lambda\left[\alpha^\rGP(\kappa; W)\right] \mp \frac{1}{\lambda} \Odd_\lambda \left[\alpha^\rGP(\kappa; W)\right] &= \alpha^\KdV(\kappa; (\pm W_\pm)) + \bm{E}(\kappa; W)
    \end{align}
    with
    \begin{align*}
        |\bm{E}(\kappa; W)| &\leq 8 \frac{\epsilon^2}{\kappa} \|W\|_{L^2}^2 \,.
    \end{align*}
\end{lemma}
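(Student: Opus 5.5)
The plan is to exploit the factorisation $R^\rGP_0 = R^\KdV_0\,\mc{U}$ from the Definitions subsection. We split
\begin{align*}
    \mc{U} &= \mc{U}_0 + \mc{U}_1 \,,
    & \mc{U}_0 &= \diag\!\left(\tfrac{\lambda+1}{2}, \tfrac{\lambda-1}{2}\right) \,,
    & \mc{U}_1 &= \tfrac{i\epsilon}{\sqrt2}\,\del_X\, \sigma_1 \,.
\end{align*}
Throughout, $\lambda \mapsto -\lambda$ is taken with $\kappa$ fixed, which is legitimate since $\kappa^2 = \frac{1-\lambda^2}{2\epsilon^2}$.

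\emph{Convergence.} The hypothesis $\max\{\kappa^{-1},\kappa^{-3}\}\|W\|_{L^2}^2\le \frac1{30}$, together with $\|f\|_{H^{-1}_\kappa}\le \frac{1}{2\kappa}\|f\|_{L^2}$ and $\|f\|_{L^2_\kappa}\le \|f\|_{L^2}$, gives the smallness conditions of Lemma \ref{lem:alpha}. Both series therefore converge absolutely, with every relevant Hilbert--Schmidt norm below $\frac13$ by Lemma \ref{lem:HS-explicit}.

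\emph{First-order identity \eqref{eqn:approx-l-1}.} Only $\mc{U}_0$ contributes to this trace, exactly as in the proof of Lemma \ref{lem:a-alpha-correspondence}. Writing $a=\tr[R^\KdV_0 W_-]$ and $b=\tr[R^\KdV_0 W_+]$, we get
\begin{align*}
    \tr[R^\rGP_0\mc{W}] &= -\tfrac{\lambda+1}{2}\,a - \tfrac{\lambda-1}{2}\,b \,.
\end{align*}
Its even part is $\frac{b-a}{2}$ and its odd part is $-\frac{\lambda}{2}(a+b)$. Hence the combination $\Even_\lambda \mp \frac1\lambda\Odd_\lambda$ equals $b$ for the upper sign and $-a$ for the lower sign, which is \eqref{eqn:approx-l-1}.

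\emph{Higher-order identity \eqref{eqn:approx-alpha}.} Expand $\tr[(R^\KdV_0\mc{U}\mc{W})^l]$ multilinearly in $\mc{U}_0$ and $\mc{U}_1$. Since $\mc{W}$ and $\mc{U}_0$ are diagonal and $\mc{U}_1$ is off-diagonal, every word containing an odd number of $\mc{U}_1$ factors is an off-diagonal matrix operator, so its trace vanishes. The pure-$\mc{U}_0$ word gives
\begin{align*}
    g_l(\lambda) &= \left(\tfrac{\lambda+1}{2}\right)^l \tr[(R^\KdV_0(-W_-))^l] + \left(\tfrac{\lambda-1}{2}\right)^l \tr[(R^\KdV_0(-W_+))^l] \,.
\end{align*}
A direct computation gives
\begin{align*}
    \Even_\lambda g \mp \tfrac1\lambda \Odd_\lambda g &= \tfrac12\left(1\mp\tfrac1\lambda\right) g(\lambda) + \tfrac12\left(1\pm\tfrac1\lambda\right) g(-\lambda) \,.
\end{align*}
Take the upper sign. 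Then $g(-\lambda)$ contributes $\frac{1+\lambda}{2\lambda}\left(\frac{1+\lambda}{2}\right)^l \tr[(R^\KdV_0 W_+)^l]$, plus a piece carrying $\left(\frac{1-\lambda}{2}\right)^l$. The prefactor $\frac{1-1/\lambda}{2}$ in front of $g(\lambda)$ is $O\big(\frac{1-\lambda}{\lambda}\big)$. Since $1-\lambda \le 1-\lambda^2 = 2\epsilon^2\kappa^2$, every deviation from $\tr[(R^\KdV_0 W_+)^l]$ carries a factor $\epsilon^2\kappa^2$ (up to the factor $1/\lambda$ discussed below). The $l$-th trace is bounded by $\big(\kappa^{-1}\|W\|_{H^{-1}_\kappa}^2\big)^{l/2}\le \big(\kappa^{-3}\|W\|_{L^2}^2/4\big)^{l/2}$. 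So for $l\ge2$ these deviations are at most a constant times $\epsilon^2\kappa^2\cdot\kappa^{-3}\|W\|^2_{L^2}\,\theta^{l-2}$ with $\theta\le\frac1{30}$. Summing the geometric series gives an error of size $\lesssim \epsilon^2\kappa^{-1}\|W\|_{L^2}^2$. The lower sign is symmetric, with the roles of $W_\pm$ exchanged.

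\emph{Words with at least two $\mc{U}_1$ factors.} By \eqref{eqn:HS-estimate-2}, such a word is bounded by $\|\sqrt{R^\KdV_0}\mc{U}_1\mc{W}\sqrt{R^\KdV_0}\|_{\HS}^2$ times $l-2$ further Hilbert--Schmidt norms, each at most $\frac13$. By \eqref{eqn:HS-1.5} this square is $\frac{\epsilon^2}{4\kappa}\|W\|_{L^2_\kappa}^2 \le \frac{\epsilon^2}{4\kappa}\|W\|_{L^2}^2$. Summing over the at most $2^l$ words and over $l$ with the weights $\frac1l$ again gives $\lesssim \epsilon^2\kappa^{-1}\|W\|_{L^2}^2$. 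Applying $\Even_\lambda \mp \frac1\lambda\Odd_\lambda$ costs at most a factor of order $\max\{1,\frac1\lambda\}$.

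\emph{Main obstacle.} The delicate step is the bookkeeping of constants needed to reach the explicit factor $8$, and in particular controlling $\frac1\lambda$ when $\lambda$ is small, i.e.\ when $\kappa$ is close to $\frac{1}{\sqrt2\epsilon}$. The first thing I would try is to avoid dividing by $\lambda$ at all: in the $\mc{U}_0$ part, $g(\lambda)-g(-\lambda)$ is explicitly divisible by $\lambda$, so $\frac1\lambda\Odd_\lambda$ can be written as a finite difference quotient and bounded directly. For the $\mc{U}_1$ terms, the contributing words have an even number of $\mc{U}_1$ factors; I would expand the corresponding coefficient polynomials in $\lambda$ and bound the odd part divided by $\lambda$ by the derivative in $\lambda$, which stays bounded for $\lambda\in(0,1)$.
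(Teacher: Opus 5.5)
Your convergence check and the first-order identity \eqref{eqn:approx-l-1} are fine. For \eqref{eqn:approx-alpha} you follow the paper's argument: expand $\tr[(R^\KdV_0\mc{U}\mc{W})^l]$ into words, drop words with an odd number of off-diagonal factors, bound the words with at least two $\mc{U}_1$ factors through \eqref{eqn:HS-estimate-2} and \eqref{eqn:HS-1.5}, and evaluate the purely diagonal words exactly. Your treatment of the diagonal words is simpler than the paper's. The paper splits $\mc{U}_0=\frac12\sigma_3+\frac12\lambda$ into two letters, regroups by a bijection on $\{0,1\}^l$, and uses a binomial identity to expand in powers of $-2\epsilon^2\kappa^2$. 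You reduce everything to the scalar polynomial $g_l(\lambda)$. Carried out, with $p=\frac{1+\lambda}{2}$, $q=\frac{1-\lambda}{2}$, $pq=\frac{\epsilon^2\kappa^2}{2}$ and $h_j=\frac{p^{j+1}-q^{j+1}}{p-q}=\sum_{i=0}^{j}p^iq^{j-i}$, the upper-sign combination of the pure-$\mc{U}_0$ word is exactly
\begin{align*}
    h_l\,\tr[(R^\KdV_0 W_+)^l] - pq\,h_{l-2}\,\tr[(R^\KdV_0(-W_-))^l] \,.
\end{align*}
Since $0\le 1-h_l\le l\,pq$ and $0\le h_{l-2}\le 1$, the deviation from $\tr[(R^\KdV_0 W_+)^l]$ is at most $\frac{\epsilon^2\kappa^2}{2}(l+1)\theta^{l/2}$ with $\theta=\frac{\|W\|_{L^2}^2}{4\kappa^3}\le\frac{1}{120}$, uniformly in $\lambda$.

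The step that fails as written is your claim that applying $\Even_\lambda\mp\frac1\lambda\Odd_\lambda$ costs a factor $\max\{1,\frac1\lambda\}$, and likewise ``up to the factor $1/\lambda$'' in the diagonal part. The hypotheses allow $\lambda\downarrow 0$, i.e.\ $\kappa\uparrow\frac{1}{\sqrt2\epsilon}$. The asserted bound contains no $\lambda$, so such an estimate proves nothing there; the division by $\lambda$ has to be absorbed, not paid for.

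The remedy in your last paragraph does absorb it, and it is equivalent to the paper's mechanism. Write $\mc{U}_0=\frac12(\sigma_3+\lambda)$. The odd part in $\lambda$ of any word consists of terms with an odd, hence positive, number of explicit factors $\lambda$. After dividing by $\lambda$, a factor $|\lambda|^{\#\lambda-1}\le1$ remains; in the paper this is the exponent $|\{\omega_j=1\}|-\mathds{1}_{\{\conj{\omega}\text{ odd}\}}\ge 0$. Equivalently, $\bigl|\frac{f(\lambda)-f(-\lambda)}{2\lambda}\bigr|\le\sup_{|s|\le\lambda}|f'(s)|$. Here $f'$ is a sum of at most $l$ words in which one $\mc{U}_0$ is replaced by $\frac12$, and each is controlled by the same Hilbert--Schmidt norms, because $|\frac{s\pm1}{2}|\le1$ for $|s|\le 1$. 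With both remedies in place, the total error is a small multiple (below $1$) of $\epsilon^2\kappa^{-1}\|W\|_{L^2}^2$. So the constant $8$ leaves ample room and is not the delicate point you feared.
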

\begin{proof}
    Combining \eqref{eqn:rGP-alpha-a} and \eqref{eqn:KdV-alpha-a} directly yields \eqref{eqn:approx-l-1}.
    To prove \eqref{eqn:approx-alpha} we need to understand
    \begin{align*}
        \Even_\lambda\left[\tr[(R^\rGP_0 \mc{W})^l]\right] \mp \frac{1}{\lambda} \Odd_\lambda\left[\tr[(R^\rGP_0 \mc{W})^l]\right] \,.
    \end{align*}
    Using the Pauli matrices
    \begin{align*}
        \sigma_3 &= \begin{pmatrix} 1 & 0 \\ 0 & - 1 \end{pmatrix}
        & \sigma_1 &= \begin{pmatrix} 0 & 1 \\ 1 & 0 \end{pmatrix} \,,
    \end{align*}
    we write
    \begin{align*}
        \Even_\lambda[R^\rGP_0 \mc{W}] &= \frac12 R^\KdV_0 (\sigma_3 + i \sqrt{2} \epsilon \sigma_1 \del_X) \mc{W}
        \\ \Odd_\lambda[R^\rGP_0 \mc{W}] &= \frac12 R^\KdV_0 \lambda \mc{W} \,.
    \end{align*}
    Then
    \begin{align*}
        \Even_\lambda\left[\tr[(R^\rGP_0 \mc{W})^l]\right] \mp \frac{1}{\lambda} \Odd_\lambda\left[\tr[(R^\rGP_0 \mc{W})^l]\right]
        &= \tr\left[\Even_\lambda[(R^\rGP_0 \mc{W})^l] \mp \frac{1}{\lambda} \Odd_\lambda[(R^\rGP_0 \mc{W})^l] \right] \,.
    \end{align*}
    For $l \in \N$ and $\omega \in \{0, 1, 2\}^l$ we define 
    \begin{align*}
        \conj{\omega} = \sum_{j=1}^l \mathds{1}_{\{\omega_j = 1\}} \,.
    \end{align*}
    One can show directly by induction that
    \begin{align*}
        \Even_\lambda[(R^\rGP_0 \mc{W})^l] &= \underset{\conj{\omega} \text{ even}}{\sum_{\omega \in \{0,1\}^l}} 2^{-l} \prod_{j=1}^l 
        \begin{cases}
            R^\KdV_0 (\sigma_3 + \sqrt{2} \epsilon \sigma_1 i \del_X) \mc{W} &, \omega_j = 0
            \\ R^\KdV_0 \lambda \mc{W} &, \omega_j = 1
        \end{cases} \\
        &= \underset{\conj{\omega} \text{ even}}{\sum_{\omega \in \{0,1,2\}^l}} 2^{-l} \prod_{j=1}^l 
        \begin{cases}
            R^\KdV_0 \sigma_3 \mc{W} &, \omega_j = 0
            \\ R^\KdV_0 \lambda \mc{W} &, \omega_j = 1
            \\ R^\KdV_0 \sqrt{2} \epsilon \sigma_1 i \del_X \mc{W} &, \omega_j = 2
        \end{cases}
    \end{align*}
    and
    \begin{align*}
        \Odd_\lambda[(R^\rGP_0 \mc{W})^l] &= \underset{\conj{\omega} \text{ odd}}{\sum_{\omega \in \{0,1\}^l}} 2^{-l} \prod_{j=1}^l 
        \begin{cases}
            R^\KdV_0 (\sigma_3 + \sqrt{2} \epsilon \sigma_1 i \del_X) \mc{W} &, \omega_j = 0
            \\ R^\KdV_0 \lambda \mc{W} &, \omega_j = 1
        \end{cases}
        \\ &= \underset{\conj{\omega} \text{ odd}}{\sum_{\omega \in \{0,1,2\}^l}} 2^{-l} \prod_{j=1}^l 
        \begin{cases}
            R^\KdV_0 \sigma_3 \mc{W} &, \omega_j = 0
            \\ R^\KdV_0 \lambda \mc{W} &, \omega_j = 1
            \\ R^\KdV_0 \sqrt{2} \epsilon \sigma_1 i \del_X \mc{W} &, \omega_j = 2
        \end{cases} \,.
    \end{align*}
    Here the ordering of the product of possibly non-commuting operators does not matter, as long as it is consistent for all $\omega \in \{0,1\}^l$.
    We shall fix as a convention that the first term in the product, i.~e. the term with $j = 1$, is multiplied on the left, while the last term is multiplied on the right.
    Then
    \begin{align*}
        \Even_\lambda[(R^\rGP_0 \mc{W})^l] \mp \frac{1}{\lambda} \Odd_\lambda[(R^\rGP_0 \mc{W})^l] &= \sum_{\omega \in \{0,1,2\}^l} 2^{-l} \left(\frac{\mp 1}{\lambda}\right)^{\mathds{1}_{\{\conj{\omega} \text{ odd}\}}} \prod_{j=1}^l 
        \begin{cases}
            R^\KdV_0 \sigma_3 \mc{W} &, \omega_j = 0
            \\ R^\KdV_0 \lambda \mc{W} &, \omega_j = 1
            \\ R^\KdV_0 \sqrt{2} \epsilon \sigma_1 i \del_X \mc{W} &, \omega_j = 2
        \end{cases} \\
        &= \underset{\exists j: \omega_j = 2}{\sum_{\omega \in \{0,1,2\}^l}} 2^{-l} \left(\frac{\mp 1}{\lambda}\right)^{\mathds{1}_{\{\conj{\omega} \text{ odd}\}}} \prod_{j=1}^l 
        \begin{cases}
            R^\KdV_0 \sigma_3 \mc{W} &, \omega_j = 0
            \\ R^\KdV_0 \lambda \mc{W} &, \omega_j = 1
            \\ R^\KdV_0 \sqrt{2} \epsilon \sigma_1 i \del_X \mc{W} &, \omega_j = 2
        \end{cases} \\
        &+ \sum_{\omega \in \{0,1\}^l} 2^{-l} \left(\frac{\mp 1}{\lambda}\right)^{\mathds{1}_{\{\conj{\omega} \text{ odd}\}}} \prod_{j=1}^l 
        \begin{cases}
            R^\KdV_0 \sigma_3 \mc{W} &, \omega_j = 0
            \\ R^\KdV_0 \lambda \mc{W} &, \omega_j = 1
        \end{cases}
        \\ &= I + II \,.
    \end{align*}
    Using that the trace of off-diagonal matrices vanishes, we estimate
    \begin{align*}
        \left| \sum_{l=2}^\infty \frac{(-1)^l}{l} \tr[I] \right|
        &\leq \sum_{l=2}^\infty \underset{|\{j: \omega_j = 2\}| \geq 2 \text{ even}}{\sum_{\omega \in \{0,1,2\}^l}} |\lambda|^{- \mathds{1}_{\{\conj{\omega} \text{ odd}\}}} 2^{-l} \prod_{j=1}^l \begin{cases}
            \big\|\sqrt{R^\KdV_0} \sigma_3 \mc{W} \sqrt{R^\KdV_0}\big\|_{\HS} &, \omega_j = 0 \\
            \big\|\sqrt{R^\KdV_0} \mc{W} \sqrt{R^\KdV_0}\big\|_{\HS} |\lambda| &, \omega_j = 1 \\
            \big\|\sqrt{R^\KdV_0} \sqrt{2} \epsilon \sigma_1 i \del_X \mc{W} \sqrt{R^\KdV_0}\big\|_{\HS} &, \omega_j = 2
        \end{cases} \\
        &\leq \sum_{l=2}^\infty \underset{|\{j: \omega_j = 2\}| \geq 2 \text{ even}}{\sum_{\omega \in \{0,1,2\}^l}} |\lambda|^{- \mathds{1}_{\{\conj{\omega} \text{ odd}\}}} 2^{-l} \kappa^{-\frac{l}{2}} \prod_{j=1}^l \begin{cases}
            \|W\|_{H^{-1}_\kappa} &, \omega_j = 0 \\
            \|W\|_{H^{-1}_\kappa} |\lambda| &, \omega_j = 1 \\
            \|W\|_{L^2_\kappa} \epsilon &, \omega_j = 2
        \end{cases}
    \end{align*}
    Here we have used \eqref{eqn:HS-estimate-2}, \eqref{eqn:HS-1}, and \eqref{eqn:HS-1.5}.
    Noting that $\|W\|_{H^{-1}_\kappa} \leq \frac{1}{\sqrt{2} \kappa} \|W\|_{L^2_\kappa}$ and $|\lambda| \leq 1$, we finish our estimate:
    \begin{align*}
        \left| \sum_{l=2}^\infty \frac{(-1)^l}{l} \tr[I] \right|
        &\leq \sum_{l=2}^\infty \underset{|\{j: \omega_j = 2\}| \geq 2 \text{ even}}{\sum_{\omega \in \{0,1,2\}^l}} \epsilon^{|\{\omega_j = 2\}|} 
        2^{-l} \kappa^{-\frac{l}{2}} |\lambda|^{|\{\omega_j = 1\}| - \mathds{1}_{\{\conj{\omega} \text{ odd}\}}} \|W\|_{H^{-1}_\kappa}^{|\{\omega_j = 0\}| + |\{\omega_j = 1\}|} \|W\|_{L^2_\kappa}^{|\{\omega_j = 2\}|}
        \\ &\leq \sum_{l=2}^\infty \|W\|_{L^2_\kappa}^l 2^{-l} \underset{|\{j: \omega_j = 2\}| \geq 2 \text{ even}}{\sum_{\omega \in \{0,1,2\}^l}} \epsilon^{|\{\omega_j=2\}|} (\sqrt{2} \kappa)^{- |\{\omega_j=0\}| - |\{\omega_j=1\}| - \frac{l}{2}}
        \\ &\leq 2 \epsilon^2 \kappa^2 \sum_{l=2}^\infty \kappa^{-\frac{l}{2}} \|W\|_{L^2_\kappa}^l 2^{-l} \underset{n \text{ even}}{\sum_{n = 2}^l} \sum_{j=0}^l \binom{l}{n, j} (\sqrt{2} \epsilon \kappa)^{n-2} (\sqrt{2} \kappa)^{- \frac32 l}
        \\ &\leq 2 \epsilon^2 \kappa^2 \sum_{l=2}^\infty \kappa^{- \frac32 l} \|W\|_{L^2_\kappa}^l 2^{-l} 3^l \sqrt{2}^{-\frac32 l}
        \leq \frac{\epsilon^2 \kappa^{-1} \|W\|_{L^2}^2}{1 - \kappa^{-\frac32} \|W\|_{L^2}} \,.
    \end{align*}
    In order to compute $II$, we decompose
    \begin{align*}
        \{0,1\}^l &= \{\omega \in \{0,1\}^l: \omega_1 = 0 \text{ and } \conj{\omega} \text{ even}\} \cup \{\omega \in \{0,1\}^l: \omega_1 = 1 \text{ and } \conj{\omega} \text{ odd}\} \\
        &\cup \{\omega \in \{0,1\}^l: \omega_1 = 0 \text{ and } \conj{\omega} \text{ odd}\} \cup \{\omega \in \{0,1\}^l: \omega_1 = 1 \text{ and } \conj{\omega} \text{ even}\}
    \end{align*}
    and observe that
    \begin{align*}
        \omega &\longmapsto (0, \omega_2, \dots, \omega_l)
        & \omega &\longmapsto (1, \omega_2, \dots, \omega_l)
    \end{align*}
    are bijections between the first two and last two sets. 
    We may sum only over the sets on the left and correspondingly adjust each term as follows:
    \begin{align*}
        II &= \underset{\conj{\omega} \text{ even}}{\underset{\omega_1 = 0}{\sum_{\omega \in \{0,1\}^l}}} 2^{-l} R^\KdV_0  (\sigma_3 \mp 1) \mc{W} \prod_{j=2}^l 
        \begin{cases}
            R^\KdV_0 \sigma_3 \mc{W} &, \omega_j = 0
            \\ R^\KdV_0 \lambda \mc{W} &, \omega_j = 1
        \end{cases}
        \\ &+ \underset{\conj{\omega} \text{ odd}}{\underset{\omega_1 = 0}{\sum_{\omega \in \{0,1\}^l}}} 2^{-l} R^\KdV_0  \left( \frac{\mp \sigma_3}{\lambda} + \lambda \right) \mc{W} \prod_{j=2}^l 
        \begin{cases}
            R^\KdV_0 \sigma_3 \mc{W} &, \omega_j = 0
            \\ R^\KdV_0 \lambda \mc{W} &, \omega_j = 1
        \end{cases}
        \\ &= \underset{\conj{\omega} \text{ even}}{\underset{\omega_1 = 0}{\sum_{\omega \in \{0,1\}^l}}} 2^{-l} (\sigma_3 \mp 1) \lambda^{\conj{\omega}} \sigma_3^{l - 1 - \conj{\omega}} (R^\KdV_0 \mc{W})^l
        + \underset{\conj{\omega} \text{ odd}}{\underset{\omega_1 = 0}{\sum_{\omega \in \{0,1\}^l}}} 2^{-l} (\sigma_3 \mp 1 \pm 2 \epsilon^2 \kappa^2) \left(\frac{\mp 1}{\lambda}\right) \lambda^{\conj{\omega}} \sigma_3^{l - 1 - \conj{\omega}} (R^\KdV_0 \mc{W})^l
        \\ &= (\sigma_3 \mp 1) (R^\KdV_0 \sigma_3 \mc{W})^l \sigma_3 \underset{\omega_1 = 0}{\sum_{\omega \in \{0,1\}^l}} 2^{-l} \left(\frac{\mp 1}{\lambda}\right)^{\mathds{1}_{\{\conj{\omega} \text{ odd}\}}} \lambda^{\conj{\omega}} \sigma_3^{\conj{\omega}}
        + 2 \epsilon^2 \kappa^2 (R^\KdV_0 \sigma_3 \mc{W})^l \sigma_3 \underset{\conj{\omega} \text{ odd}}{\underset{\omega_1 = 0}{\sum_{\omega \in \{0,1\}^l}}} 2^{-l} \left(\frac{-1}{\lambda}\right) \lambda^{\conj{\omega}} \sigma_3^{\conj{\omega}}
        \\ &= III + IV \,.
    \end{align*}
    The further treatment of $III$ and $IV$ is a matter of computation.
    We have
    \begin{align*}
        III &= (\sigma_3 \mp 1) (R^\KdV_0 \sigma_3 \mc{W})^l 2^{-l} \sigma_3 
        \left( \underset{\conj{\omega} \text{ even}}{\sum_{\conj{\omega}=0}^{l-1}} \binom{l - 1}{\conj{\omega}} \lambda^{\conj{\omega}} 
        \mp \sigma_3 \underset{\conj{\omega} \text{ odd}}{\sum_{\conj{\omega}=0}^{l-1}} \binom{l - 1}{\conj{\omega}} \lambda^{\conj{\omega} - 1} \right)
        \\ &= \frac{\sigma_3 \mp 1}{2} (R^\KdV_0 \sigma_3 \mc{W})^l 2^{1-l} \left( \sigma_3 \sum_{j=0}^{\floor{\frac{l-1}{2}}} \binom{l - 1}{2 j} (1 - 2 \epsilon^2 \kappa^2)^j 
        \mp \sum_{j=0}^{\floor{\frac{l}{2}} - 1} \binom{l - 1}{2 j + 1} (1 - 2 \epsilon^2 \kappa^2)^j \right)
        \\ &= \frac{(\sigma_3 \mp 1)^2}{4} (R^\KdV_0 \sigma_3 \mc{W})^l
        + \frac{\sigma_3 \mp 1}{2} (R^\KdV_0 \sigma_3 \mc{W})^l 2^{1-l} \sum_{j=0}^\infty \left( \sigma_3 \binom{l - 1}{2 j} \mp \binom{l - 1}{2 j + 1} \right) ((1 - 2 \epsilon^2 \kappa^2)^j - 1)
        \\ &= V + VI \,.
    \end{align*}
    The most important term is $V$, because
    \begin{align*}
        \sum_{l=2}^\infty \frac{(-1)^l}{l} \tr[V] 
        &= \sum_{l=2}^\infty \frac{(-1)^l}{l} \left( \frac{(1 \mp 1)^2}{4} \tr[(R^\KdV_0 (- W_-))^l] + \frac{(- 1 \mp 1)^2}{4} \tr[(R^\KdV_0 W_+)^l] \right)
        = \alpha^\KdV(\kappa; \pm W_\pm) \,.
    \end{align*}
    We calculate further
    \begin{align*}
        IV &= - 2 \epsilon^2 \kappa^2 (R^\KdV_0 \sigma_3 \mc{W})^l 2^{- l} \underset{\conj{\omega} \text{ odd}}{\sum_{\conj{\omega} = 0}^{l-1}} \binom{l - 1}{\conj{\omega}} \lambda^{\conj{\omega} - 1}
        \\ &= - 2 \epsilon^2 \kappa^2 (R^\KdV_0 \sigma_3 \mc{W})^l 2^{- l} \sum_{j = 0}^{\floor{\frac{l}{2}} - 1} \binom{l - 1}{2 j + 1} (1 - 2 \epsilon^2 \kappa^2)^j
        \\ &= (R^\KdV_0 \sigma_3 \mc{W})^l 2^{- l} \sum_{m=1}^\infty (- 2 \epsilon^2 \kappa^2)^m \sum_{j = 0}^\infty \binom{l - 1}{2 j + 1} \binom{j}{m - 1}
    \end{align*}
    and
    \begin{align*}
        VI &= (R^\KdV_0 \sigma_3 \mc{W})^l 2^{-l} (\sigma_3 \mp 1) \sum_{j=0}^\infty \left( \sigma_3 \binom{l - 1}{2 j} \mp \binom{l - 1}{2 j + 1} \right) \sum_{m=1}^\infty \binom{j}{m} (- 2 \epsilon^2 \kappa^2)^m
        \\ &= (R^\KdV_0 \sigma_3 \mc{W})^l 2^{-l} \sum_{m=1}^\infty (- 2 \epsilon^2 \kappa^2)^m \sum_{j=0}^\infty (1 \mp \sigma_3) \binom{l}{2 j + 1} \binom{j}{m} \,.
    \end{align*}
    Then a binomial identity, which may be verified by a calculation with generating functions, yields
    \begin{align*}
        IV + VI &= (R^\KdV_0 \sigma_3 \mc{W})^l 2^{-l} \sum_{m=1}^\infty (- 2 \epsilon^2 \kappa^2)^m \sum_{j=0}^\infty \left( (1 \mp \sigma_3) \binom{l}{2 j + 1} \binom{j}{m} + \binom{l - 1}{2j + 1} \binom{j}{m-1} \right)
        \\ &= (R^\KdV_0 \sigma_3 \mc{W})^l 2^{-l} \sum_{m=1}^\infty (- 2 \epsilon^2 \kappa^2)^m \left( (1 \mp \sigma_3) 2^{l-1-2m} \binom{l-1-m}{m} + 2^{l-2m} \binom{l-1-m}{m-1} \right)
        \\ &= (R^\KdV_0 \sigma_3 \mc{W})^l \sum_{m=1}^\infty (- 2 \epsilon^2 \kappa^2)^m 4^{-m} \left( \frac{1 \mp \sigma_3}{2} \binom{l-1-m}{m} + \binom{l-1-m}{m-1} \right) \,.
    \end{align*}
    We find that
    \begin{align*}
        & \sum_{l=2}^\infty \frac{(-1)^l}{l} \tr[IV + VI]
        \\ &= \sum_{l=2}^\infty \frac{(-1)^l}{l} \sum_{m=1}^\infty (- 2 \epsilon^2 \kappa^2)^m 4^{-m} \left( \tr\left[(R^\KdV_0 \sigma_3 \mc{W})^l \frac{1 \mp \sigma_3}{2}\right] \binom{l-1-m}{m} + \tr[(R^\KdV_0 \sigma_3 \mc{W})^l] \binom{l-1-m}{m-1} \right)
        \\ &= \sum_{l=2}^\infty \frac{(-1)^l}{l} \sum_{m=1}^\infty (- 2 \epsilon^2 \kappa^2)^m 4^{-m} \left( \tr[(R^\KdV_0 (\pm W_\pm))^l] \binom{l-1-m}{m} \right.
        \\ &\left. \hspace{5cm} + \big(\tr[(R^\KdV_0 (- W_-))^l] + \tr[(R^\KdV_0 W_+)^l]\big) \frac12 \binom{l-1-m}{m-1} \right) \,,
    \end{align*}
    and finally
    \begin{align*}
        \left| \sum_{l=2}^\infty \frac{(-1)^l}{l} \tr[IV + VI] \right|
        &\leq \sum_{m=1}^\infty \epsilon^{2m} \kappa^{2m} 2^{-m} \sum_{l=2m}^\infty 2^{l-m} \kappa^{-\frac{l}{2}} \big(\|W_-\|_{H^{-1}_\kappa}^l + \|W_+\|_{H^{-1}_\kappa}^l\big)
        \\ &\leq \sum_{m=1}^\infty \epsilon^{2m} \kappa^m \|W\|_{H^{-1}_\kappa}^{2m} \frac{2}{1 - 2 \kappa^{-\frac12} \|W\|_{H^{-1}_\kappa}}
        \\ &\leq \frac{2 \epsilon^2 \kappa \|W\|_{H^{-1}_\kappa}^2}{1 - 2 \epsilon^2 \kappa \|W\|_{H^{-1}_\kappa}^2} \frac{2}{1 - 2 \kappa^{-\frac12} \|W\|_{H^{-1}_\kappa}}
        \\ &\leq \frac{\epsilon^2 \kappa^{-1} \|W\|_{L^2}^2}{1 - \epsilon^2 \kappa^{-1} \|W\|_{L^2}^2} \frac{1}{1 - \kappa^{-\frac32} \|W\|_{L^2}}
        \\ &\leq \frac{\epsilon^2 \kappa^{-1} \|W\|_{L^2}^2}{(1 - \kappa^{-\frac32} \|W\|_{L^2})^2} \,.
    \end{align*}
    Finally, we estimate
    \begin{align*}
        \frac{1}{{(1 - \kappa^{-\frac32} \|W\|_{L^2})^2}} + \frac{1}{{1 - \kappa^{-\frac32} \|W\|_{L^2}}} 
        &\leq \frac{1}{(1 - \frac{1}{\sqrt{3}})^2} + \frac{1}{1 - \frac{1}{\sqrt{3}}} \leq 8 \,.
    \end{align*}
\end{proof}

\section{Overview of Hamiltonian structures}
\label{appendix:poissonstructures}

In this section we present concisely and formally the Hamiltonian structures of \GP, \hGP, \rGP, and \KdV.

\subsection{\texorpdfstring{\GP}{GP}}
\label{appendix:GP}
We consider formally a manifold where we denote points by $\bm{q} = (q, \conj{q})$ and tangent vectors by $\bm{p} = (p, \conj{p})$ and $\bm{r} = (r, \conj{r})$.
We define the Hamiltonian operator $J^\GP$, the Riemannian metric $g^\GP$ and the symplectic form $\omega^\GP$ by
\begin{align*}
    J^\GP_{\bm{q}} &= - \frac{i}{2} \begin{pmatrix} 1 & 0 \\ 0 & - 1 \end{pmatrix}
    \\ g^\GP_{\bm{q}}(\bm{p}, \bm{r}) 
    &= \frac12 \int_{\R} (p \conj{r} + \conj{p} r) \dd x
    \\ \omega^\GP_{\bm{q}}(\bm{p}, \bm{r})
    &= - i \int_{\R} (p \conj{r} - \conj{p} r) \dd x
\end{align*}
They satisfy
\begin{align*}
    \omega^\GP_{\bm{q}}(\bm{p}, J^\GP_{\bm{q}} \bm{r}) &= g^\GP_{\bm{q}}(\bm{p}, \bm{r}) \,.
\end{align*}
We define the gradient $\nabla^{g^\GP}_{\bm{q}}$ with respect to the metric $g^\GP$ as well as the functional derivative via
\begin{align*}
    g^\GP_{\bm{q}}(\bm{p}, \nabla^{g^\GP}_{\bm{q}} \mc{H}) 
    = \int_{\R} \bm{p} \left( \frac{\delta \mc{H}(\bm{q})}{\delta q}, \frac{\delta \mc{H}(\bm{q})}{\delta \conj{q}} \right)^T \dd x
    &= \frac{\dd}{\dd s}\Big\vert_{s=0} \mc{H}(\bm{q} + s \bm{p}) \,.
\end{align*}
This implies
\begin{align*}
    \nabla^{g^\GP}_{\bm{q}} \mc{H} &= \begin{pmatrix} 0 & 2 \\ 2 & 0 \end{pmatrix}
    \left( \frac{\delta \mc{H}(\bm{q})}{\delta q}, \frac{\delta \mc{H}(\bm{q})}{\delta \conj{q}} \right)^T \,.
\end{align*}
Then \eqref{eqn:GP-n} can be written as
\begin{align*}
    i \del_{t_n} \bm{q} &= i J^\GP_{\bm{q}} \nabla^{g^\GP}_{\bm{q}} \mc{H}^\GP_n 
    = \begin{pmatrix} 0 & 1 \\ - 1 & 0 \end{pmatrix} \left( \frac{\delta \mc{H}^\GP_n(\bm{q})}{\delta q}, \frac{\delta \mc{H}^\GP_n(\bm{q})}{\delta \conj{q}} \right)^T \,.
\end{align*}
The Poisson bracket is
\begin{align*}
    \{\mc{F},\mc{G}\}^\GP(\bm{q}) &= \int_{\R} \left( \frac{\delta \mc{F}}{\delta q} \frac{\delta \mc{G}}{\delta \conj{q}} - \frac{\delta \mc{F}}{\delta \conj{q}} \frac{\delta \mc{G}}{\delta q} \right) \dd x \,.
\end{align*}

\subsection{\texorpdfstring{\hGP}{hGP}}
\label{appendix:hGP}
We consider formally a manifold where we denote points by $w = (w_+, w_-)$ and tangent vectors by $u = (u_+, u_-)$ and $v = (v_+, v_-)$.
We define the Hamiltonian operator $J^\hGP$, the Riemannian metric $g^\hGP$ and the symplectic form $\omega^\hGP$ by
\begin{align*}
    J^\hGP_{w} &= \frac{1}{8} \begin{pmatrix}
        - \del_x a^{-1} + 4 a \del_x^{-1}
        & \del_x a^{-1} + 4 a \del_x^{-1}
        \\ - \del_x a^{-1} - 4 a \del_x^{-1}
        & \del_x a^{-1} - 4 a \del_x^{-1}
    \end{pmatrix}
    \\ g^\hGP_{w}(u, v) 
    &= \int_{\R} \frac14 (u_+ - u_-) (v_+ - v_-) + a^2 \del_x^{-1} (u_+ + u_-) \del_x^{-1} (v_+ + v_-) \dd x
    \\ \omega^\hGP_{w}(u, v) 
    &= \int_{\R} a (v_+ - v_-) \del_x^{-1}(u_+ + u_-) - a (u_+ - u_-) \del_x^{-1}(v_+ + v_-) \dd x \,.
\end{align*}
They satisfy
\begin{align*}
    \omega^\hGP_{w}(u, J^\hGP_{w} v) &= g^\hGP_{w}(u, v) \,.
\end{align*}
We define the gradient $\nabla^{g^\hGP}_{w}$ with respect to the metric $g^\hGP$ as well as the functional derivative via
\begin{align*}
    g^\hGP_{w}(v, \nabla^{g^\hGP}_{w} \mc{H}) 
    = \int_{\R} (v_+, v_-) \left( \frac{\delta \mc{H}(w)}{\delta w_+}, \frac{\delta \mc{H}(w)}{\delta w_-} \right)^T \dd x
    &= \frac{\dd}{\dd s}\Big\vert_{s=0} \mc{H}(w + s v) \,.
\end{align*} 
This implies
\begin{align*}
    \nabla^{g^\hGP}_{w} \mc{H} &= \frac14 \begin{pmatrix}
        4 - \del_x a^{-2} \del_x
        & - 4 - \del_x a^{-2} \del_x
        \\ - 4 - \del_x a^{-2} \del_x
        & 4 - \del_x a^{-2} \del_x
    \end{pmatrix} \left( \frac{\delta \mc{H}(w)}{\delta w_+}, \frac{\delta \mc{H}(w)}{\delta w_-} \right)^T \,.
\end{align*}
Then \eqref{eqn:hGP-n} can be written as
\begin{align*}
    \del_{t_n} w &= J^\hGP_{w} \nabla^{g^\hGP}_{w} \mc{H}^\hGP_n 
    = \frac{1}{4} \begin{pmatrix}
        - \del_x a^{-1} - a^{-1} \del_x
        & \del_x a^{-1} - a^{-1} \del_x
        \\ - \del_x a^{-1} + a^{-1} \del_x
        & \del_x a^{-1} + a^{-1} \del_x
    \end{pmatrix} \left( \frac{\delta \mc{H}^\hGP_n(w)}{\delta w_+}, \frac{\delta \mc{H}^\hGP_n(w)}{\delta w_-} \right)^T \,.
\end{align*}
The Poisson bracket is
\begin{align*}
    \{\mc{F}, \mc{G}\}^\hGP(w) &= \frac14 \int_{\R} \begin{pmatrix}
        \frac{\delta \mc{F}}{\delta w_+} & \frac{\delta \mc{F}}{\delta w_-}
    \end{pmatrix} \begin{pmatrix}
    - \del_x a^{-1} - a^{-1} \del_x & \del_x a^{-1} - a^{-1} \del_x\\
    - \del_x a^{-1} + a^{-1} \del_x & \del_x a^{-1} + a^{-1} \del_x
    \end{pmatrix} \begin{pmatrix}
        \frac{\delta \mc{G}}{\delta w_+} \\ \frac{\delta \mc{G}}{\delta w_-}
    \end{pmatrix} \dd x \,.
\end{align*}

\subsection{\texorpdfstring{\rGP}{rGP}}
\label{appendix:rGP}
We consider formally a manifold where we denote points by $W = (W_+, W_-)$ and tangent vectors by $U = (U_+, U_-)$ and $V = (V_+, V_-)$.
We define the Hamiltonian operator $J^\rGP$, the Riemannian metric $g^\rGP$ and the symplectic form $\omega^\rGP$ by
\begin{align*}
    J^\rGP_{W} &= \frac{1}{8} \begin{pmatrix}
        - \sqrt{2} \epsilon \del_X A^{-1} + \frac{4}{\sqrt{2} \epsilon} A \del_X^{-1}
        & \sqrt{2} \epsilon \del_X A^{-1} + \frac{4}{\sqrt{2} \epsilon} A \del_X^{-1}
        \\ - \sqrt{2} \epsilon \del_X A^{-1} - \frac{4}{\sqrt{2} \epsilon} A \del_X^{-1}
        & \sqrt{2} \epsilon \del_X A^{-1} - \frac{4}{\sqrt{2} \epsilon} A \del_X^{-1}
    \end{pmatrix}
    \\ g^\rGP_{W}(U, V) 
    &= \int_{\R} \frac{\epsilon^3}{4 \sqrt{2}} (U_+ - U_-) (V_+ - V_-) + \frac{\epsilon}{2 \sqrt{2}} A^2 \del_X^{-1} (U_+ + U_-) \del_X^{-1} (V_+ + V_-) \dd X
    \\ \omega^\rGP_{W}(U, V)
    &= \frac{\epsilon^2}{2} \int A \left( (V_+ - V_-) \del_X^{-1} (U_+ + U_-) - (U_+ - U_-) \del_X^{-1} (V_+ + V_-) \right) \dd X \,.
\end{align*}
They satisfy
\begin{align*}
    \omega^\rGP_{W}(U, J^\rGP_{W} V) &= g^\rGP_{W}(U, V) \,.
\end{align*}
We define the gradient $\nabla^{g^\rGP}_{W}$ with respect to the metric $g^\rGP$ as well as the functional derivative via
\begin{align*}
    g^\rGP_{W}(V, \nabla^{g^\rGP}_{W} \mc{H}) 
    = \int_{\R} (V_+, V_-) \left( \frac{\delta \mc{H}(W)}{\delta W_+}, \frac{\delta \mc{H}(W)}{\delta W_-} \right)^T \dd X
    &= \frac{\dd}{\dd s}\Big\vert_{s=0} \mc{H}(W + s V) \,.
\end{align*} 
This implies
\begin{align*}
    \nabla^{g^\rGP}_{W} \mc{H} &= \frac{1}{\sqrt{2} \epsilon^3} \begin{pmatrix}
        2 - \epsilon^2 \del_X A^{-2} \del_X
        & - 2 - \epsilon^2 \del_X A^{-2} \del_X
        \\ - 2 - \epsilon^2 \del_X A^{-2} \del_X
        & 2 - \epsilon^2 \del_X A^{-2} \del_X
    \end{pmatrix} \left( \frac{\delta \mc{H}(W)}{\delta W_+}, \frac{\delta \mc{H}(W)}{\delta W_-} \right)^T \,.
\end{align*}
Then \eqref{eqn:rGP-n} can be written as
\begin{align*}
    \del_{T_n} W &= \frac{\dd t_n}{\dd T_n} J^\rGP_{W} \nabla^{g^\rGP}_{W} \mc{H}^\rGP_n 
    = \frac{\dd t_n}{\dd T_n} \frac{1}{2 \epsilon^2} \begin{pmatrix}
        - \del_X A^{-1} - A^{-1} \del_X
        & \del_X A^{-1} - A^{-1} \del_X
        \\ - \del_X A^{-1} + A^{-1} \del_X
        & \del_X A^{-1} + A^{-1} \del_X
    \end{pmatrix} \left( \frac{\delta \mc{H}^\rGP_n(W)}{\delta W_+}, \frac{\delta \mc{H}^\rGP_n(W)}{\delta W_-} \right)^T \,.
\end{align*}
The Poisson bracket is
\begin{align*}
    \{\mc{F}, \mc{G}\}^\rGP(W) &= \frac{1}{2\epsilon^2} \int_{\R} \begin{pmatrix}
        \frac{\delta \mc{F}}{\delta W_+} & \frac{\delta \mc{F}}{\delta W_-} 
    \end{pmatrix} \begin{pmatrix}
    - \del_X A^{-1} - A^{-1} \del_X & \del_X A^{-1} - A^{-1} \del_X\\
    - \del_X A^{-1} + A^{-1} \del_X & \del_X A^{-1} + A^{-1} \del_X
    \end{pmatrix} \begin{pmatrix}
        \frac{\delta \mc{G}}{\delta W_+} \\ \frac{ \delta\mc{G}}{\delta W_-} 
    \end{pmatrix} \dd X \,.
\end{align*}

\subsection{\texorpdfstring{\KdV}{KdV}}
\label{appendix:KdV}
We consider formally a manifold where we denote points by $U$ and tangent vectors by $F$ and $G$.
We define the Hamiltonian operator $J^\KdV$, the Riemannian metric $g^\KdV$ and the symplectic form $\omega^\KdV$ by
\begin{align*}
    J^\KdV_U &= \frac12 \del_X
    \\ g^\KdV_U(F, G) &= \int_{\R} F G \dd X
    \\ \omega^\KdV_U &= \frac12 \int_{\R} F \del_X^{-1} G - G \del_X^{-1} F \dd X \,.
\end{align*}
They satisfy
\begin{align*}
    \omega^\KdV_U(F, J^\KdV_U G) &= g^\KdV_U(F, G) \,.
\end{align*}
We define the gradient $\nabla^{g^\KdV}_U$ with respect to the metric $g^\KdV$ as well as the functional derivative via
\begin{align*}
    g^\KdV_U(F, \nabla^{g^\KdV}_U \mc{H}) 
    = \int_{\R} F \frac{\delta \mc{H}(U)}{\delta U} \dd X
    &= \frac{\dd}{\dd s}\Big\vert_{s=0} \mc{H}(U + s F) \,.
\end{align*}
This implies
\begin{align*}
    \nabla^{g^\KdV}_U \mc{H} &= \frac{\delta \mc{H}(U)}{\delta U} \,.
\end{align*}
Then \eqref{eqn:KdV-n} can be written as
\begin{align*}
    \del_{T_n} U &= J^\KdV_U \nabla^{g^\KdV}_U \mc{E}^\KdV_n 
    = \frac12 \del_X \frac{\delta \mc{E}^\KdV_n(U)}{\delta U} \,.
\end{align*}
The Poisson bracket is
\begin{align*}
    \{\mc{F}, \mc{G}\}^\KdV(U) &= \frac12 \int_{\R} \frac{\delta \mc{F}}{\delta U} \del_X \frac{\delta \mc{G}}{\delta U} \dd X \,.
\end{align*}

\textbf{Statements and Declarations.} \\
This work was funded by the Deutsche Forschungsgemeinschaft (DFG, German Research Foundation) – Project-ID 258734477 – SFB 1173.

No datasets were generated or analysed during the current study.

The author has no competing interests to declare that are relevant to the content of this article.

SymPy \cite{sympy} was used to carry out symbolic computations for the purposes of exploration and verification of various identities involving recurrence relations.
Various versions of the OpenAI services ChatGPT and Codex were used for mathematical and grammatical proofreading, literature search, and exploratory discussion. 
\printbibliography

\begin{minipage}[t]{0.48\textwidth}
\raggedright\small
\textbf{Robert Wegner}\\
Institute for Analysis\\
Karlsruhe Institute of Technology\\
Englerstraße 2\\
76131 Karlsruhe, Germany\\
\href{mailto:robert.wegner@kit.edu}{robert.wegner@kit.edu}
\end{minipage}

\end{document}